\documentclass[a4paper,11pt]{amsart}
\usepackage{amssymb}
\usepackage{xcolor}
\usepackage{hyperref}
\usepackage{enumitem}

\usepackage[margin=2.9cm]{geometry}

\newtheorem{Main}{Theorem}

\usepackage{color}
\usepackage{caption}
\usepackage{subcaption}
\usepackage{enumerate}
\usepackage{xcolor}
\usepackage[T1]{fontenc}
\usepackage[draft]{todonotes}

\definecolor{orange}{rgb}{1,0.5,0}

\usepackage[backend=biber, maxnames=4, style=alphabetic, maxalphanames=3, minalphanames=1,]{biblatex}
\DeclareLabelalphaTemplate{
  \labelelement{%
    \field[strwidth=3,strside=left,ifnames=1]{labelname}%
    \field[strwidth=1,strside=left]{labelname}%
  }%
}
\DeclareFieldFormat{extraalpha}{%
  \ifnumless{\thefield{year}}{2000}%
    {\the\numexpr\thefield{year}-1900}%
    {\the\numexpr\thefield{year}-2000}}

\usepackage{sidecap}

\usepackage{comment}

\usepackage{mathrsfs}

\DeclareMathAlphabet{\mathpzc}{OT1}{pzc}{L}{it}

\theoremstyle{definition}
\newtheorem{definition}{Definition}[section]
\newtheorem{theorem}[definition]{Theorem}

\newtheorem{proposition}[definition]{Proposition}

\newtheorem{corollary}[definition]{Corollary}
\newtheorem{lemma}[definition]{Lemma}

\newtheorem{remark}[definition]{Remark}

\numberwithin{equation}{section}

\def\geq{\geqslant}
\def\leq{\leqslant}
\def\R{\mathbb{R}}
\def\T{\mathbb{T}}
\def\eps{\varepsilon}
\def\Z{\mathbb{Z}}
\def\N{\mathbb{N}}

\def\Q{\mathbb{Q}}
\newcommand{\be}{\begin{equation}}
\newcommand{\ee}{\end{equation}}
\newcommand{\RQ}{\R \setminus \Q}
\newcommand{\abs}[1]{\left| #1 \right|}
\newcommand{\norm}[1]{\abs{\abs{#1}}}

\title[Lower bounds on mixing rates for a class of mixing flows on surfaces]{Lower bounds on mixing rates for a class of mixing flows on surfaces}
\author[M. Lorenzo--Laguno]{Miriam Lorenzo--Laguno}
\date{}
\begin{document}
\begin{abstract}
We study mixing rates for locally Hamiltonian flows on compact surfaces with asymmetric logarithmic singularities. For a full measure set of such flows, we show that the decay of correlations of smooth observables cannot be uniformly faster than a power of $\log t$. In particular, there exist sequences of times and observables for which correlations admit lower bounds of order $(\log t)^{-2-\nu}$  for any $\nu>0$.

We further show that for a typical Arnol'd flow on $\mathbb T^2$, the self-correlation of every box in the minimal component is bounded below by $(\log t)^{-1}$ along an unbounded sequence of times. Motivated by questions in spectral theory, we also construct examples of such flows for which the self-correlation of a box fails to be square-integrable. 

These results complement previous upper bounds for correlations in both settings, which are also of polynomial order in $\log t$, and show that logarithmic decay rates are essentially sharp along sequences of times.
\end{abstract}

\maketitle

\section{Introduction}
\subsection{Smooth area-preserving flows on surfaces}
Smooth area-preserving flows on surfaces are one of the most fundamental and extensively studied classes of dynamical systems. They provide low-dimensional examples of systems with non-trivial ergodic, mixing, and spectral properties, and their study has driven the development of many central ideas in smooth ergodic theory and Teichmuller dynamics.

Let $\mathcal{M}$ be a compact, connected, orientable smooth surface equipped with a smooth non-degenerate area form $\omega$. Consider an $\omega$-preserving flow $(\phi_t^{\mathcal{M}})_{t \in \mathbb{R}}$ on $\mathcal{M}$ determined by a smooth vector field $W: \mathcal{M} \to T\mathcal{M}$ with finitely many zeroes. Such a flow is \emph{locally Hamiltonian} if the pair $(\phi_t^{\mathcal{M}}, \omega)$ determines a smooth closed $1$-form $\eta$ with $\eta = dH$ locally, for some (smooth) $H:\mathcal{M}\to \R$, and the flow satisfies Hamilton's equations in local coordinates $(x,y)$ with $\omega = dx \wedge dy$:
$
\dot{x} = \frac{\partial H}{\partial y}, 
\dot{y} = -\frac{\partial H}{\partial x}.
$

The fixed points of $(\phi_t^{\mathcal{M}})$ are the critical points of $H$. The structure of such flows depends strongly on the number and nature of the fixed points. If $(\phi_t^{\mathcal{M}})$ has no fixed points, the Poincaré--Hopf index theorem forces $\mathcal{M} = \mathbb{T}^2$, and the flow is a reparametrization of a linear flow. By a classical result of Katok \cite{Katok1967}, no such flow is mixing. The situation is different when the set of fixed points is non-empty and finite. 

A classical decomposition theorem (proved independently by Mayer \cite{Mayer1943}, Levitt \cite{Levitt1982}, and Zorich \cite{Zorich1999}) asserts that $\mathcal{M}$ splits into finitely many $(\phi_t^{\mathcal{M}})$-invariant closed domains $M_i$, whose boundaries are formed by polycycles (unions of saddles and separatrix connections), and on each domain the dynamics is either periodic or \emph{quasi-minimal} (see \cite{KanigowskiOkunevZelada2026} for details). The ergodic and mixing properties of a flow restricted to a quasi-minimal component have been studied intensively over the past five decades.

\subsection{Mixing and mixing rates} Given a flow $(\phi_t^{\mathcal{M}})$ preserving an area form $\omega$ on an invariant component $\mathcal{M'}$ (with $\omega(\mathcal{M'})=1$), the correlation of two observables $f,g \in L^2(\mathcal{M'})$ at time $t$ is defined as
\begin{equation}\label{eq:cor_def}
\mathrm{Cor}_t(f, g) := \int_{\mathcal{M'}} (f \circ \phi_t^{\mathcal{M}}) g \, \omega - \int_{\mathcal{M'}} f \, \omega \int_{\mathcal{M'}} g \, \omega.
\end{equation}
A flow is mixing if $\lim_{t \to \infty} \mathrm{Cor}_t(f, g) = 0$ for all $f,g \in L^2(\mathcal{M'})$.

A fixed point $p$ is called a \emph{saddle} if $H$ does not achieve a local maximum or minimum at $p$; it is \emph{non-degenerate} if the Hessian of $H$ at $p$ is non-singular, and \emph{degenerate} otherwise. Kochergin \cite{kochergin75} constructed mixing flows with degenerate saddles on every surface of genus $\geq 1$, providing the first results on mixing for locally Hamiltonian flows on surfaces. The situation for non-degenerate saddles is more subtle. First, Sinai--Khanin \cite{Sinai1992} showed that a flow on $\mathbb{T}^2$ with one homoclinic loop is mixing for Lebesgue--almost every irrational frequency; in fact,
their proof yields an explicit rate, see \eqref{eq:sk_rate} below, and later Kochergin \cite{kochergin03,Kochergin2004} improved the result to all irrational frequencies. For higher genus surfaces, Ulcigrai \cite{ulcigrai11} showed that almost every flow with symmetric singularities (see Section \ref{sec:susflows}) is not mixing. The same author \cite{ulcigrai07} proved mixing for almost every flow with asymmetric singularities.

One of the main tools underlying most of these results is \emph{renormalization}: a rich dynamics on the space of interval exchange transformations (IETs, see Sections \ref{sec:intro_IETs} and \ref{sec:IETs}) that appear as Poincaré return maps for surface flows. Since renormalization naturally produces results that hold for almost every IET (with respect to Lebesgue measure), most of the above theorems are stated for \emph{almost every} flow. Indeed, Chaika--Wright \cite{chaikawright} established the existence of a mixing flow with symmetric singularities, and a non-mixing flow with asymmetric singularities was recently constructed by Fayad--Kanigowski--Zelada \cite{fkz}. Such almost-everywhere restrictions are intrinsic to the renormalization approach and will also appear in the results proved in this paper.

Quantitative results on the \emph{rate} of mixing have been obtained in both settings of degenerate and non-degenerate saddles. For flows with degenerate saddles, Fayad \cite{Fayad2001} established polynomial decay in $t$ of correlations of indicator functions of boxes for almost every flow in $\T^2$. Fayad--Forni--Kanigowski \cite{FayadForniKanigowski2021} later proved that such flows typically have Lebesgue spectrum with countable multiplicity. Their proof leverages an irregular, non-uniform decay of correlations of smooth observables to establish square-summability of correlations in time, which is then the key input for the spectral conclusion.

In the non-degenerate case, the shearing is much milder, significantly slowing the mixing process. 
In the setting of mixing flows on $\T^2$, an explicit rate follows from the original argument of
Sinai--Khanin \cite{Sinai1992}. For almost every flow, they show that for any rectangles
$A, B$ and every $1/4>\nu>0$, there exists a constant 
$C_{A,B,\nu}>0$ such that, for $t$ large enough,
\be \label{eq:sk_rate}
|\mathrm{Cor}_t(\chi_A, \chi_B)|=|\mu(\phi_t^{\mathcal{M}} A \cap B) - \mu(A)\mu(B)| \leq C_{A,B,\nu}\,(\log t)^{-\nu}.
\ee

Moving beyond the torus, Ravotti \cite{ravotti} studied typical locally Hamiltonian flows with saddle loops producing asymmetric logarithmic singularities on compact surfaces of genus $\geq 1$. For $C^1$ observables compactly supported away from the fixed points on a minimal component $\mathcal{M}'$, he showed correlations decay at least polynomially in $\log t$:
\be\label{eq:round_rav}
|\mathrm{Cor}_t(f, g)|=\left|\int_{\mathcal{M}'} (f\circ\phi_t^{\mathcal{M}} ) g\,\omega- \int_{\mathcal{M}'} f \,\omega\int_{\mathcal{M}'}g\,\omega\right| \leq C_{f,g} (\log t)^{-\sigma},
\ee
for some $1>\sigma>0,$ $C_{f,g}>0$.

Sinai--Khanin and Ravotti follow similar strategies. First, they reduce the problem to the study of suspension flows over interval exchange transformations with asymmetric logarithmic roof singularities (see Sections \ref{sec:intro_IETs}--\ref{sec:susflows} for definitions). Then, the proof relies on a quantitative analysis of the shearing mechanism generated by the asymmetric logarithmic singularities. A sharp control of Birkhoff sums of the derivative of the roof function, combined with some diophantine results for irrational rotations and IETs, respectively, yields decay of correlations for the suspension flow, and hence for the original locally Hamiltonian flow. In this paper, we use similar techniques to prove counterparts to both results: lower bounds on the correlations.

\subsection{Main results}
Let us state the main results of this paper, which establish a lower bound on the decay of correlations. The results are stated for \emph{typical} locally Hamiltonian flows, meaning full measure with respect to Katok's fundamental class (see Section \ref{sec:typical}), and for flows exhibiting asymmetric logarithmic singularities (defined precisely in Section \ref{sec:susflows}).

\begin{Main}
\label{thm:flows}
Let $(\phi_t^{\mathcal{M}})_{t \in \mathbb{R}}$ be a locally Hamiltonian flow and let $\mathcal{M}' \subset \mathcal{M}$ be a minimal component. Then for almost every $(\phi_t^{\mathcal{M}})$ with asymmetric logarithmic singularities on $\mathcal{M}'$ there exist an increasing sequence of times $(t_m)$ and functions  $(f_m),( g_m)\subset C^1(\mathcal{M}')$ such that the following holds. For any $\nu > 0$ and $C > 0$, there exists $M(C,\nu)>0$ such that for any $m\geq M(C,\nu)$,
\[
|\mathrm{Cor}_{t_m}(f_m,g_m)| \geq C\,\|f_m\|_{C^1(\mathcal{M}')}\|g_m\|_{C^1(\mathcal{M}')}\,(\log t_m)^{-2-\nu}.
\]
\end{Main}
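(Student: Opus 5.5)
The plan is to reduce to a suspension flow and exploit the slowness of shearing on a long tower. First I reduce to a special flow $(T^f_t)$ over a typical IET $T$ with a roof $f$ that has asymmetric logarithmic singularities. Since the reparametrization and the return map to a transversal are smooth away from the fixed points, $C^1$ observables supported away from the saddles on $\mathcal{M}'$ correspond to $C^1$ observables on the suspension with comparable $C^1$ norms, and correlations are preserved up to normalization. So it suffices to prove the lower bound for $T^f_t$ with observables $f_m,g_m$ supported in small boxes.

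The mechanism is as follows. Take a renormalization time $n$ and a Rokhlin tower $I_n, TI_n,\dots$ of height $q_n$ given by the Rauzy--Veech induction; for a typical IET we may use a balanced tower with $|I_n| \asymp 1/q_n$. Fix a small base interval $J\subset I_n$ of length $\delta_n$ chosen to avoid a neighbourhood of the singularities: by the diophantine control inherited from the proofs of \eqref{eq:sk_rate} and \eqref{eq:round_rav}, the orbit segments of length $q_n$ starting in $J$ stay at distance $\gtrsim 1/(q_n\log q_n)$, or a similar typical bound, from the singular points. On $J$ the Birkhoff sums then satisfy $|S_{q_n} f'| \lesssim q_n\log q_n$ and $|S_{q_n} f''|\lesssim$ controlled. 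I will also need $S_{q_n}f$ to vary by at most a small amount $\epsilon$ across $J$; since $S_{q_n}f' \sim q_n\log q_n$ this forces $\delta_n \approx \epsilon/(q_n\log q_n)$.

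With this in hand I build the observables and pick the time. Let $g_m=f_m$ be a bump of height $1$ adapted to the box $B=\{(x,s): x\in J,\ s\in[0,\epsilon]\}$. Then $\|f_m\|_{C^1}\lesssim 1/\delta_n + 1/\epsilon$, while $\int f_m \asymp \delta_n\epsilon$. Set $t_m = S_{q_n}f(x_0)$ for some $x_0\in J$; typically $t_m\asymp q_n$ up to $\log$ factors, so $\log t_m\asymp \log q_n$. Since $T^{q_n}J$ nearly overlaps $J$ (the IET returns close by rigidity along the tower, with displacement $\ll\delta_n$ for a suitable subsequence, or else I use the partial-rigidity fraction of $J$), the image $T^f_{t_m}B$ intersects $B$ in measure $\gtrsim \delta_n\epsilon$. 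Hence $\mathrm{Cor}_{t_m}(f_m,g_m)\gtrsim \delta_n\epsilon - (\delta_n\epsilon)^2 \gtrsim \delta_n\epsilon$. Comparing with the norms, and choosing $\epsilon \asymp \delta_n$ (both $\approx (q_n\log q_n)^{-1/2}$ scale, after balancing), the ratio $|\mathrm{Cor}|/(\|f_m\|\|g_m\|)\gtrsim \delta_n^2\epsilon^2\cdot\delta_n^{-2}$... To get a bound of the form $(\log t)^{-2-\nu}$ one must actually let the box be of size $O(1)$ in the flow direction and instead use that the shear over time $t_m$ spreads $J$ only by $\delta_n\, q_n\log q_n$. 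So I would take $\delta_n = c/(q_n \log q_n)$ with the vertical size $\epsilon=c$ and normalize $f_m$ by $\delta_n$ in $x$ through a test function of $x/\delta_n$ multiplied by $\delta_n$. This keeps $\|f_m\|_{C^1}=O(1)$ while $\int f_m g_m \asymp \delta_n^2\cdot$(length). More cleanly, I would rather use $f_m$ depending only on the flow coordinate $s$ localized over a whole tower floor set of measure $\asymp 1$. Then the correlation loss comes only from the shear $\sim \log q_n$ and from the mass near singularities, which is excluded at cost $(\log q_n)^{-1-\nu}$ per observable. This yields the $(\log t)^{-2-\nu}$, and the $\nu$ absorbs the Borel--Cantelli losses from the typical diophantine condition.

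The main obstacle is exactly this bookkeeping: producing a set of measure $\gtrsim(\log q_n)^{-1-\nu}$ of points whose shear over time $t_m$ is $O(1)$ and which return near themselves. Doing so needs a sharp upper bound on $S_{q_n}f'$ off a small neighbourhood of singularities, valid along a full-measure set of IETs (à la Ravotti's Birkhoff sum estimates), together with rigidity of the tower. I would first try to extract this from the upper-bound machinery for \eqref{eq:round_rav} run in reverse, with a Borel--Cantelli argument over $n$ giving the exponent $1+\nu$.
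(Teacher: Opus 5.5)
There is a genuine gap. The construction you actually carry out cannot give a logarithmic rate, and the replacement you gesture at is never turned into an argument.

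The basic obstruction is that $|\mathrm{Cor}_t(f,f)|\le 2\|f\|_\infty^2\,\mu(\mathrm{supp}\,f)$, and that the ratio $|\mathrm{Cor}_t(f,f)|/\|f\|_{C^1}^2$ is invariant under $f\mapsto cf$. Your bump on $J\times[0,\epsilon]$ therefore gives a ratio of order at most $\delta_n^3\epsilon\approx\epsilon^4(q_n\log q_n)^{-3}$. This is polynomially small in $t_m\asymp q_n$, and renormalizing the amplitude by $\delta_n$ changes nothing. The same inequality tells you what any successful family must look like. On the suspension side you need $\|f\|_{C^1}/\|f\|_\infty\lesssim(\log t)^{1+\nu/2}$, so the support must have transversal width $\gtrsim(\log t)^{-1-\nu/2}$. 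In other words, the observable must live at transversal scale $(\log t)^{-1}$, not $t^{-1}$.

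Your sketch ties the time and the observable to the same level $n$. It then controls the oscillation of $S_{q_n}$ of the roof on $J$ by derivative times width, which forces $|J|\lesssim(t\log t)^{-1}$. In addition, at a balanced level the return map typically moves base points by about $|I_n|\asymp 1/q_n\gg\delta_n$. So even the claimed near-return of $J$ needs a special sequence of levels.

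Your fallback, an $O(1)$-norm observable depending only on $s$, would amount to a lower bound for a \emph{fixed} smooth observable. The shear spreads its image over $\asymp\log t$ heights, and nothing in your argument rules out cancellation between the pieces. This is exactly the difficulty the paper overcomes only on $\T^2$, for boxes, via the convexity lemmas of Section~\ref{sec:convexity}. The ``cost $(\log q_n)^{-1-\nu}$ per observable'' and ``Borel--Cantelli losses'' do not correspond to any actual estimate.

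The missing idea is to separate the two scales.
\begin{itemize}
\item \emph{The time.} It comes from a deep level $r_m$ at which a single Zorich tower has base carrying at least $3/4$ of $I^{(r_m)}$ and covers measure $\ge1-\varepsilon$. This is condition \eqref{eq:IET2}, obtained from ergodicity of the natural extension. Then $T^{h_{r_m}}$ moves base points by at most $\lambda^{(r_m)}/4$. This near-rigidity, combined with the floor-by-floor derivative bound, shows that $S_{h_{r_m}}\varphi$ oscillates by only $O(\log h_{r_m})$ over the \emph{whole} tower $E_m$ (Lemma~\ref{lemma:def_M_IET}).
\item \emph{The observable.} It comes from a much shallower level $n$ with balanced height $h_n\asymp\log t_m$. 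Such a level exists up to the exponent $(1+\theta)^2$ thanks to the Roth-type condition \eqref{eq:IET1} (Lemma~\ref{lemma:hm_vs_hn}). Take $g_m$ a cutoff on $A_m=I_n\times[0,\gamma h_n/4]$. It has measure $\gtrsim\gamma/d$, with $\|\partial_x g_m\|_\infty\lesssim h_n\log h_n$ and $\|\partial_s g_m\|_\infty\lesssim h_n^{-1}\log h_n$.
\item \emph{The estimate.} Set $t_m=\sup_{F_{r_m}}S_{h_{r_m}}\varphi+\gamma h_n/4$. Since $C_2\log h_{r_m}\le\gamma h_n/2$, the bulk of $A_m$ is pushed entirely above itself, so $|\mathrm{Cor}_{t_m}(g_m,g_m)|\ge\gamma^2/(32d^2)$. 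The support is made tall enough to \emph{absorb} the $O(\log t)$ shear, rather than thin enough to avoid it. The only price is $\|g_m\|_{C^1}\lesssim(\log t_m)^{1+\nu}$.
\end{itemize}

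Finally, such supports follow orbits through $\asymp\log t_m$ returns and come within $(\log t_m)^{-1-\nu}$ of the singularities. So your claim that $C^1$ norms on $X$ and on $\mathcal{M}'$ are comparable is not automatic. The paper needs the lower bound $\|W\|\gtrsim(\log t)^{-(1+\nu)/2}$ on the support and a bound on the shear coefficient of the variation field (Appendix~\ref{app:norm_estimates}) to get $\|\tilde g_m\|_{C^1(\mathcal{M}')}=O((\log t_m)^{1+\nu})$.
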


Theorem \ref{thm:flows} should be compared with \eqref{eq:round_rav}. The constant $C_{f,g}$ there depends on the specific pair of functions, and since $C^1_c(\mathcal{M}')$ is not a Banach space, there is no natural complete norm on this space, so one cannot invoke a uniform boundedness argument. That is, Ravotti's result does not immediately imply the existence of a uniform $C > 0$ such that
\[
|\mathrm{Cor}_t(f,g)| \leq C\,\|f\|_{C^1(\mathcal{M}')}\|g\|_{C^1(\mathcal{M}')}\,
(\log t)^{-\sigma}
\]
holds for all $f, g \in C^1_c(\mathcal{M}')$. Theorem \ref{thm:flows} addresses exactly this question: it shows that if any such uniform bound holds, the exponent $\sigma$ cannot exceed $2$.

We sharpen the picture with two results in the setting of flows on $\mathbb{T}^2$ with a single non-degenerate saddle point with a homoclinic loop and minimal component $\mathcal{M}'$. These flows are mixing on their minimal component \cite{Sinai1992,kochergin03}. We state that the correlations of box indicators cannot decay faster than $(\log t)^{-1}$ along a sequence of times for a typical flow. Here, a box is a basic set in $\mathcal{M}'$; see Section \ref{sec:flowsT2} for the precise definition.

\begin{Main}
\label{thm:log_decay_flow}
Let $(\phi_t^{\mathcal{M}})_{t \in \mathbb{R}}$ be a locally Hamiltonian flow on $\T^2$ with a single saddle point, with a homoclinic loop and $\mathcal{M}' \subset \T^2$ the minimal component. For almost every such $(\phi_t^{\mathcal{M}})$, the following holds.
For any box $A \subset \mathcal{M}'$, there exists $C > 0$ such that for all $t_0$ there exists $t \geq t_0$ satisfying
\[
|\mathrm{Cor}_t(\chi_{A}, \chi_{A})|
\geq \frac{C}{\log t}.
\]
\end{Main}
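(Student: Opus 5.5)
We reduce Theorem \ref{thm:log_decay_flow} to a statement about a special flow over an irrational rotation, and then exploit the closest returns given by the continued fraction denominators of the rotation.

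\textbf{Reduction.} By the standard reduction used by Sinai--Khanin, the restriction of $(\phi_t^{\mathcal{M}})$ to $\mathcal{M}'$ is measurably (indeed smoothly, away from the saddle) conjugate to a suspension flow $(T^f_t)$ over a rotation $R_\alpha$ of the circle. The roof function $f$ has an asymmetric logarithmic singularity at one point,
\[
f(x)\sim -a\log x \ (x\to0^+), \qquad f(x)\sim -b\log(1-x) \ (x\to1^-), \qquad a\neq b .
\]
Almost every flow corresponds to almost every $\alpha$. A box $A$ is then a set $\{(x,s): x\in I,\ s\in J\}$ in the suspension coordinates, with $I,J$ intervals, and correlations are preserved. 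We will use a full-measure Diophantine condition on $\alpha$. Let $q_n$ be its continued fraction denominators. We require $q_{n+1}\leq C q_n\log^{1+\epsilon} q_n$ along all $n$, and $q_{n+1}\leq C q_n$ along infinitely many $n$. Both hold for a.e.\ $\alpha$ by Khinchin and Borel--Cantelli.

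\textbf{Choice of times and the recurrent part.} For such $n$ we choose a time $t_n$ close to the return time $S_{q_n}f(x)$ of typical points. By the ergodic theorem and the logarithmic singularity this is $q_n\int f+O(\log q_n)$, so $\log t_n\asymp\log q_n$. Since $\|q_n\alpha\|\leq 1/q_{n+1}$, the map $R_\alpha^{q_n}$ moves $I$ by an amount $\ll|I|$. Hence points of $A$ whose $q_n$-th Birkhoff sum satisfies $S_{q_n}f(x)-t_n\in J-J'$ return into $A$.

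The key estimate is on the distribution of $S_{q_n}f(x)-t_n$ over $x\in I$. Using the Sinai--Khanin/Kochergin bound on $S_{q_n}f'$, we get $S_{q_n}f'(x)\approx (a-b)q_n\log q_n$ on most of $I$, away from points within $\epsilon/q_n$ of the singular orbit. So $S_{q_n}f$ is monotone there with slope $\asymp q_n\log q_n$.

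We partition $I$ into the $\asymp q_n$ subintervals between consecutive singular-orbit points. On each subinterval, $S_{q_n}f$ sweeps a range of length $\asymp\log q_n$. The fraction of each subinterval where $S_{q_n}f-t_n\in J$ is therefore about $|J|/\log q_n$ only if that window is hit, and we need it to be hit with a definite correlation. We take the time $t$ to be the value for which the window $J$ is hit. The set of $x$ with $S_{q_n}f(x)\in t+J-s$ then has measure $\gtrsim|I||J|/\log q_n$ more than expected, coming from the near-vertical alignment of the rotated box with itself.

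\textbf{Main obstacle.} The difficulty is comparing this with $\mu(A)^2$ and showing the deviation is of order $1/\log t$ rather than cancelled out. The plan is to compute $\mu(T^f_tA\cap A)$ exactly as an integral over $x\in I$ of $|J\cap(J+t-S_{q_n}f(x))|$, taken over the relevant $q\in\{q_n,q_n\pm1,\ldots\}$ returns. Mixing means this integral converges to $\mu(A)^2$. Since $S_{q_n}f$ behaves like an affine function with slope $L\asymp q_n\log q_n$ on $\asymp q_n$ pieces, each piece contributes an equidistribution error of order $1/(q_n\log q_n)\cdot|J|$ times a boundary term. Summing over pieces gives an error term of size $\asymp 1/\log q_n$.

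We then choose $t$ within a window of length $O(1)$ so that this boundary term has a sign, for example by placing the ends of the sweep range relative to $J$. This uses the asymmetry $a\neq b$ to keep the sweep ranges nearly equal across pieces, so that the boundary terms add coherently. Controlling the pieces near the singularity and the $\log^{1+\epsilon}$ Diophantine losses is the technical heart. Having the bounded-ratio condition $q_{n+1}\leq Cq_n$ infinitely often is what makes the errors $o(1/\log q_n)$. This yields $|\mathrm{Cor}_t(\chi_A,\chi_A)|\geq C/\log t$ along $t=t_n\to\infty$.
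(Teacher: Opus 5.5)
There is a genuine gap, and it sits exactly in the step you defer as the ``technical heart'': making the $\asymp q_n$ pieces add up coherently. You assert that the asymmetry $a\neq b$ keeps the sweep ranges nearly equal across pieces. In fact the asymmetry is what \emph{misaligns} them.

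Take a continuity interval $I_j=(u_j,v_j)$ of $S_{q_n}f$. There the function $S_{q_n}f+a\log(q_n(x-u_j))+b\log(q_n(v_j-x))$ has slope $(b-a)q_n\log q_n+O(q_n)$, and it jumps by $(a-b)\log q_n+O(1)$ at every discontinuity. So the minima of $S_{q_n}f$ on neighbouring pieces differ by $(b-a)\log q_n\,\bigl(q_n|I_j|-1\bigr)+O(1)$. By the three-gap theorem, $q_n|I_j|-1\in\{-q_{n-1}\|q_n\alpha\|,\,(q_n-q_{n-1})\|q_n\alpha\|\}$.

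Equivalently, $S_{q_n}f(y-l\alpha)-S_{q_n}f(y)=S_lf(z)-S_lf(z+q_n\alpha)$ with $z=y-l\alpha$. This is essentially $\|q_n\alpha\|\,S_lf'$, a drift growing like $|a-b|\,\|q_n\alpha\|\,l\log q_n$. Thus the minima are spread over a range of order $(q_n/q_{n+1})\log q_n$.

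Along your subsequence $q_{n+1}\le Cq_n$ this range is of order $\log q_n$, as large as the sweep range itself. No choice of $t$ in an $O(1)$ window can then place the ends coherently relative to $J$. Only a proportion of order $1/\log q_n$ of the pieces have their extremal value within $O(1)$ of $J$. So even with favourable signs the boundary terms total $O(1/\log^2 q_n)$, and on the nearly affine parts the convexity asymmetry is negligible.

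The condition you need is the opposite one, the paper's \eqref{H}: $q_{r_n+1}\ge q_{r_n}\log q_{r_n}\log\log q_{r_n}$ infinitely often, which holds for almost every $\alpha$ by Khinchin. It makes $\|q_{r_n}\alpha\|$ much smaller than $(q_{r_n}\log q_{r_n})^{-1}$, the distance from the zeros $\Gamma_{r_n}$ of $S_{q_{r_n}}\varphi'$ to the singular orbit. Lemma~\ref{lemma:aligned} then shows that all the minima agree up to $O(1/\log\log q_{r_n})$.

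The second missing idea is the source and the sign of the $1/\log$ gain. Your proposal asserts this gain (``measure $\gtrsim|I||J|/\log q_n$ more than expected'') rather than deriving it. In the paper the gain comes from the strongly convex neighbourhood of each minimum, where $S_{q_n}\varphi''\gtrsim(q_n\log q_n)^2$ while $|S_{q_n}\varphi'|\lesssim q_n\log q_n$ (Lemma~\ref{lemma:lower_bound_d}). A concave image cap landing inside $A$ would contribute with the opposite sign. Using the alignment, the paper chooses $t_n$ so that every cap lands just above $A$ (Lemma~\ref{lemma:rk3}). Then every monotone crossing contributes non-negatively by convexity (Lemma~\ref{lemma:convex3}). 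Each of the $\asymp q_n$ near-minimum branches gains $\gtrsim\eps^4/(q_n\log q_n)$ (Lemmas~\ref{lemma:convex1}--\ref{lemma:convex2}).

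Finally, the paper never compares with $\mu(A)^2$. It bounds $\mu(\phi_{t_n}A\cap\phi_\eps A)-\mu(\phi_{t_n}A\cap A)$ from below. Since $\mu(\phi_\eps A)=\mu(A)$, one of $\mathrm{Cor}_{t_n}$ and $\mathrm{Cor}_{t_n-\eps}$ must be $\gtrsim 1/\log t_n$. This sidesteps the exact $o(1/\log q_n)$ equidistribution computation over all returns, which your plan needs but does not supply.
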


Theorem \ref{thm:log_decay_flow} should be compared with \eqref{eq:sk_rate}. Sinai--Khanin, showed that correlations of rectangle indicators on flows with a single homoclinic saddle typically decay at rate $(\log t)^{-\nu}$ for any $\nu < 1/4$; here we show that, along a subsequence of times, the correlations for a full measure set of such flows cannot decay faster than $(\log t)^{-1}$. Together, these two results confirm that the decay of correlations in this setting is typically polynomial in $\log t$.

It is worth noting that, while the local shearing generated near a non-degenerate saddle point is of logarithmic order, a global lower bound of order $(\log t)^{-1}$ is not an automatic consequence of this local feature. On a global scale, as the surface flow transports a box, it repeatedly passes near the singularities and splits into many components. A priori, these pieces could experience mutually compensating asymmetries in their distribution that cancel out rather than accumulating into a macroscopic correlation. The primary difficulty and core technical contribution of our proof lies in showing that, along a curated sequence of times, this local phenomenon persists globally.

The last result is motivated by connections to spectral theory: recall that in \cite{FayadForniKanigowski2021} Fayad--Forni--Kanigowski prove countable Lebesgue spectrum for certain flows on the torus using 
square-integrability (or square-summability) of correlations. The following theorem shows that this square-integrability fails for a class of flows in $ \T^2$, providing an obstruction to applying those spectral methods directly. In contrast to Theorem \ref{thm:log_decay_flow}, the set of flows for which this holds has measure zero.

\begin{Main} \label{thm:C}
There exists a class of locally Hamiltonian flows $(\phi_t^{\mathcal{M}})_{t \in \mathbb{R}}$ on $\T^2$ such that there exists a box $A \subset \mathcal{M}'$ with
\[
\int_0^{\infty} |\mathrm{Cor}_t(\chi_{ A}, \chi_{ A})|^2\, dt = \infty.
\]
\end{Main}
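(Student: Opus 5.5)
We plan to reduce Theorem \ref{thm:C} to a statement about a special flow over an irrational rotation $R_\alpha$ of the circle, with a roof function $\varphi$ that has an asymmetric logarithmic singularity at $0$, namely $\varphi(x)\approx -A\log x - B\log(1-x)$ with $A\neq B$, plus a smooth part. This is the standard model for flows on $\T^2$ with one homoclinic saddle loop, as in Sinai--Khanin. A box $A$ in $\mathcal{M}'$ corresponds to a set $I\times[a,b]$ under the graph of $\varphi$. Because the correspondence preserves measure, correlations of $\chi_A$ for the surface flow coincide with correlations of $\chi_{I\times[a,b]}$ for the special flow. The class of flows will be cut out by a condition on $\alpha$ only: we take $\alpha$ Liouville-type, with continued fraction denominators $q_n$ satisfying $q_{n+1}\geq e^{e^{q_n}}$, or any growth fast enough for the estimates below. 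The smooth part of the Hamiltonian is left fixed. This class has measure zero, which is consistent with the remark preceding the theorem.

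The key step is to refine the mechanism of Theorem \ref{thm:log_decay_flow}. That argument produces, for a typical $\alpha$, isolated times $t$ with $|\mathrm{Cor}_t|\geq C/\log t$. Here we need such a lower bound on whole time intervals $J_n$ with $\sum_n |J_n|(\log t_n)^{-2}=\infty$.

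For Liouville $\alpha$, the iterate $R_\alpha^{q_n}$ is extremely close to the identity, with $\|q_n\alpha\|<1/q_{n+1}$. Take any $N$ with $q_n\ll N\ll q_{n+1}$. Then the Birkhoff sum $S_N\varphi$ and its derivative $S_N\varphi'$ are controlled on most of the circle by the decomposition of $N$ into multiples of $q_n$. Away from the $q_n$ points closest to the singularity, the derivative satisfies $S_N\varphi'(x)\approx (A-B)\,N\log q_n$, up to a bounded multiple of $N$. So on time scales $t\in J_n:=[q_n^{K},\,q_{n+1}^{1/2}]$, the image of a horizontal interval under the special flow is sheared with a slope that is essentially constant along most of the circle. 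The slope has a definite sign, set by $A-B$. It is far from equidistributed at the finer scales one would need, because the rotation is nearly periodic.

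The plan is then to show that for $t\in J_n$ the set $\phi_t(I\times[a,b])$ consists mostly of nearly straight strips winding $O(\log t)$ times around the vertical direction. A direct computation of its intersection with $I\times[a,b]$ should give a deviation from $\mu(A)^2$ of order at least $c/\log t$. This uses the same geometric counting as in the proof of Theorem \ref{thm:log_decay_flow}. The Liouville property lets us drop the diophantine subsequence selection and hold the estimate uniformly over $J_n$, which is the main gain.

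Once the pointwise bound $|\mathrm{Cor}_t|\geq c/\log t$ holds on $J_n$, integration gives
\[
\int_{J_n}|\mathrm{Cor}_t|^2\,dt\geq c^2\,\frac{|J_n|}{(\log q_{n+1})^2}\geq c'\,\frac{q_{n+1}^{1/2}}{(\log q_{n+1})^2},
\]
and this tends to infinity. Summing over $n$, with the intervals $J_n$ disjoint thanks to the fast growth of $q_n$, gives the divergence claimed in Theorem \ref{thm:C}.

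We expect the main obstacle to be the uniformity of the lower bound over the whole interval $J_n$, rather than along a single good sequence. Two things could go wrong. First, the strips could realign at some times in $J_n$ and cancel the asymmetry. Second, the near-singularity orbit pieces, of measure about $1/q_n$ per block, could contribute a comparable error. We would first try to absorb the near-singularity pieces into an exceptional set of measure $O(\log q_n/q_n)=o(1/\log t)$. We would then show that the main strips wrap with a monotone slope, and this monotonicity forbids cancellation. If the bound fails at some times, a fallback is enough: the bound only needs to hold on a positive proportion of $J_n$, and we would obtain this by averaging the correlation over $t$.
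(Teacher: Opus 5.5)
Your choice of $\alpha$ is fine: $q_{n+1}\geq e^{e^{q_n}}$ satisfies the paper's condition \eqref{h}. The gap is the central claim that $|\mathrm{Cor}_t|\geq c/\log t$ holds \emph{uniformly} for $t\in J_n=[q_n^K,q_{n+1}^{1/2}]$. The mechanism you describe does not give this, and heuristically it is false.

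\textbf{Why the uniform bound fails.} For $t\approx kq_n$ with $q_n\ll t\ll q_{n+1}$ one has $S_{kq_n}(\varphi)\approx kS_{q_n}(\varphi)$. In the bulk of each continuity interval the slope is of order $kq_n\log q_n$. So the image of a horizontal segment consists of order $t\log q_n$ nearly vertical strips, not strips winding $O(\log t)$ times. A nearly linear shear with large slope \emph{equidistributes} the image in the fibre direction, so it makes correlations small; it cannot keep them of size $1/\log t$. Monotonicity of the slope controls at most the sign of the asymmetry, not its size.

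The only order-one asymmetry comes from curvature. It is concentrated in the fold regions near the minima of $S_{kq_n}(\varphi)$, where $S_{kq_n}(\varphi'')\asymp kq_n^2\log^2 q_n$. These regions have width about $(\sqrt{k}\,q_n\log q_n)^{-1}$ in each of the $q_n$ intervals, so total measure about $(\sqrt{k}\log q_n)^{-1}$. This is exactly the $\sqrt{k}$ loss in Lemma \ref{lemma:endpoints_k}.

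So the ``same geometric counting as in Theorem \ref{thm:log_decay_flow}'' gives at best a contribution of order $(\sqrt{t/q_n}\,\log q_n)^{-1}$. That is far below $1/\log t$ once $t\geq q_n^2$. Your bound $\int_{J_n}|\mathrm{Cor}_t|^2\,dt\gtrsim q_{n+1}^{1/2}/(\log q_{n+1})^2$ therefore has no basis.

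The fallback of averaging $\mathrm{Cor}_t$ over $t$ also cannot work. By ergodicity, signed time averages of correlations tend to $0$. Averaging produces cancellation, not a lower bound on a positive proportion of $J_n$.

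\textbf{What the paper does instead.} It accepts the $\sqrt{k}$ loss and compensates by using many short windows.
\begin{enumerate}[label=(\roman*)]
\item For each $n$ and $1\leq k\leq e^{\log^2 q_n}$ it sets $t_{n,k}=\max_{\Gamma_{n,k}}S_{kq_n}(\varphi)+2\varepsilon$.
\item Condition \eqref{h} keeps $R_\alpha^{kq_n}$ within $e^{-2\log^2 q_n}$ of the identity at the relevant scale. As a result the minima of $S_{kq_n}(\varphi)$ on the $q_n$ long continuity intervals agree up to $o(1)$ (Lemma \ref{lemma:aligned_k}). All folds then sit simultaneously at the top of the box $\T\times[0,\varepsilon]$.
\item On $H_{n,k}$ the slope is at most $\sqrt{k}\,q_n\log q_n$ and the curvature is of order $kq_n^2\log^2 q_n$. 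Lemmas \ref{lemma:convex1}--\ref{lemma:convex2} then give a gain of $\varepsilon^4/(\sqrt{k}\log q_n)$.
\item Lemma \ref{lemma:convex3} shows the remaining region contributes with the favourable sign.
\item Hence, for each $t\in[0,\varepsilon]$, $|\mathrm{Cor}_{s}(\chi_A,\chi_A)|\gtrsim\varepsilon^4/(\sqrt{k}\log q_n)$ for $s=t_{n,k}+t$ or $s=t_{n,k}+t-\varepsilon$.
\item Each window of length $2\varepsilon$ around $t_{n,k}$ therefore contributes $\gtrsim\varepsilon^9/(k\log^2 q_n)$ to the integral.
\item The harmonic sum over $k\leq e^{\log^2 q_n}$ contributes $\gtrsim\varepsilon^9$ for each $n$, and summing over $n$ diverges.
\end{enumerate}
The divergence thus comes from the number of windows at each scale, a harmonic sum in $k$. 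It does not come from long time intervals on which correlations stay of size $1/\log t$.
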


To prove these results, we reduce them to statements for suspension flows over interval exchange transformations. We introduce these objects in Section \ref{sec:mainresults}

\subsection{Organization of the paper}
The remainder of this paper is organized as follows. 

In Section \ref{sec:mainresults}, we establish our framework. Subsections \ref{sec:intro_IETs} and \ref{sec:susflows} provide the definitions of interval exchange transformations and suspension flows, followed by a formulation of their respective measure spaces and typicality parameters in Subsection \ref{sec:typical}. With the background established, Subsections \ref{sec:thm21_statement} and \ref{sec:flowsT2} state the counterparts of our main results on the suspension space: Theorems \ref{thm:IETs}, \ref{thm:log_decay}, and \ref{thm:sum_cor}.

Because Theorems \ref{thm:log_decay_flow} and \ref{thm:C} only involve box indicator functions, their translation to the suspension space statements (Theorems \ref{thm:log_decay} and \ref{thm:sum_cor}) is immediate. In contrast, Theorem \ref{thm:flows} involves $C^1$ observables whose norms are distorted near the saddle points. Section \ref{sec:reduction} carries out the geometric analysis required to bound this distortion and establish the reduction of Theorem \ref{thm:flows} to Theorem \ref{thm:IETs}.

The proof of Theorem \ref{thm:IETs} is subsequently divided into three sections. Section \ref{sec:IETs} introduces two crucial quantitative conditions \eqref{eq:IET1}--\eqref{eq:IET2} on IETs, shown to hold for a full measure set. In Section \ref{sec:testfunctions}, under a set of generic hypotheses \eqref{eq:conditions_for_g}, we construct our family of test functions. Section \ref{sec:proofmain} completes the proof of Theorem \ref{thm:IETs} by using properties \eqref{eq:IET1}--\eqref{eq:IET2} to select a sequence of times satisfying \eqref{eq:conditions_for_g} and showing that the corresponding test functions produce a large correlation gap. 

Finally, Section \ref{sec:convexity} contains convexity lemmas quantifying the asymmetric distribution of the flow image of a thin horizontal strip on the suspension space, which are the main geometric input for the proofs of Theorems \ref{thm:log_decay} and \ref{thm:sum_cor} in Sections \ref{sec:logdecay} and \ref{sec:last}, respectively.

\section{Preliminaries and reduction to suspension flows} \label{sec:mainresults}
\subsection{Interval exchange transformations} \label{sec:intro_IETs}
Interval exchange transformations (IETs) form a central class of dynamical systems. They generalize circle rotations and provide fundamental examples of low-complexity, measure-preserving systems. Some important properties of IETs are presented in Section \ref{sec:IETs}; here we recall their definition.

A \emph{right-continuous interval exchange transformation} (IET) on $d$ intervals on $[a,b)$ is determined by a length vector
$\underline\lambda = (\lambda_1,\dots,\lambda_d) \in \mathbb{R}^d_+,$ $\sum_{i=1}^d \lambda_i = b-a,
$
and an irreducible permutation $\pi \in S_d$. The interval $[a,b)$ is partitioned into subintervals
\[
I_i = \Bigl[a+\sum_{k<i}\lambda_k,a+ \sum_{k\leq i}\lambda_k\Bigr), \qquad i=1,\dots,d,
\]
and the transformation $T$ acts by translation on each subinterval. In particular, there exist
$
a = y_0 < y_1 < \cdots < y_{d-1} < y_d = b
$
and translation vectors \[\sigma_1,\ldots,\sigma_d \in \R, \qquad \sigma_i=\sum_{\pi(k)<\pi(i)} \lambda_k - \sum_{k<i} \lambda_k\] such that $Tx= x + \sigma_i$ for $x \in I_i=[y_{i-1}, y_i)$. The IET $T$ preserves Lebesgue measure and has discontinuities at $D=\{y_1,\ldots,y_{d-1}\}$.
For simplicity, we will assume $a=0$ and $b=1$.

\subsection{Suspension flows} \label{sec:susflows}
Let $T$ be a measure-preserving endomorphism on $\mathbb{T}$, which we identify with $[0,1)$. We consider the special flow $(\phi_t)_{t\in\mathbb{R}}$ over $T$ built under the roof function $\varphi:[0,1]\to\R^{+}\cup\{\infty\}$, satisfying
\begin{equation}
\gamma := \inf_{x \in \mathbb{T}} \varphi(x) > 0,\qquad
\int_{\mathbb{T}}\varphi(x)\,dx=1.
\label{eq:def_gamma}
\end{equation}
Note that $\gamma\leq1$.
The flow is defined on the space
\[
X = \left\{(x,s)\in\T\times\mathbb{R} : 
0\leq s<\varphi(x)\right\}
\]
with the identification $(x,\varphi(x))\sim (Tx,0)$ at the points where $\varphi(x)<\infty$, and acts by vertical translation
\[
\phi_t(x,s)=(x,s+t)
\]
followed by the canonical identifications whenever the second coordinate exits the interval $[0,\varphi(x))$. Given a function $h:\mathbb{T}\to\mathbb{R}$, we write its Birkhoff sums
\[
S_l(h)(x):=\sum_{k=0}^{l-1} h(T^k x),
\]
and for $t\geq 0$ define $
N(x,t):=\max\{k\geq 0 :t-S_k(\varphi)(x)\geq 0\}.$
Clearly $ S_l(\varphi)(x) \geq \gamma l$, for any $  x \in \T$ and $ l\geq 1$, so that $N(x,t)  \leq t/\gamma$. Then, the flow $\phi_t(x,s)$ can be defined explicitly as
\[
\phi_t(x,s)=\left( T^{N(x,t)}x, t - S_{N(x,t)}(\varphi)(x) \right).
\]

It is a classical result (see \cite{KanigowskiOkunevZelada2026} and the references therein) that any locally Hamiltonian flow with finite, non-degenerate singularities, when restricted to a minimal component $\mathcal{M}'$, is measure-theoretically isomorphic to a special flow $ (\phi_t^{\mathcal{M}}\vert_{\mathcal{M}'},\mathcal{M}')\simeq(\phi_t,X)$ over a minimal IET $T:[0,1)\to [0,1)$, under a roof function that has logarithmic singularities at the (finitely many) discontinuity points of $T$ and at 0, and is smooth elsewhere. We denote the isomorphism by $\Phi:X\to \mathcal{M}'$.

In particular, $\Sigma:=\Phi([0,1]\times\{0\})\subset \mathcal{M}'$ is a smooth closed curve transversal to the flow $(\phi_t^{\mathcal{M}})$, smoothly parametrized by $\Phi(x,0)$, and $\varphi(x)$ is the return time of $(\phi_t^{\mathcal{M}})$ to $\Sigma$ under this parametrization, so that $    \phi_{\varphi(x)}^{\mathcal{M}}\Phi(x,0)=\Phi(Tx,0).$

Letting $D\subset[0,1)$ be the set of discontinuity points of $T$, we can write $\varphi$ as
\begin{equation}\begin{aligned}
\varphi(x) &= \psi(x) + c_0^+\,|\log x| 
+ c_0^-\,|\log(1-x)\,|\\&+\sum_{y\in D}c_y^+\,|\log \,(x-y)\,|\,\chi_{\{x>y\}}+\sum_{y\in D}c_y^-\,|\log \,(y-x)\,|\,\chi_{\{x<y\}},
\end{aligned}
\label{eq:def_phi}
\end{equation}
where $\psi \in L^\infty(\mathbb{T})$ has bounded variation and is smooth on $\T\setminus D$, and $c_0^\pm$ and $\{c_y^\pm\}_{y\in D}$ are {positive} constants. 
In particular, there exist real coefficients $\{a_y^{\pm},b_y^{\pm}\}_{y\in D\cup\{0\}}$ such that $\psi(x)=\psi_0(x)+\psi_1(x)+\psi_2(x)$ with
\be \label{eq:def_psi0}
\begin{aligned}
\psi_0(x) &=b_0^+\,x\,|\log x| 
+ b_0^-\,(1-x)\,|\log(1-x)\,|\\&+\sum_{y\in D}b_y^+\,(x-y)\,|\log \,(x-y)\,|\,\chi_{\{x>y\}}+\sum_{y\in D}b_y^-\,(y-x)\,|\log \,(y-x)\,|\,\chi_{\{x<y\}},
\end{aligned}
\ee
\be \label{eq:def_psi1}
\begin{aligned}
\psi_1(x) &=a_0^+\,x^2|\log x| 
+ a_0^-\,(1-x)^2|\log(1-x)\,|\\&+\sum_{y\in D}a_y^+\,(x-y)^2|\log \,(x-y)\,|\,\chi_{\{x>y\}}+\sum_{y\in D}a_y^-\,(y-x)^2|\log \,(y-x)\,|\,\chi_{\{x<y\}},
\end{aligned}
\ee
and the weak derivatives 
\be \label{eq:def_psi2}
\psi_1',\psi_2',\psi_2''\in L^\infty(\T).
\ee

We say that the roof $\varphi $ is \textit{symmetric} if $c_0^++\sum_{y\in D}c_y^+=c_0^-+\sum_{y\in D}c_y^-$. Otherwise, it is \textit{asymmetric}, and we say that $(\phi_t^{\mathcal{M}})$ has asymmetric logarithmic singularities on $\mathcal{M}'$.

Denote by $\mu$ the Lebesgue measure  on $X$, and for $f,g \in L^2(X)$ and $t \in \R$ write
\[
\mu(f) := \int_X f\, d\mu, \qquad
\langle f \circ \phi_t, g \rangle := \int_X (f \circ \phi_t)\, g\, d\mu,
\]
and define the correlation at time $t$ by
\[
\mathrm{Cor}_t(f,g) := \langle f \circ \phi_t, g \rangle - \mu(f)\mu(g).
\]
Since $\Phi: X \to \mathcal{M}'$ is a measure-preserving isomorphism, this definition is consistent with \eqref{eq:cor_def}. That is,
for $f,g\in L^2(X)$, define $\tilde{f} = f \circ \Phi^{-1}$ and $\tilde{g} = g \circ \Phi^{-1}$ on $\mathcal{M}'$. 
Then 
\[
|\mathrm{Cor}_t(f,g) |= \left|\langle f \circ \phi_t, g \rangle - \mu(f)\mu(g)\right|= |\int_{\mathcal{M}'} (\tilde f\circ\phi_t^{\mathcal{M}} )\tilde g\,\omega- \int_{\mathcal{M}'} \tilde f \,\omega\int_{\mathcal{M}'} \tilde g\,\omega|=|\mathrm{Cor}_t(\tilde f, \tilde g) |.
\]

\subsection{Measures and typicality} \label{sec:typical}
In order to prove Theorems \ref{thm:flows}, \ref{thm:log_decay_flow}, and \ref{thm:C}, we will show related results stated in the suspension flow representation in Section \ref{sec:susflows}.

The results are stated for \emph{typical} locally Hamiltonian flows, typical IETs, and typical rotation parameters $\alpha$. For IETs, typicality refers to a full measure set of length vectors $\underline\lambda$ for each irreducible permutation. IETs and the relevant measure class are recalled in Section \ref{sec:IETs}. Rotations are a particular case of IETs, for which we consider Lebesgue measure in (0,1). For locally Hamiltonian flows, full measure is with respect to Katok's fundamental class, a natural measure class on the space of smooth closed $1$-forms; we refer the reader to \cite{Katok1973, ravotti} for its definition and properties, and note only that it corresponds to a full measure set of IETs (respectively rotations) under the reduction described in Section \ref{sec:susflows}.

\subsection{Suspension flow equivalent of Theorem \ref{thm:flows}}\label{sec:thm21_statement}
We first introduce some notation before stating a result in the setting of the suspension flow representation that will imply Theorem \ref{thm:flows}. 

Since $X$ is defined as a quotient of $\mathbb{T}\times\mathbb{R}$, it does not carry a natural manifold structure globally: the identification $(x,\varphi(x))\sim(Tx,0)$ creates a gluing boundary along which the functions are not smooth. This is why we work with functions defined on rectangles in $\mathbb{T}\times\mathbb{R}$ before passing to the quotient.

For a closed set $A$, we define $C^1_c(A) :=C^1_c(A^\circ)$, i.e. $C^1$ functions with compact support contained in the interior of $A$. Given a rectangle $A=[a,b]\times[0,t_0]\subset\mathbb{T}\times\mathbb{R}$ such that the quotient map $q:\mathbb{T}\times\mathbb{R}\to X$ restricts to an injection on $A$, any $f,g\in C^1_c(A)$ descend to a well-defined function on $X$ via $q$, extended by zero outside $q(A)$. 
For $f=f(x,s)$, we set
\[
\|f\|_{C^1(A)} := \|f\|_\infty 
+ \|\partial_x f\|_\infty + \|\partial_s f\|_\infty,
\]
with all norms taken on $A$, and define
\[
\langle f,g\rangle:=\langle f\circ q^{-1},\, g\circ q^{-1}\rangle,
\qquad\mu(f):=\mu(f\circ q^{-1}), \]\[\mathrm{Cor}_t(f,g) := \mathrm{Cor}_t(f\circ q^{-1},\, g\circ q^{-1}).
\]
Throughout, whenever $f\in C^1_c(A)$ appears in a correlation, it is implicit that $A$ satisfies the above injectivity condition and that the $C^1$ norm is computed on $A$.

We can now state Theorem \ref{thm:IETs}.

\begin{theorem}
\label{thm:IETs}
For a typical IET $T$, consider the suspension flow $(\phi_t)$ over $T$ and under a roof function $\varphi$ as in \eqref{eq:def_phi}, with singularity set $D_0 \subset \mathbb{T}$. There exist $C(\varphi,T)>0$ and an increasing sequence $(t_m)$ such that for every $m\geq 1$ there exist $s_m>0$, $0 < a_m< b_m< 1$, and a function $g_m \in C_c^1(A_m)$, where $A_m:= [a_m,b_m] \times [0,s_m]$ embeds injectively into $X$ via $q$, such that
\begin{equation}
\label{eq:IET_cor_lower}
|\mathrm{Cor}_{t_m}(g_m,g_m)|>C.
\end{equation}
Moreover, for any $\nu > 0$ there exist $M(\varphi,T,\nu)\geq 1$ such that for $m\geq M$
\begin{equation}
\label{eq:IET_dbounds}
\norm{g_m}_{\infty}=1,\qquad\norm{\partial_xg_m}_{\infty}\leq(\log t_m)^{1+\nu},\qquad\norm{\partial_sg_m}_{\infty}\leq(\log t_m)^{-1+\nu},
\end{equation}
and for every $x\in [a_m,b_m]$,
\begin{equation}
\label{eq:IET_dist}
\min_{k \in \{0, \cdots,  N(x,s_m)\}}\bigl\{\mathrm{dist}(T^kx,\,D_0)\bigr\} \geq (\log t_m)^{-1-\nu}, \quad N(x,s_m)\leq (\log t_m)^{1+\nu}.
\end{equation}
\end{theorem}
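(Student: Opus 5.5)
The strategy is to build $g_m$ as a thin, nearly horizontal ``strip'' bump whose image under $\phi_{t_m}$ is sheared asymmetrically by the logarithmic singularities. The point is that $g_m\circ\phi_{t_m}$ still overlaps $g_m$ on a set of measure comparable to $\mu(g_m)$, so the correlation stays bounded below. Since $\|g_m\|_\infty=1$ is allowed, while $\partial_x g_m$ can be as large as $(\log t_m)^{1+\nu}$, we can afford supports of width roughly $(\log t_m)^{-1-\nu}$. However, \eqref{eq:IET_cor_lower} asks for a lower bound $C$ that is independent of the support size, so $g_m$ cannot be a small bump. Instead, $g_m$ should be a function of essentially full mass, built from many pieces, or a single bump on a base interval $[a_m,b_m]$ of length of order one, but oscillating in $x$ at scale $(\log t_m)^{-1-\nu}$. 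I would take the latter: $g_m(x,s)=h_m(x)\rho(s/s_m)$, where $h_m$ oscillates with period $\ell_m\approx(\log t_m)^{-1}$ and $s_m\approx(\log t_m)^{1-\nu}$. This matches $\|\partial_s g_m\|\le(\log t_m)^{-1+\nu}$ and, because $\varphi\ge\gamma$, it also gives $N(x,s_m)\le s_m/\gamma$.

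Step 1 is to choose the times via renormalization, using the full-measure conditions on typical IETs, namely the Rauzy--Veech/Kerckhoff-type balanced return times $q_m$ with good Diophantine control. I take $t_m$ to be the time at which the Birkhoff sums $S_{N}(\varphi')$ along orbits of length $N\sim q_m$ produce a shear $S_N(\varphi')(x)\approx c\,q_m\log q_m$, where $c\ne0$ is the asymmetry constant $\sum c^+-\sum c^-$. By the asymmetric Birkhoff sum estimates (Sinai--Khanin/Ulcigrai/Ravotti-type), this holds uniformly for $x$ outside a set whose orbit up to time $q_m$ stays $(\log t_m)^{-1-\nu}$-away from $D_0$; one has $t_m\approx q_m$. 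Restricting $[a_m,b_m]$ to a continuity interval of $T^{N(\cdot,s_m)}$ of length $\gtrsim 1/q_{m'}$ at a suitable smaller renormalization scale, and using the typical condition that $D_0$-orbits avoid it, gives \eqref{eq:IET_dist}. The exponents $1+\nu$ absorb the subpolynomial deviations in the typical Diophantine conditions.

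Step 2 is to compute the correlation. On the strip, $\phi_{t_m}$ acts on a horizontal segment $[a_m,b_m]\times\{s\}$ approximately as a shear: the height displacement has $x$-derivative $-S_N(\varphi')\approx -c\,t_m\log t_m$. So the image is a curve winding through the fibres with controlled slope, and the winding frequency is tuned by choosing $t_m$ within a renormalization window. I then choose the oscillation frequency of $h_m$ resonant with the return structure: $T^{N}$ restricted to the relevant tower approximately translates by a rigidity displacement that is a multiple of $\ell_m$. With this choice $h_m\circ T^N\approx h_m$ on most of the base, and the vertical shift lands the strip back onto itself for a definite proportion of points, giving $\langle g_m\circ\phi_{t_m},g_m\rangle-\mu(g_m)^2\ge C$. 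Note that $\mu(g_m)$ is small, while $\langle g_m,g_m\rangle$ is of order $s_m\cdot$ const, after renormalising so that this is $\Theta(1)$.

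The main obstacle is Step 2: making the positive overlap dominate uniformly despite the pieces of the image being scattered by the discontinuities, as warned in the introduction. I would first try to enforce that, at time $t_m$, the set of $x$ whose orbit segment meets a neighbourhood of $D_0$ has small measure, and that on the remaining set the shear $S_N(\varphi')$ varies by at most $o(t_m)$. Both would follow from the typical conditions via a Borel--Cantelli selection of the subsequence $m$. The error from the vertical mismatch is then bounded by $\|\partial_s g_m\|\cdot o(1)\cdot s_m$, and the error from the horizontal mismatch by $\|\partial_x g_m\|$ times the rigidity error, which must be $\ll(\log t_m)^{-1-\nu}$. That rigidity error estimate is exactly where the typicality conditions must be sharp.
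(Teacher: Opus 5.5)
Your construction is incompatible with the constraints in the statement, and it relies on the wrong mechanism.

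\textbf{Size constraint.} Because $\phi_t$ preserves $\mu$, you have $\mathrm{Cor}_t(g_m,g_m)=\langle (g_m-\mu(g_m))\circ\phi_t,\,g_m-\mu(g_m)\rangle$. With $\|g_m\|_\infty=1$ and $q$ injective on $A_m$ this gives
$|\mathrm{Cor}_t(g_m,g_m)|\le\|g_m\|_2^2\le\mu(A_m)=(b_m-a_m)s_m\le\mu(X)=1$. Three consequences follow.
\begin{enumerate}
\item A base of length of order one with $s_m\approx(\log t_m)^{1-\nu}\to\infty$ cannot embed.
\item $\langle g_m,g_m\rangle$ can never be of order $s_m$. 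There is nothing to renormalise, since $\|g_m\|_\infty=1$ is prescribed.
\item What you actually need is $(b_m-a_m)s_m\gtrsim 1$ with $b_m-a_m\le 1/s_m\to 0$.
\end{enumerate}
Your Step 1 then shrinks $[a_m,b_m]$ to a continuity interval of length $\sim 1/q_{m'}$. This contradicts your own set-up, and you never check that the resulting area $s_m/q_{m'}$ stays bounded below.

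\textbf{Mechanism.} The more serious gap is that the mechanism is inverted. A shear of slope $S_N(\varphi')\approx c\,t_m\log t_m$ is exactly what drives decay of correlations (Ulcigrai, Ravotti).
\begin{itemize}
\item On each continuity interval of $T^N$, of length $\asymp 1/t_m$, this shear spreads the image over a height range $\asymp\log t_m\gg s_m$. You give no argument for why a definite proportion nevertheless lands back on the strip, nor for how a ``resonant'' frequency of $h_m$ could compensate.
\item Taking $\mu(g_m)\approx 0$ makes this harder, not easier. You would then need $g_m\circ\phi_{t_m}\approx g_m$, a genuine rigidity statement at horizontal scale $\ll\ell_m$.
\item The statement also covers symmetric roofs, where $c=0$ and your argument gives nothing.
\end{itemize}

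\textbf{Missing idea.} You need to work at rigidity times where the shear is not yet felt at the scale of the support, and make the image \emph{miss} a nonnegative bump.
\begin{itemize}
\item By \eqref{eq:IET2} there are steps $r_m$ at which one Rokhlin tower has measure $\ge 1-\varepsilon$ and base $\ge\frac34\lambda^{(r_m)}$.
\item At these steps $T^{h_{r_m}}$ moves bulk points by $\lesssim 1/q_{r_m}$, and $S_{h_{r_m}}(\varphi)$ oscillates by only $O(\log h_{r_m})$ on the whole bulk.
\item Take a plain bump on $I_n\times[0,\gamma h_n/4]$, where $I_n$ is the trimmed base of a balanced tower with $C\log h_{r_m}\le h_n\lesssim(\log h_{r_m})^{(1+\theta)^2}$. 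Here $C$ is large compared with the oscillation constant, and the upper bound comes from the Roth-type condition \eqref{eq:IET1}. The support then has area $\gtrsim\gamma/(4d)$ and height exceeding the oscillation.
\item Set $t_m=\sup S_{h_{r_m}}(\varphi)+\gamma h_n/4$, with the supremum over the trimmed base of the big tower. Then most of $A_m$ is carried into its own column, strictly above $A_m$ and below the first return to $I_n$.
\end{itemize}
Hence $\langle g_m\circ\phi_{t_m},g_m\rangle\approx 0$ and $|\mathrm{Cor}_{t_m}(g_m,g_m)|\approx\mu(g_m)^2\gtrsim\gamma^2/d^2$. The bounds \eqref{eq:IET_dbounds}--\eqref{eq:IET_dist} follow from $h_n\asymp\log t_m$ up to the power $(1+\theta)^2$. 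No asymmetry is used.

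Note that this forces $s_m\gtrsim\log t_m$. Your $s_m\approx(\log t_m)^{1-\nu}$ is too short to absorb the $\log t_m$ oscillation, which for asymmetric roofs genuinely occurs across each floor of the tower.
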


Theorem \ref{thm:IETs} holds for any roof function $\varphi$ as in \eqref{eq:def_phi}, symmetric or asymmetric. The restriction to asymmetric $\varphi$ in Theorem \ref{thm:flows} is not a limitation of the method, it reflects the context. Indeed, by Ulcigrai \cite{ulcigrai11}, almost every locally Hamiltonian flow with symmetric singularities is not mixing, so a lower bound on the rate of mixing is only meaningful in the asymmetric case, where mixing is known to hold almost surely \cite{ulcigrai07} and Ravotti's \cite{ravotti} upper bound \eqref{eq:round_rav} applies.

The control on the support of $g_m$ in Theorem \ref{thm:IETs} allows us to transfer the lower bound to locally Hamiltonian flows. Since the supports are bounded away from the singularities of $\varphi$ in the suspension flow representation by \eqref{eq:IET_dist}, the corresponding points $\Phi(x,s)$ on $\mathcal{M}'$ are uniformly bounded away from the saddles of $H$.
This allows one to compare the $C^1$-norms of functions on $X$ and $\mathcal{M}'$ via the isomorphism $\Phi$. This comparison of norms is carried out in Appendix \ref{app:norm_estimates}.

\subsection{Suspension flow equivalents of Theorems \ref{thm:log_decay_flow} and \ref{thm:C}}\label{sec:flowsT2}
For Theorems \ref{thm:log_decay_flow} and \ref{thm:C} we consider flows on $\mathbb{T}^2$ with a single non-degenerate saddle point with a homoclinic loop, corresponding to suspension flows over a rotation $R_\alpha$. 
In this setting, \eqref{eq:def_phi}--\eqref{eq:def_psi2} reduce to
\be \label{eq:phi_basic}
\varphi(x) = \psi(x) +2c_0\,|\log x\,| + c_0\,|\log(1-x)\,|,
\ee
where $\psi\in L^\infty(\T)$ is of bounded variation and smooth on $(0,1)$, with
\be \label{eq:phi_Dbasic}
\begin{aligned}
\psi(x)=\psi_0(x)+\psi_1(x)+\psi_2(x)&
=2b_0\,x\,|\log x| + b_0\,(1-x)\,|\log(1-x)\,|\\&+2a_0\,x^2|\log x| + a_0\,(1-x)^2|\log(1-x)\,|+\psi_2(x),
\end{aligned}
\ee
for some $c_0>0$, $a_0,b_0\in\R$ and $\psi_2',\psi_2''\in L^{\infty}(\T)$. 
These flows are mixing for any $\alpha \in \mathbb{R} \setminus \mathbb{Q}$ \cite{Sinai1992,kochergin03,Kochergin2004}. The asymmetry $2c_0 \neq c_0$ is what drives mixing, and its role will be apparent in the proofs of Sections \ref{sec:logdecay}--\ref{sec:last}. 

We say that $A = [a,b] \times [c,d] \subset X$ is a \emph{box} if $0 \leq a < b \leq 1$ and $0 \leq c < d < \gamma/3$ (recall $\gamma:= \inf\varphi $). Equivalently, using the notation from Section \ref{sec:susflows}, a box $\tilde A\subset \mathcal{M}'$ is a segment of a closed curve $\Gamma:=\Phi([a,b]\times\{0\})\subset \Sigma$ transversal to the flow $(\phi_t^{\mathcal{M}})$, flowed for a bounded time: \[\tilde A=\bigcup_{t\in[c,d]}\phi_t^{\mathcal{M}}(\Gamma).\]

Theorem \ref{thm:log_decay_flow} shows that, in this setting, the correlations of box indicators cannot decay faster than $(\log t)^{-1}$ along a sequence of times for a typical flow, i.e., for a full-measure set of rotations. By our previous observations, Theorem \ref{thm:log_decay_flow} is a corollary of the following result about the system $(X,\mu,\phi_t)$.

\begin{theorem}
\label{thm:log_decay}
Consider the suspension flow $(\phi_t)$ under a roof function as in \eqref{eq:phi_basic}, over a rotation $R_\alpha$. Then for a full measure set of $\alpha \in (0,1)$, for any box $A \subset X$, there exists $C > 0$ such that for all $t_0$ there exists $t \geq t_0$ satisfying
\[
|\mathrm{Cor}_t(\chi_A, \chi_A)|
= \left|\mu(A \cap \phi_t A) - \mu(A)^2\right|
\geq \frac{C}{\log t}.
\]
\end{theorem}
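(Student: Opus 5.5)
We plan to select times $t$ tied to the denominators $q_n$ of $\alpha$ and show that, at such times, the image $\phi_t A$ is distributed asymmetrically across the vertical fibres of $A$.

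Fix a typical $\alpha$, with convergents $p_n/q_n$. For almost every $\alpha$, a Khinchin-type Borel--Cantelli argument gives $q_{n+1}\leq q_n(\log q_n)^{1+\epsilon}$ for all large $n$. It also gives infinitely many $n$ with $q_{n+1}\geq K q_n$ for any prescribed $K$, or more generally a positive-density set of $n$ with bounded partial quotients. We take $t\approx q_n$ along a suitable subsequence of such $n$, chosen so that the horizontal fibre $[a,b]\times\{s\}$ is well-behaved. The following properties should hold.

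\begin{enumerate}
\item The orbit $\{R_\alpha^k x: 0\leq k<N(x,t)\}$, for $x\in[a,b]$, visits the singularity $0$ in a controlled way.
\item The Birkhoff sum $S_N(\varphi')$ behaves like $\pm c_0 N\log N$ plus a term depending on the closest visit.
\item The asymmetry $2c_0-c_0=c_0\neq0$ produces a net shear $S_{q_n}(\varphi')(x)\approx c_0 q_n\log q_n$ on most of $[a,b]$.
\end{enumerate}

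These are the standard Sinai--Khanin/Kochergin estimates via Denjoy--Koksma applied to the logarithmic singular part.

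The key step is to partition $[a,b]$ into the maximal subintervals $J$ on which $N(\cdot,t)$ is constant, i.e.\ between preimages of $0$ under $R_\alpha^{-k}$, $k\leq N$. These have length of order $1/q_n$, so there are about $(b-a)q_n$ of them. On each $J$ the image $\phi_t(J\times\{s\})$ is a curve whose vertical coordinate $t-S_N(\varphi)(x)$ has derivative $-S_N(\varphi')$. Since $|S_N(\varphi')|\sim c_0 q_n\log q_n$, the curve sweeps the full fibre height about $c_0\,|J|\,q_n\log q_n\approx c_0\log q_n$ times, so it wraps $\asymp\log t$ times around the fibre. Mixing is, in effect, the statement that a curve wrapping $L$ times distributes its mass uniformly up to an error of order $1/L$. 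We want to show that this $1/L$ error is really present and does not cancel.

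Because $S_N(\varphi')$ is not constant on $J$, the point is to use convexity. The second derivative $S_N(\varphi'')$ is dominated by the closest visit near $0$, and it has a fixed sign, since $\varphi''\sim c/x^2>0$ on both sides of the singularity. So the vertical coordinate is a convex function on each $J$. A convex graph that wraps $L$ times spends more time in the last (or first) partial wrap than in an average wrap. Integrating against $\chi_{[c,d]}$, this gives
\[
\mu(\phi_t(J\times[c,d])\cap A)-\mu(A)\mu(J\times[c,d])=\frac{\kappa(J,t)}{\log t}+o\!\left(\frac{1}{\log t}\right),
\]
where the sign of $\kappa$ is determined by the convexity direction and by the fractional position of the endpoint wrap. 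We integrate over $s\in[c,d]$ as well, since $A$ has height $d-c<\gamma/3$, which keeps the box away from the roof.

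The main obstacle is summing over the roughly $q_n$ pieces $J$ without cancellation, since the endpoint phases $\theta(J)=\{t-S_N(\varphi)\}$ vary with $J$. We will choose $t$, varying $t$ within a window $[q_n, q_n+O(1)]$, so that the phases concentrate. Using $q_{n+1}\gg q_n$, the sums $S_{q_n}(\varphi)$ at left endpoints of the $J$'s differ only by $O(\|q_n\alpha\|\cdot q_n\log q_n)$, which is small, so all $\theta(J)$ are approximately equal. Shifting $t$ by an $O(1)$ amount then places the common phase where the discrepancy is maximal, for example with the last partial wrap ending at the midpoint of $[c,d]$.

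If this is too lossy, the fallback is an averaging argument. The average of the signed error over $t$ in a unit window is bounded below by a constant times $1/\log t$ in absolute value, because the convexity contribution is strictly one-signed. Hence some $t$ in the window attains the bound, which is all Theorem~\ref{thm:log_decay} asks for.

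The last step is to handle the exceptional $J$'s that come within $1/(q_n\log q_n)$ of $0$. Their total measure is $O(1/\log q_n)$ times a small constant, which is absorbed by choosing the constant $C$ small.
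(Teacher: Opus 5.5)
You have the right raw ingredients: convexity of $S_{q_n}(\varphi)$ on each continuity interval, alignment of its minima across intervals when $q_{n+1}$ is large, and tuning $t$ to place the tops of the concave image curves. But the central estimate is unjustified, and the conclusion does not follow from it.

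\textbf{The gap: horizontal placement of the wraps.} Your formula for $\mu(\phi_t(J\times[c,d])\cap A)-\mu(A)\mu(J\times[c,d])$ treats the $\asymp\log q_n$ wraps of $\phi_t(J\times\{s\})$ as if they all lay over one fibre. They do not. The piece with $N(x,t)=l$ lies over $R_\alpha^{l}J$, and only pieces with $R_\alpha^{l}J\subset[a,b]$ meet $A$. Summing over $J$ therefore produces a horizontal term, roughly $\sum_k w_k\bigl[\lambda\bigl((a,b)\cap((a,b)-k\alpha)\bigr)-(b-a)^2\bigr]$. Here $l=q_n-k$ runs over the top $O(\log q_n)$ levels, and the vertical weights $w_k$ are each of order $1/\log q_n$.

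For instance, level $l=q_n$ returns to essentially the same base interval, since $\|q_n\alpha\|<1/q_{n+1}$. This gives an excess of order $(b-a)(1-(b-a))/\log q_n$. Other levels give terms of the same order and either sign. So the term you neglect is at least as large as the convexity effect $\kappa/\log t$, with no controlled sign, and you cannot read a lower bound off $\kappa$.

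\textbf{The fallback also fails.} As you note yourself, the sign of the vertical discrepancy depends on the phase of the top wrap, so $\mathrm{Cor}_t$ is not one-signed. Averaging over a $t$-window comparable to the fibre height averages over that phase and washes the convexity effect out.

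\textbf{The missing idea: a quantity that is one-signed piece by piece.} The paper bounds
$\mathrm{Cor}_{t-\eps}(\chi_A,\chi_A)-\mathrm{Cor}_{t}(\chi_A,\chi_A)=\mu(\phi_tA\cap\phi_\eps A)-\mu(\phi_tA\cap A)$, so neither $\mu(A)^2$ nor the horizontal landing of the wraps ever has to be computed. The argument runs as follows.
\begin{itemize}
\item Over any base point in $(a,b)$, a monotone concave curve that fully crosses a window of height $3\eps$ puts at least as much mass in the upper slab as in the lower one (Lemma~\ref{lemma:convex3}), regardless of where it lands horizontally.
\item The time $t_n$ is chosen, via the alignment of the minima of $S_{q_{r_n}}(\varphi)$, so that all tops lie above $A$. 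Then every piece meeting $A$ is a full crossing, apart from $O(\log q_{r_n}/q_{r_n})$ boundary pieces and a set of measure $O(\delta/\log q_{r_n})$.
\item The level-$q_{r_n}$ pieces over $J\subset(a,b)\cap R_\alpha^{-q_{r_n}}(a,b)$ satisfy $S_{q_{r_n}}(\varphi'')\gtrsim(q_{r_n}\log q_{r_n})^2$ and $|S_{q_{r_n}}(\varphi')|\lesssim q_{r_n}\log q_{r_n}$. Each gives a definite gain $\gtrsim\eps^4/(q_{r_n}\log q_{r_n})$, so summing over the $\asymp q_{r_n}$ such intervals gives $\gtrsim 1/\log q_{r_n}$.
\item The triangle inequality then gives the $1/\log t$ bound for $\mathrm{Cor}_t$ or $\mathrm{Cor}_{t-\eps}$.
\end{itemize}

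\textbf{Two smaller points.}
\begin{itemize}
\item Your alignment error $O(\|q_n\alpha\|\,q_n\log q_n)$ is small only if $q_{n+1}/(q_n\log q_n)\to\infty$ along the subsequence. This is the paper's condition \eqref{H}, which holds almost everywhere by Khinchin's theorem; $q_{n+1}\geq Kq_n$ is not enough.
\item The alignment must be done at the minima of $S_{q_n}(\varphi)$, i.e.\ the zeros of $S_{q_n}(\varphi')$, which lie at distance $\asymp(q_n\log q_n)^{-1}$ from the singular orbit. It cannot be done at the endpoints of the $J$'s, where $S_{q_n}(\varphi)=\infty$.
\end{itemize}
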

The precise Diophantine condition on $\alpha$ is introduced in Section \ref{sec:logdecay}.

\begin{remark}
The asymmetric logarithmic singularities of the roof function $\varphi$ naturally yield a shearing of logarithmic order. However, this local feature could cancel out rather than accumulate into a macroscopic correlation.
We exploit the strict convexity of the roof function near the singularities (as quantified via the convexity lemmas in Section \ref{sec:convexity}) to rule out, along a sequence of times, full global cancellations, yielding the global lower bound.
\end{remark}

Finally, for any $\alpha\in\RQ$ there exists an Arnol'd flow $(\phi_t^{\mathcal{M}})$ in $\T^2$ with return map $R_\alpha$ on the suspension flow equivalent to $(\phi_t^{\mathcal{M}}\vert_{\mathcal{M}'})$ on its minimal component $\mathcal{M}'$ \cite{ConzeFraczek2011}. Hence, the proof of Theorem \ref{thm:C} is reduced to that of an equivalent result in the suspension flow, which holds for a Lebesgue measure zero set of $\alpha$.

\begin{theorem}
\label{thm:sum_cor}
There exists $\alpha \in \mathbb{R} \setminus \mathbb{Q}$ such that for the suspension flow $(\phi_t)$ over $R_\alpha$ under \eqref{eq:phi_basic}, there exists a box $A \subset X$ with 
\be
\int_0^{\infty} |\mathrm{Cor}_t(\chi_A, \chi_A)|^2\, dt = \infty.
\label{eq:thm3}
\ee
\end{theorem}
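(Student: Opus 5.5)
We will build $\alpha$ as a Liouville-type number, fixing its continued fraction expansion $[0;a_1,a_2,\dots]$ inductively so that the lower bound of Theorem~\ref{thm:log_decay} holds on whole time intervals rather than at isolated times. Since a lower bound of order $(\log t)^{-1}$ is not square-integrable against $dt$ only if it persists on sets of large measure, the target is the following. There are disjoint intervals $J_k=[\tau_k,\tau_k+L_k]$ with
\[
|\mathrm{Cor}_t(\chi_A,\chi_A)|\geq \frac{C}{\log t}\quad\text{for } t\in J_k,\qquad \sum_k \frac{L_k}{(\log(\tau_k+L_k))^{2}}=\infty .
\]
In fact it suffices to get a uniform lower bound $c>0$ on $|\mathrm{Cor}_t|$ for $t\in J_k$ with $\sum_k L_k=\infty$. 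This stronger version is what we will aim for, since the Liouville freedom should allow it.

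\emph{Mechanism.} If $q_n\alpha$ is extremely close to an integer, i.e.\ $\|q_n\alpha\|\leq q_n^{-1}a_{n+1}^{-1}$ with $a_{n+1}$ huge, then $R_\alpha^{q_n}$ is very close to the identity. Consider times $t$ near $j\,S_{q_n}(\varphi)$ for $j\leq \epsilon a_{n+1}$. For points $x$ whose orbit of length $jq_n$ stays at distance $\gg j\|q_n\alpha\|$ from the singularity $0$, the flow $\phi_t$ acts on the box $A$ almost as a vertical translation by $t-S_{jq_n}(\varphi)(x)$. We will estimate $S_{jq_n}(\varphi')$ on the base $[a,b]$ using Denjoy–Koksma-type bounds (the asymmetric logarithmic part gives at most $O(jq_n\log q_n)$ plus the closest-return terms). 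Choosing $j$ with $j\|q_n\alpha\|\,q_n\log q_n\ll 1$, the variation of $S_{jq_n}(\varphi)$ across most of $[a,b]$ is smaller than the box height $d-c$. The key intermediate claim is then: for such $j$ and all $t\in[jS_{q_n}(\varphi)(a)-\delta,\;jS_{q_n}(\varphi)(a)+\delta]$, with $\delta<(d-c)/4$, we have $\mu(A\cap\phi_tA)\geq \mu(A)/2$, which exceeds $2\mu(A)^2$ once $A$ is chosen small, say $\mu(A)<1/4$. This yields $|\mathrm{Cor}_t|\geq \mu(A)/4$ on an interval of length $2\delta$ around each admissible $j$. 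The number of admissible $j$ is of order $\min(\epsilon a_{n+1},\,(q_n^2\|q_n\alpha\|\log q_n)^{-1})\approx a_{n+1}/(q_n\log q_n)$. Choosing $a_{n+1}\geq q_n^2$ inductively makes the total contribution at stage $n$ at least $1$, so summing over $n$ gives divergence. We fix one box $A=[a,b]\times[c,d]$ at the start, with $[a,b]$ away from $0$, and take all $\alpha$ close to a fixed value so that the constants do not drift.

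\emph{Main obstacle.} The orbit of length $jq_n$ from points of $[a,b]$ passes near the singularity at $0$, at distances of order $\|q_{n-1}\alpha\|$ and smaller. These near-visits make $S_{jq_n}(\varphi)$ genuinely non-constant: roughly, the shift over one $q_n$ block changes by $|\log|$-type amounts depending on the position of $R^{i}x$ relative to $0$. I would first handle this by removing the exceptional set $E_n\subset[a,b]$ of points coming within $jq_n\|q_n\alpha\|\cdot K$ of $0$ during the orbit. Its measure is $O(jq_n\cdot jq_n\|q_n\alpha\|K)$, which is small for $j$ in the admissible range. Off $E_n$, I would control $S_{jq_n}(\varphi)(x)-S_{jq_n}(\varphi)(a)$ via the mean value theorem and a bound $\sup|S_{jq_n}(\varphi')|\lesssim jq_n(\log q_n+ \text{dist}^{-1})$. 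The delicate point is that the $\text{dist}^{-1}$ term from the closest visit to $0$ is of order $q_n$, and it must be multiplied by the base width $b-a$. To handle this I would restrict the comparison to subintervals of $[a,b]$ of length $\ll q_n^{-1}$, on which the shift is nearly constant. For each subinterval the overlap $\phi_tA\cap A$ then has near-full measure exactly when the subinterval's shift lies within $\delta$ of $t$ modulo the return structure. If a uniform shift fails, I would instead use that the shift function is monotone/convex in $x$ on each such piece, using the convexity lemmas of Section~\ref{sec:convexity}. This gives a positive-measure set of times on which a definite fraction of $A$ returns to itself, still with total time measure bounded below per stage. That suffices for divergence.
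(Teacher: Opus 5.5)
\textbf{There is a genuine gap.} Your central claim cannot hold for any choice of $\alpha$.

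The suspension flow under \eqref{eq:phi_basic} is mixing for every irrational $\alpha$ (Sinai--Khanin, Kochergin), so $\mu(A\cap\phi_tA)\to\mu(A)^2$. A bound $\mu(A\cap\phi_tA)\geq\mu(A)/2>2\mu(A)^2$ on windows near $jS_{q_n}(\varphi)(a)$ for infinitely many $n$ is therefore impossible, however Liouville $\alpha$ is. So is your ``stronger version'' $|\mathrm{Cor}_t|\geq c>0$ on disjoint intervals with $\sum_k L_k=\infty$.

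The step that fails is the passage from $R_\alpha^{q_n}\approx\mathrm{id}$ to near-constancy of the vertical shift. Closeness of $R_\alpha^{q_n}$ to the identity only controls $S_{jq_n}(\varphi)(x)-jS_{q_n}(\varphi)(x)$; that is what your derivative bound times $j\|q_n\alpha\|$ measures. It says nothing about how $S_{q_n}(\varphi)$ depends on $x$. For a BV roof, Denjoy--Koksma would make $S_{q_n}(\varphi)$ nearly constant, but here it fails badly. Because the singularity is asymmetric ($2c_0\neq c_0$), one has $S_{q_n}(\varphi')\leq -C_4^{-1}q_n\log q_n$ on the bulk of every continuity interval (Lemma \ref{lemma:location_min}), and $S_{q_n}(\varphi)\to\infty$ at both endpoints of each such interval. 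Hence in your regime $S_{jq_n}(\varphi)\approx jS_{q_n}(\varphi)$ oscillates by at least order $j\log q_n$ on each interval of length $\asymp q_n^{-1}$, far more than $d-c$.

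This is not a closest-return effect that can be excised with $E_n$; it is exactly the shear that produces mixing. Cutting $[a,b]$ into pieces of length $\lesssim(jq_n\log q_n)^{-1}$, on which the shift is nearly constant, does not help. The shifts of different pieces are spread over a range $\gg d-c$, so for a given $t$ only a small proportion of $A$ is translated back into $A$. Your fallback (``a definite fraction of $A$ returns to itself'') also gives no correlation bound. In a mixing system a fraction $\approx\mu(A)$ of $A$ always returns; what must be estimated is the deviation from $\mu(A)^2$, and the proposal has no mechanism for that.

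\textbf{What is needed instead} is to accept that correlations decay and to quantify that deviation. The paper takes $A=\T\times[0,\eps]$ and times $t_{n,k}=\max_{\Gamma_{n,k}}S_{kq_n}(\varphi)+2\eps$ for $1\leq k\leq e^{\log^2q_n}$. At these times the bottoms of the strictly convex branches of $S_{kq_n}(\varphi)$, one in each interval of $\mathcal P_{n,k}$, straddle $A$ and $\phi_\eps A$. On the decreasing branch one has $|S_{kq_n}(\varphi')|\lesssim\sqrt k\,q_n\log q_n$ and $S_{kq_n}(\varphi'')\asymp k\,q_n^2\log^2q_n$. Lemmas \ref{lemma:convex1}--\ref{lemma:convex2} then give $\mu(\phi_tA\cap\phi_\eps A)-\mu(\phi_tA\cap A)\gtrsim\eps^4/(\sqrt k\log q_n)$ for $t\in[t_{n,k},t_{n,k}+\eps]$, and Lemma \ref{lemma:convex3} shows the remaining branches do not cancel this gain.

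Squaring gives $\gtrsim(k\log^2q_n)^{-1}$ per window. The stage-$n$ total is bounded below only because the harmonic sum over $k\leq e^{\log^2q_n}$ is $\asymp\log^2q_n$. This is why the paper imposes \eqref{h}, $q_{n+1}\geq e^{3\log^2q_n}q_n$: the Liouville freedom supplies $e^{\log^2q_n}$ scales $k$ with uniform control (e.g.\ $k\|q_n\alpha\|\ll(q_n\log q_n)^{-1}$), not rigidity. With only about $q_n^2$ scales per stage, as in your calibration, even these decaying bounds would contribute about $1/\log q_n$ per stage, which is summable for such fast-growing $q_n$.
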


\subsection{Notation}
Throughout this paper, we identify $\mathbb{T}$ with $[0,1)$. We denote by $\Pi_x$ and $\Pi_s$ the projections onto the first and second coordinates of elements of $X$. We write $\mu$ for the Lebesgue measure on $X$ and $\lambda$ for the Lebesgue measure on $\mathbb{T}$; the length vector of an IET is always written $\underline{\lambda} = (\lambda_1,\dots,\lambda_d)$ to distinguish it from $\lambda$. 

Given a measurable function $h: \mathbb{T} \to \mathbb{R}$ and a base map $T$, its Birkhoff sums $S_l(h)(x)$ are defined for $l \in \mathbb{Z}$ by
\[
S_l(h)(x) := \begin{cases}
\sum_{k=0}^{l-1} h(T^k x)\qquad  &\text{for}\quad l > 0,\\ 
0\qquad  &\text{for}\quad l = 0,\\
-\sum_{k=l}^{-1} h(T^k x) \qquad  &\text{for}\quad l < 0.
\end{cases}
\] 
We write $N(x,t) := \max\{k: t - S_k(\varphi)(x) \geq 0\}$ to denote the number of iterates of the base map visited by the orbit of $(x,0)$ on $X$ up to time $t$. We set $D_0 := D \cup \{0\} \subset \mathbb{T}$, where $D$ is the set of discontinuities of $T$, and write $\|x\| := \mathrm{dist}(x, D_0 + \mathbb{Z})$ for the distance from $x$ to the nearest singularity.

Constants that appear across multiple results are numbered $C_1, C_2, \dots$ within each section, in order of first appearance, with dependencies noted when introduced. Constants used only within a single proof are denoted $\hat{C}$ or $\tilde{C}$. 

We use the notation $A \lesssim B$ (resp. $A \gtrsim B$) when there exists a uniform constant $C > 0$ such that $A \leq CB$ (resp. $A \geq CB$), and write $A \asymp B$ when both relations hold simultaneously. Asymptotic comparisons follow standard conventions: $f(t)=O(g(t))$ if $\limsup_{t\to \infty}{|\tfrac{f(t)}{g(t)}|}<\infty$, $f(t)=o(g(t))$ if $\limsup_{t\to \infty}{\tfrac{f(t)}{g(t)}}=0$, and $f(t)\sim g(t)$ if $f(t) = g(t) + o(f(t))$.

\section{Reduction of Theorem \ref{thm:flows} to Theorem \ref{thm:IETs}}
\label{sec:reduction}
In this section, we deduce Theorem \ref{thm:flows} from Theorem \ref{thm:IETs} using the measure-theoretic isomorphism $\Phi: X \to \mathcal{M}'$ introduced in Section \ref{sec:susflows}. The argument relies on two facts: first, that the measure-theoretic isomorphism $\Phi: X \to \mathcal{M}'$ preserves the correlations exactly and, secondly, that the $C^1(\mathcal{M}')$ norm of the pushforward $\tilde{g} _m = g_m \circ q^{-1} \circ \Phi^{-1}$ is of the same
order as the $C^1(A_m)$ norm of $g_m$, both being $O((\log t_m)^{1+\nu})$.

\begin{proposition}
\label{prop:C1norm}
Let $g$ satisfy \eqref{eq:IET_dbounds}--\eqref{eq:IET_dist} with $\nu <1$ and define $\tilde g=g\circ q^{-1}\circ \Phi^{-1}$. Then
\be
\|\tilde{g}\|_{C^1(\mathcal{M}')} = \|\tilde{g}\|_\infty + \|d\tilde{g}\|_\infty = O((\log t)^{1+\nu}).
\ee
\end{proposition}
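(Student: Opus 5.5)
The plan is to bound $\|d\tilde g\|_\infty$ by writing $\tilde g$ in local coordinates adapted to the flow. Since $\|\tilde g\|_\infty=\|g\|_\infty=1$, only the derivative needs work. Write $\Psi:=\Phi\circ q$ on $A_m=[a_m,b_m]\times[0,s_m]$, so that $\Psi(x,s)=\phi^{\mathcal M}_s(\Phi(x,0))$ and $\tilde g=g\circ\Psi^{-1}$ on $\Psi(A_m)$. Outside $\Psi(A_m)$ the function $\tilde g$ vanishes; since $g$ has compact support in the interior of $A_m$, $\tilde g$ is $C^1$ on $\mathcal M'$. The chain rule gives
\[
d\tilde g=\partial_x g\, dx\circ d\Psi^{-1}+\partial_s g\, ds\circ d\Psi^{-1}.
\]
It therefore suffices to bound the two covectors $dx\circ d\Psi^{-1}$ and $ds\circ d\Psi^{-1}$ in a fixed Riemannian metric on $\mathcal M$. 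Once we have $\|dx\circ d\Psi^{-1}\|\lesssim 1$ and $\|ds\circ d\Psi^{-1}\|\lesssim(\log t_m)^{2}$ (or better) on $\Psi(A_m)$, \eqref{eq:IET_dbounds} yields
\[
\|d\tilde g\|\lesssim (\log t_m)^{1+\nu}+(\log t_m)^{-1+\nu}(\log t_m)^{2}=O((\log t_m)^{1+\nu}).
\]

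The key structural fact is the following. The function $x$ is a first integral along each flow segment inside $\Psi(A_m)$; more precisely, $x$ is a transverse coordinate locally proportional to the closed $1$-form $\eta$, since the flux across $\Sigma$ is preserved. Indeed, with $\omega=dx\wedge ds$ pulled back (the isomorphism is area preserving, $\mu$ is Lebesgue, and $\Psi^*W=\partial_s$), we get $\iota_W\omega=\pm dx$ pulled back, and $\iota_W\omega=\pm\eta$ is a smooth closed form on $\mathcal M$. Hence $dx\circ d\Psi^{-1}=\pm\eta$ is bounded uniformly, with a constant independent of $m$. This handles the first term with no loss.

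For $ds\circ d\Psi^{-1}$, the $1$-form $\theta$ satisfies $\theta(W)=1$ and $\ker\theta$ is spanned by $d\Psi(\partial_x)$. From $\omega(W,\cdot)=\eta$ and $\omega=dx\wedge ds$ one gets $\|\theta\|\lesssim \|d\Psi(\partial_x)\|/|W|^{?}$. I would rather bound it directly: $|\theta(v)|\le|v|\,/\,\bigl(|W|\sin\angle\bigr)$-type estimates reduce it to a lower bound on $|W|$ along $\Psi(A_m)$ together with an upper bound on $\|d\Psi(\partial_x)\|$. The quantity $d\Psi(\partial_x)(x,s)=D\phi^{\mathcal M}_s\bigl(\partial_x\Phi(x,0)\bigr)$ is where the saddles enter. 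By \eqref{eq:IET_dist}, along the orbit of $(x,0)$ up to time $s_m$ the points $T^kx$ stay at distance $\ge(\log t_m)^{-1-\nu}$ from $D_0$, and there are at most $(\log t_m)^{1+\nu}$ returns. A point of $\Sigma$ at distance $\delta$ from a singular point passes a saddle at distance $\gtrsim\delta$, where the Hamiltonian $H$ is comparable to $\delta$ in Morse coordinates, so $|W|\gtrsim\delta^{1/2}$ there.

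Rather than differentiate the flow, I would use the identity $d\Psi(\partial_x)=\partial_x\Phi(x,0)+\bigl(\text{variation of return times}\bigr)\cdot W$ along successive crossings of $\Sigma$. Along $\Psi(A_m)$, at each return the transverse vector is $\partial_x\Phi(T^kx,0)-S_k(\varphi')(x)\,W$, and in between it is transported by the flow for a time less than $\varphi$. By \eqref{eq:def_phi}, $|S_k(\varphi')(x)|\le\sum_{j<k}\|T^jx\|^{-1}\lesssim(\log t_m)^{2+2\nu}$. But $\theta$ annihilates this vector and $\theta(W)=1$, so this term does not enter $\theta$. Instead, $\|\theta\|$ is controlled by the reciprocal of the sine of the angle between $W$ and $\ker\theta$ divided by $|W|$, and combined with $|\eta|=|W|$ this gives $\|\theta\|\lesssim\|d\Psi(\partial_x)\|/|W|^2$.

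The main obstacle is exactly this term. Near a saddle passage at distance $\delta\gtrsim(\log t_m)^{-1-\nu}$, one must compute $\|D\phi_s\,v\|$ in Morse coordinates, where the linearized flow is hyperbolic with passage time $\approx|\log\delta|$. I expect the bound $\|\theta\|\lesssim\delta^{-1}\cdot(\text{accumulated shear})$. I would first aim for $\|\theta\|\lesssim(\log t_m)^{2+2\nu}$, obtained by combining the local estimate $\delta^{-1}\le(\log t_m)^{1+\nu}$ with the Birkhoff sum bound above. The second term then becomes $(\log t_m)^{-1+\nu}(\log t_m)^{2+2\nu}=(\log t_m)^{1+3\nu}$, which is acceptable after renaming $\nu$; this is where the hypothesis $\nu<1$ is used to keep the exponents in range. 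The bound obtained depends only on $\varphi$, $T$, and the fixed local charts near the finitely many saddles, giving $O((\log t)^{1+\nu})$.
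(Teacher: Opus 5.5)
You set things up the same way the paper does. Your covectors $dx\circ d\Psi^{-1}=\pm\eta$ and $\theta=ds\circ d\Psi^{-1}$ are exactly the duals of the frame decomposition $J=d\Psi(\partial_x)=\alpha\|W\|e_1+\|W\|^{-1}e_2$ in Lemma \ref{lemma:A1}, and your treatment of the $\partial_x g$ term is correct. The gap is in the bound on $\theta$, which is the whole content of the proposition.

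\textbf{The inequality for $\theta$ is lossy.} Your bound $\|\theta\|\lesssim\|J\|/\|W\|^2$ throws away a factor $\|W\|^{-2}$, which on the support is only controlled by $\delta^{-1}=(\log t)^{1+\nu}$. Since $\omega(W,J)=1$, one has exactly $\theta=\omega(\cdot,J)$, hence $\|\theta\|=\|J\|\leq\|W\|^{-1}+|\alpha|\,\|W\|$. Also, the shear is not invisible to $\theta$. It tilts $\ker\theta=\mathbb{R}J$ toward $W$ and enters $\|\theta\|$ through $|\alpha|\,\|W\|$, where $\alpha$ contains $-S_N(\varphi')(x)$.

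\textbf{The exponents do not close.} Your target $\|\theta\|\lesssim(\log t)^{2+2\nu}$ does not follow from what you wrote, since $\delta^{-1}$ times your Birkhoff bound is $(\log t)^{3+3\nu}$. Even granting the target, the $\partial_s g$ contribution is $(\log t)^{-1+\nu}(\log t)^{2+2\nu}=(\log t)^{1+3\nu}$. This is not $O((\log t)^{1+\nu})$ for any $\nu>0$. You cannot rename $\nu$, because it is fixed by the hypotheses \eqref{eq:IET_dbounds}--\eqref{eq:IET_dist} on $g$ and reappears in the conclusion. With the exact identity, a bound $O((\log t)^{1+3\nu})$ would still suffice for Theorem \ref{thm:flows} after shrinking $\nu$ there, but it is not the stated proposition. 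For the same reason, $\nu<1$ plays no role in your accounting, contrary to what you claim.

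\textbf{The missing ingredient} is a harmonic-sum estimate for $S_N(\varphi')$ in place of the crude bound $N\delta^{-1}$.
\begin{enumerate}
\item For $x$ in the base interval, the points $T^kx$, $k\leq N(x,s_m)$, sit at the same relative position in distinct floors of the tower over $I_n$. These floors are disjoint translates of an interval of length $\asymp 1/q_n$, so the orbit points are mutually separated at that scale.
\item As in Lemma \ref{lemma:BSD_IET}, this gives $|S_N(\varphi')(x)|\lesssim q_n\log h_n$, i.e.\ of order $\delta^{-1}\log N\lesssim(\log t)^{1+\nu}\log\log t$. The separation comes from the construction of $g_m$, not from \eqref{eq:IET_dist} alone.
\item Combine this with the $O(\delta^{-1})$ bound for the incomplete excursion, via the cocycle relation $\alpha(x,s)=\alpha(T^Nx,s-S_N(\varphi)(x))-S_N(\varphi')(x)$. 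This yields $|\alpha|\lesssim(\log t)^{1+\nu}\log\log t$, which is Lemma \ref{lemma:A3}.
\end{enumerate}
The three contributions to $\|d\tilde g\|$ are then $(\log t)^{(3\nu-1)/2}$, $(\log t)^{1+\nu}$ and $(\log t)^{2\nu}\log\log t$. The last one is $O((\log t)^{1+\nu})$ precisely because $\nu<1$.
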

We leave the proof of Proposition \ref{prop:C1norm} to Appendix \ref{app:norm_estimates} and proceed with the proof of Theorem \ref{thm:flows}.

\begin{proof}[Proof of Theorem \ref{thm:flows}]
Let $(\phi_t^{\mathcal{M}})_{t \in \mathbb{R}}$ be a locally Hamiltonian flow and let $\mathcal{M}' \subset \mathcal{M}$ be the minimal component as in the statement. By the classical representation theorem recalled in Section \ref{sec:susflows}, the restriction $(\phi_t^{\mathcal{M}}|_{\mathcal{M}'})$ is measure-theoretically isomorphic to a suspension flow $(\phi_t, X)$ over an IET $T$ under a roof function $\varphi$ as in \eqref{eq:def_phi}, via a measure-preserving isomorphism $\Phi: X \to \mathcal{M}'$. By \cite{ravotti}, the set of flows for which $T$ is typical has full measure with respect to Katok's fundamental class.

Fix $\nu > 0$ and $C' > 0$. By Theorem \ref{thm:IETs} (applied with exponent $\nu/4$), there exist an increasing sequence of times $(t_m)$, rectangles $(A_m)$, functions $(g_m)$ with $g_m \in C^1_c(A_m)$, and a constant $C>0$, all independent of $\nu$, such that for any $m$
\[
|\mathrm{Cor}_{t_m}(g_m, g_m)| \geq C,
\] 
and, for $m$ large enough, $g_m$ satisfies \eqref{eq:IET_dbounds}--\eqref{eq:IET_dist} for $\nu/4$.

For each $m$, define $\tilde{g} _m = g_m \circ q^{-1}\circ \Phi^{-1}$. Since $\Phi$ is measure-preserving and intertwines $(\phi_t)$ with $(\phi_t^{\mathcal{M}})$: $\text{Cor}_{t_m}(\tilde g_m,\tilde g_m) =\text{Cor}_{t_m}(g_m,g_m). $ 

By Proposition \ref{prop:C1norm}, for $m$ (hence $t_m$) sufficiently large
\[
\|\tilde{g} _m\|_{C^1(\mathcal{M}')}^2 \lesssim  (\log t_m)^{2+\nu/2}\leq \frac{C}{C'}(\log t_m)^{2+\nu}.
\]
Therefore, for any prescribed $C' > 0$, if $m$ is large enough
\begin{align*}
|\mathrm{Cor}_{t_m}(\tilde{g} _m, \tilde{g} _m)|
&\geq C= C\cdot\frac{\|\tilde{g} _m\|_{C^1(\mathcal{M}')}^2}{\|\tilde{g} _m\|_{C^1(\mathcal{M}')}^2} \geq C'\|\tilde{g} _m\|_{C^1(\mathcal{M}')}^2 (\log t_m)^{-2-\nu}.
\end{align*}

\end{proof}

\section{IETs: Background and Notation} \label{sec:IETs}

In this section, we recall the basic framework and notation for IETs and the key structures that will be used throughout; we refer the reader to \cite{Viana2008} for an extensive discussion.

Let $d \geq 2$. Recall that a $d$-interval exchange transformation $ T : \mathbb{T} \to \mathbb{T}$ is
determined by a length vector $\underline\lambda= (\lambda_1,\dots,\lambda_d) \in\Delta_{d}$, where
\[
\Delta_{d} := \left\{\underline\lambda\in\mathbb{R}^d_+ : \sum_{j=1}^d \lambda_j = 1\right\}
\]
is the $d$-simplex, and a permutation $\pi\in S_d$. The unit interval is partitioned into subintervals
\[
I_j = \Bigl[\sum_{k<j}\lambda_k,   \sum_{k\leq j}\lambda_k\Bigr),   \qquad j=1,\dots,d,
\]
and $T$ acts on each $I_j$ as a translation, rearranging them according to $\pi$ so that $T(I_j)$ occupies the $\pi(j)$-th position in the partition of $\mathbb{T}$.

Throughout this work, we assume that $T$ satisfies the Keane condition, namely that the orbits of the discontinuity points are infinite and pairwise disjoint. This condition ensures minimality and guarantees that the \emph{Rauzy--Veech} induction is well defined at every step. We will work with its accelerated version, the \emph{Zorich induction}.

The Rauzy--Veech induction is a renormalization procedure that, given an IET $T = (\underline\lambda, \pi)$, produces a new IET by inducing $T$ on the subinterval $[0, 1-\min(\lambda_{\pi^{-1}(d)}, \lambda_d))$, i.e. by removing the shorter of the two rightmost subintervals. The resulting IET has the same combinatorial type up to an update of $\pi$, and its length vector is obtained from $\underline\lambda$ by a matrix in $SL(d, \mathbb{Z})$. The Zorich induction $\mathcal{Z}$ is an accelerated version that repeatedly applies the Rauzy--Veech induction until the winning (the longest) subinterval changes, thereby grouping consecutive induction steps of the same type into a single step.

For each $n \geq 0$, let $\mathcal Z^{n}T=\left(\frac{\underline{\lambda}^{(n)}}{\lambda^{(n)}},\pi^{(n)}\right)$ denote the IET obtained after $n$ steps of the Zorich induction. This transformation is defined by inducing on a subinterval
\[
I^{(n)} \subset [0,1), 
\qquad 
I^{(n)} = \bigsqcup_{j=1}^d I_j^{(n)},
\]
where $\lambda^{(n)}:=\lambda(I^{(n)})$, $\lambda_j^{(n)}:=\lambda(I_j^{(n)})$, and $\underline\lambda^{(n)} := (\lambda_1^{(n)},\dots,\lambda_d^{(n)} )$.

The combinatorial and metric data of the induction are encoded by the Zorich cocycle matrices $\mathcal{B}_n \in \mathrm{SL}(d,\mathbb{Z})$, obtained as 
the product of the elementary Rauzy--Veech matrices corresponding to each substep grouped into the $n$-th Zorich step. The associated return times (or tower heights) are given by the column sums
\[
h_j^{(n)} := \sum_{i=1}^d (\mathcal{B}_n)_{i,j}h_i^{(n-1)}=\sum_{i=1}^d (\mathcal{B}_n\cdots\mathcal{B}_1)_{i,j}, \qquad j=1,\dots,d.
\]
These determine a Rokhlin tower decomposition of the interval: for each $j$,
\[
\mathcal{T}_j^{(n)} := \bigcup_{k=0}^{h_j^{(n)}-1} T^k I_j^{(n)}.
\]
Up to a countable set, the collection $\{\mathcal{T}_j^{(n)}\}_{j=1}^d$ forms a partition of $[0,1)$.

We fix a Rauzy class $\mathfrak{R} \subset \mathfrak{S}_d^{\mathrm{irr}}$, i.e., an equivalence class of irreducible permutations under the Rauzy induction moves. Since the Zorich induction preserves $\mathfrak{R}$, that is $\pi^{(n)} \in \mathfrak{R}$ for all $n \geq 0$ whenever $\pi \in \mathfrak{R}$, the induction $\mathcal{Z}$ acts as a self-map of $\mathcal{M}_d(\mathfrak{R}) := \Delta_{d} \times \mathfrak{R}$. 

We work with the Zorich induction rather than the raw Rauzy-Veech induction for two reasons. First, the Zorich induction is ergodic with respect to a natural absolutely continuous invariant measure $\nu^{\mathcal{R}}_{\mathrm{Zorich}}$ on $M_d(\mathcal{R})$, a fact we will use in the proof of Proposition \ref{prop:full_meas_IET}. Second, the quantitative results of Marmi-Moussa-Yoccoz \cite{marmi2005cohomological} on Roth-type conditions for IETs, are stated and proved in the language of the Zorich cocycle. Working with the same induction allows us to import their estimates directly.

In addition to the Keane condition, we impose two quantitative assumptions that control the geometry of the induced partitions. In particular, the IETs for which we will proof Theorem \ref{thm:IETs} satisfy: 
\begin{itemize}
\item[] For any $\theta > 0$ there exists $C_\theta > 0$ such that, for each $n$,
\begin{equation}
\min_i \lambda_{i}^{(n)} \geq C_\theta \left(\max_i \lambda_i^{(n)}\right)^{1+\theta}.
\tag{H1}\label{eq:IET1}
\end{equation}

\item[] For any $\varepsilon > 0$ there exist a sequence $\{r_m\}$ and an index $j^* \in \{1,\dots,d\}$ such that
\begin{equation}
\frac{\lambda_{j^*}^{(r_m)}}{\lambda^{(r_m)}} \geq \frac{3}{4},
\qquad \lambda\left(\mathcal{T}_{j^*}^{(r_m)}\right) \geq 1 - \varepsilon,
\quad \forall\, m.\tag{H2}
\label{eq:IET2}
\end{equation}
\end{itemize}

Condition \eqref{eq:IET1} provides uniform control on the relative sizes of subintervals under induction, while \eqref{eq:IET2} ensures the existence of a sequence of times at which a single Rokhlin tower captures almost the entire measure.

\begin{proposition}\label{prop:full_meas_IET}
The Keane condition together with \eqref{eq:IET1}--\eqref{eq:IET2} define a full measure subset in the space of interval exchange transformations.
\end{proposition}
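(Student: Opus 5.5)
The full-measure set will be the intersection of three full-measure sets, one for each condition. For the Keane condition this is classical: Keane showed it fails only on a countable union of hyperplanes in $\Delta_d$, so it holds for Lebesgue-a.e.\ $\underline\lambda$ and every irreducible $\pi\in\mathfrak R$.

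For \eqref{eq:IET1} I will use the Roth-type results of Marmi--Moussa--Yoccoz \cite{marmi2005cohomological}. For a.e.\ IET the Zorich matrices satisfy $\|\mathcal B_{n+1}\|=O(\|\mathcal B_n\cdots\mathcal B_1\|^{\theta'})$ for every $\theta'>0$. This comes from integrability of $\log\|\mathcal B\|$ for the Zorich invariant measure together with Birkhoff's theorem. Set $Q_n=\mathcal B_n\cdots\mathcal B_1$ and $\underline\lambda=Q_n^T\underline\lambda^{(n)}$ up to normalisation, so that $h_j^{(n)}$ are the column sums of $Q_n$. Tower disjointness gives $\sum_j h^{(n)}_j\lambda^{(n)}_j\le 1$, so $\min_j\lambda_j^{(n)}\lesssim 1/\max_j h^{(n)}_j$. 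I will then use two standard consequences of the Roth-type estimates along a.e.\ orbit:
\begin{itemize}
\item balance of heights up to subpolynomial factors, $\max_j h^{(n)}_j\le C_\theta(\min_j h^{(n)}_j)^{1+\theta}$, which uses the positivity of some fixed product of Rauzy matrices occurring with bounded gaps relative to $\log\|Q_n\|$;
\item $\max_i\lambda_i^{(n)}\asymp 1/\max_j h_j^{(n)}$ up to the same factors.
\end{itemize}
Combining these with $\lambda_i^{(n)}\ge \lambda_i^{(n+k)}$-type comparisons through a positive block $\mathcal B_{n+1}\cdots\mathcal B_{n+k}$ of controlled norm gives $\min_i\lambda_i^{(n)}\ge C_\theta(\max_i\lambda_i^{(n)})^{1+\theta}$. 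Only finitely many $n$ fail the asymptotic bound, and they are absorbed into $C_\theta$. Taking $\theta=1/k$ and intersecting over $k$ gives full measure.

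For \eqref{eq:IET2} I will use ergodicity of $\mathcal Z$ with respect to $\nu^{\mathfrak R}_{\mathrm{Zorich}}$. I expect this step to be the main obstacle. Fix $\varepsilon=1/k$. I want a set $U_k\subset\mathcal M_d(\mathfrak R)$ of positive measure such that, for every $n$ with $\mathcal Z^nT\in U_k$, some $j^*$ has $\lambda^{(n)}_{j^*}/\lambda^{(n)}\ge 3/4$ and $\lambda(\mathcal T^{(n)}_{j^*})\ge 1-1/k$. The first condition is a condition on $\mathcal Z^nT$ alone. The second is not, since $\lambda(\mathcal T_j^{(n)})=h_j^{(n)}\lambda_j^{(n)}$ depends on the past through the heights.

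To handle this I plan to look forward rather than back. Consider configurations where the next $L$ Zorich steps are a long run of the same move in which one interval $j^*$ repeatedly beats a tiny interval. In that case the height of the winner after the run is dominated by the accumulated heights, so the new tower $\mathcal T^{(n+L)}_{j^*}$ contains all previous towers except the one of the loser. Choosing the loser's length ratio below $\delta(k)$ and using \eqref{eq:IET1}-type balance of heights at time $n$ gives loser mass at most $O(\delta h)$. Mass bounded this way makes $\lambda(\mathcal T_{j^*}^{(n+L)})\ge 1-1/k$ and $\lambda_{j^*}^{(n+L)}/\lambda^{(n+L)}\ge3/4$.

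The set of $T$ such that $\mathcal Z^n T$ lies in such a cylinder (prescribed combinatorics, one length ratio below $\delta$, balanced heights) has positive $\nu^{\mathfrak R}_{\mathrm{Zorich}}$ measure. Also, the set of times with height-balance within a fixed constant recurs for a.e.\ $T$; I would get this from Birkhoff's theorem applied to the natural extension, where heights are encoded by the backward coordinate. By ergodicity, a.e.\ $T$ visits this set infinitely often, which produces the sequence $r_m$. Intersecting over $k$ and choosing $j^*$ along a subsequence (there are finitely many indices) finishes the argument.

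The delicate point is making the height-balance requirement a genuine positive-measure condition. I would do this through the natural extension of the Zorich map, whose backward coordinate governs $h^{(n)}/|h^{(n)}|$, and this is where I would spend most care.
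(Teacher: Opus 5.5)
Your Keane step is fine. Your derivation of \eqref{eq:IET1} from the positive top exponent of the Zorich cocycle, integrability of $\log\|\mathcal B\|$ and Birkhoff's theorem is a legitimate expansion of what the paper simply cites from Marmi--Moussa--Yoccoz.

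The gap is in \eqref{eq:IET2}, where the mechanism you describe runs backwards. In a Rauzy--Veech step the winner's \emph{length} drops by the loser's length, and the \emph{loser's} height grows by the winner's height; the winner's height does not change. So along a run in which $j^*$ keeps beating tiny intervals, $h_{j^*}$ is frozen while $\lambda_{j^*}$ shrinks. Thus $\mathcal T_{j^*}$ is eaten slice by slice by the losers' towers. For $d\geq 3$ these losers are several different intervals cycling at the end of the other row, so no single tower need absorb the mass. Moreover, a Zorich step is by definition a maximal run of one type, so consecutive Zorich steps never share a type. It stops exactly when the last interval of the other row is longer than what remains of $j^*$. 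Hence at the Zorich time right after the run, $\lambda^{(n+L)}_{j^*}<\lambda^{(n+L)}/2$, contradicting the ratio $\geq 3/4$ you claim. Finally, \eqref{eq:IET1}-type balance of heights holds only up to a factor $(\min_j h_j^{(n)})^{\theta}$. That bounds the non-dominant mass only by $O(\delta\,(\min_j h^{(n)}_j)^{\theta})$, which is not small.

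No forward look is needed. Suppose at a single Zorich time $n$ you have $\lambda^{(n)}_{j^*}\geq(1-\delta)\lambda^{(n)}$ and $\max_{i,j}h^{(n)}_i/h^{(n)}_j\leq\eta$ with $\eta$ independent of $n$. Since $h^{(n)}_{j^*}\lambda^{(n)}_{j^*}\leq 1$, you get directly $1-\lambda(\mathcal T^{(n)}_{j^*})=\sum_{j\neq j^*}\lambda^{(n)}_j h^{(n)}_j\leq\delta\eta/(1-\delta)$.

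The whole content of \eqref{eq:IET2} is that this \emph{joint} event recurs. Constant height-balance is not a function of $\mathcal Z^nT$, so your ``cylinder with balanced heights'' is not a subset of $\mathcal M_d(\mathfrak R)$ and has no $\nu_{\mathfrak R}$-measure. This is exactly the step you defer, and it is the paper's entire proof in Appendix~\ref{app:H2}:
\begin{enumerate}[label=(\roman*)]
\item In the natural extension $\widehat{\mathcal M}_d(\mathfrak R)$, take a compact set $K_m$ of positive $\widehat\nu_{\mathfrak R}$-measure on which $\lambda_1>1-\delta_m$ and the zippered-rectangle heights $\hat{\underline h}=-\Omega_\pi\underline\tau$ are $\kappa$-balanced.
\item Use ergodicity of $\hat{\mathcal Z}$ to get $r_m$ with $\hat{\mathcal Z}^{r_m}\in K_m$.
\item Transfer $\kappa$-balance of the zippered heights to bounded balance of the tower heights $h^{(r_m)}$. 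Up to normalization both evolve by the same cocycle, starting respectively from a fixed positive vector and from $(1,\dots,1)$, so the constant does not depend on $m$.
\item Project back to $\mathcal M_d(\mathfrak R)$.
\end{enumerate}
Replace the long-run construction with this argument.
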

\begin{proof}
It suffices to prove the statement for a fixed Rauzy class $\mathfrak{R}$, with respect to the product measure $\nu_{\mathfrak{R}} = \mathrm{Leb}_{\Delta_{d}} \otimes \mu_{\mathfrak{R}}$ on $\mathcal{M}_d(\mathfrak{R})$, where $\mu_{\mathfrak{R}}$ is the uniform measure on $\mathfrak{R}$. We treat the conditions in order, based on the results of Viana \cite{Viana2008} and Marmi-Moussa-Yoccoz \cite{marmi2005cohomological}.

\textit{The Keane condition.} For any fixed $\pi \in \mathfrak{R}$, the set of $\lambda \in \Delta_{d}$ violating the Keane condition is a countable union of hyperplanes defined by rational affine relations among the $\lambda_i$. These are $\mathrm{Leb}_{\Delta_{d}}$-null, so the Keane condition holds for $\nu_{\mathfrak{R}}$-almost every $(\underline\lambda,\pi) \in \mathcal{M}_d(\mathfrak{R})$.

\textit{Condition \eqref{eq:IET1}.} Condition \eqref{eq:IET1} is precisely the Roth-type condition on the Zorich induction introduced in Section 1.3 of \cite{marmi2005cohomological}, where it is shown to hold for $\nu_{\mathfrak{R}}$-almost every $(\underline\lambda,\pi) \in \mathcal{M}_d(\mathfrak{R})$.

\textit{Condition \eqref{eq:IET2}.} To verify this condition, we present an argument analogous to \cite[Prop. 3.14]{berk2025}, leveraging the topological openness of parameters in a natural extension of the space $\mathcal{M}_d(\mathfrak{R})$. We carry this out in Appendix \ref{app:H2}.
\end{proof}

By Proposition \ref{prop:full_meas_IET}, it is enough to restrict attention to interval exchange transformations satisfying \eqref{eq:IET1}–\eqref{eq:IET2} in order to prove Theorem \ref{thm:IETs}. We begin by constructing the test functions in a general framework in Section \ref{sec:testfunctions}, and then establish the theorem in Section \ref{sec:proofmain}.

\section{Construction of the test functions}\label{sec:testfunctions}
The goal of this section is to construct, given a pair $(I,h)$ consisting of a base interval and a return time, a test function $g \in C_c^1(A)$ for some set $A(I,h)\subset\T\times\R$.
The function $g$ is designed so that $\int_A g$ is close to the measure of $A$, while its $C^1$-norm grows linearly in $h$. The precise conditions on $(I,h)$ that make this possible are collected in hypothesis \eqref{eq:conditions_for_g} below.

Define a base interval $I\subset[0,1]$ and let $h\geq 2$ be the return time of $I$ under the action of $T$. That is, let 
\[
h:=\min\{k\geq1:T^kI\cap I\neq\varnothing\}.
\]
Let $T$ be an IET on $\T$ and $\varphi$. We say that $(I,h)$ satisfy the hypotheses \eqref{eq:conditions_for_g} with respect to $T$ if there exists a $C>0$ such that they satisfy the following:
\begin{equation}\label{eq:conditions_for_g}
\begin{aligned}
&\cdot  T^k \text{ is a translation on }I\text{ for }0\leq k\leq h,\\
&\cdot h^{-1}\leq C\lambda(I).
\end{aligned}
\tag{G} 
\end{equation}

Let $X$ be the suspension flow space of the roof function $\varphi$ over $T$, and recall $\gamma:=\inf_\T\varphi>0$.

\begin{lemma}
\label{lemma:g1}
Given an interval $I$ and a positive integer $h$ satisfying \eqref{eq:conditions_for_g}, setting $A := I \times [0,\gamma h/4] \subset \mathbb{T} \times \mathbb{R}$, $A$ embeds injectively into $X$ via the quotient map $q:\mathbb{T}\times\mathbb{R} \to X$.
\end{lemma}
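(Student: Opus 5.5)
The plan is to show directly that two distinct points of $A=I\times[0,\gamma h/4]$ never have the same image under $q$. Recall that $q(x,s)$ is obtained by flowing $(x,0)$ for time $s$: $q(x,s)=\phi_s(x,0)=(T^{N(x,s)}x,\ s-S_{N(x,s)}(\varphi)(x))$. Two points $(x,s),(x',s')\in A$ with $q(x,s)=q(x',s')$ must therefore satisfy
\[
T^{k}x=T^{k'}x',\qquad s-S_k(\varphi)(x)=s'-S_{k'}(\varphi)(x'),
\]
where $k=N(x,s)$ and $k'=N(x',s')$.

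The first step is to bound these iteration counts. Since $S_l(\varphi)\geq\gamma l$, we get $k\leq s/\gamma\leq h/4<h$, and likewise $k'<h$. Assume without loss of generality that $k\geq k'$. Because $T$ is invertible, the relation $T^kx=T^{k'}x'$ gives $T^{k-k'}x=x'$, with $0\leq k-k'<h$.

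The second step uses the definition of $h$. For $1\leq j<h$ we have $T^jI\cap I=\varnothing$, by minimality of $h$. If $k-k'\geq1$, then $x'=T^{k-k'}x\in T^{k-k'}I\cap I$, which is impossible. Hence $k=k'$ and $x=x'$. Substituting into the second relation, $s-S_k(\varphi)(x)=s'-S_k(\varphi)(x)$, so $s=s'$. This proves injectivity.

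I do not expect a real obstacle here. The only point needing care is to use the correct normal form of $q$, namely that each class has a unique representative with $0\leq s<\varphi$, given by the formula for $\phi_t$, and to check $k<h$ so that the return-time minimality applies. The hypothesis that $T^k$ is a translation on $I$ is not needed for injectivity. The same argument also works for the closed top edge $s=\gamma h/4$, and it is unaffected if $\varphi=\infty$ at finitely many points, since those orbits never return and form a null set.
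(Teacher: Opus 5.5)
Your argument is correct and uses the same two facts as the paper's proof: the lower bound $S_k(\varphi)\geq\gamma k$, which keeps the number of relevant returns below $h$, and the minimality of the return time, which forbids $T^jI\cap I\neq\varnothing$ for $1\leq j<h$. The only difference is bookkeeping. You compare the canonical representatives $\phi_s(x,0)$ and bound each $N(x,s)\leq h/4$ before taking the difference $k-k'$, whereas the paper writes $x'=T^kx$ directly and treats $1\leq k<h$ and $k\geq h$ separately; your version also avoids the superfluous flow time $u$ that appears in the paper's write-up.
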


\begin{proof}
Two distinct points $(x,s), (x',s') \in A$ are identified in $X$ only if $(x',s') = \phi_u(x,s)$ for some $u \neq 0$, which requires $s' = s + u - S_k(\varphi)(x)$ and $x' = T^k x$ for some $k \geq 1$. In particular, $x' \in T^k I \cap I$, which by \eqref{eq:conditions_for_g} cannot happen for $1 \leq k \leq h-1$. It remains to rule out $k \geq h$. Since $s, s' \in [0, \gamma h/4]$ and $S_k(\varphi)(x) \geq \gamma k \geq \gamma h$ for $k \geq h$, we have
\[
s' = s + u - S_k(\varphi)(x) \leq \frac{\gamma h}{4} -\gamma h < 0,
\]
contradicting $s' \geq 0$. Hence $q|_A$ is injective.
\end{proof}

\begin{lemma}
\label{lemma:g2}
Given $0\leq a<b\leq 1$, $t>0$, and $r\geq e^2$, set $A:=[a,b]\times[0,t] $. Then there exist $g\in C^1_c(A)$ satisfying, for $g=g(x,s)$,
\[ \begin{aligned}
&\cdot \int_A g\geq \left(1-\frac{4}{\log r}\right)t(b-a) ,
\\ &\cdot 0\leq g\leq 1,
\\ &\cdot \|\partial _xg\|_\infty\leq 2\,(b-a)^{-1} \log r,\\ &\cdot \|\partial _sg\|_\infty\leq 2\,t^{-1} \log r.
\end{aligned} \]
\end{lemma}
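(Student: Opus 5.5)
We construct $g$ as a product $g(x,s)=u(x)\,v(s)$ of one-dimensional bump functions. It suffices to build, for an interval $[0,L]$ with $L>0$ and $r\geq e^2$, a function $w_L\in C^1_c((0,L))$ satisfying
\[
0\le w_L\le 1,\qquad \int_0^L w_L\ \ge\ \Bigl(1-\frac{2}{\log r}\Bigr)L,\qquad \|w_L'\|_\infty\le \frac{2\log r}{L}.
\]
Setting $u(x)=w_{b-a}(x-a)$ and $v(s)=w_t(s)$, the product $g$ lies in $C^1_c(A)$ with $0\le g\le 1$. Its derivatives satisfy $\|\partial_x g\|_\infty\le\|u'\|_\infty\le 2(b-a)^{-1}\log r$ and $\|\partial_s g\|_\infty\le\|v'\|_\infty\le 2t^{-1}\log r$. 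By Fubini,
\[
\int_A g=\int u\int v\ \ge\ \Bigl(1-\frac{2}{\log r}\Bigr)^2 t(b-a)\ \ge\ \Bigl(1-\frac{4}{\log r}\Bigr)t(b-a),
\]
using $(1-\epsilon)^2\ge 1-2\epsilon$.

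By scaling, $w_L(y)=w(y/L)$, so it is enough to treat $L=1$. Set $\delta:=1/\log r\le 1/2$, which holds since $r\ge e^2$. Let $w$ be a $C^1$ function equal to $1$ on $[\delta,1-\delta]$ and vanishing on $[0,\delta/4]\cup[1-\delta/4,1]$, so that its support is compact in $(0,1)$. On $[\delta/4,\delta]$ we take a $C^1$ monotone increasing ramp from $0$ to $1$ with zero derivative at both endpoints, and on $[1-\delta,1-\delta/4]$ the mirror image. The ramp runs over a length $3\delta/4$, so its slope must be at least $4/(3\delta)$ somewhere. A smooth monotone ramp can have maximal slope at most $(1+\eta)\cdot 4/(3\delta)$ for any $\eta>0$, for example by rescaling a smoothed piecewise linear profile. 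Taking $\eta$ small gives $\|w'\|_\infty\le 2/\delta=2\log r$, since $4/3<2$. The mass lost is at most the two outer pieces of total length $2\delta$, hence $\int w\ge 1-2\delta$.

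The only point requiring care is the explicit existence of the $C^1$ ramp with slope bound strictly below $2/\delta$. We can take the cubic $p(\tau)=3\tau^2-2\tau^3$ on $[0,1]$, rescaled to $[\delta/4,\delta]$. Its maximal slope is $\tfrac32\cdot\tfrac{4}{3\delta}=\tfrac{2}{\delta}$, which meets the required bound $2\log r$ exactly. The cubic is $C^1$ with zero derivative at both ends, so gluing it to the constants $0$ and $1$ is $C^1$. The remaining constants are routine bookkeeping.
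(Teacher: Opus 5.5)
Your proposal is correct and follows the paper's proof. Both rescale a cutoff on the unit square that equals $1$ on $[1/\log r,1-1/\log r]^2$ and has derivatives at most $2\log r$, and both conclude with $(1-2/\log r)^2\geq 1-4/\log r$; the only difference is that you make the paper's ``standard smooth cutoff construction'' explicit, as a product of one-dimensional ramps built from $3\tau^2-2\tau^3$ on $[\delta/4,\delta]$ with maximal slope exactly $2/\delta=2\log r$, which suffices since the lemma only needs $C^1$ regularity and separate bounds on $\partial_x g$ and $\partial_s g$.
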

\begin{proof}
For $r\geq e^2$ let $f\colon[0,1]^2\to\mathbb{R}$ be a smooth non-negative function, $f$ compactly supported on $(0,1)^2$, satisfying
\[\begin{aligned}
 & \cdot f=1 \quad \text{ on } \quad[1/\log r,1-1/\log r]^2,
\\ &\cdot 0\leq f\leq 1,
\\ &\cdot \|Df\|_\infty\leq 2 \log r.
\end{aligned} \]
Such an $f$ exists by a standard smooth cutoff construction. Define $g: A \to \mathbb{R}$ by
\[
g(x,s) := f\bigl(\tfrac{x-a}{b-a},\,
\tfrac{s}{t}\bigr).
\]
Clearly $g\in C^1(A)$ and, since $f$ is compactly supported in $(0,1)^2$, $g$ is compactly supported in $A^\circ$. Hence $g\in C^1_c(A)$. The bound $0\leq g\leq 1$ follows immediately from $0\leq f\leq 1$.

By the chain rule,
\[
\|\partial_x g\|_\infty   = (b-a)^{-1}\|\partial_1 f\|_\infty   \leq 2(b-a)^{-1}\log r,
\quad \|\partial_s g\|_\infty = t^{-1}\|\partial_2 f\|_\infty \leq 2t^{-1}\log r.
\]

Finally, since $f = 1$ on $[1/\log r, 1-1/\log r]^2$ and $0 \leq f \leq 1$ elsewhere,
\[
\int_A g = t(b-a)\int_{[0,1]^2} f \geq t(b-a)\left(1 - \frac{2}{\log r}\right)^2 \geq t(b-a)\left(1 - \frac{4}{\log r}\right).
\]
\end{proof}

Combining the two lemmas, we obtain the main tool of this section. Given $(I,h)$ satisfying \eqref{eq:conditions_for_g}, the following corollary provides a test function on a rectangle that embeds injectively into $X$, with integral close to the area of the rectangle and partial derivatives controlled by $h\log h$ and $h^{-1}\log h$.

\begin{corollary}\label{cor:testfunction}
Let $(I,h)$ satisfy \eqref{eq:conditions_for_g} and set $A := I\times[0,\gamma h/4]$. Moreover, assume $h\geq 8$. Then $A$ embeds injectively into $X$ via $q$, and there exists $g\in C^1_c(A)$ with $0\leq g\leq 1$, 
\be
\int_A g \geq \left(1-\frac{4}{\log h}\right)
\frac{\gamma h}{4}\,\lambda(I),\quad 
\|\partial_x g\|_\infty \leq 2C\, h\log h,\quad 
\|\partial_s g\|_\infty \leq 8\gamma^{-1} h^{-1}\log h,\label{eq:g}
\ee
where $C$ is the constant in \eqref{eq:conditions_for_g}.
\end{corollary}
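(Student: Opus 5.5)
The plan is to combine Lemma \ref{lemma:g1} and Lemma \ref{lemma:g2} directly. First, since $(I,h)$ satisfies \eqref{eq:conditions_for_g}, Lemma \ref{lemma:g1} gives at once that $A = I\times[0,\gamma h/4]$ embeds injectively into $X$ via $q$. Next, write $I=[a,b]$ (taking its closure if needed, which changes neither $\lambda(I)$ nor the interior of $A$). Apply Lemma \ref{lemma:g2} with $t:=\gamma h/4$ and $r:=h$. This requires $r\geq e^2$, and that holds because $h\geq 8>e^2\approx 7.39$; this is precisely the reason for the assumption $h\geq 8$. The lemma produces $g\in C^1_c(A)$ with $0\leq g\leq 1$ and
\[
\int_A g\geq \Bigl(1-\frac{4}{\log h}\Bigr)\frac{\gamma h}{4}\,\lambda(I).
\]

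The derivative bounds then come from substituting into the estimates of Lemma \ref{lemma:g2}. For the $x$-derivative, $\|\partial_x g\|_\infty\leq 2\lambda(I)^{-1}\log h$. The second condition in \eqref{eq:conditions_for_g}, $h^{-1}\leq C\lambda(I)$, is equivalent to $\lambda(I)^{-1}\leq Ch$, so $\|\partial_x g\|_\infty\leq 2Ch\log h$. For the $s$-derivative, $\|\partial_s g\|_\infty\leq 2t^{-1}\log h = 2\cdot\frac{4}{\gamma h}\log h = 8\gamma^{-1}h^{-1}\log h$, which is the bound stated in \eqref{eq:g}.

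I do not expect a genuine obstacle here, since every step is a direct substitution. The only points needing care are checking that $h\geq 8$ puts us in the range $r\geq e^2$ where Lemma \ref{lemma:g2} applies, and reading the constant $C$ in the $x$-derivative bound as the same constant from \eqref{eq:conditions_for_g}, obtained by inverting $h^{-1}\leq C\lambda(I)$.
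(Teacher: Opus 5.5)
Your proposal is correct and matches the paper's proof: injectivity comes from Lemma \ref{lemma:g1}, and Lemma \ref{lemma:g2} with $t=\gamma h/4$, $r=h$, together with $\lambda(I)^{-1}\leq Ch$ from \eqref{eq:conditions_for_g}, gives all the bounds in \eqref{eq:g}. You also correctly note that the hypothesis $h\geq 8>e^2$ is exactly what Lemma \ref{lemma:g2} needs.
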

\begin{proof}
Injectivity follows from Lemma \ref{lemma:g1}. 
Apply Lemma \ref{lemma:g2} with $t = \gamma h/4$ and $r=h$, 
so that $(b-a)^{-1} \leq Ch$ by \eqref{eq:conditions_for_g}, giving the bounds on $\|Dg\|_\infty$.
\end{proof}

\section{Proof of Theorem \ref{thm:IETs}} \label{sec:proofmain}
To prove Theorem \ref{thm:IETs}, we will find a sequence of intervals and integers $\{I_m,h_m\}$ that satisfy the hypothesis \eqref{eq:conditions_for_g}. Then we will apply Corollary \ref{cor:testfunction} to obtain a sequence of rectangles $A_m:=I_m\times[0,h_m/4]$, functions $g_m\in C_c^1(A_m)$ with $0\leq g_m\leq 1$, and times $t_m\nearrow \infty$ for which
\[
|\text{Cor}_{t_m}(g_m,g_m)|\geq C,
\]
for some constant $C>0$ independent of $m$ and, for $g_m=g_m(x,s)$,
\[ \|\partial_x g_m\|_{\infty}\lesssim(\log t_m)^{{(1+\theta)^2}}\log^{2}\log t_m,\quad \|\partial_s g_m\|_{\infty}\lesssim(\log t_m)^{-1}\log^{2}\log t_m,
\]
where $\theta$ is the Diophantine constant in \eqref{eq:IET1}.
The Theorem will follow from this. 

The proof proceeds in three steps. First, in Section \ref{sec:birkhoff} we establish bounds on the Birkhoff sums of $\varphi$ and $\varphi'$ along the induction sequence. Second, in Section \ref{sec:choicegm} we use conditions \eqref{eq:IET1}--\eqref{eq:IET2} to select a sequence of pairs $(I_m, h_m)$ satisfying \eqref{eq:conditions_for_g}, and define the corresponding test functions $g_m$ and times $t_m$. Third, in Section \ref{sec:lowerbounds} we show that the choice of $t_m$ forces the support of $\phi_{t_m}(g_m)$ to be nearly disjoint from that of $g_m$. This, combined with the lower bound on $\mu(g_m)$, produces a gap $|\text{Cor}_{t_m}(g_m,g_m)|\geq \gamma^2/32d^2$. A good control on $\|g_m\|_{C^1} $ yields the desired bounds.

\begin{remark}[Parameter hierarchy]
Throughout the proof of Theorem \ref{thm:IETs} we work with several small parameters whose roles are distinct. The parameter $\theta > 0$ controls the Roth-type condition \eqref{eq:IET1} and ultimately the exponent in \eqref{eq:IET_cor_lower}. It is chosen last in the proof. The parameter $\varepsilon > 0$ controls the fraction of measure captured by the dominant Rokhlin tower in \eqref{eq:IET2}. It is chosen depending on $\varphi$ and an auxiliary parameter $\delta$. The parameter $\delta > 0$ controls the truncation of orbits away from singularities, and it is chosen small but fixed throughout Sections \ref{sec:birkhoff}--\ref{sec:lowerbounds}.
\end{remark}

\subsection{Bounds on the Birkhoff sums} \label{sec:birkhoff}

To construct the sequence $(g_m,t_m)$, we need to first stablish some bounds on the Birkhoff sums of $\varphi$ and $\varphi'$ at each induction step. The techniques used are standard and should be compared to those in \cite{ulcigrai07,ravotti,ChaikaFraczekKanigowskiUlcigrai2019} and the references therein.

 Let $D_0=D\cup \{0\}$, with $D\subset \T$ the set of discontinuities of $T$. Clearly $|D_0|\leq d$. For any $x\in\mathbb{R}$, denote $\|x\| := \mathrm{dist}(x, D_0+\mathbb{Z})$. Recall that the roof function $\varphi$ is given by \eqref{eq:def_phi}--\eqref{eq:def_psi2}. In particular, its derivative for $x\in \T\setminus D_0$ is
\be \label{eq:def_Dphi}
\begin{aligned}
\varphi'(x) &= \psi'(x) - \frac{c_0^+}{x} +\frac{c_0^-}{1-x} -\sum_{y\in D}\frac{c_y^+}{x-y}\chi_{\{x>y\}}+\sum_{y\in D}\frac{c_y^-}{y-x}\chi_{\{x<y\}}\\
&= \psi_1'(x)+\psi_2'(x) - \frac{c_0^+}{x} +\frac{c_0^-}{1-x} -\sum_{y\in D}\frac{c_y^+}{x-y}\chi_{\{x>y\}}
+\sum_{y\in D}\frac{c_y^-}{y-x}\chi_{\{x<y\}}\\
&-b_0^++ b_0^--\sum_{y\in D}b_y^+\chi_{\{x>y\}}+\sum_{y\in D}b_y^-\chi_{\{x<y\}}+b_0^+\,|\log x| 
- b_0^-\,|\log(1-x)\,|\\
&+\sum_{y\in D}b_y^+\,|\log \,(x-y)\,|\,\chi_{\{x>y\}}-\sum_{y\in D}b_y^-\,|\log \,(y-x)\,|\,\chi_{\{x<y\}}.
\end{aligned}
\ee

The following results hold for any IET $T$ for which the induction process is well defined. Hence, for all IET satisfying the Keane condition. 

Fix a small constant $0.01>\delta > 0$. 
Let $n\in \mathbb{N}$, $i\in\{1,\cdots,d\}$. Let \[q_{i}^{(n)}:=1/\lambda_{i}^{(n)},\] let $I_{i}^{(n)}=(a,b)\subset(0,1)$, and let $h_i^{(n)}$ be its first return time to $I^{(n)}$. 
It holds that 
\[
\frac{h_{i}^{(n)}}{q_{i}^{(n)}} =\lambda\left(\mathcal{T}_{i}^{(n)}\right)\leq 1\qquad\implies\qquad h_{i}^{(n)}\leq q_{i}^{(n)}.
\]
Moreover, we recall the classical result for IETs satisfying the Keane condition \[\lim_{n\to \infty}\min_{i}h_{i}^{(n)}=+\infty.\]

Define \[
F_{n,i}=F_{n,i}(\delta):=\left(a+\delta/q_{i}^{(n)},b-\delta/q_{i}^{(n)}\right).
\]

The following lemma gives the key quantitative input: along the orbit of any point in $F_{n,i}$, the roof function $\varphi$ is logarithmically bounded and its derivative $\varphi'$ grows at most linearly in $q^{(n)}_i$, while the Birkhoff sums of $\varphi'$ are controlled by $q^{(n)}_i \log h^{(n)}_i$. These are precisely the bounds required by hypothesis \eqref{eq:conditions_for_g}.

\begin{lemma}
There exists constants $C_1=C_1(\delta,\varphi,T)>0$ and $N=N(T)\geq 1$ such that for all $n\geq N$
\begin{equation}
\label{eq:BSD_IET}
|S_{l} (\varphi')(x)| \leq C_1 q_{i}^{(n)} \log h_{i}^{(n)} \qquad \forall\, x \in F_{n,i},\qquad 0\leq l\leq h_{i}^{(n)}.
\end{equation}
\label{lemma:BSD_IET}
\end{lemma}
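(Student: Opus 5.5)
We split $\varphi'$ as in \eqref{eq:def_Dphi} into three groups and bound the Birkhoff sums of each along $x,Tx,\dots,T^{l-1}x$ with $x\in F_{n,i}$ and $0\leq l\leq h_i^{(n)}$. The groups are: the bounded part ($\psi_1'+\psi_2'$ and the constants $b_y^\pm$); the logarithmic terms $b_y^\pm|\log\|\cdot\||$; and the singular terms $\pm c_y^\pm/(x-y)$. The bounded part contributes at most $O(l)\leq O(h_i^{(n)})\leq O(q_i^{(n)})$, since $h_i^{(n)}\leq q_i^{(n)}$. The whole argument rests on the tower structure. Since $T^k$ is a translation on $I_i^{(n)}$ for $0\leq k<h_i^{(n)}$, the intervals $T^kI_i^{(n)}$ are pairwise disjoint and have length $\lambda_i^{(n)}=1/q_i^{(n)}$. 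Moreover, no point of $D_0$ lies in the interior of any level $T^kI_i^{(n)}$. This is the standard fact that the discontinuities of $T^{h_i^{(n)}}$ restricted to $I_i^{(n)}$ are only the endpoints coming from Rauzy--Veech induction, together with $0$ being the left endpoint of $I^{(n)}$.

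For the singular part, fix $y\in D_0$ and consider the points $T^kx$, $0\leq k<l$. Each lies in a distinct level of the tower and is at distance at least $\delta/q_i^{(n)}$ from that level's endpoints, because $x\in F_{n,i}$ and $T^k$ is a translation. Since $y$ is not interior to any level, $|T^kx-y|\geq\delta/q_i^{(n)}$. Ordering the points on one side of $y$ by distance, the $j$-th closest lies in a level at least $j-1$ levels away (the levels are disjoint, of equal length $1/q_i^{(n)}$, and $y$ is not inside any of them). Hence $|T^kx-y|\geq (j-1)/q_i^{(n)}+\delta/q_i^{(n)}$. Summing gives
\[\sum_{k<l}\frac{1}{|T^kx-y|}\leq q_i^{(n)}\Bigl(\delta^{-1}+\sum_{j=1}^{h_i^{(n)}}\frac1j\Bigr)\leq q_i^{(n)}\bigl(\delta^{-1}+1+\log h_i^{(n)}\bigr).\]
Summing over the at most $2d$ one-sided terms, and using that $h_i^{(n)}\to\infty$ so that for $n\geq N(T)$ we have $\log h_i^{(n)}\geq 1$, the constant $\delta^{-1}$ is absorbed. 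This gives $\lesssim_{\delta,\varphi} q_i^{(n)}\log h_i^{(n)}$. We deliberately do not exploit cancellation between the $+$ and $-$ sides: the absolute bound already has the required order, and asymmetry is irrelevant here.

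For the logarithmic terms, the same ordering gives
\[\sum_{k<l}\bigl|\log|T^kx-y|\bigr|\leq\sum_{j\leq h_i^{(n)}}\log\frac{q_i^{(n)}}{j-1+\delta}\lesssim h_i^{(n)}\log q_i^{(n)}-\log(h_i^{(n)}!)+O_\delta(\log q_i^{(n)}).\]
Using Stirling, this is at most $h_i^{(n)}\bigl(\log(q_i^{(n)}/h_i^{(n)})+1\bigr)+O_\delta(\log q_i^{(n)})$. Since $x\mapsto x\log(q/x)$ is bounded by $q/e$, this is $O(q_i^{(n)})$ plus a term $O(\log q_i^{(n)})$, which is also $O(q_i^{(n)})$.

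The main obstacle is justifying cleanly the combinatorial claim that no point of $D_0$ lies in the interior of a level $T^kI_i^{(n)}$, $0\leq k<h_i^{(n)}$. This is equivalent to $T^{k}$ being continuous (a translation) on $I_i^{(n)}$ for $k\leq h_i^{(n)}$, and to $0\notin\operatorname{int}T^kI_i^{(n)}$. I would cite the standard Rauzy--Veech property that each $I^{(n)}_i$ is a maximal interval of continuity of the first return map, with the left endpoint of $I^{(n)}$ equal to $0$. A secondary point is the "$j$-th closest is $j-1$ levels away" counting near $0\equiv1$ on $\T$; there we treat the two one-sided singularities at $0$ and $1$ separately, exactly as for the other $y$.
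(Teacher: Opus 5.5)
Your proposal is correct and follows essentially the same route as the paper: the points $T^kx$, $0\le k<l$, lie in distinct disjoint tower levels of length $1/q_i^{(n)}$, at distance at least $\delta/q_i^{(n)}$ from the singularities. Hence the singular part is dominated by $q_i^{(n)}$ times a harmonic series up to $h_i^{(n)}$, while the bounded part contributes $O(h_i^{(n)})\le O(q_i^{(n)})$. The differences are minor: you justify explicitly why no point of $D_0$ lies in the interior of a level, which the paper uses implicitly, and you bound the logarithmic terms separately by $O(q_i^{(n)})$ via Stirling, whereas the paper absorbs them into the same harmonic-series estimate.
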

\begin{proof}

Recall the definition of $\varphi'$ in \eqref{eq:def_Dphi}. Let $1\leq l\leq h_{i}^{(n)}$ and consider $S_l (\varphi')(x)$.
The term $\psi_1'(x)+\psi_2'(x)$ is uniformly bounded, so \[|S_l (\psi_1'(x)+\psi_2'(x))(x)|\leq l\norm{\psi_1'}_{L^{\infty}}+l\norm{\psi_2'}_{L^{\infty}},\qquad i=1,2.\]

On $F_{n,i}$, the orbit segment $\{T^k x\}_{k=0}^{h_{i}^{(n)}-1}$ avoids intervals of length $2\delta/q_{i}^{(n)}$ around the singularities, and $\{T^k I_i^{(n)}\}_{k=0}^{h_{i}^{(n)}-1}$ are disjoint intervals of $\T$, so the smallest gap between consecutive points when ordered in $(0,1)$ is $\lambda_{i}^{(n)}$. The worst-case sum is therefore comparable to the harmonic series
\[
\sum_{k=0}^{l-1}\left( \frac{\max c_y^{\pm}}{\|T^k x\|}+\max |b_y^{\pm}|(1-\log \|T^k x\|)\right)\lesssim^{\varphi} \sum_{j=0}^{l-1} \frac{q_{i}^{(n)}}{\delta+j} \lesssim^\delta q_{i}^{(n)}{(\log l+ 1)}.
\]
Thus there exists a constant $C_1 = C_1(\delta, \varphi, T) > 0$ such that
\begin{equation*}
{|S_{l} (\varphi')(x)| \leq C_1 q_{i}^{(n)} \log h_{i}^{(n)} \qquad \forall\, x \in F_{n,i},\qquad 0\leq l\leq h_{i}^{(n)}.}
\end{equation*}
\end{proof}

Now, fix $\theta,\eps>0$ and assume $T$ satisfies \eqref{eq:IET1}--\eqref{eq:IET2}. Denote by $C_\theta$ and by $\{r_m\}$ the corresponding constant and sequence. Let \[h_{r_m}=h^{(r_m)}_{j^*},\qquad q_{r_m}=q^{(r_m)}_{j^*},\qquad \text{ and } \qquad F_{r_m}=F_{r_m,j^*}.\] 

For each $m$, define
\[
F'_m=F'_m(\delta) := F_{r_m}\cap T^{h_{r_m}}F_{r_m},
\qquad
E_m=E_m(\delta):= \bigcup_{k=0}^{h_{r_m}-1} T^k F_{r_m}.
\]
The set $E_m$ is the union of $h_{r_m}$ disjoint translates of $F_{r_m}$ under $T$, forming the bulk of the Rokhlin tower over $F_{r_m}$. The intersection $F'_m = F_{r_m} \cap T^{h_{r_m}} F_{r_m}$ serves as the base of a sub-tower whose first return map is well approximated by a translation. Lemma \ref{lemma:def_M_IET} below shows that $S_{h_{r_m}}(\phi)$ is nearly constant on $E_m$, with oscillation at most logarithmic in $h_{r_m}$.

\begin{lemma} \label{lemma:def_M_IET}
There exists some $C_2=C_2(\delta, \varphi, T)>0$ and $M=M(T)\geq 1$ such that for any $m\geq M$,
\begin{equation}
\label{eq:BSV_IET}
\left|S_{h_{r_m}} (\varphi)(x) - S_{h_{r_m}} (\varphi)(y)\right| \leq C_2 \log h_{r_m} \qquad \forall\, x,y \in E_m.
\end{equation} 
\end{lemma}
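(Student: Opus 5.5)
The plan is to reduce the comparison of $S_{h_{r_m}}(\varphi)$ at two points of $E_m$ to a comparison at two points of the base $F_{r_m}$, and then to apply the mean value theorem together with Lemma \ref{lemma:BSD_IET}.

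\emph{Step 1: the base.} For $x,y\in F_{r_m}\subset I^{(r_m)}_{j^*}$, the iterates $T^k$ with $0\le k< h_{r_m}$ act as a single translation on $I^{(r_m)}_{j^*}$, so the segment $[x,y]$ is carried along without hitting $D_0$. Hence $S_{h_{r_m}}(\varphi)$ is differentiable on $[x,y]$ with derivative $S_{h_{r_m}}(\varphi')$. The mean value theorem and Lemma \ref{lemma:BSD_IET} (with $l=h_{r_m}$, $n=r_m\ge N$) give
\[
|S_{h_{r_m}}(\varphi)(x)-S_{h_{r_m}}(\varphi)(y)|\le |x-y|\, C_1 q_{r_m}\log h_{r_m}\le C_1\log h_{r_m},
\]
because $|x-y|\le\lambda^{(r_m)}_{j^*}=1/q_{r_m}$.

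\emph{Step 2: general points of $E_m$.} Write $x=T^{k}x_0$ and $y=T^{k'}y_0$ with $x_0,y_0\in F_{r_m}$ and $0\le k,k'<h_{r_m}$. The cocycle identity gives
\[
S_{h}(\varphi)(T^kx_0)=S_h(\varphi)(x_0)+S_k(\varphi)(T^hx_0)-S_k(\varphi)(x_0),
\]
where $h=h_{r_m}$. The two correction terms differ only through the points $T^{j}x_0$ and $T^{h+j}x_0$ for $0\le j<k$, so I would bound $|\varphi(T^{h+j}x_0)-\varphi(T^jx_0)|$ termwise.

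\emph{Step 3: the correction terms (main obstacle).} The difficulty is that $T^hx_0$ lies in $I^{(r_m)}$ but need not lie in $F_{r_m}$, or even in $I^{(r_m)}_{j^*}$, so $T^{h+j}x_0$ may come close to a singularity.

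To handle this, I would compare $\varphi(T^{h+j}x_0)$ with $\varphi(T^{j}x_0)$ directly. Since $|T^{h+j}x_0-T^jx_0|$ is governed by the return displacement, Rokhlin tower distinctness gives $\|T^{j}x_0\|\gtrsim \delta/q_{r_m}$ only for the base orbit. For the shifted orbit, I would instead use the fact that $\varphi$ is logarithmic. Each term satisfies $|\varphi(z)|\lesssim 1+|\log\|z\||$, so a crude bound summing over the segment $\{T^{h+j}x_0\}_{j<k}$ is handled by the harmonic-type argument of Lemma \ref{lemma:BSD_IET} applied to $\log$ rather than $1/\cdot$. The resulting bound is $\sum_j|\log\|T^{j}z\||$ minus the mean $k\int\varphi$, which is $O(\log h)$ once the points are $1/q$-separated.

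More precisely, I would write $S_k(\varphi)(z)-S_k(\varphi)(w)$ for two $1/q$-separated orbit segments as a sum of points of a $\lambda^{(r_m)}$-spaced family, each term controlled by $\log q$. Only the $O(d)$ closest points to each singularity contribute more than $O(1/q)\cdot q$, and the rest telescope as Riemann sums. This yields $O(\log h_{r_m})$ using $q_{r_m}\lesssim h_{r_m}$, which follows from (H2) since $\lambda(\mathcal{T}_{j^*})\ge 1-\varepsilon$. Combining the three steps gives $C_2$.
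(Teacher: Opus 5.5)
The gap is in Step 3. Steps 1 and 2 are sound and use the same ingredients as the paper. The base bound is the paper's mean value argument with Lemma~\ref{lemma:BSD_IET}. The identity $S_h(\varphi)(T^kx_0)-S_h(\varphi)(x_0)=S_k(\varphi)(T^{h}x_0)-S_k(\varphi)(x_0)$, with $h=h_{r_m}$, is the one in the paper's proof.

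You rightly observe that $T^{h}x_0$ may leave $F_{r_m}$, but the harmonic/Riemann-sum argument you sketch does not handle this:
\begin{itemize}
\item Separation of the points $T^{h+j}x_0$ says nothing about the one closest to $D_0$. Its term $\bigl|\log\|T^{h+j}x_0\|\bigr|$ is unbounded over $x_0\in F_{r_m}$ as soon as, e.g., $T^{h}F_{r_m}$ contains in its interior a point whose forward orbit meets $D_0$ within $h-2$ steps. The constant $3/4$ in \eqref{eq:IET2} does not exclude this, since $T^{h}$ can displace $I^{(r_m)}_{j^*}$ by up to $\lambda^{(r_m)}/4$, which can exceed $\delta/q_{r_m}$.
\item Once the shifted orbit leaves the $j^*$-tower it runs through levels of other towers. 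So neither the claimed $1/q_{r_m}$-separation nor an $O(\log h)$ comparison of a partial sum $S_k(\varphi)$ with $k\int\varphi$ is justified. No Denjoy--Koksma-type bound is available for arbitrary partial orbit segments of an IET that would give this.
\end{itemize}

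The missing idea is to arrange that the shifted orbit never leaves $F_{r_m}$, rather than to estimate it. If both $x_0$ and $T^{h}x_0$ lie in $F_{r_m}\subset I^{(r_m)}_{j^*}$, every $T^k$ with $k\le h$ is a translation on the segment between them. The mean value theorem and Lemma~\ref{lemma:BSD_IET} then give
\[
\bigl|S_k(\varphi)(T^{h}x_0)-S_k(\varphi)(x_0)\bigr|\le |T^{h}x_0-x_0|\,C_1q_{r_m}\log h_{r_m}\le C_1\log h_{r_m},\qquad 0\le k\le h.
\]
This is what the paper does. The displacement bound $\lambda^{(r_m)}/4$ coming from \eqref{eq:IET2} makes the set of such $x_0$ nonempty (the paper's $F'_m$, up to the direction of the shift). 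Each level of $E_m$ is compared with the orbit of one such $x_0$, and the triangle inequality gives $C_2=4C_1$.

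The paper's passage from that single orbit to arbitrary points of a higher level (its within-component mean value step) implicitly needs the same control you were worried about, since Lemma~\ref{lemma:BSD_IET} is only stated on the base. Your Step~2 decomposition, by contrast, closes for every $x,y\in E_m$, with $C_2=3C_1(\delta/2,\varphi,T)$, once the displacement is at most $\tfrac{\delta}{2}\lambda^{(r_m)}_{j^*}$. In that case $T^{h}F_{r_m}$ lies in the analogue of $F_{r_m}$ with $\delta/2$ in place of $\delta$. This is not implied by the constant $3/4$ in \eqref{eq:IET2}. It does hold along the sequence constructed in Appendix~\ref{app:H2}, where $\lambda^{(r_m)}_{j^*}/\lambda^{(r_m)}\to 1$.
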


\begin{proof}
By \eqref{eq:IET2},
\[
\left.
\begin{aligned}
{\lambda_{j^*}^{(r_m)}} \geq 3{\lambda^{(r_m)}}/4,
\\
T^{r_m}I_{j^*}^{(r_m)}\subset I^{(r_m)}
\end{aligned}
\right\}
\]
\[\implies \lambda\left(F_{r_m}\cap T^{h_{r_m}}F_{r_m}\right)\geq \lambda^{(r_m)}/2-2\delta/q_{r_m}\geq \left(1/2-{2\delta}\right)\lambda^{(r_m)}>0.
\]
Therefore $F_m'\neq\emptyset$.

$E_m$ is formed of $h_{r_m}$ disjoint intervals, each corresponding to an iterate under $T$ of $F_{r_m}$. If $x, y$ lie in the same connected component of $E_{m}$,
\[
\left|S_{h_{r_m}} (\varphi)(x) - S_{h_{r_m}} (\varphi)(y)\right| \leq |x-y| \cdot \sup_{\xi \in [x,y]} |S_{h_{r_m}}( \varphi')(\xi)|,
\]
so, provided $m$ large enough, \eqref{eq:BSD_IET} gives
\[
\sup_{x,y \in J} \left|S_{h_{r_m}} (\varphi)(x) - S_{h_{r_m}} (\varphi)(y)\right| \leq C_1 q_{r_m} \log h_{r_m} \lambda(F_{r_m} ) \leq C_1\log h_{r_m}.
\]

Consider the first $q_{r_m}$ iterates of the orbit of $x_0\in F'_m $. Each of them lie in a different interval of $E_m$ and $x_0,T^{q_{r_m}}x_0\in F_{r_m}$. Hence
\[\begin{aligned}
\sup_{0< l < h_{r_m}} \left|S_{h_{r_m}} (\varphi)(T^lx_0) - S_{h_{r_m}} (\varphi)(x_0)\right| &\leq \sup_{0< l < h_{r_m}} \left|S_{l} (\varphi)(x_0) - S_{l} (\varphi)(T^{h_{r_m}}x_0)\right| \\&\leq C_1 \log h_{r_m}.
\end{aligned}
\]

To pass from a single connected component to all of $E_m$, let $x, y \in E_m$ be arbitrary.  If they lie in the same component, the bound $C_1 \log h_{r_m}$ applies directly. If not, pick any $x_0 \in F'_m$ and $l_1, l_2$ such that $T^{l_1} x_0$ and $T^{l_2} x_0$ lie in the same components as $x$ and $y$ respectively. Then the triangular inequality gives the result for $C_2 = 4C_1$.
\end{proof}

\subsection{Choice of $g_m$ and $t_m$}
\label{sec:choicegm}

For $I_{i}^{(n)}=(a,b)\subset(0,1)$, consider the base interval
\be
F_{n,i}=\left(a+\delta/q_i^{(n)},b-\delta/q_i^{(n)}\right).
\label{eq:def_In_IET}
\ee

\begin{lemma}
\label{lemma:goog_In_IET}
There exists $N=N(T)\geq 1$ such that for each $n\geq N$, if $q_i^{(n)}\leq h_i^{(n)}d$, then the pair $(I=F_{n,i},h=h_i^{(n)})$ satisfy \eqref{eq:conditions_for_g} with respect to $T$.
\end{lemma}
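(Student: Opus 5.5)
We need to verify the two conditions in \eqref{eq:conditions_for_g} for $I=F_{n,i}$ and $h=h_i^{(n)}$: the first-return property used in the definition of $h$, translation of $T^k$ on $I$ for $0\le k\le h$, and $h^{-1}\le C\lambda(I)$. The plan is to work inside the Rokhlin tower $\mathcal T_i^{(n)}$ and use only the standard structure of the Zorich (Rauzy--Veech) induction. For $n$ large, $h_i^{(n)}\ge 2$ (indeed $\min_i h_i^{(n)}\to\infty$ under Keane), and the choice of $N(T)$ will come from this and from ensuring $\delta/q_i^{(n)}$ is small.

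\textbf{Step 1 (return time).} By construction of the induction, the intervals $T^kI_i^{(n)}$, $0\le k\le h_i^{(n)}-1$, are pairwise disjoint, and $h_i^{(n)}$ is the first return time of every point of $I_i^{(n)}$ to $I^{(n)}\supset I_i^{(n)}$. Hence $T^kF_{n,i}\cap F_{n,i}=\varnothing$ for $1\le k\le h-1$. We also need $T^{h}F_{n,i}\cap F_{n,i}\ne\varnothing$, so that the minimum defining $h$ equals $h_i^{(n)}$ rather than something larger. If this fails for a given $i$, I would instead read the hypothesis as using $h_i^{(n)}$ merely as an upper bound for the return time in Lemma \ref{lemma:g1}. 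The injectivity argument there only needs no returns before time $h$, so the disjointness alone suffices for everything downstream. I would state this remark explicitly.

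\textbf{Step 2 (translation).} The first condition should follow from the fact that the interior of each floor $T^kI_i^{(n)}$, $0\le k<h_i^{(n)}$, contains no discontinuity of $T$. This is the defining property of the Rauzy--Veech towers: the induced map on $I_i^{(n)}$ is a single translation, $T^{h_i^{(n)}}I_i^{(n)}$ is a single interval inside $I^{(n)}$, and each intermediate $T^k$ is continuous on $I_i^{(n)}$. Therefore $T^k$ restricted to $I_i^{(n)}\supset F_{n,i}$ is a translation for $0\le k\le h$. Note that $k=h$ is included because $T^h$ is the induced map, which acts as a translation on the $i$-th subinterval. Working with the open interval $F_{n,i}$ avoids any endpoint issues coming from right-continuity.

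\textbf{Step 3 (length bound).} This is purely arithmetic. We have $\lambda(F_{n,i})=\lambda_i^{(n)}-2\delta/q_i^{(n)}=(1-2\delta)/q_i^{(n)}$. Using the assumption $q_i^{(n)}\le d\,h_i^{(n)}$,
\[
\lambda(F_{n,i})\ge\frac{1-2\delta}{d\,h_i^{(n)}},
\]
so $h^{-1}\le C\lambda(I)$ holds with $C=d/(1-2\delta)\le 2d$, since $\delta<0.01$. Nonemptiness of $F_{n,i}$ is automatic because $\delta<1/2$.

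I expect the only real subtlety to be Step 1's compatibility with the paper's definition of $h$ as the exact first return time. My plan is to rely on disjointness of the floors, which is all that Lemma \ref{lemma:g1} and Corollary \ref{cor:testfunction} actually use. Everything else is routine.
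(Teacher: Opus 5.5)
Your proof is correct and follows the paper's: the translation property comes from the Rokhlin tower structure of the induction, and the length bound is the same arithmetic, $\lambda(F_{n,i})=(1-2\delta)/q_i^{(n)}\geq (2d\,h_i^{(n)})^{-1}$, which gives $C=2d$. Your additional points are sound: you handle the case $k=h$ via the induced map, you secure $h\geq 2$ for $n\geq N(T)$, and you observe that Lemma~\ref{lemma:g1} only needs $T^kF_{n,i}\cap F_{n,i}=\varnothing$ for $1\le k<h$. One wording fix in Step 1: $h_i^{(n)}$ is a \emph{lower} bound for the true first return time of $F_{n,i}$, not an upper bound.
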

\begin{proof}
It is clear by their definitions that $T^k$ is a translation on $I$ for any $0\leq k<h$, and \[\lambda(I)=\lambda(F_{n,i})= \frac{{1-2\delta}}{q_i^{(n)}}\geq \frac{1}{2h_i^{(n)}d}=:C^{-1}h^{-1}.\]
\end{proof}

For the rest of the section, let $T$ satisfy \eqref{eq:IET1}--\eqref{eq:IET2}. 

The next lemma shows that, given the scale $\log h_{r_m}$ imposed by \eqref{eq:IET2}, we can always find an induction level $n$ and an index $i$ such that the pair $(F_{n,i}, h^{(n)}_i)$ satisfies \eqref{eq:conditions_for_g} with return time comparable to $\log h_{r_m}$, and such that $h^{(n)}_i$ is not too small relative to $q^{(n)}_i$. 

\begin{lemma} \label{lemma:hm_vs_hn}
There exists $\hat C_\theta=\hat C_\theta(T)>0$ and $M=M(\delta, \varphi,T)\geq 1$ such that for all $m\geq M$,
$\exists n=n(m,\delta,\varphi,T)\in\N$ and $i\in\{1,\dots,d\}$ satisfying 
\be
 \hat C_\theta\left(h_i^{(n)}\right)^{\frac{1}{(1+\theta)^2}}\leq \frac{2C_2 }{\gamma}\log h_{r_m}\leq  h_i^{(n)},
\label{eq:hm_vs_hn}
\ee
for $C_2$ as in Lemma \ref{lemma:def_M_IET}, and
\[
\frac{q_i^{(n)}}{d}\leq h_i^{(n)}\leq q_i^{(n)}.
\]
\end{lemma}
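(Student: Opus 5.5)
The plan is to choose the induction level $n$ by a stopping-time argument on $\lambda_{\max}^{(n)}:=\max_j\lambda_j^{(n)}$. At that level, the index $i$ will be one whose Rokhlin tower carries a proportion at least $1/d$ of the measure. Condition \eqref{eq:IET1} then pins $h_i^{(n)}$ between two powers of $1/\lambda_{\max}^{(n)}$. Throughout I write $L=L_m:=\frac{2C_2}{\gamma}\log h_{r_m}$; note $L_m\to\infty$ since $h_{r_m}\to\infty$.

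\emph{Step 1: choice of $i$ at a fixed level.} The towers $\mathcal{T}_j^{(n)}$ partition $[0,1)$ up to a null set, so $\sum_j\lambda_j^{(n)}h_j^{(n)}=1$. Hence for every $n$ there is an index $i=i(n)$ with $\lambda_i^{(n)}h_i^{(n)}\geq 1/d$, that is $h_i^{(n)}\geq q_i^{(n)}/d$. The bound $h_i^{(n)}\leq q_i^{(n)}$ was already noted before Lemma \ref{lemma:BSD_IET}. So the second display of the lemma holds for this $i$. The same choice gives
\[
\frac{1}{d\,\lambda_{\max}^{(n)}}\leq \frac{1}{d\lambda_i^{(n)}}\leq h_i^{(n)}\leq q_i^{(n)}\leq \frac{1}{\min_j\lambda_j^{(n)}}\leq C_\theta^{-1}\bigl(\lambda_{\max}^{(n)}\bigr)^{-(1+\theta)},
\]
where the last inequality is \eqref{eq:IET1}.

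\emph{Step 2: choice of $n$.} Since $\lambda_{\max}^{(n)}\to0$, I take $n$ minimal with $1/(d\lambda_{\max}^{(n)})\geq L$. Step 1 then gives the right-hand inequality $h_i^{(n)}\geq L$ of \eqref{eq:hm_vs_hn}. Because $L_m\to\infty$, for $m\geq M$ this $n$ exceeds any prescribed $N(T)$, and in particular $n\geq1$. Minimality gives $\lambda_{\max}^{(n-1)}>1/(dL)$.

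\emph{Step 3: one-step comparison.} To get the left inequality I need to compare consecutive levels. The claim to check is
\[
\lambda_{\max}^{(n)}\gtrsim \min_j\lambda_j^{(n-1)}.
\]
The reason I expect it to hold is that in one Zorich step the winner is fixed and only the losing intervals are cut off. So the new induced interval $I^{(n)}$ still contains a subinterval of $I^{(n-1)}$ of length at least $\min_j\lambda_j^{(n-1)}$, and one of the $d$ new subintervals has length at least $\lambda^{(n)}/d$. If needed, I would instead bound $\lambda^{(n)}$ from below using the structure of $\mathcal{B}_n$ and absorb the resulting factor into the constants.

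Granting this and using \eqref{eq:IET1} at level $n-1$,
\[
\lambda_{\max}^{(n)}\gtrsim C_\theta\bigl(\lambda_{\max}^{(n-1)}\bigr)^{1+\theta}>C_\theta(dL)^{-(1+\theta)}.
\]
Inserting this into the upper bound of Step 1 gives
\[
h_i^{(n)}\lesssim C_\theta^{-1}C_\theta^{-(1+\theta)}(dL)^{(1+\theta)^2}.
\]
Rearranging yields $\hat C_\theta\,(h_i^{(n)})^{1/(1+\theta)^2}\leq L$ with $\hat C_\theta$ depending only on $\theta,d,T$, which is the left inequality of \eqref{eq:hm_vs_hn}.

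\emph{Main obstacle.} Step 3 is the delicate point. The Zorich acceleration may group many Rauzy–Veech substeps, so the lengths can shrink by a large factor in a single step. I would first try to show that the non-winning interval surviving to the end of the step, or the winner's remainder, has length at least $\min_j\lambda_j^{(n-1)}$ up to a factor $d$. Failing that, I would run the stopping argument along Rauzy–Veech steps inside the Zorich step, where lengths change by at most a factor $2$. I would then reapply \eqref{eq:IET1}, accepting an extra constant depending only on $T$ and $\theta$.
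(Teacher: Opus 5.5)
Your argument is correct and is essentially the paper's. The paper stops at the first level where every tower with $\lambda_j^{(k)}h_j^{(k)}\geq 1/d$ has height at least your $L$, rather than at the first level where $1/(d\max_j\lambda_j^{(k)})\geq L$; minimality still gives $\max_j\lambda_j^{(n-1)}>1/(dL)$, after which both proofs apply \eqref{eq:IET1} twice and conclude with $\lambda_i^{(n)}\leq 1/h_i^{(n)}$. Your Step 3 comparison is exactly the inequality the paper uses without comment, and it holds with constant $1$: within a Zorich step only the winner's length decreases, so every non-winning interval keeps its length and $\max_j\lambda_j^{(n)}\geq\min_j\lambda_j^{(n-1)}$, and no fallback to Rauzy--Veech substeps is needed.
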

\begin{proof}
Given ${m}$, for each induction step $k$, let $S_k := \left\{j \in \{1,\dots,d\} : h_j^{(k)}\lambda_j^{(k)} \geq \frac{1}{d}\right\}$ be the set of dominant components. Since $\sum_{j=1}^d h_j^{(k)}\lambda_j^{(k)} = 1$, $S_k \neq \emptyset$ for all $k$. We define 
\[n:=\min\left\{k:\min_{j\in S_k} h_j^{(k)}\geq \frac{2C_2}{\gamma}\log h_{r_m}\right\}.\] 
Clearly $n\to\infty$ as $m\to\infty$. Pick $m$ large enough so that $n\geq 2$.

By definition of $n$, at step $n$ we can choose an index $i \in S_n$ such that 
\[
\frac{1}{d}\leq h_i^{(n)}\lambda_i^{(n)}\leq 1.
\]
Fixing this $i$, we automatically have $h_i^{(n)} \geq \min_{j \in S_n} h_j^{(n)} \geq \frac{2C_2}{\gamma}\log h_{r_m}$. 

Furthermore, by the minimality of $n$, the condition must fail at step $n-1$, meaning there exists an index $j \in S_{n-1}$ such that $h_{j}^{(n-1)} < \frac{2C_2}{\gamma}\log h_{r_m}$. Since $j \in S_{n-1}$, it satisfies $h_{j}^{(n-1)} \geq \frac{1}{d\lambda_{j}^{(n-1)}} \geq \frac{1}{d\max_j\lambda_j^{(n-1)}}$. Combined with \eqref{eq:IET1}, we obtain
\[\begin{aligned}
h_i^{(n)}&\geq  \frac{2C_2}{\gamma}\log h_{r_m}  \geq h_{j}^{(n-1)}\geq \frac{1}{d\max_j\lambda_j^{(n-1)}}\geq \frac{1}{d C_\theta\left(\min_j \lambda_j^{(n-1)}\right)^{\frac{1}{1+\theta}}}\\
&\geq \frac{1}{d C_\theta\left(\max_j \lambda_j^{(n)}\right)^{\frac{1}{1+\theta}}}
\geq \frac{1}{d C_\theta^{\frac{2+\theta}{1+\theta}}\left(\min_j \lambda_j^{(n)}\right)^{\frac{1}{(1+\theta)^2}}}\geq \frac{1}{d C_\theta^{\frac{2+\theta}{1+\theta}} \left(\lambda_i^{(n)}\right)^{\frac{1}{(1+\theta)^2}}}\\
&\geq \frac{1}{d C_\theta^{\frac{2+\theta}{1+\theta}}}\left(h_i^{(n)}\right)^{\frac{1}{(1+\theta)^2}}=:\hat C_\theta \left(h_i^{(n)}\right)^{\frac{1}{(1+\theta)^2}},
\end{aligned}
\]
where the final inequality follows from the total area constraint, implying $\lambda_i^{(n)} \leq 1/h_i^{(n)}$. This completes the proof.
\end{proof}

Now, given $m$, let $n$ and $i$ as in Lemma \ref{lemma:hm_vs_hn}, and let $q_n:=q_{i}^{(n)},$ $ h_n :=h_{i}^{(n)}$ so that 
\[ 
\hat C_\theta \left(h_n\right)^{\frac{1}{(1+\theta)^2}}\leq \frac{2C_2 }{\gamma}\log h_{r_m}\leq h_n, \qquad \frac{q_n}{d}\leq h_n,
\]
and let $I_n:=F_{n,i}$.

\begin{lemma}
\label{lemma:def_tn_IET}
There exists $M = M(\delta, \varphi, T,\theta) \geq 1$ such that for all $m \geq M$ and its corresponding $n$, the function $g_m \in C^1(A_m)$ given by Corollary \ref{cor:testfunction} associated with $(I_n, h_n, T)$ is well-defined and supported on $A_m := I_n \times [0, \gamma h_n/4]$. Moreover, defining
\begin{equation}
\label{eq:def_tn_IET}
t_m := \sup_{F_{r_m}} S_{h_{r_m}}(\varphi) + \frac{\gamma h_n}{4},
\end{equation}
it holds that
\begin{equation}
\label{eq:tn_inequality}
t_m \leq \inf_{F_{r_m}} S_{h_{r_m}}(\varphi) + \frac{3\gamma h_n}{4},
\end{equation}
and there exists $C_3 = C_3(\delta, \varphi, T) > 0$ such that
\begin{equation}
\label{eq:tn_qn_IET}
C_3^{-1} h_{r_m} \leq t_m \leq C_3 h_{r_m}.
\end{equation}
\end{lemma}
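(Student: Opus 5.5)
The proof has three parts: well-definedness of $g_m$ from Corollary \ref{cor:testfunction}, the bound \eqref{eq:tn_inequality} from Lemma \ref{lemma:def_M_IET}, and the comparability \eqref{eq:tn_qn_IET} from lower and upper bounds on Birkhoff sums.

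\textbf{Well-definedness of $g_m$.} By Lemma \ref{lemma:hm_vs_hn}, for $m\geq M$ we have an $n$ and an $i$ with $q_n/d\leq h_n\leq q_n$. Also $h_n\geq \frac{2C_2}{\gamma}\log h_{r_m}\to\infty$, so for large $m$ we get $n\geq N(T)$ and $h_n\geq 8$.
\begin{itemize}
\item Since $q_n\leq d h_n$, Lemma \ref{lemma:goog_In_IET} shows that $(I_n,h_n)$ satisfies \eqref{eq:conditions_for_g}, with constant $C=2d$.
\item Corollary \ref{cor:testfunction} then yields $g_m\in C^1_c(A_m)$ with $A_m=I_n\times[0,\gamma h_n/4]$, and $A_m$ embeds injectively into $X$.
\end{itemize}

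\textbf{The inequality \eqref{eq:tn_inequality}.} Since $F_{r_m}\subset E_m$ (the $k=0$ floor), Lemma \ref{lemma:def_M_IET} gives
\[
\sup_{F_{r_m}}S_{h_{r_m}}(\varphi)-\inf_{F_{r_m}}S_{h_{r_m}}(\varphi)\leq C_2\log h_{r_m}\leq \frac{\gamma h_n}{2},
\]
where the last step is the right-hand inequality of \eqref{eq:hm_vs_hn}. Adding $\gamma h_n/4$ to both sides gives \eqref{eq:tn_inequality}.

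\textbf{Lower bound in \eqref{eq:tn_qn_IET}.} This is immediate: $t_m\geq \sup_{F_{r_m}}S_{h_{r_m}}(\varphi)\geq \gamma h_{r_m}$, so any $C_3\geq\gamma^{-1}$ works.

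\textbf{Upper bound in \eqref{eq:tn_qn_IET}.} Two terms need control.
\begin{itemize}
\item The term $\gamma h_n/4$. From the left-hand inequality of \eqref{eq:hm_vs_hn}, $h_n\lesssim_\theta(\log h_{r_m})^{(1+\theta)^2}=o(h_{r_m})$, so this term is negligible.
\item The term $\inf_{F_{r_m}}S_{h_{r_m}}(\varphi)$. It suffices to bound $S_{h_{r_m}}(\varphi)(x)$ for a single $x\in F_{r_m}$; then \eqref{eq:tn_inequality} bounds $t_m$ by that value plus $O(h_n)$.
\end{itemize}

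To bound the single Birkhoff sum, take $x\in F_{r_m}$. The points $T^kx$, $0\leq k<h_{r_m}$, lie in the disjoint floors $T^kI^{(r_m)}_{j^*}$, each of length $\lambda^{(r_m)}_{j^*}=1/q_{r_m}$. Moreover $x$ sits at distance at least $\delta/q_{r_m}$ from the floor endpoints, so every $T^kx$ is at distance at least $\delta/q_{r_m}$ from $D_0$. Near each singularity, order the points by distance; as in Lemma \ref{lemma:BSD_IET}, the $j$-th closest is at distance $\gtrsim (j+\delta)/q_{r_m}$. Hence
\[
\sum_{k}|\log\|T^kx\||\lesssim \sum_{j\leq h_{r_m}}\log\frac{q_{r_m}}{j+\delta}\lesssim h_{r_m}\log\frac{q_{r_m}}{h_{r_m}}+h_{r_m}+h_{r_m}|\log\delta|.
\]
By \eqref{eq:IET2}, $h_{r_m}/q_{r_m}=\lambda(\mathcal T^{(r_m)}_{j^*})\geq 1-\varepsilon$, so $\log(q_{r_m}/h_{r_m})$ is bounded. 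The bounded part $\psi$ contributes $O(h_{r_m})$. Therefore $S_{h_{r_m}}(\varphi)(x)\leq C(\delta,\varphi)h_{r_m}$, which closes the upper bound.

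The main obstacle is this last estimate. The harmonic-type comparison of Lemma \ref{lemma:BSD_IET} has to be redone for $|\log|$ instead of $1/x$, and the result must be linear in $h_{r_m}$, not $h_{r_m}\log h_{r_m}$. This works only because $q_{r_m}\asymp h_{r_m}$, which \eqref{eq:IET2} provides.
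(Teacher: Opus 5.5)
Your proposal is correct and follows the paper's proof essentially step for step. You obtain $g_m$ from Lemma \ref{lemma:goog_In_IET} and Corollary \ref{cor:testfunction}, and you derive \eqref{eq:tn_inequality} from Lemma \ref{lemma:def_M_IET} together with the right-hand inequality of \eqref{eq:hm_vs_hn}. For \eqref{eq:tn_qn_IET}, the lower bound uses $\gamma:=\inf\varphi$, and the upper bound combines $h_n=o(h_{r_m})$ with a logarithmic harmonic-type estimate that is linear in $h_{r_m}$ precisely because \eqref{eq:IET2} gives $q_{r_m}\asymp h_{r_m}$.

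The only cosmetic difference is that you bound $t_m$ through $\inf_{F_{r_m}}$ and \eqref{eq:tn_inequality}, whereas the paper bounds $\sup_{F_{r_m}}$ directly. Your pointwise estimate holds at every $x\in F_{r_m}$, so the two routes are interchangeable.
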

The time $t_m$ is chosen so that the flow image $\varphi_{t_m}(B_m)$ of the bulk of $A_m$ lands just above $A_m$, missing it entirely. This is why $t_m$ is defined as the supremum of $S_{h_{r_m}}(\varphi)$ on $F_{r_m}$, shifted by $\gamma h_n/4$.

\begin{proof}
The existence of $g_m$ for $m$ large enough is a direct application of Lemma \ref{lemma:goog_In_IET}, and it follows from \eqref{eq:BSV_IET} and \eqref{eq:hm_vs_hn} that
\[
t_m - \inf_{F_{r_m}} S_{h_{r_m}} (\varphi)-3{\gamma h_n}/{4}\leq C_2\log h_{r_m}-\gamma h_n/2\leq 0.
\]

To prove \eqref{eq:tn_qn_IET}, first note that it is enough to prove it for $\sup_{F_{r_m}} S_{h_{r_m}}(\varphi)$ instead of $t_m$, since \eqref{eq:hm_vs_hn} implies that the quotient 
\[
\frac{h_n}{h_{r_m}}\lesssim\frac{(\log h_{r_m})^{(1+\theta)^2}}{h_{r_m}}\to 0
\] 
as $m\to \infty$.

Now, by definition of $\gamma$, 
\[
S_{h_{r_m}}(\varphi)(x) \geq \gamma h_{r_m} , \qquad \forall x\in \T,
\]
so it is only left to show the upper bound.

To show the upper bound we will use the same argument as in the proof of Lemma \ref{lemma:BSD_IET}.
The term $\psi(x)$ is uniformly bounded, so 
\[
S_{h_{r_m}}(\psi)(x)\lesssim h_{r_m},\qquad \forall x \in \T.
\] 
On $F_{r_m}$, the orbit segment $\{T^k x\}_{k=0}^{h_{r_m}-1}$ avoids intervals of length $2\delta/q_{r_m}$ around the singularities, and the smallest gap between consecutive points is $q_{r_m}^{-1}$. The worst-case sum is therefore comparable to the series
\[
-\sum_{k=0}^{h_{r_m}-1} \log{\|T^k x\|}\leq \sum_{j=0}^{h_{r_m}-1} \log \frac{q_{r_m}}{\delta+j} \leq \log \frac{q_{r_m}}{\delta}+\int_0^{h_{r_m}}\log\frac{q_{r_m}}{\delta+x}dx\lesssim^\delta h_{r_m},
\]
for any $x\in F_{r_m}$, where we have used that \eqref{eq:IET2} implies $h_{r_m}\geq q_{r_m}/2$.
\end{proof}

\begin{remark}
    \label{rk:dep} Note that, for $\varepsilon$ and $\delta$ fixed, the sequences $(r_m)$ and $(n(m))$ do not depend on the choice of $\theta$, and hence neither do $(t_m)$, $(A_m)$, and $(g_m)$.
\end{remark}

\subsection{Lower bounds on the self-correlation of $g_n$ at time $t_n$}  \label{sec:lowerbounds}

In this section we finish the proof of Theorem \ref{thm:IETs}. 

We now show that the choice of $t_m$ forces $\phi_{t_m}(A_m)$ to be nearly disjoint from $A_m$, which makes the self-correlation $\langle g_m \circ \phi_{t_m}, g_m \rangle$ substantially smaller than $\mu(g_m)^2$, giving the desired lower bound.

\begin{lemma}
There exists $C=C(\delta,\varphi,T,\theta)>0$ and $M=M(\delta, \varphi,T,\theta)\geq 1$ such that for all $m\geq M$, $g_m$ and $t_m$ from Lemma \ref{lemma:def_tn_IET} satisfy
\[
|\langle g_m\circ\phi_{t_m},g_m\rangle-\mu(g_m)^2|\geq \frac{\gamma^2}{32 d^2},\qquad \|g_m\|_\infty=1,
\]
and, letting $g_m=g_m(x,s)$,
\[
\|\partial_xg_m\|_{\infty}\leq C (\log t_m)^{{(1+\theta)^2}}\log\log t_m,
\quad \|\partial_sg_m\|_{\infty}\leq C (\log t_m)^{-1}\log\log t_m.
\]
\label{lemma:final_thm1}
\end{lemma}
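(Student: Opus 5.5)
The proof splits into two parts: a lower bound for $\mu(g_m)$ together with an upper bound for $\langle g_m\circ\phi_{t_m},g_m\rangle$, and then the derivative bounds.

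\textbf{Step 1: size of $\mu(g_m)$.} By Corollary \ref{cor:testfunction}, applied to $(I_n,h_n)$, which satisfies \eqref{eq:conditions_for_g} by Lemma \ref{lemma:goog_In_IET},
\[
\mu(g_m)\geq \Bigl(1-\tfrac{4}{\log h_n}\Bigr)\tfrac{\gamma h_n}{4}\lambda(I_n).
\]
Since $\lambda(I_n)=(1-2\delta)/q_n$ and $q_n\leq d h_n$, we get $h_n\lambda(I_n)\geq (1-2\delta)/d$. Hence $\mu(g_m)\geq \gamma/(4d)\cdot(1-2\delta)(1-o(1))$, and for $m$ large
\[
\mu(g_m)^2\geq \frac{\gamma^2}{16d^2}(1-2\delta)^2(1-o(1))\geq \frac{\gamma^2}{24d^2}.
\]
It then suffices to show $\langle g_m\circ\phi_{t_m},g_m\rangle\leq \gamma^2/(96d^2)$, since the difference is then at least $\gamma^2/32d^2$. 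The bound $\|g_m\|_\infty=1$ is obtained by normalising: $g_m$ equals $1$ on the interior square by the construction in Lemma \ref{lemma:g2}.

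\textbf{Step 2: near-disjointness.} This is the main step. Split $A_m$ into a good part and a bad part. The good part is the set of points $(x,s)$ with $x\in I_n\cap E_m$ whose forward orbit stays inside $E_m$ for the relevant time. The bad part has measure $O(\varepsilon+\delta)$, using \eqref{eq:IET2} and the definition of $F_{r_m}$. We would need a uniform version here: the portion of the small interval $I_n$ lying outside $E_m$ should be small relative to $\lambda(I_n)$. The plan is to argue this using the tower structure, since $I_n$ is a level-$n$ interval and $n\ll r_m$, so $I_n$ is a union of many levels of the $r_m$-towers. This is the delicate point, and it is where $\varepsilon$ is chosen depending on $\delta,\varphi$.

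For a good point $y=(x,s)$ with $x\in T^kF_{r_m}$, consider the point at the level where the orbit passes over $x$ again after $h_{r_m}$ returns. By Lemma \ref{lemma:def_M_IET}, $S_{h_{r_m}}(\varphi)(x)$ lies in $[\inf_{F_{r_m}},\sup_{F_{r_m}}]$ up to $C_2\log h_{r_m}$, and \eqref{eq:hm_vs_hn} gives $C_2\log h_{r_m}\leq \gamma h_n/2$. Combined with \eqref{eq:def_tn_IET}--\eqref{eq:tn_inequality}, this places $\phi_{t_m}(x,s)$ over the point $T^{h_{r_m}}x$ at a height in $(s,\,s+\gamma h_n]$, roughly above height $\gamma h_n/4$ when $s$ is small. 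Since $A_m$ has height only $\gamma h_n/4$ and injects into $X$, this height lies outside $[0,\gamma h_n/4]$ or reaches a base point not in $I_n$. This shift is exactly what the choice of $t_m$ is designed for.

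If this shift is not clean for all $s$, we would refine the argument to show that most of the mass is pushed out. Then $\langle g_m\circ\phi_{t_m},g_m\rangle\leq \mu(\text{bad part})\cdot 1$, which is at most $\gamma^2/(96d^2)$ once $\varepsilon,\delta$ are small.

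\textbf{Step 3: derivative bounds.} From \eqref{eq:g},
\[
\|\partial_xg_m\|_\infty\lesssim h_n\log h_n,\qquad \|\partial_sg_m\|_\infty\lesssim h_n^{-1}\log h_n.
\]
By \eqref{eq:hm_vs_hn} and \eqref{eq:tn_qn_IET},
\[
\log t_m\asymp \log h_{r_m}\lesssim h_n\lesssim (\log h_{r_m})^{(1+\theta)^2}\lesssim(\log t_m)^{(1+\theta)^2},
\]
and therefore $\log h_n\lesssim\log\log t_m$. Substituting these into the bounds from \eqref{eq:g} yields both stated derivative estimates.
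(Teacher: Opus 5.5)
\textbf{Verdict: genuine gap in Step 2.} Your Steps 1 and 3 match the paper: the lower bound on $\mu(g_m)$ comes from Corollary~\ref{cor:testfunction} with $h_n\lambda(I_n)\geq(1-2\delta)/d$, and the derivative bounds come from \eqref{eq:g}, \eqref{eq:hm_vs_hn} and \eqref{eq:tn_qn_IET}. The outline of Step 2 is also the paper's. However, Step 2 is the substance of the lemma and you do not carry it out.

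\textbf{The bad part is not shown to be small.} You flag the relative smallness of the bad part as the delicate point, and the mechanism you suggest does not settle it. Knowing that $I_n$ is a union of $r_m$-levels says nothing about what proportion of those levels lie in $\mathcal{T}^{(r_m)}_{j^*}$. Indeed, \eqref{eq:IET2} only bounds the global quantity $\lambda\bigl((\mathcal{T}^{(r_m)}_{j^*})^c\bigr)\leq\varepsilon$, while $\lambda(I_n)\to0$.

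The missing idea is to measure the bad set in $X$ rather than on the base.
\begin{itemize}
\item Since $n\leq r_m$, the $r_m$-towers refine the $n$-towers.
\item A point $x\in I_n\subset I^{(n)}_i$ cannot re-enter $I^{(r_m)}\subset I^{(n)}$ before time $h_n$. Hence $x,Tx,\dots,T^{h_n-1}x$ all stay in one $r_m$-tower.
\item We have $N(x,s)\leq h_n/4$ for $s\leq\gamma h_n/4$, and $q$ is injective on $A_m$. So the part of $A_m$ over $I_n\setminus\mathcal{T}^{(r_m)}_{j^*}$ lies inside the part of $X$ over $(\mathcal{T}^{(r_m)}_{j^*})^c$, which gives
\[
\lambda\bigl(I_n\setminus\mathcal{T}^{(r_m)}_{j^*}\bigr)\,\frac{\gamma h_n}{4}\leq\int_{(\mathcal{T}^{(r_m)}_{j^*})^c}\varphi .
\]
\item The right-hand side is small uniformly in $m$ by absolute continuity of the integral of $\varphi\in L^1$. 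This is exactly where the paper uses $\varepsilon=\varepsilon(\delta,\varphi)$ and $\varphi\in L^1$.
\item Since $\mu(A_m)\geq(1-2\delta)\gamma/(4d)$, absolute smallness in $X$ is relative smallness.
\item The margins $\mathcal{T}^{(r_m)}_{j^*}\setminus E_m$ cost only a $2\delta$-fraction of each $j^*$-level inside $I_n$, plus $O(\lambda^{(r_m)})$ at the two ends.
\end{itemize}

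\textbf{The push-out step should be stated exactly, not hedged.} Take $x$ in the set over which the supremum in \eqref{eq:def_tn_IET} is taken. Then \eqref{eq:def_tn_IET}--\eqref{eq:tn_inequality} place the landing height $t_m+s-S_{h_{r_m}}(\varphi)(x)$ in $[s+\gamma h_n/4,\,s+3\gamma h_n/4]\subset[\gamma h_n/4,\gamma h_n]$. The lower end is $s+\gamma h_n/4$, not $s$, so the shift is clean for every $s$.

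This window over $T^{h_{r_m}}x$ misses $A_m$ when $T^{h_{r_m}}x\in I_n$. The reason is that $I_n$ has first return time $h_n$ and $S_{h_n}(\varphi)\geq\gamma h_n$, as in Lemma~\ref{lemma:g1}. A base point outside $I_n$ is \emph{not} automatically outside $A_m$. That is why the paper shrinks $I_n$ to $I'_n$ by $1/q_n^2\geq\lambda^{(r_m)}/4$.

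\textbf{Two further points.} The paper's one-line claim $\phi_{t_m}(B_m)\cap A_m=\varnothing$ also passes over both.
\begin{itemize}
\item Lemma~\ref{lemma:def_M_IET} only places $S_{h_{r_m}}(\varphi)$ on $E_m$ within $C_2\log h_{r_m}$ of its values on $F_{r_m}$. It does not place it below $\sup_{F_{r_m}}$. To push out every level of $E_m$, take the supremum in \eqref{eq:def_tn_IET} over $E_m$. This is harmless, since \eqref{eq:tn_inequality} and \eqref{eq:tn_qn_IET} still hold.
\item For $x=T^\ell z$ with $z\in F_{r_m}$, the bound $|T^{h_{r_m}}x-x|\leq\lambda^{(r_m)}/4$ is guaranteed only when $T^{h_{r_m}}z\in I^{(r_m)}_{j^*}$. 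The levels over the remaining base points form a sub-tower of Lebesgue measure at most $(\lambda^{(r_m)}-\lambda^{(r_m)}_{j^*})/\lambda^{(r_m)}_{j^*}$. The same integral bound controls its share of $A_m$ once $\lambda^{(r_m)}_{j^*}/\lambda^{(r_m)}$ is close to $1$. The construction in Appendix~\ref{app:H2} provides this, even though \eqref{eq:IET2} only records the ratio $3/4$.
\end{itemize}
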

\begin{proof}
Define $I'_n=\left(a+\delta/q_n+1/q_n^2,b-\delta/q_n-1/q_n^2\right)$ and recall $A_m=I_n\times[0,\gamma h_n/4].$ This choice of $I'_n$ is made so that $T^{h_{r_m}}I_n'\subset I_n$, for $m$ large enough, while its length is asymptotically close to $\lambda(I_n)=\lambda(I_n')+2/q_n^2$. Indeed, for $m$ large enough, \eqref{eq:IET2} and \eqref{eq:hm_vs_hn} imply
\[
\text{dist}(x,T^{h_{r_m}}x)\leq \frac{\lambda^{(r_m)}}4\leq \frac{1}{3q_{r_m}}\leq \frac{1}{q_n^2}, \qquad x\in F_{r_m}.
\] 

Set
\begin{align*}\begin{cases}
J_m &:= I'_n\cap E_m, \\
B_m &:= J_m\times\left[0,\frac{\gamma h_n}{4}\right]\subset A_m,\\
B'_m &:= \left\{(x,t):x\in T^{h_{r_m}}J_m, t\in\left(\frac{\gamma h_n}{4},S_{q_n}(\varphi)(x)\right)\right\}\subset A_m^c.
\end{cases}
\end{align*}
By definition of $E_m$ and \eqref{eq:IET2}, we have
\[
\mu(B_m)=\lambda(I_n'\cap E_m)\frac{\gamma h_n}{4}\geq\left(\mu(I_n)-\frac{2}{q_n^2}\right)\frac{\gamma h_n}{4}-\int_{\left(\mathcal{T}_{j^*}^{(r_m)}\right)^c}\varphi\geq(1-\delta)\mu(A_m),
\]
for $\varepsilon(\delta,\varphi)>0$ small enough and $n$ large enough. Here we have used that $h_n\leq q_n$ and that $\varphi \in L^1(\T)$ and $\lambda\left(\mathcal{T}_{j^*}^{(r_m)}\right)\geq 1-\eps$.

The rectangle $A_m$ has base length $\frac{1-2\delta}{q_n}$ and height $\gamma h_n/4$, so $\mu(A_m)\geq\frac{(1-2\delta)\gamma}{4d}. $

The conditions \eqref{eq:def_tn_IET}--\eqref{eq:tn_inequality} on $t_m$ forces $\phi_{t_m}(B_m)\cap A_n\subset B'_m\cap A_m=\varnothing$, hence
\[
\mu(\phi_{t_m}(A_m)\cap A_m) \leq \mu(A_m\setminus B_m) \leq \delta\,\mu(A_m).
\]

Recall that $g_m$ is defined using Corollary \ref{cor:testfunction}, and therefore satisfies the estimates \eqref{eq:g}. Consequently,
\[
\left|\mu(g_m)^2-\langle g_m,\, g_m\circ\phi_{t_m}\rangle\right|
\geq \left(\mu\left(A_m\right)\left(1-\frac{4}{\log h_n}\right)\right)^2-\mu\left(A_m\setminus B_m\right)
\]
\[
\geq\mu\left(A_m\right)^2-\delta\mu\left(A_m\right)-O\left({\frac{1}{\log h_{n}}}\right) \geq \frac{(1-2\delta)^2\gamma^2}{16d^2}-\delta\frac{(1-2\delta)\gamma}{4d}-O\left({\frac{1}{\log h_{n}}}\right).
\]
So one can fix $\delta(\varphi)>0$ small enough so that, for $m$ large enough such that the last term is $\leq \frac{\gamma^2}{64d^2}$, it holds
\[
\left|\mu(g_m)^2-\langle g_m,\, g_m\circ\phi_{t_m}\rangle\right|\geq \frac{\gamma^2}{32d^2}.
\]

Finally, using that $g_m$ satisfies the estimates \eqref{eq:g} and $t_m$ relates to $h_{n}$ by \eqref{eq:hm_vs_hn} and \eqref{eq:tn_qn_IET}, we get
\[
\|\partial_xg\|_{\infty}\lesssim h_{n} \log h_{n}\lesssim (\log t_m)^{{(1+\theta)^2}}\log\log t_m,
\]
and 
\[
\|\partial_sg\|_{\infty}\lesssim h_{n}^{-1} \log h_{n}\lesssim (\log t_m)^{-1}\log\log t_m,
\]
where the constants depend on $\delta$, $\varphi$, $T$ and $\theta$. 

As a consequence of \eqref{eq:g}, $\|g_m\|_\infty=1$. This completes the proof.
\end{proof}

\begin{proof}[Proof of Theorem \ref{thm:IETs}]
Choose $\theta > 0$ small enough that $((1+\theta)^2 - 1) < \nu$ and $\delta$ such that \[\frac{(1-2\delta)^2}{16}-\frac{\delta(1-2\delta)d}{4\gamma}\geq\frac{3}{64}.\] Then pick $\eps$ so that $\int_{\left(\mathcal{T}_{j^*}^{(r_m)}\right)^c}\varphi\leq \frac{\delta (1-2\delta)\gamma}{8d}$. 

We proof the result holds for all IETs satisfying \eqref{eq:IET1}--\eqref{eq:IET2}, which is a full measure condition by Proposition \ref{prop:full_meas_IET}.
For such IETs and our choice of $(\theta,\eps)$, Lemma \ref{lemma:final_thm1} produces a constant $C=\gamma ^2/32d^2$, a sequence $t_m \to \infty$, rectangles $A_m$, functions $g_m \in C^1_c(A_m)$, and $M>0$ such that, for every $m\geq M$
\[
|\text{Cor}_{t_m}(g_m,g_m)|=|\langle g_m\circ\phi_{t_m},g_m\rangle-\mu(g_m)^2|\geq C,
\]
and $\|g_m\|_\infty=1$, $\|\partial_xg\|_{\infty}=o((\log t_m)^{1+\nu})$, $\|\partial_sg\|_{\infty}=o((\log t_m)^{-1+\nu})$.

Note that, for $\varphi$ and $T$ fixed, so are $\eps$ and $\delta$, so we recall from Remark \ref{rk:dep} that the sequences $(t_m),(A_m),(g_m)$ are independent of the choice of $\nu$. However, $M$ is not. Without loss of generality, we can assume $M(\nu=1)=1$, so that \eqref{eq:IET_cor_lower} holds for any $m$.

If $m\geq M(\nu)$, by definition of $A_m$ (the support of $g_m$) and using that $h_n$ and $t_m$ are related via \eqref{eq:hm_vs_hn} and \eqref{eq:tn_qn_IET}, for every $(x, s)\in A_m$ 
\[
\mathrm{dist}(T^k x, D_0)> \frac{\delta}{q_n}\geq\hat\delta\,(\log t_{m})^{-(1+\theta)^2}>(\log t_{m})^{-1-\nu},
\]
for $0\leq k< h_n$, and 
\[
N(x,s)\leq \tfrac{1}{4}h_n\leq\frac{1}{\hat\delta}(\log t_m)^{(1+\theta)^2}<(\log t_{m})^{1+\nu},
\]
for $m$ large enough, where $\hat \delta=\hat \delta(\delta,\varphi,T,\theta) > 0$. This concludes the proof.
\end{proof}

\section{Some convexity results} \label{sec:convexity}
The key mechanism behind Theorems \ref{thm:log_decay} and \ref{thm:sum_cor} is that the strict concavity of $-\varphi$ forces the flow image of a thin horizontal strip to be asymmetrically distributed above and below a given height. The three lemmas in this section make this precise. 

Lemma \ref{lemma:convex1} gives the quantitative asymmetry for a single concave function; Lemma \ref{lemma:convex2} integrates this to a measure estimate; and Lemma \ref{lemma:convex3} extends both to the case where the strip only partially overlaps the domain or there is no quantitative control on the derivatives.

\begin{lemma}\label{lemma:convex1}
Let $\eps>0$. Let $f:[\alpha,\beta]\to[-\eps,2\eps]$ be a $C^2$ bijective function. 
Moreover, assume $\exists M,C>0, k \geq 1$ such that 
\[
0< f'(x)\leq \sqrt{k}\,CM ,\qquad \text{ and }\qquad {C}^{-1}kM^2 \leq -f''(x),
\]
on $[\alpha,\beta]$ (see Figure \ref{fig:convexity}). Then $\exists \eps^*=\eps^*(C)>0$ such that
\be
(f^{-1}(s+\eps)-f^{-1}(s))-(f^{-1}(s)-f^{-1}(s-\eps))\geq\frac{\eps^3}{\sqrt{k}M},
\label{eq:diff_cros}
\ee
provided $\eps<\eps^*$, fo any $s\in (0,\eps)$.
\end{lemma}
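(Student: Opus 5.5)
The plan is to pass from $f$ to its inverse $g:=f^{-1}:[-\eps,2\eps]\to[\alpha,\beta]$ and to read the left-hand side of \eqref{eq:diff_cros} as a second difference of $g$. Since $f$ is a $C^2$ bijection with $f'>0$, the inverse $g$ is $C^2$ and increasing. The expression
\[
(g(s+\eps)-g(s))-(g(s)-g(s-\eps))=g(s+\eps)-2g(s)+g(s-\eps)
\]
is well defined for every $s\in(0,\eps)$, because then $s-\eps,\,s,\,s+\eps\in[-\eps,2\eps]$, which is exactly the range of $f$. The concavity of $f$ (i.e. $f''<0$) makes $g$ convex, so the second difference is positive. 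The task is therefore to get a quantitative lower bound on $g''$.

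First, the inverse function formulas give $g'(u)=1/f'(g(u))$ and $g''(u)=-f''(g(u))/f'(g(u))^3$. Using the two hypotheses, the numerator satisfies $-f''\geq C^{-1}kM^2$ and the denominator satisfies $f'^3\leq k^{3/2}C^3M^3$. Hence, uniformly on $[-\eps,2\eps]$,
\[
g''(u)\geq \frac{C^{-1}kM^2}{k^{3/2}C^3M^3}=\frac{1}{C^4\sqrt{k}\,M}.
\]
Second, I would write the second difference as a double integral,
\[
g(s+\eps)-2g(s)+g(s-\eps)=\int_0^{\eps}\!\bigl(g'(s+v)-g'(s-\eps+v)\bigr)\,dv=\int_0^{\eps}\!\int_{s-\eps+v}^{s+v} g''(w)\,dw\,dv .
\]
Bounding the integrand from below gives the lower bound $\eps^2/(C^4\sqrt{k}M)$. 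Alternatively, one can apply the mean value theorem twice to obtain $\eps^2 g''(\xi)$ for some $\xi\in(s-\eps,s+\eps)$.

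Finally, choose $\eps^*:=C^{-4}$. For $\eps<\eps^*$ we have $\eps^2/(C^4\sqrt kM)\geq \eps^3/(\sqrt kM)$, which is \eqref{eq:diff_cros}.

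There is no serious obstacle in this argument. The only point requiring care is the exponent bookkeeping in the bound for $g''$: the factor $k$ must appear as $k^{-1/2}$ and $M$ as $M^{-1}$, matching the right-hand side of \eqref{eq:diff_cros}. This works because the upper bound on $f'$ enters cubed while the lower bound on $-f''$ enters linearly. The bound $k\geq1$ is not actually needed beyond ensuring $\sqrt{k}$ is real. One also checks that $\eps^*$ depends only on $C$, as the statement requires.
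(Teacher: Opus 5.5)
Your proposal is correct and follows essentially the same route as the paper: pass to $g=f^{-1}$, use $g''=-f''/(f')^3\geq (C^4\sqrt{k}M)^{-1}$, and bound the second difference of $g$ below by $\eps^2\min g''$. The paper gets that last bound from a two-sided Taylor expansion with Lagrange remainder, while you use a double integral or the mean value theorem twice; your explicit $\eps^*=C^{-4}$ works, and your constant $\eps^2/(C^4\sqrt{k}M)$ is the accurate one, since the paper's stated $2\eps^2/(C^4\sqrt{k}M)$ has a harmless factor-2 slip that taking $\eps$ small absorbs.
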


\begin{figure}[h!]
\centering
\includegraphics[width=0.5\linewidth]{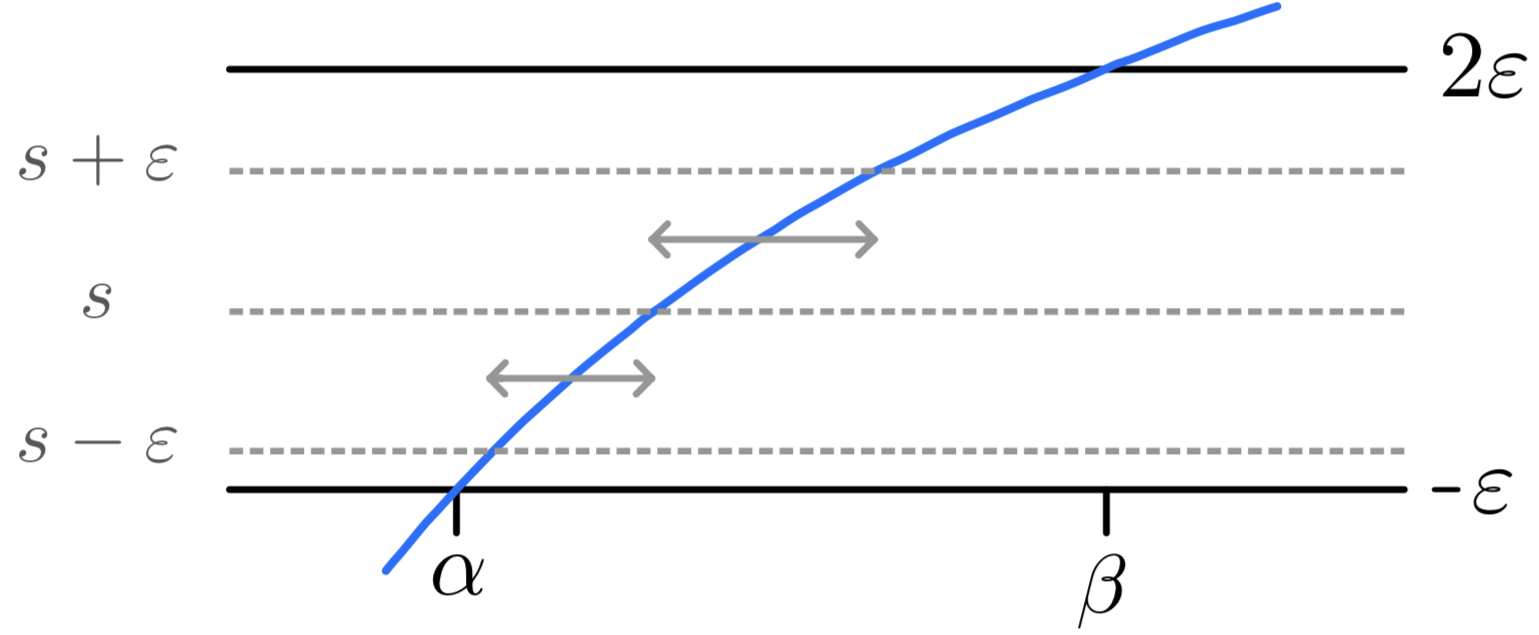}
\caption{Schematics of the setting of Lemma \ref{lemma:convex1}.}
\label{fig:convexity}
\end{figure}

\begin{remark}
The parameter $k \geq 1$ is introduced so that Lemma \ref{lemma:convex1} applies in the proofs of Theorem \ref{thm:log_decay} (where $k=1$) and
Theorem \ref{thm:sum_cor} (where $k$ ranges).
\end{remark}

\begin{proof}
We will first prove the following: for every $s\in (0,\eps)$, $\exists \theta_1(s),\theta_2(s)\in(\alpha,\beta)$ such that
\be
\begin{aligned}
&(f^{-1}(s+\eps)-f^{-1}(s))-(f^{-1}(s)-f^{-1}(s-\eps))=-\sum_{i=1}^2\frac{f''(f^{-1}(\theta_i(s)))}{2f'(f^{-1}(\theta_i(s)))^3}\eps^2.
\label{eq:taylor_crossings}
\end{aligned}
\ee

Let $g=f^{-1}$. Then $g$ is $C^2$ on $[-\eps,2\eps]$. Fix $s\in(0,\eps)$.

We apply the Taylor expansion of $g$ at $s$. For $\eps>0$, we have
\[
g(s+(-1)^i\eps) = g(s)+(-1)^i g'(s)\eps +\frac{g''(\theta_i(s))}{2}\eps^2,
\]
for $i=1,2$, and some $\theta_1(s)\in(s-\eps,s)$ and $\theta_2(s)\in(s,s+\eps)$.

Subtracting gives
\[
(g(s+\eps)-g(s))-(g(s)-g(s-\eps))= \frac{g''(\theta_1(s))}{2}\eps^2+\frac{g''(\theta_2(s))}{2}\eps^2.
\]

It remains to express the derivatives of $g$ in terms of those of $f$. Using the identities for derivatives of the inverse function, we have
\[
g'(s)=\frac{1}{f'(f^{-1}(s))}, \qquad g''(s)= -\frac{f''(f^{-1}(s))}{f'(f^{-1}(s))^3}.
\]
Substituting these identities yields \eqref{eq:taylor_crossings}.

To prove \eqref{eq:diff_cros}, notice that, by hypothesis, $\frac{-f''(x)}{f'(x)^3}\geq \frac{1}{\sqrt{k}MC^4},$ on $[\alpha,\beta]$.
Therefore
\[
(g(s+\eps)-g(s))-(g(s)-g(s-\eps))\geq \frac{2}{\sqrt{k}MC^4}\eps^2 \geq \frac{\eps^3}{\sqrt{k}M},
\]
for $\eps<\eps^*$ small enough. 
\end{proof}

Lemma \ref{lemma:convex1} controls pointwise asymmetry. Integrating over $s \in (0,\varepsilon)$ converts this into the following area estimate, which is the form we will actually apply.
\begin{lemma}
Let $f$ be a $C^2$ function satisfying all the assumptions of Lemma \ref{lemma:convex1} for some $0<\eps<\eps^*$. Let 
\[ 
E=\{(x,y):\,f(x)\leq y\leq f(x)+\eps,\,x\in[\alpha,\beta]\},\]\[ D^-=[\alpha,\beta]\times[0,\eps],\qquad D^+=[\alpha,\beta]\times[\eps,2\eps].
\]
Then
\be
    \mu(E\cap D^+)-\mu(E\cap D^-)\geq\frac{\eps^4}{\sqrt{k}M}.
\ee
\label{lemma:convex2}
\end{lemma}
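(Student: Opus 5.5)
The idea is to compute both areas by slicing horizontally, so that everything reduces to the pointwise inequality \eqref{eq:diff_cros} of Lemma \ref{lemma:convex1}. Since $f$ is strictly increasing and maps $[\alpha,\beta]$ onto $[-\eps,2\eps]$, write $g=f^{-1}$. For a fixed height $y$, the horizontal section of $E$ is
\[
E_y=\{x\in[\alpha,\beta]: f(x)\leq y\leq f(x)+\eps\}=\{x: y-\eps\leq f(x)\leq y\}.
\]
Intersected with $[\alpha,\beta]$, this is the interval $[g(\max(y-\eps,-\eps)),\, g(\min(y,2\eps))]$. For $y\in[0,2\eps]$ both endpoints stay inside $[-\eps,2\eps]$, so the section has length $g(y)-g(y-\eps)$. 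By Fubini,
\[
\mu(E\cap D^-)=\int_0^{\eps}\bigl(g(y)-g(y-\eps)\bigr)\,dy,\qquad \mu(E\cap D^+)=\int_{\eps}^{2\eps}\bigl(g(y)-g(y-\eps)\bigr)\,dy.
\]

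Next I substitute $y=s+\eps$ in the second integral, with $s\in(0,\eps)$. Its integrand becomes $g(s+\eps)-g(s)$, and the difference of the two areas is
\[
\mu(E\cap D^+)-\mu(E\cap D^-)=\int_0^{\eps}\Bigl[\bigl(g(s+\eps)-g(s)\bigr)-\bigl(g(s)-g(s-\eps)\bigr)\Bigr]\,ds.
\]
The bracket is exactly the left-hand side of \eqref{eq:diff_cros}. Lemma \ref{lemma:convex1} applies because $\eps<\eps^*$, and it bounds the bracket below by $\eps^3/(\sqrt{k}M)$ for every $s\in(0,\eps)$. Integrating over an interval of length $\eps$ gives the claimed bound $\eps^4/(\sqrt{k}M)$.

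The only step needing care is the first one: checking that the horizontal sections are not truncated by the endpoints $\alpha,\beta$, that is, by the range $[-\eps,2\eps]$ of $f$, on the relevant heights. This is what guarantees that each section length is exactly a difference of values of $g$. It holds because for $y\in[0,2\eps]$ we have $y\leq 2\eps$ and $y-\eps\geq-\eps$, which is precisely why the range of $f$ was taken to be $[-\eps,2\eps]$. The sets $D^\pm$ are closed strips that share the boundary line $y=\eps$, but that line has measure zero and does not affect the computation.
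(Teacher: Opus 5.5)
Your proof is correct and follows the paper's argument: with $g=f^{-1}$, you slice horizontally, rewrite $\mu(E\cap D^+)-\mu(E\cap D^-)$ as $\int_0^{\eps}\bigl[(g(s+\eps)-g(s))-(g(s)-g(s-\eps))\bigr]\,ds$, and integrate the pointwise bound \eqref{eq:diff_cros} from Lemma \ref{lemma:convex1}. Your explicit check that the horizontal sections at heights $y\in[0,2\eps]$ are not truncated by the range $[-\eps,2\eps]$ of $f$ is a detail the paper leaves implicit.
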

\begin{proof}
Observe that, with $g=f^{-1}$, the difference becomes
\[
\mu(E\cap D^+)- \mu(E\cap D^-) = \int_{0}^{\eps} \Bigl[(g(s+\eps)-g(s))-(g(s)-g(s-\eps))\Bigr]\,ds.
\]

By Lemma \ref{lemma:convex1}, for every $s\in(0,\eps)$, the term on the integral is lower bounded by $\frac{\eps^3}{\sqrt{k}M}$. Integrating yields the result.
\end{proof}
Here $E$ is the region between the graph of $f$ and a horizontal translate by $\varepsilon$, while $D^-$ and $D^+$ are the lower and upper horizontal slabs of height $\varepsilon$. The lemma asserts that $E$ intersects the upper slab more than the lower one, by an amount proportional to the concavity of $f$.

Finally, we need one last result. Although the conclusion is weaker, we will use Lemma \ref{lemma:convex3} in the cases on which the function $f$ need not map onto all of $[-\varepsilon, 2\varepsilon]$, or where the concavity bound is not required to be quantitative.
\begin{lemma}
\label{lemma:convex3}
Let $\eps>0$. Let $f:[\alpha,\beta]\to[-\eps,2\eps]$ be a $C^2$ bijective function such that $f'(x)\neq 0$ and $0 > f''(x)$ on $[\alpha,\beta]$. Let $E$ and $D^\pm$ be as in Lemma \ref{lemma:convex2}. Then $\mu(E\cap D^+)-\mu(E\cap D^-)\geq 0$. 

Moreover, if $f:[\alpha,\beta]\to[-\hat\eps,2\eps]$ satisfies all the conditions above for some $-2\eps<\hat{\eps}<\eps$, then $\mu(E\cap D^+)-\mu(E\cap D^-)\geq 0$.
\begin{proof}
If $f'>0$, apply Lemmas \ref{lemma:convex1} and \ref{lemma:convex2} with $C=\max\{\max f',(\min |f''|)^{-1}\}$ and $k=M=1$. If $f'<0$, same with $\hat f(x)=f(\beta+\alpha-x)$.

To prove the last statement, first note that if $\hat\eps\leq-\eps$, then $\mu(E\cap D^-)=0$ and the result is trivial. Otherwise, let $g=f^{-1}$, defined on $[-\hat\eps,2\eps]$. Slicing horizontally,
\[
\mu(E\cap D^+)-\mu(E\cap D^-)=\int_0^\eps[g(s+\eps)-g(s)]\,ds - \int_0^{\hat\eps}[g(\eps-s)-g(-s)]\,du.
\]
so that, substituting $t=\eps-s$ in the second integral
\[ 
\begin{aligned}
\mu(E\cap D^+)-\mu(E\cap D^-)&=\int_0^{\eps-\hat\eps}[g(s+\eps)-g(s)]\,ds\\&+\int_{\eps-\hat\eps}^{\eps}\bigl[(g(s+\eps)-g(s))-(g(s)-g(s-\eps))\bigr]\,ds.
\end{aligned}
\]
The first integral is non-negative since $g$ is increasing. For the second, every $s\in(\eps-\hat\eps,\eps)$ satisfies $s-\eps\geq-\hat\eps$, so all three arguments of $g$ lie in $[-\hat\eps,2\eps]$, and the integrand is non-negative by concavity of $g$.

\end{proof}
\end{lemma}

\section{Proof of Theorem \ref{thm:log_decay}}\label{sec:logdecay}

The argument has four steps. In Section \ref{sec:7.1} we introduce the Diophantine condition \eqref{H} on $\alpha$ and reduce Theorem \ref{thm:log_decay} to a more quantitative statement, Theorem \ref{thm:log_decay2}. In Section \ref{sec:7.2} we establish uniform bounds on the Birkhoff sums of the derivatives of $\varphi$ along orbits of $R_\alpha$, which hold for any irrational $\alpha$. In Section \ref{sec:7.3} we identify a sequence of times at which the flow image of $A$ is geometrically positioned to produce a large correlation. Finally, in Section \ref{sec:7.4} we carry out the correlation estimate by partitioning $A$ into thin horizontal strips and applying the convexity results of Section \ref{sec:convexity} strip by strip.

\subsection{Notation and first steps} \label{sec:7.1}

For $\alpha\in (0,1)$, let $\alpha=[0;a_1,a_2,\dots]$ be its continued fraction expansion and let $p_n/q_n$ be the $n$th convergent. 

Letting $\norm{x}=\text{dist}(x,\Z)$, recall the following basic continued-fraction facts:
\[
\frac{1}{2q_{n+1}}\leq \frac{1}{q_{n+1}+q_n}<\|q_n\alpha\|<\frac{1}{q_{n+1}},
\qquad q_{n+1}=a_{n+1}q_n+q_{n-1},
\]
for every $n\in \N$, and $\|q_{n-1}\alpha\|\leq \|k\alpha\|, $ for every $1 \leq k < q_n$.

It is well known by the three gap theorem \cite{alloucheshallit2003} that the set $ \{-k \alpha \}_{k=0}^{q_n-1}$ partitions $ \T$ into $q_n$ intervals of lengths $ \norm{q_{n-1} \alpha}$ and $ \norm{q_{n-1} \alpha}+ \norm{q_n \alpha}$.

We say $\alpha\in(0,1)$ satisfies the condition \eqref{H} if it admits an increasing sequence $\{r_n\}$ such that:
\begin{align}
q_{r_n+1}\geq q_{r_n}\log q_{r_n}\log \log q_{r_n},&\quad \forall n. \tag{H}\label{H}
\end{align}

Condition \eqref{H} is calibrated to produce the $(\log t)^{-1}$ lower bound for the correlations: it is strong enough to control the error terms in the Birkhoff sum estimates, yet weak enough to hold for Lebesgue-almost every $\alpha$ by Khinchin's theorem, as the next proposition shows.

\begin{proposition}\label{prop:full_meas}
Lebesgue almost every $\alpha \in (0,1)$ satisfy \eqref{H}.
\end{proposition}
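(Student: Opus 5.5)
Our plan is to deduce the statement from the Borel--Cantelli lemma together with Khinchin's theorem on partial quotients. Recall that $q_{n+1}=a_{n+1}q_n+q_{n-1}\geq a_{n+1}q_n$. Hence it suffices to show that, for Lebesgue almost every $\alpha$, infinitely many $n$ satisfy
\[
a_{n+1}\geq \log q_n\,\log\log q_n .
\]
Such $n$ then furnish the increasing sequence $\{r_n\}$ in \eqref{H}.

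\textbf{Step 1: replace $q_n$ by a deterministic sequence.} By Lévy's theorem (or the elementary Khinchin--Lévy bound), for almost every $\alpha$ there is a constant $L$ with $\log q_n\leq Ln$ for all large $n$. For instance, $\log q_n/n\to \pi^2/(12\log 2)$ almost surely. It therefore suffices to show that almost surely $a_{n+1}\geq \psi(n):=Ln\log(Ln)$ for infinitely many $n$. Indeed, for such $n$ we get $a_{n+1}\geq \log q_n\log\log q_n$, since $x\mapsto x\log x$ is increasing for $x\geq e$. Here we need $L$ to be a deterministic constant, so we fix any $L$ larger than the Lévy constant; the exceptional set where $\log q_n> Ln$ infinitely often is null.

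\textbf{Step 2: apply Khinchin's theorem.} By the classical Borel--Bernstein (Khinchin) theorem, for a positive sequence $\psi(n)$ the event $\{a_n\geq \psi(n)\text{ infinitely often}\}$ has full measure if and only if $\sum_n 1/\psi(n)=\infty$. With $\psi(n)=Ln\log(Ln)$ we have $\sum 1/(Ln\log(Ln))=\infty$ by the integral test. So almost every $\alpha$ has $a_{n+1}\geq \psi(n)$ infinitely often, which combined with Step 1 gives \eqref{H}.

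The only delicate point is Step 1: the threshold in \eqref{H} depends on $q_n$, which is random. This is handled by the almost sure linear growth of $\log q_n$. The crucial observation is that the resulting series $\sum 1/(n\log n)$ is still divergent, so there is no loss. If one prefers to avoid citing Lévy's theorem, the weaker bound $\log q_n\leq Ln$ for large $n$ follows from $\sum_n \mathrm{Leb}(\log q_n>Ln)<\infty$ via standard estimates on the Gauss measure, and Borel--Cantelli.
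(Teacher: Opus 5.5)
Your argument is correct, but it follows a different route from the paper's. The paper applies Khinchin's approximation theorem directly in the denominator variable. It takes $\eta(q)=(2q\log q\log\log q)^{-1}$, for which $q\eta(q)$ is decreasing and $\sum_q\eta(q)=\infty$, so almost every $\alpha$ has infinitely many $p/q$ with $|\alpha-p/q|<\eta(q)/q$. If $q_n\le q<q_{n+1}$, the best approximation property then gives $\frac{1}{2q_{n+1}}\le|q_n\alpha-p_n|\le|q\alpha-p|<\frac{1}{2q_n\log q_n\log\log q_n}$, which is \eqref{H} at index $n$. Because the approximating function depends on $q$ itself, the randomness of the threshold $\log q_n\log\log q_n$ never has to be addressed, and no information about the growth of $q_n$ enters.

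You instead pass through the partial quotients via $q_{n+1}\ge a_{n+1}q_n$ and use the Borel--Bernstein zero--one law. That law needs a deterministic threshold indexed by $n$, which is exactly what forces your Step~1. It also brings in an extra metric input: $\log q_n\le Ln$ eventually almost surely, via L\'evy's theorem or the moment/Borel--Cantelli bound you sketch. The index shift from $a_n$ to $a_{n+1}$ is harmless, since $\sum 1/\psi(n-1)$ also diverges.

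So the paper's proof is more economical: one metric theorem plus an elementary continued-fraction fact. Yours costs an additional ingredient. In exchange, it is phrased in terms of the digits $a_{n+1}$, and it makes explicit that the borderline nature of \eqref{H} is the divergence of $\sum 1/(n\log n)$ once $\log q_n$ is converted into $n$.
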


\begin{proof}
Let $\eta(q) = ({2q \log q\log\log q})^{-1}$. Then $q\eta(q) = ({2 \log q\log\log q})^{-1}$ is decreasing and
\[
\sum_{q=2}^\infty \eta(q) = \sum_{q=2}^\infty \frac{1}{2q \log q\log\log q} = \infty.
\]
By Khinchin's theorem \cite{khinchin1964}, for Lebesgue almost every $\alpha$, there exist infinitely many coprime pairs $(p,q)$ such that $ \left|\alpha - \frac{p}{q}\right| < \frac{1}{2q^2 \log q\log \log q}.$

For each $q$, there exists an $n$ such that $ q_n \leq q < q_{n+1}, $ so the best approximation property of continued fractions implies
\[
\frac{1}{2 q_{n+1}} \leq\left|q_n\alpha - p_n\right| \leq \left|q\alpha - p\right|< \frac{1}{2q \log q\log \log q} \leq \frac{1}{2q_n \log q_n\log \log q_n}.
\]
Hence, for infinitely many $n$ for which a Khinchin-good rational $p/q$ lies in $(q_n,$ $q_{n+1})$, we obtain $q_{n+1} \geq \, q_n \log q_n\log \log q_n.$
\end{proof}

It follows from Proposition \ref{prop:full_meas} and \[ |\mathrm{Cor}_t(\chi_A, \chi_A)|+| \mathrm{Cor}_{t-\eps}(\chi_A, \chi_A)|\geq\left|\mu(A\cap\phi_{t}A)-\mu(A\cap\phi_{t-\eps}A)\right|,\] that Theorem \ref{thm:log_decay} is a corollary of the following result.

\begin{theorem} \label{thm:log_decay2}
If $\alpha\in(0,1)$ satisfies \eqref{H}, then for any $A\subset X$ box, there exists $\eps>0$ and an increasing sequence $t_n$ such that $\forall n\geq 1$
\be
\label{eq:thm72}
\left|\mu(A\cap\phi_{t_n}A)-\mu(\phi_{\eps}A\cap\phi_{t_n}A)\right|\geq \frac{\eps^3}{\log t_n}.
\ee
\end{theorem}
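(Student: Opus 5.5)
The plan is to realize the difference in \eqref{eq:thm72} as a sum, over the continuity intervals of a single Birkhoff sum, of the "upper slab minus lower slab" quantities controlled by Lemmas \ref{lemma:convex2} and \ref{lemma:convex3}. Write $A=[a,b]\times[c,d]$ and note that, by invariance of $\mu$,
\[
\mu(\phi_\eps A\cap\phi_{t}A)=\mu(A\cap\phi_{t-\eps}A).
\]
Fix $\eps>0$ small compared to $d-c$, to $\eps^*$ from Lemma \ref{lemma:convex1}, and to $\gamma$. Partition $A$ into horizontal strips $A_j=[a,b]\times[c+j\eps',c+(j+1)\eps']$ with $\eps'\ll\eps$. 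Then $\phi_t A_j$ is the region between the graph of $x\mapsto c+j\eps'+t-S_{N}(\varphi)(R_\alpha^{-N}x)$ and its vertical translate, written over the base coordinate. Comparing $A\cap \phi_tA$ with $A\cap\phi_{t-\eps}A$ amounts to comparing how much of each image piece falls in an upper slab of height $\eps$ versus the adjacent lower slab. This is exactly the configuration $E\cap D^{\pm}$ of Lemma \ref{lemma:convex2}, provided that on each piece the height function is strictly concave with controlled derivatives.

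\emph{Choice of times.} Let $q=q_{r_n}$ be given by \eqref{H}, and take $t_n$ of the form $S_q(\varphi)(x_*)+c'$ for a suitable base point $x_*$ and offset $c'\in[c,d]$, so that $t_n\asymp q$ and $\log t_n\asymp\log q$. Since $\|q\alpha\|<(q\log q\log\log q)^{-1}$, the map $R_\alpha^q$ moves points by much less than the width of the pieces we look at. Hence, for $(x,s)\in A$, the number of returns $N(x,t_n)$ equals $q$ (or $q\pm1$, handled identically) on all but a negligible set. The image of each strip is then a union of graphs of $t_n+s-S_q(\varphi)$ over the continuity intervals of $S_q(\varphi)$, i.e.\ over the $q$ gaps cut out by $\{-k\alpha\}_{k<q}$ (three gap theorem).

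\emph{Birkhoff sum estimates (Section \ref{sec:7.2} style).} On each gap, $S_q(\varphi'')\gtrsim q^2$, coming from the closest return to $0$, while $|S_q(\varphi')|\lesssim q\log q$, with the asymmetry $2c_0\ne c_0$ producing the $q\log q$ drift. The error terms from $\psi$ are controlled by $\psi_2''\in L^\infty$ and by the explicit form of $\psi_0,\psi_1$. So $f=t_n+s-S_q(\varphi)$ is strictly concave. On the subinterval where $f$ ranges over a window $[-\eps,2\eps]$ around the height of a strip, $f'$ has size $M\asymp q\log q$ and $-f''\gtrsim q^2$. I would rescale the lemma's hypotheses (taking $k=1$) so that this ratio is admissible; the conclusion of Lemma \ref{lemma:convex2} is then a gain of order $\eps^4/(q\log q)$ per piece.

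\emph{Summing without cancellation.} The number of pieces whose graph crosses the window $[c,d]$ inside $[a,b]$ is comparable to $(b-a)q$. After summing over the $\asymp\eps/\eps'$ strips, the gains total $\gtrsim\eps^3\, q/(q\log q)\asymp\eps^3/\log t_n$. The main obstacle is to guarantee that no piece contributes a negative amount that could cancel this. Such pieces are those near the singularity, those clipped by the lateral sides $x=a,b$, and those only partially covering $[-\eps,2\eps]$. For these I would use Lemma \ref{lemma:convex3}, which needs only strict concavity and monotonicity, to show each contributes $\ge0$. The remaining exceptional contributions (points where $N\neq q$, boundary layers of width $\|q\alpha\|$) have total measure $O(1/(\log q\log\log q))$, which is $o(\eps^3/\log q)$. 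This last smallness is precisely where the extra $\log\log q$ factor in \eqref{H} is needed.
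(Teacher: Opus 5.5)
The step that fails is the per-piece gain. The asymmetry in Lemma \ref{lemma:convex1} is exactly the second-order term in \eqref{eq:taylor_crossings}, so it is governed by the ratio $-f''/(f')^3$ of $f$ itself. No choice of $M$, $C$ in the lemma can improve that ratio. With $f'\asymp q\log q$ and only $-f''\gtrsim q^2$, you get $-f''/(f')^3\gtrsim 1/(q\log^3 q)$. That is a gain $\asymp \eps^4/(q\log^3 q)$ per piece and a total of order $(\log t_n)^{-3}$, not $(\log t_n)^{-1}$. This is not a lossy estimate: in the bulk of a continuity interval the closest visits to $0$ are at distance $\asymp 1/q$, so the curvature really is $\asymp q^2$ while the slope is the asymmetric drift $\asymp q\log q$.

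The missing idea is to place the window at the \emph{bottom} of every convex graph. The minimizers $y\in\Gamma_{r_n}$ of $S_q(\varphi)$ sit at distance $\asymp (q\log q)^{-1}$ from the singularity, where $c_0/y^-_{\min,q}$ balances the drift (Lemma \ref{lemma:location_min}). Points on the branch $S_q(\varphi')<0$ within height $O(\gamma)$ of the minimum stay at that distance. There $-f''\gtrsim (q\log q)^2$ and $f'\lesssim q\log q$ (Lemma \ref{lemma:lower_bound_d}), which is exactly the hypothesis of Lemma \ref{lemma:convex1} with $k=1$, $M=q\log q$.

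Doing this with one time on all $q$ intervals is the real role of \eqref{H}. The paper takes $t_n=\max_{\Gamma_{r_n}}S_q(\varphi)+2\gamma/3-\eps$. It shows that the minimal values $S_q(\varphi)(y)$, $y\in\Gamma_{r_n}$, agree up to $O(1/\log\log q)$, by comparing $y$ with $y-l\alpha$ at cost $\norm{q\alpha}\,q\log q$ (Lemma \ref{lemma:aligned}). The offset $2\gamma/3-\eps$ then puts the vertex of every concave graph $t_n+s-S_q(\varphi)$ above the heights of $A$, so the graphs are monotone on the window. Your ``suitable $x_*$ and offset $c'\in[c,d]$'' secures neither the alignment nor this placement. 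A piece whose window straddles a vertex can contribute negatively by far more than the gain.

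Second, your claim that $N(x,t_n)\in\{q-1,q,q+1\}$ off a negligible set is false. By \eqref{eq:rk2}, $S_q(\varphi)$ has slope $\le -C_4^{-1}q\log q$ on most of each continuity interval, which has length $\asymp 1/q$. So it varies by $\gtrsim \log q$ there, and $N(x,t_n)=q$ only on a proportion $O(1/\log q)$ of the base. Most of $A$ makes $l$ returns with $q-O(\log q)\le l<q$. These pieces carry almost all the mass, so they cannot be absorbed as error against a gain of size $\eps^3/\log q$. You must show each contributes $\ge 0$, which requires convexity and monotonicity of $S_l(\varphi)$ on them. The paper gets this by restricting to the sets $G_{n,l}$ and using Lemmas \ref{lemma:good_fn}, \ref{lemma:rk1} and \ref{lemma:convex3}. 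What remains as error is only the $O(\log q)$ clipped end intervals and a set of measure $O(\delta/\log q)$, with $\delta$ small relative to $\eps^3$.
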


\subsection{Bounds on the Birkhoff sums} \label{sec:7.2}

In this section, we present some estimates on the Birkhoff sums of $S_l(\varphi)(x)$ and its derivatives. Bounds of this type on Birkhoff sums of the derivative of the roof function along orbits of an irrational rotation are classical, our main tool being the Denjoy--Koksma inequality \cite{herman1979}; see, for instance, \cite{Sinai1992,kochergin03}.

Recall that for rotations we consider a roof function $\varphi$ as in \eqref{eq:phi_basic}--\eqref{eq:phi_Dbasic}. In particular, the set of its singularities is $\{0\}$ (and 1), and its derivatives for $x\in (0,1)$ are

\be \label{eq:def_Dbasic}
\begin{aligned}
\varphi'(x) &= - \frac{2c_0}{x} + \frac{c_0}{1-x}+ \psi'(x)\\&= - \frac{2c_0}{x} + \frac{c_0}{1-x}-2b_0+b_0+2b_0|\log x|-b_0|\log(1-x)|+\psi_1'(x)+\psi_2'(x),
\end{aligned}
\ee
\be \label{eq:def_DDbasic}
\begin{aligned}
\varphi''(x) &= \frac{2c_0}{x^2} + \frac{c_0}{(1-x)^2}+ \psi''(x)
=\frac{2c_0}{x^2} + \frac{c_0}{(1-x)^2}- \frac{2b_0}{x} - \frac{b_0}{1-x}\\
&-9a_0+4a_0|\log x| + 2a_0|\log(1-x)|+\psi_2''(x),
\end{aligned}
\ee
with $\psi_1',\psi_2',\psi_2''\in L^{\infty}(\T)$. 

For $l \geq 1$ and $x$ let $x^-_{\min,l}$ and $x^+_{\min,l}$ denote the distance to the nearest singularity from the right and from the left, respectively, that is
\[
x^+_{\min,l} := \min_{0 \leq k < l} T^k x, 
\qquad x^-_{\min,l} := \min_{0 \leq k < l} (1 - T^k x),
\]
and let 
\[
x_{\min,l} := \min_{0 \leq k < l} \|T^k x\|=\min\{x_{\min,l}^-,x_{\min,l}^+\}.
\]

The following results hold for any $\alpha\in\RQ$.

\begin{lemma}\label{lemma:bounds_d}
There exists $C_i=C_i(\varphi)>0$, $i=1,2$, and $N=N(\varphi)\geq 1$ such that 
\begin{equation}\label{eq:upper_bound_d}
|S_l(\varphi')(x)|\leq C_1 \max\{q_{n}\log q_{n},\,x_{\min,l}^{-1}\},
\end{equation}
and
\begin{equation}
\label{eq:upper_bound_dd}
|S_l(\varphi'')(x)|\leq C_2 \, \max\{q_n^2,x_{\min,l}^{-2}\},
\end{equation}
for $0\leq l\leq q_{n}$ and at every $x$ point of continuity, for $n \geq N$.

Moreover, $\exists C_3=C_3(\varphi)>0$ such that 
\be 
S_{l}(\varphi'')(x)\geq C_3\, x_{\min,{l}}^{-2}-C_3^{-1}q_n\log l\label{eq:convexity}
\ee
for $n \geq N$ and for any $x\in \T$ continuity point of $S_l(\varphi)$; i.e. $x\not\in\cup_{m=0}^{l-1} \{R_\alpha^{-m}0\}$.
\end{lemma}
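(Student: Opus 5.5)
We prove the three estimates by splitting $\varphi'$ and $\varphi''$ according to \eqref{eq:def_Dbasic}--\eqref{eq:def_DDbasic} into a singular part, a logarithmic part, and a bounded part. We then control each part along the orbit segment $\{R_\alpha^k x\}_{0\le k<l}$, $l\le q_n$, using the three gap theorem and Denjoy--Koksma.

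\emph{Bounded and logarithmic terms.} The bounded terms ($\psi_1'$, $\psi_2'$, $\psi_2''$ and the constants) contribute at most $O(l)=O(q_n)$. The logarithmic terms $|\log x|$ and $|\log(1-x)|$ contribute $\sum_k |\log\|R_\alpha^k x\||$. The points $R_\alpha^k x$, $0\le k<q_n$, are separated by at least $\|q_{n-1}\alpha\|\ge 1/(2q_n)$. Ordering them by distance to $0$, the $j$-th closest satisfies $\|R^k_\alpha x\|\gtrsim \max\{j/q_n,\,x_{\min,l}\}$. Hence the sum is at most $|\log x_{\min,l}|+\sum_{j\ge1}\log(q_n/j)\lesssim |\log x_{\min,l}|+q_n$. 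This is $O(\max\{q_n\log q_n, x_{\min,l}^{-1}\})$ for the first derivative. For the second derivative the analogous $1/x$ terms sum to $\lesssim x_{\min,l}^{-1}+q_n\log q_n$, which is $O(\max\{q_n^2,x_{\min,l}^{-2}\})$.

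\emph{Principal singular terms for \eqref{eq:upper_bound_d} and \eqref{eq:upper_bound_dd}.} For $-2c_0/x+c_0/(1-x)$ we use the same ordering on each side of the singularity. Among the points with $T^kx$ closest to $0$ from the right, the nearest contributes $\le c\,(x^+_{\min,l})^{-1}$. The $j$-th one, for $j\ge2$, lies at distance $\ge (j-1)/(2q_n)$, so the remaining terms give a harmonic sum $\lesssim q_n\log q_n$. The left side is handled identically. This proves \eqref{eq:upper_bound_d}, with $C_1$ absorbing $c_0$ and the bounded-term constants; no cancellation is needed since we only want an upper bound. For \eqref{eq:upper_bound_dd}, the nearest term gives $x_{\min,l}^{-2}$ and the rest give $\sum_j q_n^2/j^2\lesssim q_n^2$.

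\emph{Lower bound \eqref{eq:convexity}.} Here positivity is the key. The principal part $2c_0/x^2+c_0/(1-x)^2$ of $\varphi''$ is positive, so we may drop every term except the one realizing $x_{\min,l}$, which gives at least $c_0\,x_{\min,l}^{-2}$. The only possibly negative contributions are $-2b_0/x-b_0/(1-x)$, the log terms, and the bounded terms. By the harmonic-sum estimate over the $l$ points, their absolute total is $\lesssim |b_0|\bigl(x_{\min,l}^{-1}+q_n\log l\bigr)+l$.

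The one delicate point is the $x_{\min,l}^{-1}$ term, which must be absorbed into the main term. If $x_{\min,l}$ is small we use $x_{\min,l}^{-1}\le \eta\, x_{\min,l}^{-2}$, which costs only a fraction of $c_0$. If $x_{\min,l}$ is not small, $x_{\min,l}^{-1}$ is bounded and goes into the $q_n\log l$ error. Choosing $C_3$ small accordingly, and $N$ large so that $\log q_n\ge1$, gives \eqref{eq:convexity}. The continuity assumption ensures no orbit point hits $0$, so all the sums are finite.
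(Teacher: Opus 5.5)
Your proposal is correct and follows the paper's proof essentially step for step. It uses the same splitting of $\varphi'$ and $\varphi''$ into the principal singular part plus $\psi'$, $\psi''$, and the same ordering of the orbit points by distance to the singularity via the separation $\|q_{n-1}\alpha\|\ge 1/(2q_n)$, which produces harmonic sums (resp.\ sums of $m^{-2}$). It also uses the same positivity bound $\varphi''\ge c_0\|x\|^{-2}+\psi''$ for \eqref{eq:convexity}, and your explicit absorption of the $x_{\min,l}^{-1}$ error is a detail the paper leaves implicit. One caveat applies to both your argument and the paper's: this absorption really needs $l\ge 2$, since for $l=1$ the term $q_n\log l$ vanishes and \eqref{eq:convexity} would read $\varphi''\ge C_3\|x\|^{-2}$, which the hypotheses on $\psi_2''$ do not guarantee.
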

\begin{proof}
 Fix $1\leq l\leq q_{n}$. The orbit $\{R_\alpha^k x\}_{k=0}^{l-1}$ is well-separated. That is, by the three-distance theorem, consecutive points in this orbit are separated by at least $\|q_{n-1}\alpha\|$, and the $(2m+1)$-st and $(2m+2)$-nd closest iterates to the singularities lie at distance at least $x_{\min,l} + m/(2q_n)$ from $\{0,1\}$. Ordering by distance and using the expression of $\varphi'$ in \eqref{eq:def_Dbasic} gives
\[
\begin{aligned}
| S_{l}(\varphi'-\psi')(x)| \leq 2c_0(x_{\min,l})^{-1}+\sum_{m=1}^{l-1}2c_0\tfrac{2q_n}{m},
\end{aligned}
\]
and
\[
\begin{aligned}
| S_{l}(\psi')(x)| \leq -2|b_0|\log x_{\min,l}-\sum_{m=1}^{l-1}2|b_0|\log \tfrac{m}{2q_n} +2l|b_0|+ l\norm{\psi_1'}_{L^\infty}+ l\norm{\psi_2'}_{L^\infty}.
\end{aligned}
\]
Hence, for $n$ large enough, the existence of $C_1$ follows. 

The proof of \eqref{eq:upper_bound_dd} is analogous to that of \eqref{eq:upper_bound_d}, the main difference being that (see \eqref{eq:def_DDbasic})
\[
\begin{aligned}
| S_{l}(\psi'')(x)| &\leq
3|b_0|x_{\min,l}^{-1}-6|a_0|\log x_{\min,l}+\sum_{m=1}^{l-1}\left[3|b_0|\tfrac{2q_n}{m}+3|a_0|(3-2\log \tfrac{m}{2q_n})\right]\\
&+ l\norm{\psi_2''}_{L^\infty}\lesssim x_{\min,l}^{-1}+q_n\log l,
\end{aligned}
\]
and
\[
\frac{1}{q_n^2}\left[\sum_{m=1}^{l-1}3c_0(\tfrac{2q_n}{m})^2 +\sum_{m=1}^{l-1}3|b_0|\tfrac{2q_n}{m}+\sum_{m=1}^{l-1}3|a_0|(3-2\log \tfrac{m}{2q_n})+ l\norm{\psi_2''}_{L^\infty}\right],
\]
is uniformly bounded on $n$ (because $\sum m^{-2}<\infty$), and the dominant contribution comes from the single iterate closest to the singularity.

Finally, we have $\varphi''(x)\ge{c_0}{\norm x}^{-2}+\psi''(x),$ for any $x$. Hence, at every continuity point $x$,
\[
S_{l}(\varphi)''(x)\geq c_0 x_{\min,l}^{-2}-| S_{l}(\psi'')(x)|= c_0 x_{\min,l}^{-2}-O(x_{\min,l}^{-1}+q_n\log l).
\]
\end{proof}

For $n\ge1$, let
\[
\Gamma_{n}:=\{x:S_{q_{n}}(\varphi')(x)=0\}.
\]
The zeros $\Gamma_n$ play a central role. They are the points at which $S_{q_n}(\varphi)$ attains its minimum on each continuity interval, and the time $t_n$ will be defined in terms of these minimums. The following lemma shows that every point of $\Gamma_n$ is at distance comparable to $(q_n \log q_n)^{-1}$ from the nearest singularity.

\begin{lemma}
There exists $\hat\delta=\hat\delta(\varphi)>0$ and $N=N(\varphi)\geq 1$ such that 
\be
\frac{\hat \delta}{ q_{n}\log q_{n}}\leq y_{\min,q_{n}}\leq \frac{1}{\hat \delta q_{n}\log q_{n}},
\label{eq:location_min}
\ee
for any $y\in\Gamma_{n}$, $ n\geq N $.

Moreover, there exists $C_4:=C_4(\varphi)>0$ such that, provided $x_{\min,q_n} > (\hat\delta q_n \log q_n)^{-1}$,
\be 
S_{q_n}(\varphi')(x) \leq -C_4^{-1} q_n \log q_n.
\label{eq:rk2}
\ee
\label{lemma:location_min}
\end{lemma}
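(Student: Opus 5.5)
The plan is to decompose $S_{q_n}(\varphi')(x)$ into the contribution of the single iterate closest to the singularity and the sum over the remaining $q_n-1$ iterates. Write $k_0$ for the index achieving $x_{\min,q_n}$, and let $m$ range over the other iterates ordered by distance to $\{0,1\}$. By the three gap theorem, the $(2m+1)$-st and $(2m+2)$-nd closest iterates lie at distance $\geq x_{\min,q_n}+m/(2q_n)$ from $\{0,1\}$; the iterates of a full period $q_n$ are also at distance $\lesssim m/q_n$, since they are roughly equidistributed at scale $1/q_n$.

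\textbf{Step 1 (the bulk term).} Split the bulk sum of $-\frac{2c_0}{x}+\frac{c_0}{1-x}$ according to the side from which each iterate approaches the singularity. Points near $0^+$ contribute $-2c_0/T^kx$, and points near $1^-$ contribute $+c_0/(1-T^kx)$. Because $\{R_\alpha^k x\}_{k<q_n}$ has one point in each interval of length $\asymp 1/q_n$, each side contributes a harmonic sum of order $q_n\log q_n$. More precisely, the side near $0$ gives $-2c_0 q_n\log q_n(1+o(1))$ and the side near $1$ gives $+c_0 q_n\log q_n(1+o(1))$. The asymmetry $2c_0\neq c_0$ therefore gives a net bulk contribution of $-c_0 q_n\log q_n+O(q_n)$. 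To make this rigorous I would compare with Riemann sums of $1/x$ over $[1/q_n,1/2]$, using the three gap structure to bound errors by $O(q_n)$. The $\psi'$ terms are handled as in Lemma \ref{lemma:bounds_d}: the $|\log|$ pieces sum to $O(q_n)$ (since $\sum_m \log(q_n/m)\asymp q_n$), and the bounded pieces give $O(q_n)$.

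\textbf{Step 2 (the closest iterate).} The closest iterate contributes $-2c_0/x_{\min}^+$ or $+c_0/x_{\min}^-$, of size $\asymp x_{\min,q_n}^{-1}$. For \eqref{eq:rk2}: if $x_{\min,q_n}>(\hat\delta q_n\log q_n)^{-1}$, this term has absolute value at most $2c_0\hat\delta q_n\log q_n$. Choosing $\hat\delta$ small, e.g. $2c_0\hat\delta<c_0/2$, the total is $\leq -\tfrac{c_0}{2}q_n\log q_n+O(q_n)$, which gives \eqref{eq:rk2} for large $n$.

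\textbf{Step 3 (location of zeros).} The lower bound in \eqref{eq:location_min} is exactly the contrapositive of Step 2: if $y\in\Gamma_n$ then $S_{q_n}(\varphi')(y)=0$, so $y_{\min,q_n}\leq(\hat\delta q_n\log q_n)^{-1}$. This is the upper inequality; the labels should be matched to the stated orientation. The other inequality comes from the same decomposition. If $y_{\min,q_n}<\hat\delta/(q_n\log q_n)$, the closest-iterate term has absolute value $\geq c_0/(\hat\delta q_n\log q_n)^{-1}\cdot$, i.e. $\geq (c_0/\hat\delta)\,q_n\log q_n$, which dominates the bulk term of size $\leq C q_n\log q_n$ (Lemma \ref{lemma:bounds_d} applied to the remaining iterates). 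The sum is then nonzero, so $y\notin\Gamma_n$. Taking $\hat\delta$ smaller than both thresholds yields \eqref{eq:location_min}.

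\textbf{Main obstacle.} The main obstacle is Step 1: obtaining the sharp leading asymptotic $-c_0 q_n\log q_n$ rather than just an upper bound. This requires controlling both one-sided harmonic sums with $O(q_n)$ error uniformly in $x$. The first approach I would try is to pair the $q_n$ orbit points with the partition of $\T$ into intervals $[j/q_n,(j+1)/q_n)$, each containing essentially one point up to a bounded shift (Denjoy–Koksma-type regularity). The near-singularity intervals would be excluded and treated separately as in Step 2.
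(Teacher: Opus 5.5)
Your proposal is correct and follows essentially the paper's route. The leading term $-c_0 q_n\log q_n$ comes from the asymmetry $2c_0\neq c_0$ once the $q_n$-orbit is equidistributed at scale $1/q_n$. That term beats the closest iterate whenever $x_{\min,q_n}>(\hat\delta q_n\log q_n)^{-1}$, and for the other inequality the closest iterate dominates everything else.

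The only difference is bookkeeping. The paper truncates $\varphi'$ at distance $(\hat\delta q_n\log q_n)^{-1}$ from the singularity and applies the Denjoy--Koksma inequality as a black box, with variation error $O(\hat\delta q_n\log q_n)$. You instead remove the single closest iterate and redo the one-point-per-interval (Denjoy--Koksma-type) Riemann-sum comparison by hand, which gives an $O(q_n)$ error for the remaining sum.
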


\begin{proof}
The function $S_{q_{n}}(\varphi)(x)$ is continuous on $\T\setminus\cup_{k=0}^{q_{n}-1}\{-k\alpha\}$, and on each of the $q_{n}$ intervals of continuity, the function is $C^2$.

Note $x_{\min,q_n}\leq 1/q_n$ for any $x$, so by \eqref{eq:convexity}, taking $q_n/\log q_n>C_3^{-2}$ we get $S_{q_{n}}(\varphi)$ is strictly convex on each interval of continuity, and $\to\infty$ at the singularities. In particular, $S_{q_{n}}(\varphi')$ is strictly monotone there and has exactly one zero. Hence $\Gamma_{n}=\{x:S_{q_{n}}(\varphi')(x)=0\}$ consists of exactly $q_{n}$ points, one in each interval.

Fix $1>\hat\delta>0$ and let $\tilde\varphi'$ denote the restriction of $\varphi'$ to $[\frac{1}{\hat \delta q_{n}\log q_{n}},1-\frac{1}{\hat \delta q_{n}\log q_{n}}]$. 
Applying the Denjoy--Koksma inequality to $\tilde\varphi'$, we get that there exists $C_4=C_4(\varphi)>{4}/{c_0}$ such that 
\[
\left|S_{q_{n}}(\tilde\varphi')(x)-q_{n}\int_\T\tilde\varphi'\right|\leq C_4\hat\delta q_{n}\log q_{n}\leq C_4 q_{n}\log q_{n},
\]
for $n$ large enough. Hence
\[
\begin{aligned}
\left|S_{q_{n}}(\tilde\varphi')(x)\right|&\leq c_0 q_{n}\log q_{n}+2q_n \norm{\psi}_{L^{\infty}}+C_4 q_{n}\log q_{n}\leq (2c_0+C_4)q_{n}\log q_{n},
\end{aligned}
\]
and
\[
-S_{q_{n}}(\tilde\varphi')(x)\geq c_0 q_{n}\log q_{n}-2q_n \norm{\psi}_{L^{\infty}}-C_4 \hat\delta q_{n}\log q_{n}\geq (c_0/2-C_4\hat\delta)q_{n}\log q_{n},
\]
for $n$ large enough.

Let $n$ be large enough such that $\frac{1}{\hat \delta q_{n}\log q_{n}}\leq\frac{1}{2q_{n}}.$ 

For the upper bound of \eqref{eq:location_min} and \eqref{eq:rk2}, consider $x$ such that $x_{\min,q_{n}}>({\hat \delta q_{n}\log q_{n}})^{-1}$. Then, by the previous inequality, it is enough to take $\hat\delta$ such that $\hat\delta<c_0/(4C_4)$ to get $S_{q_{n}}(\varphi')(x)=S_{q_{n}}(\tilde\varphi')(x)\leq -C_4^{-1}\, q_{n}\log q_{n}<0$.

The lower bound of \eqref{eq:location_min} follows from noting that $x_{\min,q_n}^{+}+x_{\min,q_n}^-\geq \norm{q_{n-1}\alpha}\geq \frac{1}{2q_n}$ and $|{\psi'(x)}|\lesssim|\log x|$ for any $x$, so that for $x_{\min,q_{n}}<{\hat \delta}({ q_{n}\log q_{n}})^{-1}$,
\[
\begin{aligned}
|S_{q_{n}}(\varphi')(x)|&\geq \varphi'({x_{\min,q_{n}}})-|S_{q_{n}}(\tilde\varphi')(x)|\geq \frac{c_0}{x_{\min,q_{n}}}-|{\psi'(x_{\min,q_{n}})}|-8q_n{c_0}-|S_{q_{n}}(\tilde\varphi')(x)|
\\&\geq (c_0/2\hat\delta-2c_0-C_4)\, q_{n}\log q_{n}>0,
\end{aligned}
\]
for $n$ large enough and taking $\hat\delta$ such that $\hat\delta<c_0(4c_0+2C_4)^{-1}$.

\end{proof}
\begin{remark}
Lemma \ref{lemma:location_min} shows that if $x_{\min,q_n} > (\hat\delta q_n \log q_n)^{-1}$, $S_{q_n}(\varphi')(x) <0$. Since $S_{q_n}(\varphi)$ is convex, its derivative is strictly increasing on each continuity, so the unique zero $y \in \Gamma_n$ satisfies $y^-_{\min,q_n} = y_{\min,q_n}$.
\label{rk:2}
\end{remark}

\subsection{Choice of $t_n$} \label{sec:7.3}

Let $A=[a,b]\times[c,d]\subset X$ be a box (i.e. $0\leq c<d\leq1$ and $0\leq c < d < \gamma/3$) and let $0<\eps<{\min\{\gamma/3-d,\gamma/6\}}$. Let $\alpha$ satisfy \eqref{H} for a sequence $\{r_n\}$ and fix $n\in \N$. 

We now define the sequence of times $t_n$ at which the correlation lower bound \eqref{eq:thm72} will be verified. The idea is to choose $t_n$ so that the flow image $\varphi_{t_n}(A)$ straddles the top of $A$: points near $\Gamma_{r_n}$ are mapped to just above $A$. The convexity of $S_{q_{r_n}}(\varphi)$ then forces an asymmetry that the lemmas of Section \ref{sec:convexity} quantify. 

We set
\begin{equation}
t_n:=\max_{y\in\Gamma_{r_n}}S_{q_{r_n}}(\varphi)(y)+2\gamma/3-\eps.    
\label{eq:defT}
\end{equation}

\begin{lemma} 
 There exists $N=N(\varphi)\geq 1$ such that $\forall n\geq N$ and $\forall y\in \Gamma_{r_n}$
\label{lemma:aligned}
\be
t_n<S_{q_{r_n}}(\varphi)(y)+2\gamma/3<S_{q_{r_n}+1}(\varphi)(y)-d-\eps.
\label{eq:aligned}\ee
Moreover,
\begin{equation}
t_n\asymp^{\varphi} q_{r_n}.\label{eq:approxT}
\end{equation}   
\end{lemma}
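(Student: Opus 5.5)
The plan is to prove the two inequalities in \eqref{eq:aligned} separately and then \eqref{eq:approxT}. The second inequality is the easy one. Since $S_{q_{r_n}+1}(\varphi)(y)-S_{q_{r_n}}(\varphi)(y)=\varphi(R_\alpha^{q_{r_n}}y)\geq\gamma$, it suffices that $2\gamma/3+d+\eps<\gamma$, i.e. $d+\eps<\gamma/3$. This holds strictly by the choice $\eps<\gamma/3-d$.

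The first inequality amounts, by \eqref{eq:defT}, to showing that the minimum values $\{S_{q_{r_n}}(\varphi)(y)\}_{y\in\Gamma_{r_n}}$ all agree up to an error $o(1)<\eps$. This is the main step, and it is where \eqref{H} enters. Write $q=q_{r_n}$. Fix $y\in\Gamma_{r_n}$ and $0\le k<q$. The cocycle identity gives
\[
S_q(\varphi)(R_\alpha^k y)-S_q(\varphi)(y)=S_k(\varphi)(y+q\alpha)-S_k(\varphi)(y)=\int_y^{y+q\alpha}S_k(\varphi')(\xi)\,d\xi .
\]
We have $\|q\alpha\|<1/q_{r_n+1}\le (q\log q\log\log q)^{-1}$ by \eqref{H}, while $y_{\min,q}\ge \hat\delta(q\log q)^{-1}$ by Lemma \ref{lemma:location_min}. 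Hence every $\xi$ on the segment satisfies $\xi_{\min,k}\gtrsim (q\log q)^{-1}$, and \eqref{eq:upper_bound_d} gives $|S_k(\varphi')(\xi)|\lesssim q\log q$. Therefore the difference above is $O(1/\log\log q)$.

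Next, the points $R_\alpha^k y$, $0\le k<q$, visit each of the $q$ continuity intervals of $S_q(\varphi)$. This holds because those intervals are cut by $\{-j\alpha\}_{j<q}$, which is shifted into itself by $R_\alpha$ up to the tiny error $\|q\alpha\|$, and $y$ stays at distance $\gg\|q\alpha\|$ from the cuts. So for any other $y'\in\Gamma_{r_n}$ we get $S_q(\varphi)(y')\le S_q(\varphi)(y)+O(1/\log\log q)$. Exchanging the roles of $y$ and $y'$ gives $\max_{\Gamma_{r_n}}S_q(\varphi)-S_q(\varphi)(y)=o(1)<\eps$ for $n\ge N$. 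This yields $t_n<S_q(\varphi)(y)+2\gamma/3$. The point I expect to need most care is the bookkeeping that each continuity interval contains exactly one of the $R_\alpha^k y$, i.e. that no orbit point crosses a discontinuity. I would handle it with the same comparison $\|q\alpha\|\ll y_{\min,q}$.

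Finally, for \eqref{eq:approxT} the lower bound is $t_n\ge S_q(\varphi)(y)\ge\gamma q$. For the upper bound, use $\|\psi\|_\infty<\infty$ and the three-gap theorem. The orbit $\{R_\alpha^k y\}_{k<q}$ has its $(2m+1)$-st and $(2m+2)$-nd closest points to $0$ at distance $\gtrsim m/q$, and its closest point at distance $\gtrsim (q\log q)^{-1}$. This gives
\[
S_q(\varphi)(y)\lesssim q+\log(q\log q)+\sum_{m=1}^{q}\log\tfrac{2q}{m}\lesssim q ,
\]
exactly as in the proof of Lemma \ref{lemma:def_tn_IET}. Adding the bounded constant $2\gamma/3-\eps$ then gives $t_n\asymp q_{r_n}$.
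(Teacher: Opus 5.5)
Your proof is correct, and in the key step it takes a cleaner route than the paper. The paper proceeds in four moves:
\begin{itemize}
\item it fixes the rightmost zero $y$, so that all the points $y-l\alpha$ share the same $x_{\min,q_{r_n}}$;
\item it uses the cocycle identity with the second-derivative bound \eqref{eq:upper_bound_dd} to show that $S_{q_{r_n}}(\varphi')(y-l\alpha)$ is small;
\item it uses the convexity lower bound \eqref{eq:convexity} to place $y-l\alpha$ within $O((q_{r_n}\log q_{r_n}\log\log q_{r_n})^{-1})$ of the zero $y'$ of its continuity interval;
\item it applies the mean value theorem twice.
\end{itemize}
You never locate $y'$. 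Because $y'$ minimises $S_{q}(\varphi)$ on its continuity interval (from the convexity established in the proof of Lemma \ref{lemma:location_min}), the one-sided inequality $S_q(\varphi)(y')\le S_q(\varphi)(R_\alpha^k y)$ suffices. Since your bound is uniform over all pairs $(y,y')$, it is automatically two-sided. Your argument needs only \eqref{eq:upper_bound_d} and \eqref{eq:location_min}, works for every $y\in\Gamma_{r_n}$, and gives the same $O(1/\log\log q_{r_n})$. In exchange, the paper's route yields extra information not needed here: to that precision, $\Gamma_{r_n}$ is a single orbit segment.

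The paper uses implicitly that the $q$ orbit points meet every continuity interval. Your justification of this is right, but the essential point should be stated. For $k<q$, $R_\alpha^k$ sends each cut point $-j\alpha$ either onto another cut point or onto one displaced by exactly $q\alpha$. So the displacement never accumulates to $k\|q\alpha\|$. The accumulated bound $q\|q\alpha\|\le(\log q\log\log q)^{-1}$ would not be small compared with $y_{\min,q}\gtrsim(q\log q)^{-1}$.

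For \eqref{eq:approxT}, you sum directly via the three-gap theorem, as in Lemma \ref{lemma:def_tn_IET}. The paper instead applies Denjoy--Koksma to the truncated roof and gets the sharper $S_{q_{r_n}}(\varphi)(y)=q_{r_n}+O(\log q_{r_n})$. Either suffices for $t_n\asymp q_{r_n}$.
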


\begin{proof}
Let $y\in\Gamma_{r_n}$ and $l<q_{r_n}$. Since $S_{q_{r_n}}(\varphi')(y)=0$, we write
\[
S_{q_{r_n}}(\varphi')(y-l\alpha)= S_{q_{r_n}}(\varphi')(y-l\alpha)-S_{q_{r_n}}(\varphi')(y).
\]

Combining the bound \eqref{eq:upper_bound_dd} with \eqref{H} by applying the mean value theorem, we obtain
\[
|S_{q_{r_n}}(\varphi')(y-l\alpha)|\leq |S_{l}(\varphi'')(\theta)|\norm{q_{r_n}\alpha} \leq \frac{C_2\theta_{\min,q_{r_n}}^{-2}}{q_{r_n+1}}\leq \frac{C_2\theta_{\min,q_{r_n}}^{-2}}{q_{r_n}\log q_{r_n}\log\log q_{r_n}},
\]
for some $\theta\in(y-l\alpha,y+(q_{r_n}-l)\alpha)$ (or $(y+(q_{r_n}-l)\alpha,y-l\alpha)$). To show that $S_{q_{r_n}}(\varphi')$ is continuous on this interval, let $\hat\delta$ be as in Lemma \ref{lemma:location_min}. For $n$ large enough, 
\[
\norm{q_{r_n}\alpha}\leq\frac{1}{q_{r_n}\log q_{r_n}\log\log q_{r_n}}\leq\frac{\hat\delta}{2q_{r_n}\log q_{r_n}}.
\]
Therefore
\[
\frac{\hat\delta q_{r_n}\log q_{r_n}}{2}\leq q_{r_n}\log q_{r_n}\left(\frac{1}{\hat\delta} +\frac{\hat\delta}{2}\right)^{-1}\leq \theta_{\min,q_{r_n}}^{-1}\leq \frac{2q_{r_n}\log q_{r_n}}{\hat\delta}.
\]

Let $y\in\Gamma_{r_n}$ be the rightmost point when ordered in $(0,1)$, such that for any $0\leq l<q_{r_n}$, 
\[
y_{\min,q_{r_n}}=y_{\min,q_{r_n}}^-=(y-l\alpha)_{\min,q_{r_n}}^-=(y-l\alpha)_{\min,q_{r_n}}.
\] 
By the convexity result \eqref{eq:convexity}, provided $q_{r_n}/\log q_{r_n}>2C_3^{-2}$
\[ 
S_{q_{r_n}}(\varphi'')(x)\geq C_3\, x_{\min,{l}}^{-2}-C_3^{-1}q_{r_n}\geq \tfrac{C_3}{2}\, x_{\min,{l}}^{-2}.
\]
Therefore if $y'\in\Gamma_{r_n}$ lies in the same continuity interval as $y-l\alpha$, then
\[ 
\begin{aligned}
|y'-y+l\alpha|&\leq\frac{|S_{q_{r_n}}(\varphi')(y-l\alpha)|}{\min_{(y'\wedge(y-l\alpha),y'\vee(y-l\alpha))} S_{q_{r_n}}(\varphi'')}\\
&\leq \frac{C_2}{q_{r_n}\log q_{r_n}\log\log q_{r_n}}\left(\frac{2q_{r_n}\log q_{r_n}}{\hat\delta}\right)^2 \frac{2}{C_3}\left(\frac{1}{\hat\delta q_{r_n}\log q_{r_n}}\right)^2\\
&\leq \tilde C \left(q_{r_n}\log q_{r_n}\log\log q_{r_n}\right)^{-1},
\end{aligned}
\]
for some $\tilde C=\tilde C(\varphi,\hat\delta)>0$.

Let $y'\in\Gamma_{r_n}$. Using the previous step,
\[
\begin{aligned}
&|S_{q_{r_n}}(\varphi)(y)-S_{q_{r_n}}(\varphi)(y')|
\\&\leq |S_{q_{r_n}}(\varphi)(y)-S_{q_{r_n}}(\varphi)(y-l\alpha)|+ |S_{q_{r_n}}(\varphi)(y-l\alpha)-S_{q_{r_n}}(\varphi)(y')|
\\&= |S_{l}(\varphi)(y-l\alpha+q_{r_n}\alpha)-S_{l}(\varphi)(y-l\alpha)|+ |S_{q_{r_n}}(\varphi)(y-l\alpha)-S_{q_{r_n}}(\varphi)(y')|.
\end{aligned}
\]
Both terms are bounded by the mean value theorem, using the estimate \eqref{eq:upper_bound_d} and the small distance between $y-l\alpha$ and both $y-l\alpha+{q_{r_n}\alpha}$ and $y'$:
\[ 
|S_{l}(\varphi)(y-l\alpha+q_{r_n}\alpha)-S_{l}(\varphi)(y-l\alpha)|\leq\norm{q_{r_n}\alpha}2C_1q_{r_n}\log q_{r_n}/\hat\delta=O((\log\log q_{r_n})^{-1}),
\]
\[ 
|S_{q_{r_n}}(\varphi)(y-l\alpha)-S_{q_{r_n}}(\varphi)(y')|\leq|y'-y+l\alpha|\,2C_1q_{r_n}\log q_{r_n}/\hat\delta=O((\log\log q_{r_n})^{-1}).
\]

Altogether, $|S_{q_{r_n}}(\varphi)(y)-S_{q_{r_n}}(\varphi)(y')|$ decays asymptotically as $({\log\log q_{r_n}})^{-1}$ with $n$, for any $y,y'\in\Gamma_{r_n}$. 
Since $S_{q_{r_n}}(\varphi)$ is asymptotically constant on $\Gamma_{r_n}$, by definition of $t_n$ in \eqref{eq:defT}, for $n$ large enough we obtain
\[
t_n < S_{q_{r_n}}(\varphi)(y)+2\gamma/3\quad \text{for all } y\in\Gamma_{r_n}.
\]
This gives \eqref{eq:aligned}.

To prove \eqref{eq:approxT}, pick $y\in\Gamma_{r_n}$. Using $\int_{\T}\varphi=1$, Denjoy--Koksma estimates on the restriction of $\varphi$ to $[\frac{\hat{\delta}}{q_{r_n}\log q_{r_n}},1-\frac{\hat{\delta}}{q_{r_n}\log q_{r_n}}]$ give
\[
\begin{aligned}
&\left|S_{q_{r_n}}(\varphi)(y)-q_n\int_\T\varphi\chi_{\{\norm{x}\geq\hat\delta(q_{r_n}\log{q_{r_n}})^{-1}\}}\right|
\leq \text{Var}\left(\varphi\chi_{\{\norm{x}\geq\hat\delta(q_{r_n}\log{q_{r_n}})^{-1}\}}\right)
\\&\leq \varphi(\tfrac{\hat{\delta}}{q_{r_n}\log q_{r_n}})+\varphi(1-\tfrac{\hat{\delta}}{q_{r_n}\log q_{r_n}})+\int_{\hat\delta(q_{r_n}\log{q_{r_n}})^{-1}}^{1-\hat\delta(q_{r_n}\log{q_{r_n}})^{-1}}\frac{3c_0}{x}dx+\text{Var}(\psi)+ 2\norm{\psi}_{L^\infty}\\
&=O(\log q_{r_n}),
\end{aligned}
\]
where we have used that $\psi$ has bounded variation.
Note that the integral on the left hand side $\to 1$ as $n\to \infty$, and recall $\hat\delta=\hat\delta(\varphi)$, so by definition of $t_n$, this yields
\[
t_n\asymp^{\varphi} q_{r_n}.
\]

\end{proof}

\begin{lemma}\label{lemma:rk3} For every $s \in (0,\varepsilon)$, $\phi_{t_n - s}$ maps the set $(\Gamma_{r_n} \cap A) \times (c,d)$ outside $A$. That is, $[c,d+\eps]\cap\Pi_s\phi_{t_n}((\Gamma_{r_n}\cap A)\times[c,d+s])=\emptyset$.
\end{lemma}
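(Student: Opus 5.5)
The plan is to compute $\phi_{t_n}(y,u)$ explicitly for $y\in\Gamma_{r_n}\cap[a,b]$ and $u\in[c,d+s]$, with $s\in(0,\eps)$. Write $q:=q_{r_n}$. The idea is to show that the number of roof crossings is exactly $N(y,u+t_n)=q$, and then to read off the resulting height. The two formulations in the statement are equivalent: $\phi_{t_n-s}(x,u')=\phi_{t_n}(x,u'-s)$ whenever $u'-s\ge0$, and in any case the set $\{u'-s:\ u'\in(c,d)\}$ is contained in the range $[c-\eps,d+s]$, which is treated below. For definiteness I will work with the second formulation, on heights $u\in[c,d+s]$.

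\textbf{Step 1: the crossing count is $q$.} I will show that
\[
S_q(\varphi)(y)\le u+t_n< S_{q+1}(\varphi)(y).
\]
The upper inequality follows from the second inequality in \eqref{eq:aligned} of Lemma \ref{lemma:aligned}, which gives $t_n+u\le t_n+d+\eps<S_{q+1}(\varphi)(y)$. For the lower inequality, the definition \eqref{eq:defT} gives
\[
t_n\ge S_q(\varphi)(y)+2\gamma/3-\eps,
\]
since $t_n$ is defined through the maximum of $S_q(\varphi)$ over $\Gamma_{r_n}$. Because $\eps<\gamma/6$, this yields $t_n+u\ge S_q(\varphi)(y)+\gamma/2>S_q(\varphi)(y)$. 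Hence, for $n\ge N$ with $N$ as in Lemma \ref{lemma:aligned},
\[
\phi_{t_n}(y,u)=\bigl(R_\alpha^{q}y,\ h(u)\bigr),\qquad h(u):=u+t_n-S_q(\varphi)(y).
\]

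\textbf{Step 2: the height avoids $[c,d+\eps]$.} Combining the first inequality of \eqref{eq:aligned} with the lower bound on $t_n$ from Step 1 gives
\[
c+2\gamma/3-\eps\le h(u)<d+s+2\gamma/3.
\]
It then suffices to check that $c+2\gamma/3-\eps>d+\eps$. This holds because $d-c<d<\gamma/3$ and $2\eps<\gamma/3$, both by the standing choices $d<\gamma/3$ and $\eps<\min\{\gamma/3-d,\gamma/6\}$. Consequently $\Pi_s\phi_{t_n}(y,u)\notin[c,d+\eps]$ for every $u\in[c,d+s]$ and every $y\in\Gamma_{r_n}\cap[a,b]$, which is the claim.

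\textbf{Main obstacle.} The only delicate point is the wrap-around in Step 1: the second coordinate must genuinely be $h(u)$, with no extra roof crossing. This is exactly where the second half of \eqref{eq:aligned} enters, since it leaves room $d+\eps$ above $t_n$ before the next crossing. That half of \eqref{eq:aligned} ultimately comes from $\varphi(R^q_\alpha y)\ge\gamma$ together with the near-constancy of $S_q(\varphi)$ on $\Gamma_{r_n}$, as established in Lemma \ref{lemma:aligned}. Everything else reduces to the elementary inequalities on $c,d,\eps,\gamma$ in Step 2.
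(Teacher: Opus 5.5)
Your proof is correct and is essentially the paper's argument. Both combine \eqref{eq:defT} with the first inequality of \eqref{eq:aligned} to confine the post-flow height $u+t_n-S_{q_{r_n}}(\varphi)(y)$ between $c+2\gamma/3-\eps$ and $d+s+2\gamma/3$, then check this lies above $d+\eps$; the paper phrases this as containment in $[\gamma/3+d,\gamma]$, which implicitly gives the no-extra-crossing fact you verify via the second inequality of \eqref{eq:aligned}. One small slip: your opening remark says heights $u\in[c-\eps,c)$ are ``treated below'' although Steps 1--2 assume $u\geq c$, but the same inequalities work there because $d-c+3\eps<2\gamma/3$.
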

This is the key disjointness condition used in the proof of Proposition \ref{prop:strips}.
Moreover, since $S_{q_{r_n}}\varphi$ attains its minimum in $\Gamma_{r_n} $, by definition of $t_n$: $N(x,t_n+d+\eps)\leq q_{r_n}$ for any $x\in \T$.
\begin{proof}
Conditions \eqref{eq:defT}--\eqref{eq:aligned} together imply that \[\Pi_s\phi_{t_n+s}(\{y\}\times[c,d])\subset[2\gamma/3-\eps+c+s,2\gamma/3+d+s]\subset[\gamma/3+d,\gamma],\] placing the image above $A$ entirely.
\end{proof}

\subsection{Computation of the correlations}\label{sec:7.4} 

We want to compute a lower bound for 
\[
\mu(\varphi_{t_n}A \cap \varphi_\varepsilon A) - \mu(\varphi_{t_n}A \cap A).
\]
We will do so by dividing $A$ into thin horizontal strips and computing the contribution to this difference from each pair of strips. The gain comes from the convexity of the Birkhoff sums of $\varphi$ established in Section \ref{sec:7.2}: the flow image of each strip intersects the $\varepsilon$-shifted version more than the unshifted one. The contribution from the points where this convexity argument does not directly apply will be shown to be negligible. In particular, we will prove the following proposition.

\begin{proposition} Let $\{A_i\}_{i=1}^{N_0}$ be a partition of $A$ in $N_0$ strips of width $b-a$ and height $\eps:=\tfrac{d-c}{N_0}$. There exists $ C_5=C_5(\varphi,A)>0$, $\eps^*>0$, and $N=N(\varphi,A)\geq 1$ such that, provided $\eps^*>\eps>0$ and $n\geq N$,
\[
\mu(\phi_{t_n}A_i\cap \phi_\eps A_j)-\mu(\phi_{t_n}A_i\cap A_j)>\frac{C_5\eps^4}{\log q_{r_n}},
\]
for any $1\leq i,j\leq N_0.$ 
\label{prop:strips}
\end{proposition}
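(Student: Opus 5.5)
We slice $A_i=[a,b]\times[c_i,c_i+\eps]$ vertically and use the explicit form of the flow, so that the whole estimate reduces to the convexity lemmas applied on each continuity interval of $S_{q_{r_n}}(\varphi)$. By Lemma \ref{lemma:rk3} and the remark after it, $N(x,t_n+d+\eps)\le q_{r_n}$. Since points $(x,s)\in A_i$ with $s\in[c_i,c_i+\eps]$ can only return to height $\le d+\eps<\gamma/3$ after $q_{r_n}$ base steps (by \eqref{eq:aligned}), the part of $\phi_{t_n}A_i$ meeting $[a,b]\times[c,d+\eps]$ consists of points of the form $(R_\alpha^{q_{r_n}}x,\ s+t_n-S_{q_{r_n}}(\varphi)(x))$ with $x\in[a,b]$. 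Because $\|q_{r_n}\alpha\|\le (q_{r_n}\log q_{r_n}\log\log q_{r_n})^{-1}$ by \eqref{H}, the map $x\mapsto R_\alpha^{q_{r_n}}x$ is a tiny translation. Its effect on the endpoints of $[a,b]$ costs $O(\|q_{r_n}\alpha\|)$, which is $o(\eps^4/\log q_{r_n})$ and can be absorbed.

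Thus $\phi_{t_n}A_i\cap([a,b]\times\R)$ is, up to this error, the region $E$ between the graphs of $f(x):=t_n+c_i-S_{q_{r_n}}(\varphi)(x)$ and $f+\eps$, for $x$ in each continuity interval $J\subset[a,b]$. Comparing it with $A_j$ and $\phi_\eps A_j=[a,b]\times[c_j+\eps,c_j+2\eps]$ after the vertical shift by $c_j$ is exactly the $D^-/D^+$ comparison of Lemma \ref{lemma:convex2}. The function $f$ is concave on each $J$, since $S_{q_{r_n}}(\varphi'')>0$ by \eqref{eq:convexity}, and it attains its maximum at the point $y\in\Gamma_{r_n}\cap J$. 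Splitting $J$ at $y$ gives two monotone branches. On the left branch $f'>0$; the right branch is handled by the reflection used in Lemma \ref{lemma:convex3}. By Lemma \ref{lemma:rk3}, $f(y)$ lies above the window $[c_j-c_i-\eps,\,c_j-c_i+2\eps]$ (after normalising), so on each branch the preimage of the window is a single interval $[\alpha,\beta]$ on which $f$ is a bijection onto the window. This is the setting of Lemma \ref{lemma:convex1}, or of the \emph{moreover} part of Lemma \ref{lemma:convex3} when the branch is truncated by a singularity or by $a,b$. Hence every branch contributes a nonnegative amount.

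To get the quantitative gain we need a good branch on which Lemma \ref{lemma:convex1} applies with $k=1$ and $M\asymp q_{r_n}\log q_{r_n}$. Take a continuity interval $J$ well inside $[a,b]$; there are $\asymp(b-a)q_{r_n}$ of them. On the part of $J$ where $f$ crosses the window, $f'$ is controlled as follows. The drop $f(y)-f(x)$ is $O(1)$ there. By \eqref{eq:rk2}, away from scale $(q_{r_n}\log q_{r_n})^{-1}$ the slope has size $\asymp q_{r_n}\log q_{r_n}$. Together with \eqref{eq:upper_bound_d} this gives $x_{\min,q_{r_n}}\gtrsim(q_{r_n}\log q_{r_n})^{-1}$ on the crossing set and $0<f'\lesssim q_{r_n}\log q_{r_n}$. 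By \eqref{eq:convexity} and \eqref{eq:location_min}, $-f''\gtrsim x_{\min}^{-2}\gtrsim (q_{r_n}\log q_{r_n})^2\cdot$const on a crossing set of the left branch, where $x_{\min}$ is comparable to $(q_{r_n}\log q_{r_n})^{-1}$ because $f$ drops by $O(1)$ there. This gives the hypotheses of Lemma \ref{lemma:convex1} with $M=q_{r_n}\log q_{r_n}$ and $C=C(\varphi)$. Lemma \ref{lemma:convex2} then yields a gain $\ge \eps^4/(q_{r_n}\log q_{r_n})$ per good interval. Summing over the $\gtrsim(b-a)q_{r_n}$ good intervals gives $C_5\eps^4/\log q_{r_n}$ with $C_5\asymp b-a$, while the remaining branches contribute $\ge0$.

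\textbf{Main obstacle.} The delicate step is checking that the crossing set of the left branch really sits at distance $\asymp (q_{r_n}\log q_{r_n})^{-1}$ from the singularity, so that $f'$ and $f''$ have the matched sizes $M$ and $M^2$. The danger is that the window is crossed only far from $y$, where $-f''$ might be only $\asymp q_{r_n}^2$. I would handle this by integrating $f'$ from $y$ using \eqref{eq:convexity} and \eqref{eq:upper_bound_dd}, showing that $f$ drops by $O(1)$ within distance $O((q_{r_n}\log q_{r_n})^{-1})$ of $y$. If that fails, the fallback is to compute the gain directly from \eqref{eq:taylor_crossings} with the true values of $f''/f'^3$. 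Near-uniformity of $S_{q_{r_n}}(\varphi)(y)$ over $y\in\Gamma_{r_n}$, from the proof of Lemma \ref{lemma:aligned}, ensures that the window position is the same on all good intervals.
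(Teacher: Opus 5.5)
There is a genuine gap in your first paragraph: the claim that only points $(R_\alpha^{q_{r_n}}x,\,s+t_n-S_{q_{r_n}}(\varphi)(x))$ reach $[a,b]\times[c,d+\eps]$ is false. Estimate \eqref{eq:aligned} concerns only $y\in\Gamma_{r_n}$. The observation stated with Lemma \ref{lemma:rk3} gives $N(x,t_n+s)\le q_{r_n}$, not equality. In fact equality is the exception. On the left branch of a continuity interval $J$, \eqref{eq:rk2} together with monotonicity of $S_{q_{r_n}}(\varphi')$ gives $S_{q_{r_n}}(\varphi')\le -C_4^{-1}q_{r_n}\log q_{r_n}$ once you are at distance $\gtrsim(q_{r_n}\log q_{r_n})^{-1}$ from $y$. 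Hence $S_{q_{r_n}}(\varphi)(x)>t_n+d+\eps$ for all $x\in J$ outside an $O((q_{r_n}\log q_{r_n})^{-1})$-neighbourhood of $y$.

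So for all but a proportion $O(1/\log q_{r_n})$ of $x\in[a,b]$, the orbit has completed only $l=N(x,t_n+s)<q_{r_n}$ returns. As $x$ varies, the image $(R_\alpha^{l}x,\,t_n+s-S_l(\varphi)(x))$ runs from the floor to the roof and crosses the heights of $A_j$ and $\phi_\eps A_j$. These levels carry essentially all of $\mu(\phi_{t_n}A_i\cap A_j)$, which is of order $\mu(A_i)\mu(A_j)$ by mixing. The level $l=q_{r_n}$ you analyse involves only $x$ in a set of measure $O(1/\log q_{r_n})$. Without a sign for the $l<q_{r_n}$ contributions, the bound $C_5\eps^4/\log q_{r_n}$ does not follow. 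Two further points break for these levels. First, $R_\alpha^l$ is not a small translation, so branches cut by $(a,b)\cap R_\alpha^{-l}(a,b)$ cannot be absorbed into $\|q_{r_n}\alpha\|$. Second, the \emph{moreover} part of Lemma \ref{lemma:convex3} only covers truncation from below, not branches whose top is cut off.

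The missing step is the first half of the paper's argument.
\begin{itemize}
\item Decompose by the level $l$ and restrict to $F_n(\hat t_1)$. These are the $x$ with $l\ge N_*=q_{r_n}-O(\delta^{-1}\log q_{r_n})$ and $x^\pm_{\min,l}=x^\pm_{\min,q_{r_n}}$, so that $S_l(\varphi)$ is convex there by \eqref{eq:convexity}.
\item On each interval, $\min S_l(\varphi)\le\min S_{q_{r_n}}(\varphi)\le t_n-2\gamma/3+\eps$ lies below the window. So every crossing of the window by $t_n+\hat t_1-S_l(\varphi)$ happens on a monotone concave branch (Lemma \ref{lemma:rk1}), and Lemma \ref{lemma:convex3} makes each uncut branch contribute $\ge 0$.
\item The losses are the complement of $F_n(\hat t_1)$, of area $O(\delta\eps/\log q_{r_n})$ by Lemma \ref{lemma:good_fn}. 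Add at most two cut branches per level $l$, of total area $O(\eps\log q_{r_n}/(\delta q_{r_n}))$.
\item Your gain at level $l=q_{r_n}$ beats these losses once $\delta\ll\eps^3$ and $n$ is large.
\end{itemize}
That gain itself is correct and is exactly the paper's Lemma \ref{lemma:lower_bound_d} argument: the crossing set lies at distance $\asymp(q_{r_n}\log q_{r_n})^{-1}$ from the singularity, Lemma \ref{lemma:convex1} applies with $k=1$ and $M=q_{r_n}\log q_{r_n}$, and one sums over $\gtrsim(b-a)q_{r_n}$ intervals.
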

Let us first show that Theorem \ref{thm:log_decay2} follows from Proposition \ref{prop:strips}.
\begin{proof}[Proof of Theorem \ref{thm:log_decay2}] We can pick $\eps>0$ small enough such that for each $t_n$, with $n$ large enough, Proposition \ref{prop:strips} holds, yielding
\[
\begin{aligned}
\mu(\phi_{t_n}A\cap \phi_\eps A)-\mu(\phi_{t_n}A\cap A)&=\sum_{i,j=1}^{N_0}\mu(\phi_{t_n}A_i\cap \phi_\eps A_j)-\mu(\phi_{t_n}A_i\cap A_j)\\&\geq N_0^2\frac{C_5\eps^4}{\log q_{r_n}}=\frac{C_5(d-c)^2\eps^2}{\log q_{r_n}}.
\end{aligned}
\]
The result follows from the estimate \eqref{eq:approxT}.
\end{proof}

To prove Proposition \ref{prop:strips}, we need to introduce and prove some auxiliary lemmas. 
First, let us define a subset of $(a,b)$, $F_n$, where we have good control of the Birkhoff sums, and show that we can discard $(a,b)\setminus F_n$.  

Let $\hat\delta>\delta>0$, and define
\[
F_{n}'=F_{n}'(\delta):=\mathbb{T}\setminus\bigcup_{k=0}^{q_{r_n}-1} R_\alpha^{-k}\Big(\frac{-\delta}{q_{r_n}\log q_{r_n}},\frac{\delta}{q_{r_n}\log q_{r_n}}\Big).
\]

Let $N_*:=q_{r_n}-\frac{2C_1 }{\delta}\log q_{r_n}-1$. For each $1\leq l\leq q_{r_n}$, let
\[ 
G_{n,l}:=\left\{x\in \T:(x_{\min,l}^+,x_{\min,l}^-)=(x_{\min,q_{r_n}}^+,x_{\min,q_{r_n}}^-)\right\}\setminus \bigcup_{k=0}^{l-1}\{-k\alpha\},
\] 
and let 
\[
F_{n}(\hat t)=\left\{x\in(a,b)\cap G_{n,{N(x,t_n+\hat t+\eps)}} \right\}\cap\{N(x,t_n+\hat t+\eps)\geq N_*\},
\]
for $\hat t\in[c,d-\eps]$.

\begin{lemma} There exists $C_6=C_6(\varphi,A)>0$ and $N=N(\varphi,A)\geq 1$ such that for $n\geq N$ and for any $\hat t\in[c,d-\eps]$,
\be 
\lambda((a,b)\setminus F_n(\hat t))\leq\frac{\delta C_6}{\log q_{r_n}}.
\ee 
\label{lemma:good_fn}
\end{lemma}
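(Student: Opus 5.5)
The set $(a,b)\setminus F_n(\hat t)$ is covered by two exceptional sets, each of measure $O(\delta/\log q_{r_n})$. Write $N=N(x,t_n+\hat t+\eps)$. A point $x\in(a,b)$ fails to lie in $F_n(\hat t)$ only if one of the following holds:
\begin{enumerate}[label=(\roman*)]
\item $N<N_*$;
\item $x\notin G_{n,N}$, i.e. the closest approach of the orbit to the singularity (from the left or from the right) during the first $q_{r_n}$ steps happens at an iterate $k\geq N$, or $x$ is one of the finitely many discontinuity points.
\end{enumerate}
By the remark after Lemma \ref{lemma:rk3}, $N\le q_{r_n}$ always, so in case (ii) the closest approach happens at some $k\in[N,q_{r_n})$. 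If $N\ge N_*$, this $k$ lies in the window $[N_*,q_{r_n})$, which contains at most $\frac{2C_1}{\delta}\log q_{r_n}+1$ iterates. So it suffices to control (i) together with the event that a minimum in $x^\pm_{\min,q_{r_n}}$ is attained in this last window.

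\textbf{Bounding (i).} I would show that $N<N_*$ forces $x$ to be very close to a singularity of $S_{q_{r_n}}(\varphi)$. Suppose $x\in F_n'(\delta)$, so that $x_{\min,q_{r_n}}\geq \delta/(q_{r_n}\log q_{r_n})$. Since $t_n\geq \min_{\Gamma_{r_n}} S_{q_{r_n}}(\varphi)$, we have $S_{N}(\varphi)(x)\ge t_n+\hat t+\eps-\varphi(T^Nx)$. Compare this with $S_{q_{r_n}}(\varphi)(x)$:
\begin{itemize}
\item the $q_{r_n}-N$ skipped terms each have size at least $\gamma$;
\item $S_{q_{r_n}}(\varphi)(x)$ differs from its minimum over the continuity interval by at most $|x-y|\sup|S_{q_{r_n}}(\varphi')|$, which by \eqref{eq:upper_bound_d} is at most $C_1\max\{q\log q,x_{\min}^{-1}\}\cdot|x-y|$;
\item here $|x-y|\le 1/q_{r_n}$ and $x_{\min}^{-1}\le q\log q/\delta$.
\end{itemize}
Together these give $q_{r_n}-N\lesssim (C_1/\delta)\log q_{r_n}$, which is exactly how $N_*$ was calibrated. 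Hence $N<N_*$ forces $x\notin F_n'(\delta)$. The complement of $F_n'(\delta)$ is a union of $q_{r_n}$ intervals of length $2\delta/(q_{r_n}\log q_{r_n})$, so its measure is at most $2\delta/\log q_{r_n}$. This closeness-of-Birkhoff-sums step is the delicate one. If the crude version fails, I would use the convexity \eqref{eq:convexity} and \eqref{eq:rk2} to compare $S_{q_{r_n}}(\varphi)$ with its minimum more sharply.

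\textbf{Bounding (ii).} Fix $k$ in the window $[N_*,q_{r_n})$. The points $x$ whose closest approach (right or left) is at $R_\alpha^k x$ form the set where $R_\alpha^kx$ is the nearest point to $0$ among $\{R_\alpha^jx\}_{j<q_{r_n}}$. By the three gap theorem, this set is a union of intervals of total measure $O(1/q_{r_n})$. I would also intersect with $F_n'(\delta)$, since the complement of $F_n'(\delta)$ is already counted in (i). Summing over the at most $\frac{2C_1}{\delta}\log q_{r_n}+1$ values of $k$ crudely gives $O(\delta^{-1}\log q_{r_n}/q_{r_n})$. This is much smaller than $\delta/\log q_{r_n}$ for $n$ large, since $\delta$ is fixed.

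Adding the two contributions yields $\lambda((a,b)\setminus F_n(\hat t))\le \delta C_6/\log q_{r_n}$ for a suitable $C_6=C_6(\varphi,A)$, uniformly in $\hat t\in[c,d-\eps]$. The main obstacle is step (i), namely making the comparison between $N(x,t_n+\hat t+\eps)$ and $q_{r_n}$ quantitative using only \eqref{eq:upper_bound_d} and Lemma \ref{lemma:aligned}.
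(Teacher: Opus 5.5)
Your proposal is correct and follows essentially the same route as the paper. You show $N(x,t_n+\hat t+\eps)\ge N_*$ on $F_n'(\delta)$ using the oscillation bound \eqref{eq:upper_bound_d} on a component of $F_n'$, bound $\lambda(\T\setminus F_n')\lesssim\delta/\log q_{r_n}$, and discard the $O(\delta^{-1}\log q_{r_n})$ continuity intervals of length $\le 2/q_{r_n}$ whose extremal iterate falls in $[N_*,q_{r_n})$. The one cosmetic difference is that you bound $q_{r_n}-N$ through the tail of the orbit of $x$ itself, whereas the paper compares with the zero $y\in\Gamma_{r_n}$, where $N(y,\cdot)=q_{r_n}$. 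Two small fixes are needed. The leftover term $\varphi(T^Nx)$ must either be bounded by $O(\log q_{r_n})$ on $F_n'$, or avoided by using $S_{N+1}(\varphi)(x)>t_n+\hat t+\eps$ and summing the tail from $N+1$. And the fact you actually need is $S_{q_{r_n}}(\varphi)(y)\le t_n-2\gamma/3+\eps$ for every $y\in\Gamma_{r_n}$, which comes from the maximum in \eqref{eq:defT}, not $t_n\ge\min_{\Gamma_{r_n}}S_{q_{r_n}}(\varphi)$.
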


\begin{proof}
First, note that $N(y,t_n+\hat t+\eps)=q_{r_n}$ $\forall y\in\Gamma_{r_n}$, and it is the maximum on $\T $ by Lemma \ref{lemma:rk3}. By the definition of $N(\cdot,\cdot)$ and \eqref{eq:upper_bound_d}, for any $x$ in the same component of $F_n'$ as $y$
\[
\begin{aligned}
N(x,t_n+\hat t+\eps)&\geq N(y,t_n+\hat t+\eps)-N(y,(S_{N(x,t_n)}(\varphi)(x)-S_{N(x,t_n+\hat t+\eps)}(\varphi)(y)))-1
\\&\geq N(y,t_n+\hat t+\eps)-\frac{1}{\gamma}(S_{N(x,t_n+\hat t+\eps)}(\varphi)(x)-S_{N(x,t_n+\hat t+\eps)}(\varphi)(y))-1
\\&\geq q_{r_n}-\frac{1}{\gamma}\max_{F_n'}S_{N(x,t_n+\hat t+\eps)}(\varphi'){|y-x|}-1\geq q_{r_n}-\frac{C_1 q_{r_n}\log q_{r_n}}{\delta}\frac{2}{q_{r_n}}-1
\\&\geq q_{r_n}-\frac{2C_1 }{\delta}\log q_{r_n}-1.
\end{aligned}
\]

Consider each of the $q_{r_n}$ segments of $\T\setminus\cup_{m=0}^{q_{r_n}-1}\{R_\alpha^{-m}0\} $. 
For each of them, there exist $k_1$, $k_2$ such that the interval is $(R_\alpha^{-k_1}0,R_\alpha^{-k_2}0)$. Hence
\[
x_{\min,q_{r_n}}^+=R_\alpha^{k_1} x, \qquad \text{and} \qquad
x_{\min,q_{r_n}}^-=1-R_\alpha^{k_2} x,
\]
for $x$ on the segment. In particular, $k_1$ and $k_2$ each take every value in $\{0,\dots,q_{r_n}-1\}$ exactly once.
This means that all but at most $\frac{4C_1 }{\delta}\log q_{r_n}+2$ of the maximal components $I\subset F_n'$ satisfy 
\[
(x_{\min,q_{n}}^-,x_{\min,q_{n}}^+)=\left(x_{\min,N(x,t_n+ \hat t)}^+,x_{\min,N(x,t_n+ \hat t)}^-\right)
\]
for any $x\in I$.
Hence, the measure of the intervals of $F_n'$ for which it is not true is at most
\[
\left(\frac{4C_1 }{\delta}\log q_{r_n}+1\right)\frac{2}{q_{r_n}}\leq \frac{\delta}{\log q_{r_n}},
\]
for $n$ large enough.

On the other hand, noting that each interval of continuity of $S_{q_{r_n}}(\varphi)$ has length at least $(2q_{r_n})^{-1}$, we get
\[
\lambda((a,b))-\lambda((a,b)\cap F_n')\leq \left[(b-a){2q_{r_n}}+1\right]\tfrac{2\delta}{q_{r_n}\log q_{r_n}}.
\]

The result follows from combining the two inequalities and noting that
\[
(a,b)\cap F_n'\cap\left\{x\in G_{n,{N(x,t_n+\hat t+\eps)}}\right\}\subset F_n( \hat t).
\]
\end{proof}

\begin{lemma}
\label{lemma:rk1}
For any $\hat t_1,\hat t_2\in[c,d-\eps]$ and $N_*\leq l\leq q_{r_n}$, 
\[
F_n(\hat t_1,l):= F_n(\hat t_1)\cap R_\alpha^{-l}(a,b)\cap\{t_n+\hat{t_1}- S_l(\varphi)(x)\in[\hat t_2-\eps,\hat t_2+2\eps]\}
\]
is a union of intervals satisfying, for any $I$ such interval,
\begin{itemize}
    \item[i)] $\lambda(I) \leq 2/q_{r_n}$,
    \item[ii)] either $S_l(\varphi')<0$ or $S_l(\varphi')>0$ on $I$,
    \item[iii)] $\sup_I \{t_n+\hat t_1-S_l(\varphi)\}= t_2+2\eps$ for all but at most two intervals.
\end{itemize}
\end{lemma}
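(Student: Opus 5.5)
The plan is to work inside a single continuity interval of $S_l(\varphi)$ and show that $F_n(\hat t_1,l)$ meets it in at most two intervals of monotonicity, with the sup condition forced by connectedness. Fix $N_*\le l\le q_{r_n}$. The points $\{-k\alpha\}_{k=0}^{l-1}$ cut $\T$ into at most $l$ intervals, on each of which $S_l(\varphi)$ is $C^2$. By the three gap theorem these intervals have length at most $\|q_{r_n-1}\alpha\|+\|q_{r_n}\alpha\|\le 2/q_{r_n}$. This uses $l\ge N_*\ge q_{r_n-1}$ for $n$ large, since $q_{r_n}\ge q_{r_n-1}\log q_{r_n-1}\cdots$ by \eqref{H}, and $N_*=q_{r_n}-O(\log q_{r_n})$. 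Since $F_n(\hat t_1,l)$ is contained in the union of these continuity intervals, every component $I$ lies in one of them, and i) follows.

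For ii), I would use convexity. On a continuity interval $J$, the function $S_l(\varphi)$ is strictly convex by \eqref{eq:convexity}: $x_{\min,l}\le 2/q_{r_n}$ gives $C_3x_{\min,l}^{-2}\gtrsim q_{r_n}^2\gg C_3^{-1}q_{r_n}\log l$. Hence $S_l(\varphi')$ is strictly increasing on $J$ and has at most one zero $z_J$. The set $\{x\in J: t_n+\hat t_1-S_l(\varphi)(x)\in[\hat t_2-\eps,\hat t_2+2\eps]\}$ is the preimage of an interval under a convex function. It therefore consists of at most two intervals, one on each side of $z_J$ (or one interval containing $z_J$). We then intersect with $F_n(\hat t_1)\cap R_\alpha^{-l}(a,b)$ and redefine the components $I$ as the maximal subintervals on which $S_l(\varphi')$ has constant sign, splitting at $z_J$ if necessary. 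The membership conditions defining $F_n(\hat t_1)$ through $G_{n,N(\cdot)}$ are constant on pieces of the partition by $\{-k\alpha\}$, so they do not create further cuts beyond finitely many endpoints. This gives ii).

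For iii), on each such $I$ the function $u=t_n+\hat t_1-S_l(\varphi)$ is monotone. Its range on the full sublevel piece is therefore an interval containing values up to the level where $u$ reaches $\hat t_2+2\eps$, unless the piece is truncated before that level. Truncation happens only when the piece ends at an endpoint of $(a,b)$ (after pulling back by $R_\alpha^{-l}$), or at the edge of the good set $F_n(\hat t_1)$. The main obstacle is controlling the latter. My approach is to show that $u$ cannot attain $\hat t_2+2\eps$ beyond the boundary of $F_n(\hat t_1)$ inside $J$. The reason is that near the singularities $S_l(\varphi)$ blows up, so $u\to-\infty$ there, while the maximum of $u$ on $J$ is attained at $z_J$, which lies at distance $\asymp(q_{r_n}\log q_{r_n})^{-1}$ from the endpoints by Lemma \ref{lemma:location_min}. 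This is well inside $F_n'$ since $\delta<\hat\delta$. Moreover, by Lemma \ref{lemma:rk3} the value $u(z_J)$ already exceeds $d+\eps\ge\hat t_2+2\eps$. Consequently, every monotone piece on either side of $z_J$ crosses the level $\hat t_2+2\eps$ in its interior, so its sup equals $\hat t_2+2\eps$. The only exceptions are the at most two pieces cut by the two endpoints $a,b$ of the base interval, which proves iii).
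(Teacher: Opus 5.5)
The convexity mechanism in your ii) is right, but iii) has a genuine gap, and the same gap undermines your proof of i).

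You treat $\partial F_n(\hat t_1)$ as a possible truncation point and try to rule it out by replacing $F_n(\hat t_1)$ with $F_n'$ and citing Lemmas \ref{lemma:location_min} and \ref{lemma:rk3}. This does not work, for three reasons.
First, $F_n(\hat t_1)$ is not defined through $F_n'$. Lemma \ref{lemma:good_fn} only uses the inclusion $(a,b)\cap F_n'\cap\{x\in G_{n,N(x,t_n+\hat t+\eps)}\}\subset F_n(\hat t)$.
Second, those two lemmas describe zeros and values of $S_{q_{r_n}}(\varphi)$ on $\Gamma_{r_n}$. They say nothing about the minimiser $z_J$ of $S_l(\varphi)$ when $l<q_{r_n}$.
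Third, knowing that $z_J$ is deep inside a good set says nothing about whether the crossing of the level $\hat t_2+2\eps$ is. For $y_J\in\Gamma_{r_n}\cap J$ you have $\min_J S_l(\varphi)\leq S_{q_{r_n}}(\varphi)(y_J)-\gamma(q_{r_n}-l)$. So when $l$ is well below $q_{r_n}$, the band $\{u\in[\hat t_2-\eps,\hat t_2+2\eps]\}$ sits high up the walls of the convex function $S_l(\varphi)$, close to the endpoints of $J$. That is exactly where an $F_n'$-type description gives no control.
Your remark in ii) that membership in $G_{n,N(x,\cdot)}$ is constant on partition pieces is also false in general, since $N(x,\cdot)$ varies inside a piece.

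The missing observation is that $N(x,t_n+\hat t_1+\eps)=l$ on the band. Indeed, there the remaining time $t_n+\hat t_1+\eps-S_l(\varphi)(x)$ lies in $[\hat t_2,\hat t_2+3\eps]\subset[0,\gamma)$. Hence, on the band, the condition defining $F_n(\hat t_1)$ reduces to $x\in(a,b)\cap G_{n,l}$ (using $l\geq N_*$). Moreover, $G_{n,l}$ is a union of \emph{whole} continuity intervals $J$ of $S_{q_{r_n}}(\varphi)$: those whose endpoints $-k_1\alpha,-k_2\alpha$ have $k_1,k_2<l$. On such a $J$:
\begin{itemize}
\item $S_l(\varphi)$ blows up at both ends;
\item it is convex by \eqref{eq:convexity}, since $x_{\min,l}=x_{\min,q_{r_n}}\leq 2/q_{r_n}$;
\item $\min_J S_l(\varphi)\leq S_l(\varphi)(y_J)\leq S_{q_{r_n}}(\varphi)(y_J)\leq t_n-2\gamma/3+\eps$, so $\max_J u>\hat t_2+2\eps$.
\end{itemize}
It is this comparison $S_l\leq S_{q_{r_n}}$, not Lemma \ref{lemma:rk3}, that is needed. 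So each such $J$ contributes exactly two monotone pieces, each reaching $\hat t_2+2\eps$. Only the endpoints of $(a,b)\cap R_\alpha^{-l}(a,b)$ can cut them. This is essentially the paper's argument.

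It also repairs i). Continuity intervals of $S_l(\varphi)$ for $l<q_{r_n}$ are not bounded by $\|q_{r_n-1}\alpha\|+\|q_{r_n}\alpha\|$. Dropping the single point $-(q_{r_n}-1)\alpha$ merges two gaps into one of length $2\|q_{r_n-1}\alpha\|+\|q_{r_n}\alpha\|$, which exceeds $2/q_{r_n}$ whenever $q_{r_n}>2q_{r_n-1}$. Also, $l\geq q_{r_n-1}$ by itself only gives gaps of order $1/q_{r_n-1}$. By contrast, $I\subset J$ gives $\lambda(I)\leq\|q_{r_n-1}\alpha\|+\|q_{r_n}\alpha\|\leq 2/q_{r_n}$ at once.
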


\begin{proof}
\textit{i)} Fix $l \leq q_{r_n}$. By definition, $G_{n,l}$ is the union of some of the maximal intervals of $\mathbb{T} \setminus \bigcup_{m=0}^{q_{r_n}-1} \{R_\alpha^{-m}0\}.$ The bound of the length of the intervals then follows from the three-distance theorem.

\textit{ii)} Now, for any $x \in G_{n,l}$, we have $x_{\min,l} \leq \frac{1}{q_{r_n}}$.
Hence, if $q_{r_n}/\log q_{r_n} > C_3^{-2}$, then by \eqref{eq:convexity}, the function $S_l(\varphi)$ is convex on each interval contained in $G_{n,l}$. Moreover, by the definition of $t_n$ (see \eqref{eq:defT}), the minimum of $S_{q_{r_n}}(\varphi)(x)$ on each interval of $\mathbb{T} \setminus \bigcup_{m=0}^{q_{r_n}-1} \{R_\alpha^{-m}0\}$ is bounded above by
\[
t_n - \frac{2\gamma}{3} + \varepsilon \leq t_n + \hat t_1 - \hat t_2 - 2\varepsilon,
\]
and $S_{q_{r_n}}(\varphi)(x)\geq S_{l}(\varphi)(x)$.
Therefore, this minimum is not attained in $F_n(\hat t_1,l)$ for any $l \leq q_{r_n}$, so $S_l(\varphi')$ has constant sign.

\textit{iii)} As we just showed, in the intervals in $\{t_n+\hat{t_1}- S_l(\varphi)(x)\in[\hat t_2-\eps,\hat t_2+2\eps]\}$, $S_l(\varphi)(x)$ is monotone and attains the maximum $t_2+2\eps$. However, note that we are only considering points in $(a,b)\cap R_\alpha^{-l}(a,b)$, so that the leftmost and rightmost intervals of $(a,b)\cap R_\alpha^{-l}(a,b)\cap\{t_n+\hat{t_1}- S_l(\varphi)(x)\in[\hat t_2-\eps,\hat t_2+2\eps]\}$ might get cut (see Figure \ref{fig:2}).

\end{proof}
\begin{figure}[h!]
\centering
\includegraphics[width=0.7\linewidth]{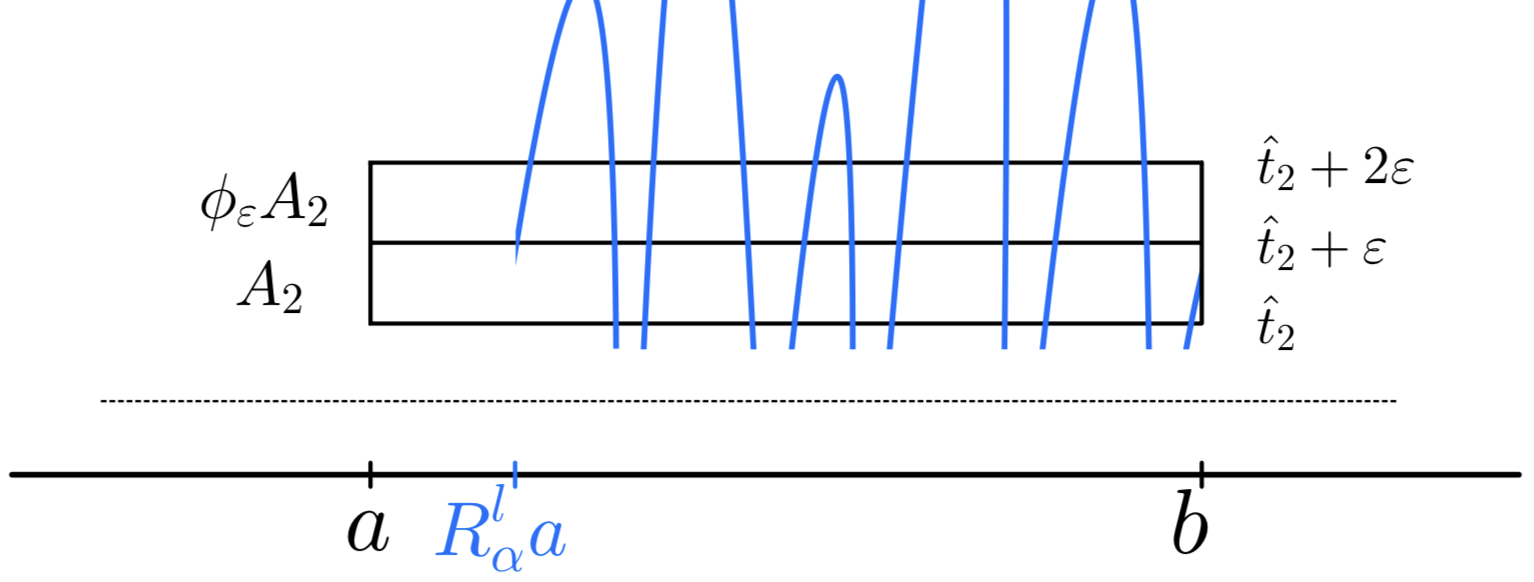}
\caption{Schematics of the graph $(R_\alpha^lx,t_n+\hat t_1-S_l(\varphi)(x))$ for $x\in(a,b)$ (in blue) and its intersection with the strips $A_2:=(a,b)\times[\hat t_2,\hat t_2+\eps]$ and $\phi_\eps A_2=(a,b)\times[\hat t_2+\eps,\hat t_2+2\eps]$.}
\label{fig:2}
\end{figure}

The last ingredient for the proof of Proposition \ref{prop:strips} is that, given $\hat t\in[c,d-\eps]$, the restriction of $S_{q_{r_n}}(\varphi)$ on most of the intervals in $\{t_n-S_{q_{r_n}}(\varphi)+\hat t\in[c-\eps,d+2\eps],$  $S_{q_{r_n}}(\varphi')<0\}$ satisfies (up to translation and restriction to a subinterval) the hypothesis of Lemmas \ref{lemma:convex1} and \ref{lemma:convex2}. This is the content of the next Lemma.

\begin{lemma} \label{lemma:lower_bound_d}Let $H_n:=\{x:t_n-d-2\eps+c\leq S_{q_{r_n}}(\varphi)(x)\leq t_n-c+d, S_{q_{r_n}}(\varphi')(x)<0\}$. There exists $C_7(\varphi)>0$ such that for any $x\in H_n$
\[
S_{q_{r_n}}(\varphi'')(x)\geq {C_7}\, q_{r_n}^2\log^2 q_{r_n}.
\] 

Moreover, at least $(a,b)\tfrac{q_n}{2}-3$ of the intervals of $H_n$ are fully contained in $(a,b)\cap R_\alpha^{-q_{r_n}}(a,b)$. 
\end{lemma}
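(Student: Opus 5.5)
The plan is to reduce the lower bound on $S_{q_{r_n}}(\varphi'')$ to the convexity estimate \eqref{eq:convexity} of Lemma \ref{lemma:bounds_d}. By that estimate it suffices to show that every $x\in H_n$ satisfies
\[
x_{\min,q_{r_n}}\lesssim (q_{r_n}\log q_{r_n})^{-1}.
\]
Indeed, in that case $C_3x_{\min,q_{r_n}}^{-2}\gtrsim q_{r_n}^2\log^2 q_{r_n}$, which dominates the error term $C_3^{-1}q_{r_n}\log q_{r_n}$ for $n$ large.

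To get this bound, I will use that $x\in H_n$ forces $S_{q_{r_n}}(\varphi)(x)$ to lie within $O(1)$ of $t_n$. By \eqref{eq:defT}, $t_n$ is itself within $O(1)$ of the minimal values $S_{q_{r_n}}(\varphi)(y)$, $y\in\Gamma_{r_n}$. These minima are asymptotically constant by the proof of Lemma \ref{lemma:aligned}.

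Now fix the continuity interval containing $x$, and let $y\in\Gamma_{r_n}$ be its minimizer. Since $S_{q_{r_n}}(\varphi')(x)<0$, the point $x$ lies to the left of $y$. Convexity gives
\[
S_{q_{r_n}}(\varphi)(x)-S_{q_{r_n}}(\varphi)(y)\geq |S_{q_{r_n}}(\varphi')(\xi)|\,|x-\xi|
\]
for intermediate points $\xi$. Moving from $y$ leftwards, the relevant quantity is $x^-_{\min}$, by Remark \ref{rk:2}. Suppose $x^-_{\min,q_{r_n}}\geq K(\hat\delta q_{r_n}\log q_{r_n})^{-1}$ with $K$ large. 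Then the whole segment between the point where $x^-_{\min}=(\hat\delta q_{r_n}\log q_{r_n})^{-1}$ and the point at distance $K(\hat\delta q_{r_n}\log q_{r_n})^{-1}$ from it has derivative at most $-C_4^{-1}q_{r_n}\log q_{r_n}$, by \eqref{eq:rk2}. So the function rises by at least $C_4^{-1}(K-1)/\hat\delta$ over this segment.

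Choosing $K=K(\varphi,A)$ so that this rise exceeds the height window $d+2\eps+O(1)$ of $H_n$ contradicts $x\in H_n$. Hence $x_{\min,q_{r_n}}\leq x^-_{\min,q_{r_n}}\leq K(\hat\delta q_{r_n}\log q_{r_n})^{-1}$. This gives the first claim with $C_7=C_3\hat\delta^2/(2K^2)$.

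For the counting claim, note that each continuity interval of $S_{q_{r_n}}(\varphi)$ contributes exactly one component of $H_n$, by strict convexity and blow-up at the endpoints. This component lies within distance $O((q_{r_n}\log q_{r_n})^{-1})$ of the left endpoint, by the above, together with the lower bound in \eqref{eq:location_min}, which shows it is nonempty. By the three-gap theorem, the continuity intervals have length between $(2q_{r_n})^{-1}$ and $2/q_{r_n}$. Hence at least $(b-a)q_{r_n}/2-2$ of them lie inside $(a,b)$.

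Shifting by $R_\alpha^{q_{r_n}}$ moves points by $\|q_{r_n}\alpha\|\leq (q_{r_n}\log q_{r_n}\log\log q_{r_n})^{-1}$, using \eqref{H}. So only intervals within that distance of $a$ or $b$ can fail to lie in $R_\alpha^{-q_{r_n}}(a,b)$; that is at most one more. This yields the claimed $(b-a)q_{r_n}/2-3$.

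The main obstacle is making the height-window argument uniform. The minima over $\Gamma_{r_n}$ differ only by $o(1)$, and $t_n$ sits $2\gamma/3-\eps$ above the maximal minimum. Since $H_n$ requires $S_{q_{r_n}}(\varphi)\leq t_n+d-c$, the rise allowed before leaving $H_n$ is bounded by $t_n+d-c-\min_{y\in\Gamma_{r_n}}S_{q_{r_n}}(\varphi)(y)=O(1)$. I will first verify this bookkeeping carefully, so that $K$ depends only on $\varphi$ and $A$.
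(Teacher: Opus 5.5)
Your proposal is correct and follows essentially the paper's proof. You bound $x^-_{\min,q_{r_n}}$ by $O((q_{r_n}\log q_{r_n})^{-1})$ by comparing \eqref{eq:rk2} with the height window $S_{q_{r_n}}(\varphi)(x)-S_{q_{r_n}}(\varphi)(y)<d-c+2\gamma/3<\gamma$, which \eqref{eq:aligned} gives directly, so $K$ and $C_7$ depend only on $\varphi$. You then apply \eqref{eq:convexity}, and count the continuity intervals of length at most $2/q_{r_n}$ inside $(a,b)$, losing one more to the $\|q_{r_n}\alpha\|$-shift.

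There are two small slips, neither of which affects the count. First, each component of $H_n$ lies within $O((q_{r_n}\log q_{r_n})^{-1})$ of the \emph{right} endpoint of its continuity interval, not the left, since $x^-_{\min}$ is the distance to the right endpoint. Second, nonemptiness of each component follows from $t_n-c+d>S_{q_{r_n}}(\varphi)(y)$ (via \eqref{eq:defT}) together with the blow-up at the left endpoint, not from the lower bound in \eqref{eq:location_min}.
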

\begin{proof} 
By Remark \ref{rk:2}, every $y \in \Gamma_n$ satisfies $y^-_{\min,q_n} = y_{\min,q_n}$, i.e. the nearest singularity lies to the right of $y$. We use this throughout the proof.

Let $x\in H_n$. Let $y$ be the zero of $S_{q_{r_n}}(\varphi')$ on the same interval of continuity of $x$. Since $S_{q_{r_n}}(\varphi')(z)<0$ if and only if $z<y$, we have $y_{\min,q_{r_n}}^-\leq x_{\min,q_{r_n}}^-.$

Recall that, by \eqref{eq:aligned}, $t_n<S_{q_{r_n}}(\varphi)(y)+2\gamma/3.$ Hence
\[
S_{q_{r_n}}(\varphi)(x)-S_{q_{r_n}}(\varphi)(y) <d-c+2\gamma/3< \gamma.
\]

From \eqref{eq:rk2} we know that, for $x_{\min,q_{r_n}}^->(\hat \delta q_{r_n}\log q_{r_n})^{-1} $, $\exists C_4>0$ such that
\[ 
S_{q_{r_n}}(\varphi')(x)\leq -{C_4}\, q_{r_n}\log q_{r_n}.
\] 
Let $z<y$ be the point in the same continuity interval of $x$ and $y$ such that $z_{\min,q_{r_n}}^-=(\hat \delta q_{r_n}\log q_{r_n})^{-1}$. Then either 
\[
x<z\implies x_{\min,q_{r_n}}^-<(\hat \delta q_{r_n}\log q_{r_n})^{-1},
\] 
or
\[
\begin{aligned}
x_{\min,q_{r_n}}^-&=y_{\min,q_{r_n}}^-+(y-x)=z_{\min,q_{r_n}}^-+(z-x) \\&\leq(\hat \delta q_{r_n}\log q_{r_n})^{-1} +{\gamma}(C_4 \,q_{r_n}\log q_{r_n})^{-1}, 
\end{aligned}
\]
where we have used
\[
(z-x)C_4 q_{r_n}\log q_{r_n}\leq-\int_x^z S_{q_{r_n}}(\varphi')(u)\,du\leq S_{q_{r_n}}(\varphi)(x)-S_{q_{r_n}}(\varphi)(y). 
\]
The existence of $C_7$ then follows from \eqref{eq:convexity}:
\[
S_{q_{r_n}}(\varphi'')(x)\geq C_3 x_{\min,q_{r_n}}^{-2}-C_3^{-1}q_{r_n}\log q_{r_n}=O((q_{r_n}\log q_{r_n})^2).
\]

Finally, recall that, by convexity, each interval of $H_n$ is nested on a different interval of $\T\setminus\cup_{m=0}^{q_{r_n}-1}\{R_\alpha^{-m}0\}$, which cover $(a,b)\cap R_\alpha^{-q_{r_n}}(a,b)$ up to a measure zero set and have length bounded by $2/q_{r_n}$. Therefore the number of intervals of $H_n$ fully contained in $(a,b)\cap R_\alpha^{-q_{r_n}}(a,b)$ is lower bounded by
\[
\frac{q_{r_n}}{2}(b-a-\norm{\alpha q_{r_n}})-2.
\]
\end{proof}
\begin{remark}
\label{rk:4} By construction of $H_n$, for any $\hat t_1,\hat t_2\in(c,d)$ any interval $I$ such that $S_{q_{r_n}}(\varphi)(x)\geq t_n-\hat t_1$, $\forall x \in I$, and \[\hat t_2+2\eps=\sup\Pi_s( \phi_{t_n}(I\times \{\hat t_1\}) ),\qquad\hat t_2-\eps=\inf\Pi_s( \phi_{t_n}(I\times \{\hat t_1\}) ),\] is contained in $H_n$.
\end{remark}

Now, we can prove Proposition \ref{prop:strips}.

\begin{proof}[Proof of Proposition \ref{prop:strips}]
Without loss of generality, let $\hat t_1,\hat t_2\in\{c,c+\eps,\dots,d-\eps\}$, not necessarily different, and consider $A_1$, $A_2$ such that 
\[
A_i=(a,b)\times[\hat t_i,\hat t_i+\eps],\quad i=1,2.
\]

Notice that $A_2$ and $\phi_\eps A_2$ are stacked one on top of each other. 

Let $N_*\leq l\leq q_{r_n}.$ First, we claim that for any but (at most) the leftmost and rightmost maximal intervals $I\subset F_n( \hat t_1,l)$ it holds 
\[
\mu(\phi_{t_n}(I\times (\hat t_1,\hat t_1+\eps))\cap\phi_\eps A_2)\geq\mu(\phi_{t_n}(I\times (\hat t_1,\hat t_1+\eps))\cap A_2).
\]
It follows from Lemma \ref{lemma:rk1} that the leftmost and rightmost intervals of $F_n(\hat t_1,l)$ may be cut off by the boundary of $(a,b)$, but for the rest of the intervals Lemma \ref{lemma:convex3} applies with $f(x)=t_n-S_{q_{r_n}}(\varphi)(x)+\hat t_1-\hat t_2$.

Secondly, note that Lemmas \ref{lemma:rk1} and \ref{lemma:good_fn} imply that the portion of the area of $A_1$ lying on the extreme intervals of all $F_n(\hat t_1,l)$, together with the portion on $(a,b)\setminus F_n(\hat t_1)$, is bounded above by
\[
\frac{4}{q_{r_n}}(q_{r_n}-N_*+1)\eps + \frac{\delta C_6}{\log q_{r_n}}\eps= \frac{4}{q_{r_n}}\left(\frac{2C_1 }{\delta}\log q_{r_n}+2\right)+ \frac{\delta C_6}{\log q_{r_n}}\eps.
\]

Therefore it is enough to show that there exists enough different subintervals $I$ of $(a,b)$ satisfying
\[
\hat t_2+2\eps=\sup\Pi_s( \phi_{t_n}(I\times \{\hat t_1\}) ),\qquad\hat t_2-\eps=\inf\Pi_s( \phi_{t_n}(I\times \{\hat t_1\}) ),
\]
\be
\mu(\phi_{t_n}(I\times (\hat t_1,\hat t_1+\eps))\cap\phi_\eps A_2)-\mu(\phi_{t_n}(I\times (\hat t_1,\hat t_1+\eps))\cap A_2)>\frac{\eps^4}{q_{r_n}\log q_{r_n}}.
\label{eq:positive_diff}
\ee
We claim that this holds for a subinterval of every interval of $H_n$ fully contained in $(a,b)\cap R_\alpha^{-q_{r_n}}(a,b)$. From Lemma \ref{lemma:lower_bound_d}, this will imply there are at least $(b-a)q_{r_n}/2-3$ of such intervals, and the result will follow from considering $\delta$ small enough (with respect to $\eps^3$) and $n$ large enough.

Now, let $J$ be a maximal interval of $H_n$ fully contained in $(a,b)\cap R_{\alpha}^{-q_{r_n}}(a,b)$, and let $I\subset J$ be the interval for which $    -\eps\leq  t_n-S_{q_{r_n}}(\varphi)(x)+\hat t_1-\hat t_2\leq 2\eps.$
Let $f(x)=t_n-S_{q_{r_n}}(\varphi)(x)+\hat t_1-\hat t_2$ on $I$. By Lemmas \ref{lemma:bounds_d} and \ref{lemma:lower_bound_d}, $f$ satisfy the hypotesis of Lemma \ref{lemma:convex1}, with 
\[
k=1,\qquad M=q_{r_n}\log q_{r_n},\qquad \text{and}\qquad C= \max\{\hat\delta^{-1}C_1,C_7^{-1}\}
\]
so that Lemma \ref{lemma:convex2} applies. Hence \eqref{eq:positive_diff} holds for $I$ and summing over all the intervals gives
\[
\begin{aligned}
\mu(\phi_{t_n}A_1\cap \phi_\eps A_2)-\mu(\phi_{t_n}A_1\cap A_2)>\left((b-a)\frac{q_{r_n}}{2}-3\right)\frac{\eps^4}{q_{r_n}\log q_{r_n}}- \frac{4}{q_{r_n}}N_*\eps - \frac{\delta C_6}{\log q_{r_n}}\eps
\\=\eps^4 O(({\log q_{r_n}})^{-1})-O(\log q_{r_n} q_{r_n}^{-1})-\delta \eps O(({\log q_{r_n}})^{-1}).
\end{aligned}
\]
Taking $  \delta $ small enough and $n$ large enough completes the proof.
\end{proof}

\section{Proof of Theorem \ref{thm:sum_cor}}\label{sec:last}
Through this section, we fix $\alpha\in \RQ$ such that its best denominators satisfy \be q_{n+1}\geq e^{3\log ^2q_{n}}q_{n},\label{h}\tag{h} \ee for every $n$.

The strategy is to construct a sequence of disjoint time intervals $[t_{n,k} - \varepsilon, t_{n,k} + \varepsilon]$ on which $|\mathrm{Cor}_{t_{n,k}}(\chi_A, \chi_A) |\gtrsim ({\sqrt{k} \log q_n})^{-1}$, indexed by pairs $(n,k)$ with $1 \leq k \leq e^{\log^2 q_n}$. Squaring and summing over all such pairs then gives a divergent series, proving \eqref{eq:thm3}. The stronger Diophantine condition \eqref{h} is needed here, compared to \eqref{H} in Theorem \ref{thm:log_decay2}), because we must simultaneously control $e^{\log^2 q_n}$ different scales $k$ at each induction step $n$.

Recall that $\varphi$ satisfies \eqref{eq:phi_basic}--\eqref{eq:phi_Dbasic} and \eqref{eq:def_Dbasic}--\eqref{eq:def_DDbasic}, and that we define, for $l\geq 0$,
\[
x^+_{\min,l} := \min_{0 \leq k < l} T^k x, \quad 
x^-_{\min,l} := \min_{0 \leq k < l} (1 - T^k x),\quad
x_{\min,l} :=\min\{x_{\min,l}^-,x_{\min,l}^+\}.
\]

\subsection{Extension of the definitions and bounds for pairs $(n,k)$}

Fix $n \geq 1$ and $1 \leq k \leq e^{\log^2q_{n}}$. Recall that for our choice of $\alpha$: $e^{3\log ^2q_{n}}\leq a_{n+1}$. Denote by $\mathcal{P}'_{n,k}$ the collection of intervals of continuity of $S_{kq_n}(\varphi)$ on $\mathbb{T}$.

\begin{lemma}
Given $ k \leq a_{n+1}$, $\mathcal{P}'_{n,k}$ is composed by exactly $(k-1)q_{n}$ intervals of length $\|q_{n}\alpha\|$, and exactly $q_{n}$ intervals of length either $\|q_{n-1}\alpha\| - (k-2)\|q_{n}\alpha\|$ or $\|q_{n-1}\alpha\| - (k-1)\|q_{n}\alpha\|$.
\label{lemma:partition}
\end{lemma}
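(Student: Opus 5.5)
The plan is to describe the discontinuity set of $S_{kq_n}(\varphi)$ explicitly as a finite orbit segment of $0$, and then count gap lengths with a three-distance argument adapted to the range $q_n \le l \le a_{n+1}q_n$.

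\textbf{Step 1: the partition points.} The discontinuities of $S_{kq_n}(\varphi)$ are exactly the points $\{-j\alpha\}_{j=0}^{kq_n-1}$, which are pairwise distinct since $\alpha$ is irrational. So $\mathcal{P}'_{n,k}$ consists of the $kq_n$ arcs cut out by these points on $\T$. I will prove the statement by induction on $k$, adding one block of $q_n$ points at a time.

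\textbf{Step 2: base case $k=1$.} For $k=1$, the three gap theorem (recalled in Section \ref{sec:7.1}) says the $q_n$ points $\{-j\alpha\}_{j<q_n}$ cut $\T$ into $q_n$ intervals. Their lengths are $\|q_{n-1}\alpha\|$ or $\|q_{n-1}\alpha\|+\|q_n\alpha\|$. These are precisely the two values in the statement at $k=1$, namely $\|q_{n-1}\alpha\|-(k-1)\|q_n\alpha\|$ and $\|q_{n-1}\alpha\|-(k-2)\|q_n\alpha\|$.

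\textbf{Step 3: inductive step $k\to k+1$.} The new points are $-(j+kq_n)\alpha = (-j\alpha)-kq_n\alpha$ for $0\le j<q_n$. Each is obtained from the point $-(j+(k-1)q_n)\alpha$ of the previous block by a shift of $\|q_n\alpha\|$. The direction of the shift is the same for every $j$, fixed by the sign of $q_n\alpha-p_n$.

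\emph{Geometric claim.} At stage $k$, each of the $q_n$ long gaps has one endpoint from the most recent block. The new point falls inside that long gap, at distance $\|q_n\alpha\|$ from that endpoint. This splits the long gap into one short gap of length $\|q_n\alpha\|$ and a long gap shortened by $\|q_n\alpha\|$. The short gaps created earlier are untouched.

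\emph{Why the new point lands inside the long gap.} This requires $k\|q_n\alpha\| < \|q_{n-1}\alpha\|$. It follows from $\|q_{n-1}\alpha\| = a_{n+1}\|q_n\alpha\| + \|q_{n+1}\alpha\|$ together with $k\le a_{n+1}$ (at the final step one uses $k-1 < a_{n+1}$). The inequality guarantees the shortened long gaps stay positive and that no two new points land in the same gap.

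\emph{Counting.} Each step adds exactly $q_n$ short intervals and keeps exactly $q_n$ long ones. The long ones had lengths $\|q_{n-1}\alpha\|-(k-1)\|q_n\alpha\|$ or $\|q_{n-1}\alpha\|-(k-2)\|q_n\alpha\|$, and each loses $\|q_n\alpha\|$, which is the claimed formula at $k+1$. As a check on the count: starting from $q_n$ intervals and adding $(k-1)q_n$ short ones gives $(k-1)q_n + q_n = kq_n$ arcs, matching the number of points.

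\textbf{Main obstacle.} The hard part is the geometric claim in Step 3: that in the base partition each gap has an endpoint $-j\alpha$ whose translate $-(j+q_n)\alpha$ lies inside that same gap. I would argue it directly from best approximation. The translate is within $\|q_n\alpha\| < \|q_{n-1}\alpha\|$ of $-j\alpha$, which is below the minimal gap, so it lands in one of the two gaps adjacent to $-j\alpha$. The map $j\mapsto j+q_n$ is injective, so exactly one new point enters each gap. Hence each gap receives exactly one new point, on the side fixed by the sign of $q_n\alpha-p_n$. Iterating while the remaining long part has length at least $\|q_n\alpha\|$, which is guaranteed by $k\le a_{n+1}$, completes the induction.
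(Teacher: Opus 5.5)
Your proof is correct and follows the paper's argument. Both start from the three-gap partition by the first $q_n$ points, note that each later block of $q_n$ points is the previous block shifted by $\|q_n\alpha\|$ in a fixed direction, and use $k\|q_n\alpha\|<\|q_{n-1}\alpha\|$ (from $\|q_{n-1}\alpha\|=a_{n+1}\|q_n\alpha\|+\|q_{n+1}\alpha\|$) to see that each block cuts one short strip off the end of every long interval. Your explicit induction, which tracks that the newest points are the shift-side endpoints of the long gaps, spells out the ``one cut per long interval'' step that the paper only asserts; strictly, that step follows from each point being the shift-side endpoint of exactly one gap, not from injectivity of $j\mapsto j+q_n$ alone.
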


\begin{proof}
The partition $\mathcal{P}'_{n,k}$ is obtained by placing $k$ successive $q_{n}$-orbits on $\mathbb{T}$: 
\[
\mathbb{T} \setminus \bigcup_{m=0}^{k-1} \left( \{-l\alpha\}_{l=0}^{q_{n}-1} - m q_{n} \alpha \right).
\]

By the three-distance theorem applied to the $q_{n}$ points $\{-j\alpha\}_{j=0}^{q_{n}-1}$, the torus $\mathbb{T}$ is partitioned into intervals of exactly two lengths:
\[
\|q_{n-1}\alpha\| + \|q_{n}\alpha\| \quad \text{(type A)} \qquad \text{and} \qquad \|q_{n-1}\alpha\| \quad \text{(type B)}.
\]
We refer to both types collectively as \emph{long} intervals.

Recall $q_{2n}\alpha>p_{2n}$ and $q_{2n+1}\alpha<p_{2n+1}$ for any $n$. Consequently, when the $m$-th orbit $\{-l\alpha\}_{l=0}^{q_{n}-1} - mq_{n}\alpha$ is added for $m \geq 1$, each of its $q_{n}$ points lands exactly $\|q_{n}\alpha\|$ to the left (if $n$ is even) or the right (if $n$ is odd) of the corresponding point of the $(m-1)$-th orbit, thereby cutting a strip of width $\|q_{n}\alpha\|$ (\emph{short} interval) from the left end ($n$ odd) or the right end ($n$ even) of each long interval as left by the $(m-1)$-th orbit.

Since $k \leq a_{n+1}$, the total displacement after $k$ steps satisfies
\[
k\|q_{n}\alpha\| \leq a_{n+1}\|q_{n}\alpha\| < \|q_{n-1}\alpha\|,
\]
where the last inequality uses the continuant relation $\|q_{n-1}\alpha\| = a_{n+1}\|q_{n}\alpha\| + \|q_{n+1}\alpha\|$. Hence every cut lands strictly inside a long interval.

Each new orbit ($m = 1, \ldots, k-1$) cuts every long interval once, producing $(k-1)q_{n}$ short intervals in total. The $q_{n}$ remaining intervals are the left-over pieces of the original long intervals after $k-1$ cuts each. 

A type A interval after $k-1$ cuts of width $\|q_{n}\alpha\|$, has remaining length $ \|q_{n-1}\alpha\| + \|q_{n}\alpha\| - (k-1)\|q_{n}\alpha\| = \|q_{n-1}\alpha\| - (k-2)\|q_{n}\alpha\|.$
A type B interval after $k-1$ cuts, has remaining length $ \|q_{n-1}\alpha\| - (k-1)\|q_{n}\alpha\|. $ This completes the proof.
\end{proof}
Let $\mathcal{P}_{n,k}\subset\mathcal{P}'_{n,k}$ be the $q_{n}$ intervals of length $>\norm{q_{n}\alpha}$. From the previous lemma, we know $\lambda(I)\geq \norm{q_{n.1}\alpha}-k\norm{q_{n}\alpha}\geq \frac{1}{2q_{n}}-\frac{e^{\log^2q_{n}} }{q_{n+1}}\geq \frac{1}{3q_{n}}$, for $n$ large enough.

\begin{lemma}
\label{lemma:rk5} Let $I\in \mathcal{P}_{n,k}$. Then for any $1 \leq m \leq k\leq e^{\log^2q_{n}}$ and every $x\in I$ satisfying $\text{dist}(x,\partial I)\geq e^{-2\log^2q_{n}}$, it holds
\[
|(R_\alpha^{mq_{n}}x)_{\min,q_{r_n}}^\pm-x_{\min,q_{r_n}}^\pm|\leq\tfrac{1}{2} x_{\min,q_{r_n}}^\pm.
\]
\end{lemma}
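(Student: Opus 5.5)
The idea is to treat $R_\alpha^{mq_n}$ as a tiny rigid translation and check that it cannot move any point of the first $q_n$ iterates across the singularity. I read $q_{r_n}$ in the statement as $q_n$, the scale fixed in this section, so $x^\pm_{\min,q_{r_n}}$ means $x^\pm_{\min,q_n}$.

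\emph{Step 1 (size of the shift).} Write $R_\alpha^{mq_n}x=x+\sigma$ mod $1$, where $\sigma=\pm m\|q_n\alpha\|$. The sign is fixed by the parity of $n$, as in the proof of Lemma \ref{lemma:partition}. Using $\|q_n\alpha\|<1/q_{n+1}$, then $m\le k\le e^{\log^2 q_n}$, then condition \eqref{h}, I would bound
\[
|\sigma|\le \frac{e^{\log^2 q_n}}{q_{n+1}}\le \frac{e^{\log^2 q_n}}{e^{3\log^2 q_n}q_n}=\frac{e^{-2\log^2 q_n}}{q_n}.
\]

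\emph{Step 2 (lower bound on $x^\pm_{\min,q_n}$).} Each interval $I\in\mathcal P_{n,k}$ is contained in one interval of continuity of $S_{kq_n}(\varphi)$. That interval refines the partition of $\mathbb T$ by $\{-j\alpha\}_{j=0}^{q_n-1}$. Hence no point $-j\alpha$ with $0\le j<q_n$ lies in the interior of $I$. Now $x^+_{\min,q_n}$ is the distance from $x$ to the nearest such point on its left, and $x^-_{\min,q_n}$ is the distance to the nearest such point on its right. Both are therefore at least $\mathrm{dist}(x,\partial I)\ge e^{-2\log^2 q_n}$. Combining with Step 1, for $q_n\ge 2$,
\[
|\sigma|\le \tfrac12\min\{x^+_{\min,q_n},x^-_{\min,q_n}\}.
\]

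\emph{Step 3 (translation of the minima).} For $0\le j<q_n$ we have $R_\alpha^j(x+\sigma)=R_\alpha^jx+\sigma$. Every iterate satisfies $R_\alpha^jx\ge x^+_{\min,q_n}>|\sigma|$ and $1-R_\alpha^jx\ge x^-_{\min,q_n}>|\sigma|$. So no shifted iterate wraps around $0\equiv 1$. Consequently
\[
(R_\alpha^{mq_n}x)^\pm_{\min,q_n}=x^\pm_{\min,q_n}\pm\sigma.
\]
The difference is therefore $|\sigma|\le\frac12 x^\pm_{\min,q_n}$, which is the claim.

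The only delicate point is Step 2. One must check that the boundary points of $I$ coming from the extra orbits $-l\alpha-m'q_n\alpha$ ($m'\ge1$) do not spoil the estimate. They do not: they only make $I$ smaller, so the distance from $x$ to the original $q_n$-orbit points can only be larger than $\mathrm{dist}(x,\partial I)$. The rest is direct bookkeeping with \eqref{h}.
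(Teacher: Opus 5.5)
Your proof is correct and follows the same route as the paper's. The difference of the one-sided minima is exactly the rigid shift $m\|q_n\alpha\|\le e^{\log^2 q_n}/q_{n+1}\le e^{-2\log^2 q_n}/q_n$ by \eqref{h}, and this is at most half of $x^\pm_{\min,q_n}\ge \mathrm{dist}(x,\partial I)\ge e^{-2\log^2 q_n}$. Your Steps 2 and 3, and your reading of $q_{r_n}$ as $q_n$, simply make explicit what the paper's one-line proof leaves implicit.
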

\begin{proof}
Note that 
\[
|(R_\alpha^{mq_{n}}x)_{\min,q_{r_n}}^\pm-x_{\min,q_{r_n}}^\pm|=m\norm{q_{n}\alpha}\leq e^{-\log^2q_{n}}\tfrac{1}{q_{n+1}}\leq\tfrac{1}{2} x_{\min,q_{r_n}}^\pm,
\]
for $n\geq 2$, where we have used $q_{n+1}\geq e^{3\log^2q_{n}}q_n$ for every $n$.
\end{proof}

\begin{lemma}\label{lemma:bounds_dk}
There exists $N=N(\varphi)\geq 1$ and $C_i'=C_i'(\varphi)>0$, $i=1,2$ such that for any $n\geq N$, $1 \leq k\leq e^{\log^2q_{n}}$ and $x\in I\in \mathcal{P}_{n,k}$ satisfying $\text{dist}(x,\partial I)\geq e^{-2\log^2q_{n}}$ it holds
\be
|S_l(\varphi')(x)| \leq C_1'\, k_0 \max\{q_{n}\log q_{n},\,x_{\min,q_{n}}^{-1}\}+C_1' \max\{q_{n}\log q_{n},\,(R_\alpha^{k_0 q_n} x)_{\min,l_0}^{-1}\},
\label{eq:bound_dk}
\ee
and
\be
|S_l(\varphi'')(x)| \leq C_2'\, k_0 \,x_{\min,q_{n}}^{-2}+C_2' \,\max\{q_n^2,\,(R_\alpha^{k_0 q_n}x)_{\min,l_0}^{-2}\},
\label{eq:bound_ddk}
\ee
for $0\leq l\leq kq_n$, where $k_0=\lfloor l/q_{n}\rfloor$ and $l_0=l-k_0q_{n}$.

Moreover, $\exists C_3'=C_3'(\varphi)>0$ such that, for any $x\in I\in \mathcal{P}_{n,k}$, if $k_0\geq 1$, then
\be 
S_{l}(\varphi'')(x)\geq C_3'\, \sum_{m=0}^{k_0-1} (R_\alpha^{mq_{n}+l_0}x)_{\min,{q_{n}}}^{-2}+C_3'\, (R_\alpha^{k_0 q_n}x)_{\min,{l_0}}^{-2},
\label{eq:convexity_k_general}
\ee
and if additionally $\text{dist}(x,\partial I)\geq e^{-2\log^2q_{n}}$, then
\be 
S_{l}(\varphi'')(x)\geq C_3'\, k_0\, x_{\min,{q_{n}}}^{-2}+C_3'\, (R_\alpha^{k_0 q_n}x)_{\min,{l_0}}^{-2},
\label{eq:convexity_k}
\ee
\end{lemma}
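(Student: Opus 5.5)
The plan is to decompose the Birkhoff sum along blocks of length $q_n$. For $0\le l\le kq_n$ write $l=k_0q_n+l_0$ with $0\le l_0<q_n$. By the cocycle identity,
\[
S_l(h)(x)=\sum_{m=0}^{k_0-1}S_{q_n}(h)(R_\alpha^{mq_n}x)+S_{l_0}(h)(R_\alpha^{k_0q_n}x)
\]
for $h=\varphi',\varphi''$. Each full block and the final partial block are orbit segments of length at most $q_n$. Lemma \ref{lemma:bounds_d} (whose proof only uses the three-distance separation and holds for any irrational $\alpha$) therefore applies to each of them. The full blocks give bounds in terms of $\max\{q_n\log q_n,(R_\alpha^{mq_n}x)_{\min,q_n}^{-1}\}$ for $\varphi'$ and $\max\{q_n^2,(R_\alpha^{mq_n}x)_{\min,q_n}^{-2}\}$ for $\varphi''$. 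The last block gives exactly the second terms in \eqref{eq:bound_dk}--\eqref{eq:bound_ddk}.

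The next step replaces $(R_\alpha^{mq_n}x)_{\min,q_n}$ by $x_{\min,q_n}$ for $m<k_0\le k$. Here I would invoke Lemma \ref{lemma:rk5}, applied with the scale $q_n$, which is what its proof actually shows: $R_\alpha^{mq_n}$ moves points by $m\|q_n\alpha\|\le e^{\log^2 q_n}/q_{n+1}\le e^{-2\log^2q_n}/q_n$ by \eqref{h}. Under the hypothesis $\mathrm{dist}(x,\partial I)\ge e^{-2\log^2q_n}$, this displacement is at most $\tfrac12 x_{\min,q_n}^\pm$. So $(R_\alpha^{mq_n}x)_{\min,q_n}\ge \tfrac12 x_{\min,q_n}$, which costs factors $2$ and $4$ in the constants.

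To drop the $q_n^2$ in the $\varphi''$ full-block terms, I would use that every point has $x_{\min,q_n}\le 1/q_n$, so $q_n^2\le x_{\min,q_n}^{-2}$. For $\varphi'$ the max is kept as stated. Summing the $k_0$ blocks gives \eqref{eq:bound_dk} and \eqref{eq:bound_ddk} with $C_1'=2C_1$ and $C_2'=8C_2$.

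For the lower bounds I would apply \eqref{eq:convexity} blockwise. It is cleaner to regroup as $S_l=S_{l_0}(\cdot)(x)+\sum_{m}S_{q_n}(\cdot)(R_\alpha^{mq_n+l_0}x)$, which matches the indexing in \eqref{eq:convexity_k_general}, or to keep the first grouping; either works symmetrically. Each full block contributes at least $C_3 (\cdot)_{\min,q_n}^{-2}-C_3^{-1}q_n\log q_n$, and the partial block contributes at least $C_3(\cdot)_{\min,l_0}^{-2}-C_3^{-1}q_n\log l_0$. The negative errors are absorbed because $(\cdot)_{\min,q_n}^{-2}\ge q_n^2\gg q_n\log q_n$, so each full block keeps at least half its main term.

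The main obstacle is the partial block when $l_0$ is small. There $(\cdot)_{\min,l_0}^{-2}$ need not dominate $q_n\log l_0$, since the min over few points can be of order one. I would absorb this error into one full block: $k_0\ge1$ is assumed, and a full block gives at least $q_n^2/2\gg q_n\log q_n$. Hence $C_3'=C_3/4$ should work for $n\ge N(\varphi)$. Finally, \eqref{eq:convexity_k} follows from \eqref{eq:convexity_k_general} by the same comparison $(R_\alpha^{mq_n+l_0}x)_{\min,q_n}\le \tfrac32 x_{\min,q_n}$ coming from Lemma \ref{lemma:rk5}. If one uses the shifted grouping, one must check that the shift by $l_0$ is harmless, since the relevant minimum is over the same window up to a cyclic shift.
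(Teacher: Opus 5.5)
Your proposal is correct and follows essentially the same route as the paper: split $S_l$ into $k_0$ blocks of length $q_n$ plus a remainder of length $l_0$ and apply Lemma \ref{lemma:bounds_d} to each block. You then use Lemma \ref{lemma:rk5} at scale $q_n$, which is what its proof actually gives, to compare each $(R_\alpha^{mq_n}x)_{\min,q_n}$ with $x_{\min,q_n}$, and you obtain the convexity bounds blockwise, absorbing the error terms via $x_{\min,q_n}\le 1/q_n$ and $k_0\geq 1$.

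Two of your points make explicit what the paper's shorter proof leaves implicit. The first is the absorption of the small-$l_0$ partial-block error into one full block. The second is your remark that the indexing in \eqref{eq:convexity_k_general} mixes two groupings. This is harmless: each window's term is dominated by the terms of the one or two overlapping blocks of the other grouping, at the cost of a factor $2$ in $C_3'$.
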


\begin{proof}
From Lemma \ref{lemma:rk5}, for $\text{dist}(x,\partial I)\geq {\norm{q_{n}\alpha}}^{1/2}$, it holds 
\[
\tfrac{1}{2}x_{\min,q_{r_n}}^\pm\leq (R_\alpha^{mq_{n}}x)_{\min,q_{r_n}}^\pm \leq \tfrac{3}{2}x_{\min,q_{r_n}}^\pm,
\]
for any $1\leq m\leq k$.
Hence we can apply Lemma \ref{lemma:bounds_d} to $\{R_\alpha^{k_0 q_n+m} x\}_{m=0}^{l_0-1}$ and to each $q_{n}$-sub-orbit of $\{R_\alpha^{m} x\}_{m=0}^{k_0q_n-1}$, and \eqref{eq:bound_dk}--\eqref{eq:bound_ddk} follow.

Note that \eqref{eq:convexity_k_general} and Lemma \ref{lemma:rk5} imply \eqref{eq:convexity_k} after renaming the constant $C_3'$. To prove \eqref{eq:convexity_k_general}, consider $l\geq q_{n}$. Then
\[
S_{l}(\varphi'')(x)\geq S_{l_0}(\varphi'')(R_\alpha^{k_0q_{n}}x)+ \sum_{m=0}^{k_0-1} S_{q_n}(\varphi'')(R_\alpha^{mq_{n}}x).
\]
We can bound each term using Lemma \ref{lemma:bounds_d}. Since the minimum distance to the singularity of each sub-orbit is upper bounded by $q_{n}^{-1}$, such $C_3'$ exists for $k_0\geq 1$.
\end{proof}

Fix $\gamma/10>\eps>0$. Fix $n$ and define 
\[
\Gamma_{n,k}:=\{x\in\cup_{I\in\mathcal{P}_{n,k}}I:S_{kq_n}(\varphi')(x)=0\}.
\]

\begin{lemma}
\label{lemma:location_min_k}
There exists $N=N(\varphi)\geq 1$ such that for any $n\geq N$ and $1 \leq k\leq e^{\log^2q_{n}}$, $\Gamma_{n,k}$ consists of $q_{n}$ points, each in one of the intervals of $\mathcal{P}_{n,k}$ and there exists $\hat\delta=\hat\delta(\varphi)>0$ such that 
\be
\frac{\hat \delta}{ 2q_{n}\log q_{n}}\leq y_{\min,kq_n}^-\leq \frac{2}{\hat \delta q_{n}\log q_{n}},
\label{eq:location_min_k}
\ee
for any $y\in\Gamma_{n,k}$, $1 \leq k \leq e^{\log^2q_{n}}$. Moreover, for $\hat \delta$ small enough, there exists $C_4'=C_4'(\varphi)>0$ such that for any $x$ with $x_{\min,kq_n}^-$ outside of the range \eqref{eq:location_min_k} it holds
\be 
|S_{kq_n}(\varphi')(x)|\geq C_4'k\, q_{n}\log q_{n}.
\label{eq:lower_bound_dk}
\ee

\end{lemma}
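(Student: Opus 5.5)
The plan is to mimic the proof of Lemma \ref{lemma:location_min}, treating $S_{kq_n}(\varphi')$ as a sum of $k$ Birkhoff sums of length $q_n$:
\[
S_{kq_n}(\varphi')(x)=\sum_{m=0}^{k-1}S_{q_n}(\varphi')(R_\alpha^{mq_n}x).
\]
Lemma \ref{lemma:rk5} then lets us compare each block with the first one.

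\textbf{Step 1: one zero per interval of $\mathcal{P}_{n,k}$.} On each interval of $\mathcal{P}'_{n,k}$ the function $S_{kq_n}(\varphi)$ is $C^2$. Applying \eqref{eq:convexity_k_general} with $l=kq_n$ (so $k_0=k$, $l_0=0$), each block minimum distance is at most $q_n^{-1}$, hence
\[
S_{kq_n}(\varphi'')\gtrsim k q_n^2>0 .
\]
So $S_{kq_n}(\varphi)$ is strictly convex there and $S_{kq_n}(\varphi')$ is strictly monotone, and there is at most one zero per interval. For the long intervals $I\in\mathcal{P}_{n,k}$, both endpoints are (up to the short cuts) preimages of the singularity. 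Near the right endpoint, the block contributing the term $c_0/(1-\cdot)$ makes $S_{kq_n}(\varphi')\to+\infty$. Near the left endpoint, $-2c_0/x$ drives it to $-\infty$. So exactly one zero exists in each of the $q_n$ long intervals. The short intervals, of length $\|q_n\alpha\|$, are excluded by the definition of $\Gamma_{n,k}$.

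\textbf{Step 2: the location bound \eqref{eq:location_min_k} and the lower bound \eqref{eq:lower_bound_dk}.} Take $x\in I$ with $\mathrm{dist}(x,\partial I)\geq e^{-2\log^2 q_n}$. By Lemma \ref{lemma:rk5}, each shifted point $R_\alpha^{mq_n}x$ has $x^{\pm}_{\min,q_n}$ within a factor $[1/2,3/2]$ of that of $x$, and $x^-_{\min,kq_n}$ is comparable to $x^-_{\min,q_n}$.

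First suppose $x^-_{\min,kq_n}\geq 2(\hat\delta q_n\log q_n)^{-1}$. Then every block has its minimal distance above $(\hat\delta q_n\log q_n)^{-1}$. Applying \eqref{eq:rk2} blockwise, with $\hat\delta$ shrunk if needed to absorb the factor $3/2$, gives $S_{q_n}(\varphi')\leq -C_4^{-1}q_n\log q_n$ for each block. Summing the $k$ blocks yields the bound $-kC_4^{-1}q_n\log q_n$.

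Next suppose $x^-_{\min,kq_n}\leq \hat\delta(2q_n\log q_n)^{-1}$. Then every block has a point at distance at most $\hat\delta(q_n\log q_n)^{-1}$ to the left of $1$. The argument of the lower bound in Lemma \ref{lemma:location_min} applies blockwise: the dominant term $c_0/x_{\min}$ beats the truncated Denjoy--Koksma part. Each block is therefore $\geq (c_0/2\hat\delta - 2c_0 - C_4)q_n\log q_n$, and summing gives $\gtrsim k q_n\log q_n$.

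Together the two cases give \eqref{eq:lower_bound_dk}, and hence \eqref{eq:location_min_k}, since the zero can only lie in the complementary range. Remark \ref{rk:2}'s reasoning shows the nearest singularity to the zero lies on the right, which is why only $y^-_{\min,kq_n}$ enters.

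\textbf{Step 3: points close to $\partial I$ (main obstacle).} The points with $\mathrm{dist}(x,\partial I)<e^{-2\log^2q_n}$ are the delicate part, since there the comparison between blocks fails. However, at least one block then has minimal distance $<e^{-2\log^2 q_n}+k\|q_n\alpha\|\ll (q_n\log q_n)^{-1}$. That block contributes $\gtrsim e^{2\log^2 q_n}$ in absolute value, with sign determined by the side of the singularity.

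The remaining blocks still have minimal distance $\gtrsim \|q_{n-1}\alpha\|/2 - k\|q_n\alpha\|\gtrsim q_n^{-1}$ on the opposite side. Their contributions, by \eqref{eq:bound_dk}, are at most $C k q_n\log q_n\leq C e^{\log^2 q_n}q_n\log q_n$, which is dominated by $e^{2\log^2q_n}$. This is where the strong condition \eqref{h} enters.

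Such $x$ therefore have $x^\pm_{\min,kq_n}$ tiny, and the same sign analysis gives \eqref{eq:lower_bound_dk}. No zero lies there, which completes the count of $\Gamma_{n,k}$.
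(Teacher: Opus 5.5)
Your overall strategy matches the paper's: split $S_{kq_n}(\varphi')$ into $k$ blocks of length $q_n$, compare blocks with Lemma \ref{lemma:rk5}, apply the estimates of Lemma \ref{lemma:location_min} blockwise, and get uniqueness of the zero from \eqref{eq:convexity_k_general}. Your endpoint argument in Step 1 is correct and makes explicit the existence of the zero, which the paper leaves implicit.

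\textbf{Step 3 is wrong as written.} It rests on a mistaken picture of where the blocks sit. Take $x\in I\in\mathcal{P}_{n,k}$ within $\eta<e^{-2\log^2 q_n}$ of the left endpoint of $I$. Then every block $0\le m<k$ sees the \emph{same} preimage of $0$, shifted by a multiple of $\|q_n\alpha\|$:
$(R_\alpha^{mq_n}x)^+_{\min,q_n}\in[\eta,\eta+k\|q_n\alpha\|]$, and by \eqref{h} we have $k\|q_n\alpha\|\le e^{-2\log^2 q_n}/q_n$. The same holds for $x^-$ near the right endpoint.

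Consequently the ``remaining blocks'' are not at distance $\gtrsim q_n^{-1}$ from the singularity, and their contributions are not $O(kq_n\log q_n)$. For $\eta\ll\|q_n\alpha\|$, their singular terms alone sum to roughly $2c_0\|q_n\alpha\|^{-1}\log k$, which is astronomically larger. Moreover, \eqref{eq:bound_dk} cannot be invoked there: its hypothesis $\mathrm{dist}(x,\partial I)\ge e^{-2\log^2 q_n}$ excludes exactly these points.

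\textbf{How to repair it.} The conclusion survives, for the reason the paper uses. All $k$ near-singular terms have the same sign: $-2c_0/(\cdot)$ at the left end, $+c_0/(1-\cdot)$ at the right end. So they reinforce one another rather than compete. What must be controlled is only the rest of each block, namely the other $q_n-1$ points. By the three-gap theorem these stay at distances $\gtrsim q_n^{-1}$, growing linearly when ordered, so they contribute $O(q_n\log q_n)$ per block as in Lemma \ref{lemma:bounds_d}. This gives $|S_{kq_n}(\varphi')(x)|\gtrsim k\,e^{2\log^2 q_n}-O(kq_n\log q_n)$. Note that \eqref{h} enters only through $k\|q_n\alpha\|\le e^{-2\log^2q_n}/q_n$, which keeps all blocks in the singular regime.

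\textbf{A similar slip in Step 2.} In your first case, $x^-_{\min,kq_n}$ being large does not imply that every block has large minimal distance, since $x^+_{\min,kq_n}$ may be small. You need the mirror case, as the paper includes. There the terms $-2c_0/(R_\alpha^{mq_n}x)^+_{\min,q_n}$ make every block negative, consistent in sign with the Denjoy--Koksma bound.
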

\begin{proof}
Fix $I\in\mathcal{P}_{n,k}$. The function $S_{kq_n}(\varphi)(x)$ is $C^2$ on $I$ and, by \eqref{eq:convexity_k_general}, $S_{kq_n}(\varphi')$ is strictly monotone and has exactly one zero. Note that then \eqref{eq:location_min_k} follows from \eqref{eq:lower_bound_dk}.

The argument is the same as Lemma \ref{lemma:location_min} with two differences. First, the Denjoy--Koksma inequality is applied to $\varphi'$ restricted away from singularities at a slightly smaller scale rather to include each of the $k$ sub-orbits. Second, the bounds have an extra factor $k$.

Fix $1>\hat\delta>0$ and let $\tilde\varphi'$ denote the restriction of $\varphi'$ to $[\frac{1}{2\hat \delta q_{n}\log q_{n}},1-\frac{1}{2\hat \delta q_{n}\log q_{n}}]$. 
Applying the Denjoy--Koksma inequality to $\tilde\varphi'$, we get that there exists $C_4'=C_4'(\varphi)>{4}/{c_0}$ such that 
\[
\begin{cases}
\left|S_{q_{n}}(\tilde\varphi')(x)\right|\leq (2c_0+C_4')q_{n}\log q_{n},\\
-S_{q_{n}}(\tilde\varphi')(x)\geq (c_0/2-C_4'\hat\delta)q_{n}\log q_{n},
\end{cases}
\]
for $n$ large enough. 

Let $n$ be large enough such that $\frac{1}{\hat \delta q_{n}\log q_{n}}\leq\frac{1}{2q_{n}}.$ 

If $x_{\min,q_{n}}\geq({\hat \delta q_{n}\log q_{n}})^{-1}$, from Lemma \ref{lemma:rk5} we have, for $n$ large enough, $x_{\min,kq_n}>(2{\hat \delta q_{n}\log q_{n}})^{-1}$. Therefore, 
\[ 
-S_{kq_n}(\varphi')(x)=-S_{kq_n}(\tilde\varphi')(x)\geq k(c_0/2-C_4'\hat\delta)q_{n}\log q_{n}.
\]

For $x_{\min,q_{n}}^-<{\hat \delta}({ q_{n}\log q_{n}})^{-1}$,
\[
S_{{kq_n}}(\varphi')(x)\geq \sum_{l=0}^{k-1}\left[\tfrac{c_0}{2}(R_\alpha^{lq_{n}}x)_{\min,q_{n}}^{-1} -4q_nc_0-O(|\log (R_\alpha^{lq_{n}}x)_{\min,q_{n}}|)\right]-k(2c_0+C_4')q_{n}\log q_{n},
\]
and, for $x_{\min,q_{n}}^+<{\hat \delta}({ q_{n}\log q_{n}})^{-1}$,
\[
\begin{aligned}
S_{{kq_n}}(\varphi')(x)&\leq \sum_{l=0}^{k-1}\left[-{2c_0}(R_\alpha^{lq_{n}}x)_{\min,q_{n}}^{-1} +4q_nc_0+O(|\log (R_\alpha^{lq_{n}}x)_{\min,q_{n}}|)\right]\\&+k({c_0}/{2}-C_4'\hat\delta)q_{n}\log q_{n}.
\end{aligned}
\]

If $e^{-2\log^2q_{n}}<x_{\min,q_{n}}^\pm<{\hat \delta}({ q_{n}\log q_{n}})^{-1}$, Lemma \ref{lemma:rk5} again implies \[\max_{l\in\{0,\dots,k-1\}}(R_\alpha^{lq_{n}}x)_{\min,q_{n}}^\pm<2x_{\min,q_{n}}^\pm,\] and if $e^{-2\log^2q_{n}}\geq x_{\min,q_{n}}^\pm,$\[\max_{l\in\{0,\dots,k-1\}}(R_\alpha^{lq_{n}}x)_{\min,q_{n}}^\pm\leq x_{\min,q_{n}}^\pm+l\norm{\alpha q_n}\leq e^{-2\log^2q_{n}}+q_n^{-1}e^{-2\log^2q_{n}}\leq (2\hat \delta q_{n}\log q_{n})^{-1},\]for $n$ large enough. Applying this to the previous inequalities and taking $\hat\delta$ small enough gives \eqref{eq:lower_bound_dk}.

\end{proof}

Given $n\geq 1$, $1 \leq k\leq e^{\log^2q_{n}}$, recall $\Gamma_{n,k}:=\{x\in\cup_{I\in\mathcal{P}_{n,k}}I:S_{kq_n}(\varphi')(x)=0\}$. Define 
\be
\label{eq:defTk}
t_{n,k}=\max_{y\in\Gamma_{n,k}}S_{kq_n}(\varphi)(y)+2\eps.
\ee

\begin{lemma}
There exists $N=N(\varphi)\geq 1$ such that the following holds for $n\geq N$ and $1 \leq k\leq e^{\log^2q_{n}}$:
\label{lemma:aligned_k}
\be
t_{n,k}<\min_{y\in\Gamma_{n,k}}S_{kq_n}(\varphi)(y)+3\eps.
\label{eq:aligned_k}\ee
\end{lemma}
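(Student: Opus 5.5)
The plan follows the proof of Lemma \ref{lemma:aligned}, with $q_{r_n}$ replaced by $kq_n$. Condition \eqref{h} makes all error terms super-polynomially small, so we can afford the extra factor $k\le e^{\log^2 q_n}$. It suffices to show that
\[
\max_{y,y'\in\Gamma_{n,k}}\bigl|S_{kq_n}(\varphi)(y)-S_{kq_n}(\varphi)(y')\bigr|\to 0 \qquad (n\to\infty),
\]
uniformly in $1\le k\le e^{\log^2 q_n}$. Indeed, once this difference is $<\eps$, the definition \eqref{eq:defTk} gives $t_{n,k}<\min_{\Gamma_{n,k}}S_{kq_n}(\varphi)+3\eps$.

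\emph{Step 1 (transport between intervals).} Fix $y\in\Gamma_{n,k}$ and $0\le l<q_n$. Write
\[
S_{kq_n}(\varphi)(y-l\alpha)-S_{kq_n}(\varphi)(y)=S_l(\varphi)(y-l\alpha)-S_l(\varphi)(y-l\alpha+kq_n\alpha).
\]
The two points on the right differ by $k\|q_n\alpha\|\le e^{\log^2 q_n}/q_{n+1}\le e^{-2\log^2 q_n}/q_n$, by \eqref{h}. By Lemma \ref{lemma:location_min_k}, $y^-_{\min,kq_n}\ge \hat\delta/(2q_n\log q_n)$, which is enormously larger than this displacement. Hence the whole segment between the two points stays at distance $\gtrsim (q_n\log q_n)^{-1}$ from the singularities, and it avoids the discontinuities of $S_l(\varphi)$. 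Applying \eqref{eq:upper_bound_d} with $l<q_n$ and the mean value theorem, the difference is $O\bigl(q_n\log q_n\cdot e^{-2\log^2 q_n}/q_n\bigr)=o(1)$.

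The same identity differentiated gives
\[
S_{kq_n}(\varphi')(y-l\alpha)=S_l(\varphi')(y-l\alpha)-S_l(\varphi')(y-l\alpha+kq_n\alpha).
\]
By \eqref{eq:upper_bound_dd} and the mean value theorem this is $O\bigl((q_n\log q_n)^2 e^{-2\log^2 q_n}/q_n\bigr)$, which is again super-polynomially small.

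\emph{Step 2 (locating the zero in the target interval).} I will check that the $q_n$ points $y-l\alpha$, $0\le l<q_n$, fall one in each interval of $\mathcal{P}_{n,k}$. The shift by $-l\alpha$ permutes the $q_n$-orbit $\{-j\alpha\}$, up to a wrap-around offset of $\|q_n\alpha\|$, so it maps long intervals to long intervals. Moreover, the distance of $y-l\alpha$ to $\partial I$ stays above $e^{-2\log^2 q_n}$. So Lemma \ref{lemma:rk5} and \eqref{eq:convexity_k} apply, and the distances to the singularities satisfy $(y-l\alpha)_{\min,q_n}\asymp (q_n\log q_n)^{-1}$.

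Let $y'\in\Gamma_{n,k}$ be the zero in the interval containing $y-l\alpha$. Then \eqref{eq:convexity_k} gives $S_{kq_n}(\varphi'')\gtrsim k(q_n\log q_n)^2$ on the segment between $y-l\alpha$ and $y'$. Combined with Step 1, this yields
\[
|y'-(y-l\alpha)|\lesssim \frac{(q_n\log q_n)^2e^{-2\log^2q_n}/q_n}{k(q_n\log q_n)^2}.
\]
Then \eqref{eq:bound_dk} bounds $|S_{kq_n}(\varphi')|\lesssim k\,q_n\log q_n$ on this segment. Hence
\[
|S_{kq_n}(\varphi)(y')-S_{kq_n}(\varphi)(y-l\alpha)|=O\bigl(\log q_n\, e^{-2\log^2 q_n}\bigr)=o(1).
\]

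\emph{Step 3.} Combining Steps 1 and 2 through the triangle inequality gives the uniform $o(1)$ oscillation on $\Gamma_{n,k}$, and the lemma follows by choosing $N$ with that oscillation $<\eps$.

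The main obstacle is the bookkeeping in Step 2. One has to verify that every $y-l\alpha$ lands in a long interval of $\mathcal{P}_{n,k}$, rather than in one of the $(k-1)q_n$ short ones, and at distance at least $e^{-2\log^2 q_n}$ from its boundary. This is what licenses the use of \eqref{eq:convexity_k} and \eqref{eq:bound_dk}. I would handle it using the explicit structure from Lemma \ref{lemma:partition}: the cuts accumulate at one end of each long interval, whereas $y$ sits at distance $\asymp (q_n\log q_n)^{-1}$ from the other end, where the nearest singularity lies by the analogue of Remark \ref{rk:2}. If that end happens to be the cut side, I would instead compare through $y+l\alpha$.
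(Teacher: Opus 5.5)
Your proposal is correct and follows the paper's proof essentially step by step: you transport from $y$ to $y-l\alpha$ via the telescoping identity and \eqref{h}, locate the zero $y'$ in the target interval using \eqref{eq:convexity_k}, and bound the value difference with \eqref{eq:bound_dk}. The only difference is bookkeeping: the paper takes $y$ to be the zero in the rightmost interval of $\mathcal{P}_{n,k}$. Then the left and right endpoints of $\mathcal{P}_{n,k}$ form two $q_n$-orbits and $(y-l\alpha)^-_{\min,kq_n}=y^-_{\min,kq_n}$ exactly for $0\le l<q_n$, so your ``main obstacle'' disappears with no wrap-around offset, and the $y+l\alpha$ fallback is unnecessary.
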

\begin{proof}

We will use the same ideas as in the proof of Lemma \ref{lemma:aligned}. 

Let $I\in\mathcal{P}_{n,k}$ be the rightmost interval of $\mathcal{P}_{n,k}$ when ordered in $(0,1)$, and let $\{y\}=I\cap\Gamma_{n,k}$ Every point in the orbit $\{y-l\alpha\}_{l=0}^{q_n-1}$ lies in an interval of $\mathcal{P}_{n,k}$. This is clear by looking at the proof of Lemma \ref{lemma:partition} and noting that the left and the right boundary points of the elements of $\mathcal{P}_{n,k}$ form two $q_n$-orbits. By the same reason, for $0\leq l \leq {q_n-1}$, 
\[
(y-l\alpha)_{\min,q_n}^-=y_{\min,q_n}^-=\norm{y},\quad (y-l\alpha)_{\min,kq_n}^-=y_{\min,kq_n-l}^-=y_{\min,kq_n}^-.
\]

Since $S_{kq_n}(\varphi')(y)=0$, we write
\[
S_{kq_n}(\varphi')(y-l\alpha)= S_{kq_n}(\varphi')(y-l\alpha)-S_{kq_n}(\varphi')(y).
\]

Let $n$ be large enough such that $\norm{kq_n\alpha}\leq\frac{k}{q_{n+1}}\leq({q_{n}e^{2\log^2q_{n}}})^{-1}\leq\frac{\hat\delta}{4q_{n}\log q_{n}}.$
We can combine the bound \eqref{eq:bound_ddk} with \eqref{h} by applying the mean value theorem. We obtain
\[
|S_{kq_n}(\varphi')(y-l\alpha)|\leq |S_{l}(\varphi'')(\theta)|\norm{kq_n\alpha} \leq \frac{C_2'k\,\theta_{\min,kq_n}^{-2}}{q_{n+1}}\leq \frac{C_2'\theta_{\min,kq_n}^{-2}}{q_{n}e^{2\log^2q_{n}}},
\]
for some $\theta\in(y-l\alpha,y-l\alpha+kq_n\alpha)$ or $(y-l\alpha+kq_n\alpha,y-l\alpha)$. 

Therefore, using \eqref{eq:location_min_k} and the bound on $\norm{kq_n\alpha}$, we have
\[
e^{- 2\log ^2 q_n}<\frac{\hat\delta q_{n}\log q_{r_n}}{4}\leq q_{n}\log q_{n}\left(\frac{2}{\hat\delta} +\frac{\hat\delta}{4}\right)^{-1}\leq \theta_{\min,kq_n}^{-1}\leq \frac{4q_{n}\log q_{n}}{\hat\delta}.
\]

Again, following the steps of Lemma \ref{lemma:aligned}, by the convexity result \eqref{eq:convexity_k}, if $y'\in\Gamma_{n,k}$ lies in the same interval of $\mathcal{P}_{n,k}$ as $y-l\alpha$, then
\[ 
\begin{aligned}
|y'-y+l\alpha|&\leq \frac{C_2'}{q_{n}e^{2\log^2q_{n}}}\left(\frac{4q_{n}\log q_{n}}{\hat\delta}\right)^2 \frac{1}{C_3'}\left(\frac{2}{\hat\delta q_{n}\log q_{n}}\right)^2\leq \tilde C \left(q_{n}e^{2\log^2q_{n}}\right)^{-1},
\end{aligned}
\]
for some $\tilde C(\varphi,\hat\delta)>0$. Thus, by \eqref{eq:bound_dk} and \eqref{h}, the main value gives
\[
\begin{aligned}
|S_{q_{r_n}}(\varphi)(y')-S_{q_{r_n}}(\varphi)(y-l\alpha)|&\leq \tilde C \left(q_{n}e^{2\log^2q_{n}}\right)^{-1}C_1'k\frac{2q_{n}\log q_{n}}{\hat\delta}=O(\log q_{n}e^{-\log^2q_{n}}),
\end{aligned}
\]
and
\[
|S_{kq_n}(\varphi)(y)-S_{{kq_n}}(\varphi)(y-l\alpha)|\leq \norm{kq_n\alpha}C_1'\frac{2q_{n}\log q_{n}}{\hat\delta}= O(\log q_{n}e^{-\log^2q_{n}}).
\]

Altogether, $|S_{q_{r_n}}(\varphi)(y)-S_{q_{r_n}}(\varphi)(y')|$ decays asymptotically with $n$, for any $y,y'\in\Gamma_{r_n}$. So, by definition of $t_n$, for $n$ large enough we obtain
\[
t_n < S_{q_{r_n}}(\varphi)(y)+3\eps \quad \text{for all } y\in\Gamma_{r_n}.
\]
\end{proof}

\subsection{Computation of the correlations}

The argument parallels Section \ref{sec:7.4} but now runs over $k$ scales simultaneously. For each pair $(n,k)$, we define a region $H_{n,k}$ where $N(x,t_{n,k}) $ is exactly $kq_n$ and we have good estimates on $S_{kq_n}(\varphi')$, so that the convexity results of Section \ref{sec:convexity} apply. Lemma \ref{lemma:endpoints_k} gives the correlation gain on $H_{n,k}$, Lemma \ref{lemma:final_k} handles the complementary region $F_{n,k} \setminus H_{n,k}$ using Lemma \ref{lemma:convex3}, and the proof of Theorem \ref{thm:sum_cor} assembles these into a divergent series.

Let $A=\T\times [0,\eps]$.
Let \[H_{n,k}:=\{x\in\cup_{I\in\mathcal{P}_{n,k}}I:N(x,t_{k,n}+2\eps)=kq_n,S_{N(x,t_{k,n})}(\varphi')\leq 0\}.\]
By the definition of $t_{n,k}$ and Lemma \ref{lemma:aligned_k}, $H_{n,k}$ is the union of $q_{n}$ intervals, one on each element of $\mathcal{P}_{n,k}$, and bounded to the right by points of $\Gamma_{n,k}$. 

Let $E_{n,k}=H_{n,k}\times[0,\eps]$. 

\begin{lemma} \label{lemma:endpoints_k} There exist $\eps^*=\eps^*(\varphi)>0$ and $N=N(\varphi)\geq 1$ such that for any $0<\eps<\eps^*$, $n\geq N$, $1 \leq k\leq e^{\log^2q_{n}}$ and any $t\in[0,\eps]$
\[
\mu(\phi_{t_{n,k}+t}E_{n,k} \cap \phi_\eps A)-\mu(\phi_{t_{n,k}+t}E_{n,k} \cap A)\geq\frac{\eps^4}{\sqrt{k}\log q_{n}}.
\]
\end{lemma}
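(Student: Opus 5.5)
We follow the proof of Proposition \ref{prop:strips}, now with $k$ sub-orbits and $A=\T\times[0,\eps]$. Fix $t\in[0,\eps]$ and $I\in\mathcal{P}_{n,k}$, and let $J=H_{n,k}\cap I$, whose right endpoint is the zero $y\in\Gamma_{n,k}\cap I$. By the definition of $H_{n,k}$, every $x\in J$ satisfies $N(x,t_{n,k}+2\eps)=kq_n$. So for $(x,s)\in J\times[0,\eps]$ the height coordinate of $\phi_{t_{n,k}+t}(x,s)$ is
\[
s+t+t_{n,k}-S_{kq_n}(\varphi)(x),
\]
wherever this lies in $[0,2\eps]$, and the base coordinate is $R_\alpha^{kq_n}x$, which is a translation by $\|kq_n\alpha\|$. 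Set $f_t(x):=t+t_{n,k}-S_{kq_n}(\varphi)(x)$ on $J$. This is the negative of a convex function, strictly increasing on $J$ because $S_{kq_n}(\varphi')<0$ there. By \eqref{eq:defTk} and \eqref{eq:aligned_k}, $f_t(y)\in(-\eps,2\eps+t]$. The set $\phi_{t_{n,k}+t}E_{n,k}\cap(\T\times[0,2\eps])$ over $I$ is therefore the translated region $E$ of Lemma \ref{lemma:convex2}, built from $f_t$ restricted to the subinterval $I'=f_t^{-1}([-\eps,2\eps])$. Here $A$ plays the role of $D^-$ and $\phi_\eps A$ the role of $D^+$.

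The first step is to check the hypotheses of Lemma \ref{lemma:convex1} on $I'$ with $M=q_n\log q_n$.

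\emph{Second derivative.} Since $f_t$ varies by at most $3\eps<\gamma$ on $I'$, we repeat the argument of Lemma \ref{lemma:lower_bound_d}, using \eqref{eq:lower_bound_dk} in place of \eqref{eq:rk2}. The slope is at least $C_4'kq_n\log q_n$ once $x^-_{\min,kq_n}$ is beyond the range \eqref{eq:location_min_k}, so $I'$ lies within distance $O((q_n\log q_n)^{-1})$ of the singularity. This gives $x_{\min,q_n}\asymp (q_n\log q_n)^{-1}$ on $I'$. By Lemma \ref{lemma:rk5}, this holds uniformly along all $k$ sub-orbits, since $I'$ is far from $\partial I$ compared with $e^{-2\log^2 q_n}$. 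Then \eqref{eq:convexity_k} yields
\[
-f_t''\geq C_3'k\,x_{\min,q_n}^{-2}\gtrsim k\,(q_n\log q_n)^2=kM^2.
\]

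\emph{First derivative.} This is the step I expect to be the main obstacle. The bound \eqref{eq:bound_dk} only gives $f_t'\lesssim kM$, but Lemma \ref{lemma:convex1} requires $f_t'\le \sqrt{k}\,CM$. The point is to use that $f_t'$ vanishes at $y$ and that $f_t$ ranges over only $3\eps$. Let $\ell$ be the length of the portion of $J$ on which $f_t$ drops by $3\eps$. Concavity with $|f_t''|\asymp kM^2$, valid uniformly on a neighbourhood of $y$ of size $\asymp (q_n\log q_n)^{-1}$ by \eqref{eq:bound_ddk} and \eqref{eq:convexity_k}, gives $kM^2\ell^2\asymp \eps$ when $I'$ is near $y$. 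Hence $\ell\asymp \sqrt{\eps}/(\sqrt{k}M)$, and so $f_t'\lesssim kM^2\ell\lesssim \sqrt{k}M$ on $I'$. If instead $I'$ sits lower on the branch, we intersect with a subinterval of $f_t^{-1}([-\eps,2\eps])$ where the same Taylor estimate holds, or discard it via Lemma \ref{lemma:convex3}. With $k$ absorbed as in Lemma \ref{lemma:convex1}, we get $f_t'\le\sqrt{k}CM$ with $C=C(\varphi,\eps)$. We should only have to redo the ratio $-f''/f'^3\gtrsim(\sqrt k M)^{-1}$.

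\emph{Assembling.} Where the image of $I'$ is fully contained in $[-\eps,2\eps]$, Lemma \ref{lemma:convex2} gives a gain of at least $\eps^4/(\sqrt{k}\,q_n\log q_n)$ per interval. Where the range is only partial, since $f_t(y)$ may lie below $2\eps$, Lemma \ref{lemma:convex3} gives a nonnegative contribution. When $t\ge\eps$, part of the graph exits through the top of the strip, which is harmless because the upper slab only gains. To secure the quantitative gain on every interval, we shift the slabs if necessary. Because $f_t(y)>2\eps-\eps$ by \eqref{eq:aligned_k}, a full-range branch $[-\eps,\eps]$ exists. Lemma \ref{lemma:convex1} holds for any $s\in(0,\eps)$, so integrating only over the available $s$ still gives $\gtrsim\eps^4/(\sqrt k M)$. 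Since the base map is a rigid translation, no cancellation occurs between different $I$. Summing over the $q_n$ intervals of $\mathcal{P}_{n,k}$ gives
\[
q_n\cdot \frac{\eps^4}{\sqrt{k}\,q_n\log q_n}=\frac{\eps^4}{\sqrt{k}\log q_n},
\]
after adjusting $\eps^*$ to absorb the constants.
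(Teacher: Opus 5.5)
Your route is the paper's. You Taylor-expand $S_{kq_n}(\varphi)$ about the zero $y\in\Gamma_{n,k}\cap I$, using the two-sided bound $S_{kq_n}(\varphi'')\asymp kq_n^2\log^2q_n$ on $(x,y)$. That bound comes from \eqref{eq:bound_ddk} and \eqref{eq:convexity_k}, after locating $x^-_{\min,kq_n}\asymp(q_n\log q_n)^{-1}$ as in Lemma \ref{lemma:lower_bound_d}. This upgrades the slope bound from $kM$ to $\sqrt{k}M$. You then apply Lemmas \ref{lemma:convex1}--\ref{lemma:convex2} with $M=q_n\log q_n$ on each of the $q_n$ intervals and sum.

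There is, however, one concrete error that makes your ``Assembling'' paragraph invalid as written: you have the position of $f_t(y)$ reversed. By \eqref{eq:defTk}, $t_{n,k}-S_{kq_n}(\varphi)(y)\ge 2\eps$ for every $y\in\Gamma_{n,k}$. By \eqref{eq:aligned_k}, $t_{n,k}-S_{kq_n}(\varphi)(y)<3\eps$. So $f_t(y)\in[2\eps+t,\,3\eps+t)$, never below $2\eps$.

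For the bottom of the range: $f_t\to-\infty$ at the left endpoint of $I$. Any $x$ left of $y$ with $f_t(x)\ge-\eps$ has $S_{kq_n}(\varphi)(x)\le t_{n,k}+t+\eps\le t_{n,k}+2\eps$, so it still satisfies $N(x,t_{n,k}+2\eps)=kq_n$. Hence on every interval $J=H_{n,k}\cap I$, $f_t$ maps a subinterval bijectively onto $[-\eps,2\eps]$. Lemma \ref{lemma:convex2} therefore gives the quantitative gain on all $q_n$ intervals, and the sum yields the claim. This is exactly what the paper records as $\inf_{J(t)}=-\eps$, $\sup_{J(t)}=2\eps$.

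Your fallback for a ``partial range'' should be deleted, because it would fail if that case actually occurred. Lemma \ref{lemma:convex3} only covers truncation at the bottom ($f$ onto $[-\hat\eps,2\eps]$). A range cut off at the top can give a negative contribution: for $f(x)=x$ onto $[-\eps,\eps]$ one gets $\mu(E\cap D^+)-\mu(E\cap D^-)=-\eps^2/2$. Also, the slabs $A$ and $\phi_\eps A$ are fixed, so they cannot be ``shifted''.

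Two smaller corrections. First, the drop of $f_t$ from $y$ to the bottom of $I'$ is at most $4\eps+t\le5\eps$, not $3\eps$. Second, bound this drop by $\gamma$, as the paper does. That makes the constant $C$ in Lemma \ref{lemma:convex1} depend only on $\varphi$, which removes the apparent circularity in your version between $C=C(\varphi,\eps)$ and $\eps^*=\eps^*(C)$.
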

\begin{proof}
The factor $\sqrt{k}$ in the denominator of the lower bound reflects the fact that, at scale $k$, the convexity of $S_{kq_n}(\varphi)$ is of order $k q_n^2 \log^2 q_n$ while the first derivative bound is of order $\sqrt{k} q_n \log q_n$. Applying Lemmas \ref{lemma:convex1} and \ref{lemma:convex2} then gives the $1/\sqrt{k}$ decay.

We first claim that there exists $C_5'=C_5'(\varphi)>0$ such that $\forall x \in H_{n,k}$
\[
|S_{kq_n}(\varphi')(x)|\leq C_5' \sqrt{k}\, q_{n}\log q_{n},
\]
for $n$ large enough, for every $1 \leq k \leq e^{\log^2q_{n}}$.

Indeed, fix $I\in\mathcal{P}_{n,k}$ and let $x\in I\cap H_{n,k}$, $\{y\}=I\cap \Gamma_{n,k}$. First, notice that $x$ is to the left of $y$, and by the same argument as in the proof of Lemma \ref{lemma:lower_bound_d}, the estimates \eqref{eq:location_min_k} and \eqref{eq:lower_bound_dk} imply \[e^{-\log^2 q_n}<\hat \delta\,(2{\, q_{n}\log q_{n}})^{-1}\leq x_{\min,kq_n}^- \leq 2\,({\hat \delta\, q_{n}\log q_{n}})^{-1}+{\gamma}\,(C_4' \,k  \,q_{n}\log q_{n})^{-1}.\] Hence $x_{\min,kq_n}=x_{\min,kq_n}^-$ and it follows from \eqref{eq:bound_ddk} and \eqref{eq:convexity_k} that 
\[ 
\tilde C^{-1}\, k\, q_{n}^2\log ^2q_{n} \leq S_{kq_n}(\varphi'')\leq \tilde C\, k\, q_{n}^2\log ^2q_{n}
\] 
on $(x,y)$, for some $\tilde C(\varphi)>0$. Now, the Taylor expansion about $y$ of $f=S_{kq_n}(\varphi)$ yields
\[
\gamma\geq f(x)-f(y)=f''(\theta)\tfrac{(y-x)^2}{2}\geq \tfrac{1}{2}\tilde C^{-1}\, k\, q_{n}^2\log ^2q_{n}\,(y-x)^2,
\]
for some $\theta\in(x,y)$. Therefore 
\[
f'(x)=f''(\theta)(x-y)\geq -\tilde C\, k\, q_{n}^2\log ^2q_{n}\,\sqrt{\frac{2\tilde C\gamma}{kq_{n}^2\log ^2q_{n}}}=:-C_5' \sqrt{k}\, q_{n}\log q_{n}.
\]

Let $I\in\mathcal{P}_{n,k}$ and let $J(t)= I\cap\{t_{n,k}+t-S_{kq_n(\varphi)}(x)\in(-\eps,2\eps)\}\cap  H_{n,k}$. From the definition of $t_{n,k}$ and $H_{n,k}$, $J(t)$ is an interval satisfying 
\[
\inf_{J(t)}[ t_{n,k}+t-S_{kq_n(\varphi)}(x)]=-\eps,\quad\sup_{J(t)}[ t_{n,k}+t-S_{kq_n(\varphi)}(x)]=2\eps .
\]

By the estimate we just showed we can apply Lemmas \ref{lemma:convex1} and \ref{lemma:convex2} to $f(x)=t_{n,k}+t-S_{kq_n}(\varphi)(x)$ with $k$, $M=q_{n}\log q_{n}$, and $C=\max\{C_5',\tilde C\}$. We get
\[
\mu\left[\phi_{t_{n,k}+t}(J(t)\times[0,\eps])\cap \phi_\eps A\right]-\mu\left[\phi_{t_{n,k}+t}(J(t)\times[0,\eps])\cap A\right]>\frac{\eps^4}{\sqrt{k}\,q_{n}\log q_{n}},
\]
for every $t\in[0,\eps]$.
Summing over the $q_{n}$ intervals of $\mathcal{P}_{n,k}$ gives the result.
\end{proof}

Let $\hat\delta>\delta>0$, and define
\[
F_{n,k}'=F_{n,k}'(\delta):=\mathbb{T}\setminus\bigcup_{m=0}^{kq_n-1} R_\alpha^{-m}\Big(\frac{-\delta}{kq_{n}\log q_{n}},\frac{\delta}{kq_{n}\log q_{n}}\Big).
\]
For $n$ large such that $\norm{q_n\alpha}< \tfrac{\delta}{kq_{n}\log q_{n}}$, $F_{n,k}'$ is the union of $q_n$ intervals, one in each element of $\mathcal{P}_{n,k}$. Let 
\[
F_{n,k}=\{x\in\cup_{I\in\mathcal{P}_{n,k}}I:N(x,t_n+\eps)\geq (k-1)q_n\}.
\]

\begin{lemma} There exist $N=N(\varphi)\geq 1$ such that for any $n\geq N$, $1 \leq k\leq e^{\log^2q_{n}}$: $F_{n,k}'\subset F_{n,k}$. \label{lemma:rk6}
\end{lemma}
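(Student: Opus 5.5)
We aim to show that for every $x\in F'_{n,k}$ the orbit segment stays far enough from the singularity that the roof Birkhoff sums accumulate slowly. Then reaching time $t_{n,k}+\eps$ (we read $t_n$ in the definition of $F_{n,k}$ as $t_{n,k}$) requires at least $(k-1)q_n$ returns. Equivalently, we will show
\[
S_{(k-1)q_n}(\varphi)(x)\leq t_{n,k}+\eps\qquad \text{for all } x\in F'_{n,k},
\]
since this gives $N(x,t_{n,k}+\eps)\geq (k-1)q_n$ directly from the definition of $N$.

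\textbf{Step 1: describe $F'_{n,k}$.} By Lemma \ref{lemma:partition}, the removed neighbourhoods have radius $\delta/(kq_n\log q_n)$, which is larger than $\|q_n\alpha\|$ for $n$ large by \eqref{h}. Hence they swallow all the short intervals of $\mathcal P'_{n,k}$. Each component of $F'_{n,k}$ therefore sits inside a long interval $I\in\mathcal P_{n,k}$, at distance at least $\delta/(kq_n\log q_n)$ from $\partial I$. For $x$ in such a component we get $x_{\min,kq_n}\geq \delta/(kq_n\log q_n)$. Because $\delta/(kq_n\log q_n)\gg e^{-2\log^2 q_n}$, Lemmas \ref{lemma:rk5} and \ref{lemma:bounds_dk} apply at $x$.

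\textbf{Step 2: compare $S_{kq_n}(\varphi)(x)$ with $S_{kq_n}(\varphi)(y)$, where $\{y\}=I\cap\Gamma_{n,k}$.} Both points lie in the same continuity interval $I$, and $y$ is the minimum of the convex function $S_{kq_n}(\varphi)$ on $I$. By the mean value theorem together with \eqref{eq:bound_dk},
\[
S_{kq_n}(\varphi)(x)-S_{kq_n}(\varphi)(y)\lesssim |x-y|\,\bigl(k\,x_{\min}^{-1}+q_n\log q_n\bigr)\lesssim \frac{k^2 q_n\log q_n}{\delta}\cdot\frac1{q_n},
\]
using $|x-y|\leq 2/q_n$. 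This crude bound grows like $k^2$, so it is too large to compare with $t_{n,k}$ directly. We will not try to close the argument this way. Instead we drop the last $q_n$ iterates, as Step 3 does.

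\textbf{Step 3: bound $S_{(k-1)q_n}(\varphi)(x)$ through the last block.} Write
\[
S_{(k-1)q_n}(\varphi)(x)=S_{kq_n}(\varphi)(x)-S_{q_n}(\varphi)\bigl(R_\alpha^{(k-1)q_n}x\bigr).
\]
The last block is at least $\gamma q_n$, which is enormous compared with the $O(k^2\log q_n/\delta)$ excess from Step 2 only if $k^2\log q_n\ll q_n$. This fails for $k$ near $e^{\log^2 q_n}$, so we refine the excess estimate. We apply Lemma \ref{lemma:rk5} to each of the $k$ sub-blocks of length $q_n$: every sub-block starting point $R_\alpha^{mq_n}x$ stays within a factor $3/2$ of $x$ in its distance to the singularities. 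Each sub-block then contributes at most $S_{q_n}(\varphi)(R_\alpha^{mq_n}y)+O(|x-y|\,\|S_{q_n}(\varphi')\|)$. Here the relevant minimum distance is that of a single $q_n$-orbit, which is $\gtrsim \delta/(kq_n\log q_n)$ in the worst case. Using the convexity \eqref{eq:convexity_k}, the true excess of each block is of order $O(\log k+\log q_n)$ rather than linear in $k$. Summing, $S_{kq_n}(\varphi)(x)\leq S_{kq_n}(\varphi)(y)+O(k\log(kq_n))$. Since $k\log(kq_n)\leq e^{\log^2 q_n}\log^2 q_n \ll \gamma q_n$ is false in general, we plan to combine this with $S_{q_n}(\varphi)(R_\alpha^{(k-1)q_n}x)\geq S_{q_n}(\varphi)(R_\alpha^{(k-1)q_n}y)$ minus one block excess. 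This yields
\[
S_{(k-1)q_n}(\varphi)(x)\leq S_{(k-1)q_n}(\varphi)(y)+O\bigl(k\log(kq_n)\bigr)\cdot\tfrac{k-1}{k}.
\]
We then finish using $S_{kq_n}(\varphi)(y)-S_{(k-1)q_n}(\varphi)(y)\gtrsim q_n$ (Denjoy–Koksma, as in \eqref{eq:approxT}) and $t_{n,k}\geq S_{kq_n}(\varphi)(y)$.

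\textbf{Main obstacle.} The hard part is Step 3: controlling the excess $S_{(k-1)q_n}(\varphi)(x)-S_{(k-1)q_n}(\varphi)(y)$ uniformly in $k\leq e^{\log^2 q_n}$ so that it stays below the gain $\asymp q_n$ of the final block. The first thing to try is a per-block estimate using that the sub-blocks are essentially translates, via Lemma \ref{lemma:rk5}. If the total per-block excess is $O(\log q_n)$ uniformly, then the required inequality reduces to $k\log q_n\lesssim kq_n$, which holds trivially. So the estimate should be made blockwise, comparing block $m$ of $x$ with block $m$ of $y$, rather than comparing the sum of $k-1$ blocks of $x$ against the single extra block.
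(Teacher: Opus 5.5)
The gap is in your closing step, and it cannot be repaired: for large $k$ the inclusion, read with $t_{n,k}$ as you do, is false. The paper's own proof breaks down at the same place.

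\textbf{Where your argument fails.} Your reduction to $S_{(k-1)q_n}(\varphi)(x)\le t_{n,k}+\eps$ on $F'_{n,k}$ is right. Your Step 2 also correctly records what the mean value theorem with \eqref{eq:bound_dk} gives there: $F'_{n,k}$ only guarantees $x_{\min,q_n}\ge\delta/(kq_n\log q_n)$, so the excess is of order $k^2\log q_n/\delta$. The problem is the end of Step 3 and the ``Main obstacle'' paragraph, which miscount. By Lemma \ref{lemma:aligned_k},
$t_{n,k}+\eps<S_{kq_n}(\varphi)(y)+4\eps=S_{(k-1)q_n}(\varphi)(y)+S_{q_n}(\varphi)(R_\alpha^{(k-1)q_n}y)+4\eps$.
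So the only slack is one block, $S_{q_n}(\varphi)(R_\alpha^{(k-1)q_n}y)\le q_n+O(\log q_n)$. Meanwhile $S_{(k-1)q_n}(\varphi)(x)-S_{(k-1)q_n}(\varphi)(y)$ is a sum of $k-1$ per-block excesses. Even granting an $O(\log q_n)$ bound per block, you need $(k-1)\log q_n\lesssim q_n$, not $k\log q_n\lesssim kq_n$. That fails for $k$ near $e^{\log^2 q_n}$.

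\textbf{Why no refinement helps.} The per-block excess really is of size $\log q_n$ on most of each interval. Take $x$ in the middle third of some $I\in\mathcal P_{n,k}$, so $x\in F'_{n,k}$, and let $\{y\}=I\cap\Gamma_{n,k}$. By \eqref{eq:location_min_k}, $y$ lies within $2/(\hat\delta q_n\log q_n)$ of the right end of $I$.
\begin{itemize}
\item For $0\le m\le k-2$, the points $R_\alpha^{mq_n}x$ and $R_\alpha^{mq_n}y$ lie in one continuity interval of $S_{q_n}(\varphi)$.
\item Integrate \eqref{eq:rk2} over a stretch of length $\gtrsim 1/q_n$; this is the asymmetry, $S_{q_n}(\varphi')\le -C_4^{-1}q_n\log q_n$ in the bulk. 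Together with \eqref{eq:upper_bound_d} near the right end, this gives $S_{q_n}(\varphi)(R_\alpha^{mq_n}x)-S_{q_n}(\varphi)(R_\alpha^{mq_n}y)\ge c\log q_n-O(1)$.
\item Summing, $S_{(k-1)q_n}(\varphi)(x)-S_{kq_n}(\varphi)(y)\ge c(k-1)\log q_n-O(k)-q_n-O(\log q_n)$.
\item This exceeds $4\eps$ once $k\gg q_n/\log q_n$, so $x\notin F_{n,k}$.
\end{itemize}

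\textbf{The paper's proof.} It has the same defect. Its last inequality $kq_n-\tfrac{2C_1'k}{\delta}\log q_n-2\ge(k-1)q_n$ is equivalent to $q_n\ge\tfrac{2C_1'k}{\delta}\log q_n+2$. That holds only for $k\lesssim\delta q_n/\log q_n$. Also, as your Step 2 shows, its MVT bound should carry $k^2$, not $k$. So both arguments give the lemma only for $k$ up to about $q_n/\log q_n$.

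\textbf{A possible repair.} The appeal to \eqref{eq:convexity_k_general} in Lemma \ref{lemma:final_k} suggests that only $N(x,t_{n,k}+\eps)\ge q_n$ may be needed downstream. If so, that weaker inclusion does hold for $2\le k\le e^{\log^2 q_n}$. On $F'_{n,k}$ one has $S_{q_n}(\varphi)(x)\le q_n+O(\log^2 q_n)$, while $t_{n,k}\ge k\,(q_n-O(\log q_n))$.
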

\begin{proof}
Note that $N(y,t_{n,k})=kq_n$ $\forall y\in\Gamma_{n,k}$ and, by \eqref{eq:bound_dk} and the MVT,
\[
\begin{aligned}
N(x,t_{n,k})&\geq N(y,t_{n,k})-\frac{1}{\gamma}(S_{N(x,t_{n,k})}(\varphi)(x)-S_{N(x,t_{n,k})}(\varphi)(y))-1\\&\geq kq_n-\frac{2C_1' k}{\delta}\log q_{n}-2\geq (k-1)q_n,
\end{aligned}
\]
for any $x$ in the same component of $F_{n,k}'$ as $y$ and for $n$ large enough, where we have used $\frac{\delta}{kq_{n}\log q_{n}}\geq\frac{\delta}{q_{n}\log q_{n}}e^{-\log^2 q_n}\geq e^{-2\log^2 q_n}$.
\end{proof}

\begin{lemma} Let $H_{n,k}':=F_{n,k}\setminus H_{n,k}$. Then exists $N=N(\varphi)\geq 1$ such that for any $n\geq N$, $2 \leq k\leq e^{\log^2q_{n}}$, and $t\in[0,\eps]$
\[
\mu\left(\phi_{t_{n,k}+t}(H_{n,k}'\times[0,\eps])\cap \phi_\eps A\right)\geq\mu\left(\phi_{t_{n,k}+t}(H_{n,k}'\times[0,\eps])\cap A\right).
\]
Therefore, there exist $\eps^*>0$ such that for any $0<\eps<\eps^*$ and any $t\in[0,\eps]$
\be
\mu\left[\phi_{t_{n,k}+t}A\cap \phi_\eps A\right]-\mu\left[\phi_{t_{n,k}+t}A\cap A\right]\geq \frac{\eps^4}{2\sqrt{k}\log q_{n}}.
\label{eq:def_bounds_k}
\ee
\label{lemma:final_k}
\end{lemma}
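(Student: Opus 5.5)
The plan is to handle the first inequality interval by interval and then assemble \eqref{eq:def_bounds_k} by decomposing $A=\T\times[0,\eps]$ according to $H_{n,k}$, $H'_{n,k}$ and $\T\setminus F_{n,k}$.

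\emph{First inequality.} Fix $I\in\mathcal{P}_{n,k}$ and let $y$ be the unique point of $\Gamma_{n,k}\cap I$. Every $x\in H'_{n,k}\cap I$ either lies to the right of $y$, or has $N(x,t_{n,k}+2\eps)<kq_n$. In both cases $l:=N(x,t_{n,k}+t+\cdot)$ ranges between $(k-1)q_n$ and $kq_n$; this uses Lemma \ref{lemma:rk6} together with the definition of $F_{n,k}$.

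For each such $l$, I would cut $H'_{n,k}\cap I$ into maximal subintervals on which $N(\cdot,t_{n,k}+t+s)=l$ for the heights $s\in[0,\eps]$ relevant to reaching the strip $\T\times[-\eps,2\eps]$. On such a subinterval the vertical coordinate of the image of $(x,s)$ is
\[f(x)=t_{n,k}+t+s-S_l(\varphi)(x).\]
The two properties needed are:
\begin{itemize}
\item[(a)] $S_l(\varphi'')>0$ on the piece, by \eqref{eq:convexity_k_general}, which applies since $l\ge (k-1)q_n\ge q_n$ when $k\ge2$;
\item[(b)] $S_l(\varphi')$ has constant sign there.
\end{itemize}
Property (b) holds because the only interior critical point of $S_{kq_n}(\varphi)$ on $I$ is $y$. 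For $l<kq_n$ the minimum of $S_l(\varphi)$ on the piece is not attained in the window, since by \eqref{eq:aligned_k} we have $S_l(\varphi)\le S_{kq_n}(\varphi)-\gamma\le t_{n,k}-\gamma$ there. This is the same argument as Lemma \ref{lemma:rk1}(ii).

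So $f$ is strictly concave and monotone on each piece. Its range is either all of $[-\eps,2\eps]$ or is cut at the bottom, giving a range $[-\hat\eps,2\eps]$. It cannot be cut at the top: the top value $2\eps$ is attained because the flow crosses the level continuously until the next roof. Lemma \ref{lemma:convex3}, in both its forms, therefore gives a nonnegative difference on each slice, and integrating in $s$ and summing over pieces yields the first inequality.

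\emph{Main obstacle.} The step I expect to be hardest is the bookkeeping near the roof identifications. I must check that the image of a piece never wraps into two different fibres inside the window $[-\eps,2\eps]$; this uses $3\eps<\gamma$. I must also check that the pieces to the right of $y$, where $f$ is increasing after reflection, are handled by the reflection $\hat f(x)=f(\beta+\alpha-x)$ in the proof of Lemma \ref{lemma:convex3}.

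\emph{Assembling \eqref{eq:def_bounds_k}.} Write $\phi_{t_{n,k}+t}A$ as the sum of three contributions: from $H_{n,k}\times[0,\eps]$, from $H'_{n,k}\times[0,\eps]$, and from $(\T\setminus F_{n,k})\times[0,\eps]$.
\begin{itemize}
\item The first contributes at least $\eps^4/(\sqrt{k}\log q_n)$, by Lemma \ref{lemma:endpoints_k}.
\item The second is nonnegative, by the argument above.
\item The third is bounded in absolute value by $\eps\,\lambda(\T\setminus F_{n,k})$. By Lemma \ref{lemma:rk6}, $\lambda(\T\setminus F_{n,k})\le\lambda(\T\setminus F'_{n,k})\le kq_n\cdot 2\delta/(kq_n\log q_n)=2\delta/\log q_n$.
\end{itemize}
This error is not smaller than the gain uniformly in $k$. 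I would therefore either choose $\delta=\delta(\eps)$ of order $\eps^3/\sqrt{k}$, which is harmless because $\delta$ only enters Lemma \ref{lemma:rk6} through $n$ being large and $k\le e^{\log^2q_n}$ still fits. Alternatively, I would observe that the complement of $F_{n,k}$ lies within distance $e^{-2\log^2 q_n}$ of the boundaries and apply the same convexity argument there. With either choice the total difference is at least $\eps^4/(2\sqrt{k}\log q_n)$ for $\eps<\eps^*$ and $n$ large.
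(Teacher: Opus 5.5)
Your argument for the first inequality is the paper's. You split according to the return count $l$ and use \eqref{eq:convexity_k_general}, which is available because $k\ge 2$ forces $l\ge q_n$. You use that $\min_I S_l(\varphi)$ lies below the window, so the profile is concave, monotone, and truncated at most at the bottom. Then you apply both forms of Lemma \ref{lemma:convex3}.

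\textbf{The gap is in assembling \eqref{eq:def_bounds_k}.} Your union bound $\lambda(\T\setminus F'_{n,k})\le kq_n\cdot\frac{2\delta}{kq_n\log q_n}$ throws away a factor $k$, and that loss is the only reason the error term looks non-uniform in $k$. The missing observation is that the $kq_n$ excised centres $-j\alpha-mq_n\alpha$ ($0\le j<q_n$, $0\le m<k$) form $q_n$ clusters, each of diameter $(k-1)\|q_n\alpha\|\le k/q_{n+1}$. The $k$ excised intervals of one cluster therefore lie in a single interval of length $\frac{2\delta}{kq_n\log q_n}+(k-1)\|q_n\alpha\|$. Using \eqref{h}, this gives
\[
\lambda(\T\setminus F_{n,k})\le\frac{2\delta}{k\log q_n}+\frac{kq_n}{q_{n+1}}\le\frac{2\delta}{k\log q_n}+e^{-2\log^2 q_n}.
\]
Since $\frac1k\le\frac1{\sqrt k}$, a single fixed $\delta\le\eps^3/8$ yields $\eps\,\lambda(\T\setminus F_{n,k})\le\frac{\eps^4}{2\sqrt k\log q_n}$ for all $2\le k\le e^{\log^2 q_n}$ once $n$ is large. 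Combining this with Lemma \ref{lemma:endpoints_k} and the first inequality gives \eqref{eq:def_bounds_k}. This is the paper's route, and it keeps $\delta$ independent of $k$, so Lemma \ref{lemma:rk6} can be quoted exactly as stated.

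\textbf{Neither of your workarounds closes the gap as written.}

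Taking $\delta\sim\eps^3/\sqrt k$ is not harmless. Lemma \ref{lemma:rk6} is proved for a fixed $\delta$. In its proof, $\delta$ enters not only through $\frac{\delta}{kq_n\log q_n}\ge e^{-2\log^2 q_n}$ but also through the loss $\frac{2C_1'k}{\delta}\log q_n$. That loss must be absorbed by $q_n$ to get $N(x,t_{n,k})\ge (k-1)q_n$. A $k$-dependent $\delta$ multiplies it by $\sqrt k/\eps^3$, so you would have to re-prove that lemma rather than quote it.

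Your alternative rests on a reversed inequality. The paper checks that the excised radius $\frac{\delta}{kq_n\log q_n}$ is \emph{at least} $e^{-2\log^2 q_n}$. So the discarded set is not confined to an $e^{-2\log^2 q_n}$-neighbourhood of the boundary points. It also contains the short intervals of $\mathcal{P}'_{n,k}$, and it may contain points whose return count is below $q_n$. On those, \eqref{eq:convexity_k_general} is not available, so the same convexity argument cannot simply be rerun there.
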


\begin{proof}
From the definition of $F_{n,k}$, \eqref{eq:convexity_k_general} applies. Therefore, similarly to Lemma \ref{lemma:rk1}, on any $I\in \mathcal{P}_{n,k}$ we have that, for any $t>0$, if $\min_{I} S_l(\varphi)<t-\gamma,$ the set $ \{x\in I:-\eps<t-S_l(\varphi)<2\eps\},$ is either empty or the union of at most two disjoint intervals, one in which $S_l(\varphi')<0$ and one in which $S_l(\varphi')>0$.
Moreover, on both intervals $\inf S_l(\varphi)=t-2\eps$. 

From our choice of $\eps$, $t$, $H_{n,k}$ contains exactly the interval with $l=kq_n$ and $S_l(\varphi')<0$. Therefore for $l<kq_n$ all these intervals are contained in $H_{n,k}'$, as well as the interval for which $l=kq_n$ and $S_l(\varphi')>0$.
The convexity of $S_l(\varphi)$ in $F_{n,k}$ allows us to apply Lemma \ref{lemma:convex3} to each of these intervals. This gives the first result.

To prove \eqref{eq:def_bounds_k}, recall from Lemma \ref{lemma:rk6} that $F_{n,k}'\subset F_{n,k}$, which implies  
\[
\lambda (\T\setminus F_{n,k})=q_n\frac{2\delta}{k\,q_n\log q_n}+(k-1)q_n\norm{q_n\alpha}\leq \frac{2\delta}{k\,\log q_n}+\frac{k q_n}{q_{n+1}}\leq \frac{\eps^3}{2\sqrt{k}\log q_{n}},
\]
for $0<\delta<\delta^*$, $\delta^*(\eps)>0$ small enough, and $n$ large enough, where we have used \eqref{h} to bound the second term of the sum: $\frac{kq_n}{q_{n+1}} \leq e^{-2\log^2 q_n}$.

Therefore, putting together these results and Lemma \ref{lemma:endpoints_k}, and noting
\[
A=[H_{n,k}'\cup H_{n,k}\cup (\T\setminus F_{n,k})]\times[0,\eps],
\]
the result \eqref{eq:def_bounds_k} holds.
\end{proof}

\begin{proof}[Proof of Theorem \ref{thm:sum_cor}]
Let $\alpha$ satisfy \eqref{h} and consider $N$ given by \ref{lemma:final_k}. Then, for $n\geq N$, $2 \leq k\leq e^{\log^2q_{n}}$, and $t\in[0,\eps]$, \eqref{eq:def_bounds_k} holds. Noting $\mu(A)=\mu(\phi_\eps A)$ and writing \eqref{eq:def_bounds_k} as a difference of correlations, we get that at least one of the following hold
\[
    |\text{Cor}_{t_{n,k}+t-\eps}(\chi_A,\chi_{A})|\geq\frac{\eps^4}{4\sqrt{k}\log q_{n}},\quad \text{or}\quad
    |\text{Cor}_{t_{n,k}+t}(\chi_A,\chi_{A})|\geq \frac{\eps^4}{4\sqrt{k}\log q_{n}}.
\]
Squaring and integrating over $t \in [0,\varepsilon]$ gives
\[
\int_{t_{n,k}-\varepsilon}^{t_{n,k}+\varepsilon} |\mathrm{Cor}_t(\chi_A,\chi_A)|^2\,dt \geq \frac{\varepsilon^9}{16k\log^2 q_n}.
\]

To see that the intervals $[t_{n,k} - \varepsilon, t_{n,k} + \varepsilon]$ are pairwise disjoint, note that by definition we can bound the difference $t_{n,k+1} - t_{n,k}$ below by the minimum of $S_{q_n}(\phi)$. Therefore 
\[
t_{n,k+1} - t_{n,k} \geq \gamma q_n - 2\varepsilon \geq 2\varepsilon,
\]
for all $1 \leq k \leq e^{\log^2 q_n}$. Since $k \leq e^{\log^2 q_n} \ll a_{n+1}$, the same applies to $t_{n+1,1} - t_{n,k}$ for any such $k$.

Since the intervals $[t_{n,k}-\varepsilon, t_{n,k}+\varepsilon]$ are pairwise disjoint over all $(n,k)$ with $n \geq N$ and $2 \leq k \leq e^{\log^2 q_n}$, summing gives
\[
\int_0^\infty |\mathrm{Cor}_t(\chi_A,\chi_A)|^2\,dt \geq \sum_{n \geq N} \sum_{k=2}^{e^{\log^2 q_n}} \frac{\varepsilon^9}{16k\log^2 q_n} \geq \sum_{n \geq N} \frac{\varepsilon^9}{32} = \infty,
\]
where we used $\sum_{k=2}^{K} k^{-1} \geq (\log K)/2$. This completes the proof.
\end{proof}

\appendix
\section{Proof of Proposition \ref{prop:C1norm}}
\label{app:norm_estimates}

In this appendix we establish the $C^1$ estimate
\[
\|\tilde{f}\|_{C^1(\mathcal{M}')}  =O((\log t)^{1+\nu}),
\]
for $\tilde{f} = f \circ q^{-1}\circ \Phi^{-1}$, where $f \in C^1_c(A)$ is a test function produced by Theorem \ref{thm:IETs}. The main difficulty is that the differential $d\Phi^{-1}$ blows up near the saddle singularities of the locally Hamiltonian flow. The condition \eqref{eq:IET_dist} keeps the support of $f$ away from these singularities in a quantitative way that we make precise below.

Recall that the flow $(\phi_t^{\mathcal{M}})_{t \in \mathbb{R}}$ on $\mathcal{M}$ preserves the area form $\omega$ and it is determined by a smooth vector field $W$. Equip $\mathcal{M}$ with a Riemannian metric consistent with $\omega$. 
\begin{lemma}\label{lemma:A1} For every point $p=\Phi(x,s)\in\mathcal{M}'$ with $W(p)\neq 0$ the following pointwise bound holds
\begin{equation}
\label{eq:df_bound}
\|d\tilde{f}(p)\| \leq \frac{|\partial_s f|}{\|W\|} + \|W\|\,|\partial_x f| + |\alpha(x,s)|\,\|W\|\,|\partial_s f|,
\end{equation}
for some real valued function $\alpha(x,s)$.
\end{lemma}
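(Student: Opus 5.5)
We will compute $d\tilde f$ in a moving frame adapted to the flow. Near $p=\Phi(x,s)$ with $W(p)\neq0$, the map $\Phi$ is a local diffeomorphism (away from the gluing set, where we work on the rectangle $A$ and $q$ is injective), with $\partial_s\Phi = W\circ\Phi$ and $\partial_x\Phi=:V(x,s)$. Since $\Phi(x,s)=\phi^{\mathcal M}_s\Phi(x,0)$, we have $V(x,s)=d\phi_s^{\mathcal M}\,\partial_x\Phi(x,0)$. The plan is to expand $d\tilde f(p)$ in the dual basis of the frame $\{W,V\}$, and then convert that frame into an orthonormal one, $\{W/\|W\|, W^\perp/\|W\|\}$.

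The first step uses area preservation. The parametrization of $\Sigma$ is chosen so that $\Phi$ is measure-preserving, which means $\omega(W,V)=\omega(\partial_s\Phi,\partial_x\Phi)=\pm1$ at $(x,s)=(x,0)$. Since $\phi_s^{\mathcal M}$ preserves both $\omega$ and $W$, this identity persists for all $s$. Because the metric is compatible with $\omega$, we can write $V=\beta\,W^\perp/\|W\|^2+\alpha\,W$. Here $W^\perp$ is the rotation of $W$ by $\pi/2$, so $\omega(W,W^\perp)=\|W\|^2$. The condition $\omega(W,V)=\pm1$ then forces $\beta=\pm1$. The function $\alpha=\alpha(x,s)$ is the real-valued function appearing in the statement.

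The second step inverts the frame. From the chain rule we have $d\tilde f(W)=\partial_s f$ and $d\tilde f(V)=\partial_x f$. Hence
\[
d\tilde f\bigl(W^\perp/\|W\|^2\bigr)=\pm\bigl(\partial_x f-\alpha\,\partial_s f\bigr).
\]
Evaluating $d\tilde f$ on the orthonormal frame gives
\[
d\tilde f(W/\|W\|)=\partial_s f/\|W\|, \qquad d\tilde f(W^\perp/\|W\|)=\pm\|W\|(\partial_x f-\alpha\,\partial_s f).
\]
The operator norm of $d\tilde f(p)$ is at most the sum of the absolute values of these two components. Applying the triangle inequality to the second one yields \eqref{eq:df_bound}.

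The main point requiring care is the normalization $\omega(W,V)=\pm1$: it is what turns the coefficient $\beta$ into exactly $\pm1$, so that no other unknown factor appears. We will justify it from the fact that $\Phi$ pushes $\mu$ (that is, $dx\,ds$) forward to $\omega$, i.e. $\Phi^*\omega=\pm\,dx\wedge ds$. This forces $\omega(\partial_x\Phi,\partial_s\Phi)=\pm1$ everywhere on the domain of the chart. With that settled, the remaining steps are linear algebra.
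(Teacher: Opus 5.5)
Your proposal is correct and matches the paper's proof: decompose $\partial_x\Phi$ in the frame spanned by $W$ and its $\omega$-orthogonal complement, use that $\omega(W,\partial_x\Phi)$ is a unit constant preserved by the flow to fix the transverse coefficient, invert the chain-rule relations $d\tilde f(W)=\partial_s f$, $d\tilde f(\partial_x\Phi)=\partial_x f$, and bound the norm in an orthonormal frame. The only difference is cosmetic: you justify the normalization via $\Phi^*\omega=\pm\,dx\wedge ds$, whereas the paper obtains $\omega(W,\partial_x\Phi(x,0))=dH(\partial_x\Phi(x,0))=1$ from the parametrization of $\Sigma$ by $H$.
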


\begin{proof}
Recall from Section \ref{sec:susflows} that $\Phi : X \to \mathcal{M}'$ is a measure-preserving isomorphism intertwining $(\phi_t)$ with $(\phi_t^{\mathcal{M}})$. It is defined by $\Phi(x,0) \in \Sigma$ (the base transversal, parametrized by the local Hamiltonian $H$) and $\Phi(x,s) = \phi_s^{\mathcal{M}}(\Phi(x,0))$ for $x\in\T$ and $0 \leq s$. At a smooth point $(x,s)$ with $x \notin D_0$, the differential of $\Phi$ satisfies
\[
d\Phi_{(x,s)}(\partial_s) = W(\Phi(x,s)), \qquad d\Phi_{(x,s)}(\partial_x) = J(x,s),
\]
where $W$ is the Hamiltonian vector field generating $(\phi_t^{\mathcal{M}})$ and
\[
J(x,s) :=\partial_\epsilon\big|_{\epsilon=0}\phi_s^{\mathcal{M}}(\Phi(x+\epsilon,0))
\]
is the transverse variation field.

At each $p \in \mathcal{M}'$ with $W(p)\neq 0$, define the orthonormal frame
\[
e_1(p) :=\frac{W(p)}{\|W(p)\|}, \qquad e_2(p) :=e_1(p)^\perp,
\]
where $e_2$ is the unit vector orthogonal to $e_1$ with $\omega(e_1,e_2)=1$. We decompose the variation field in this frame (see Figure \ref{fig:variation_decomposition})
\[
J(x,s) = \alpha(x,s)\,W(\Phi(x,s)) + \beta(x,s)\,e_2(\Phi(x,s))= \alpha(x,s)\|W\|\,e_1 + \beta(x,s)\,e_2.
\]
Since $\Phi$ is area-preserving and $\Sigma$ is parametrized by $H$ with $dH = \omega(W,\cdot)$, we have at $s = 0$,
\[
\omega(W(\Phi(x,0)),J(x,0)) = \omega\bigl( W(\Phi(x,0)),\partial_x\Phi(x,0)\bigr) = dH\bigl(\partial_x\Phi(x,0)\bigr) = 1,
\]
and, by area preservation $(\phi_s^{\mathcal{M}})^*\omega = \omega$, for $s > 0$ it holds
\[
\omega(W(\Phi(x,s)),J(x,s)) = \omega(W(\Phi(x,0)),J(x,0)) = 1.
\]
Substituting the decomposition of $J$ we get $\beta(x,s)= 1/\|W\|$:
\[
1 = \omega\left(\|W\|\,e_1,\alpha\|W\|\,e_1 + \beta\,e_2\right) = \beta\|W\|\,\omega(e_1, e_2) = \beta\|W\|.
\]
The variation field is therefore
\[
J(x,s) = \alpha(x,s)\|W\|\,e_1 + \frac{1}{\|W\|}\,e_2.
\]

\begin{SCfigure}[0.82][htbp]
\centering
\includegraphics[width=0.5\textwidth]{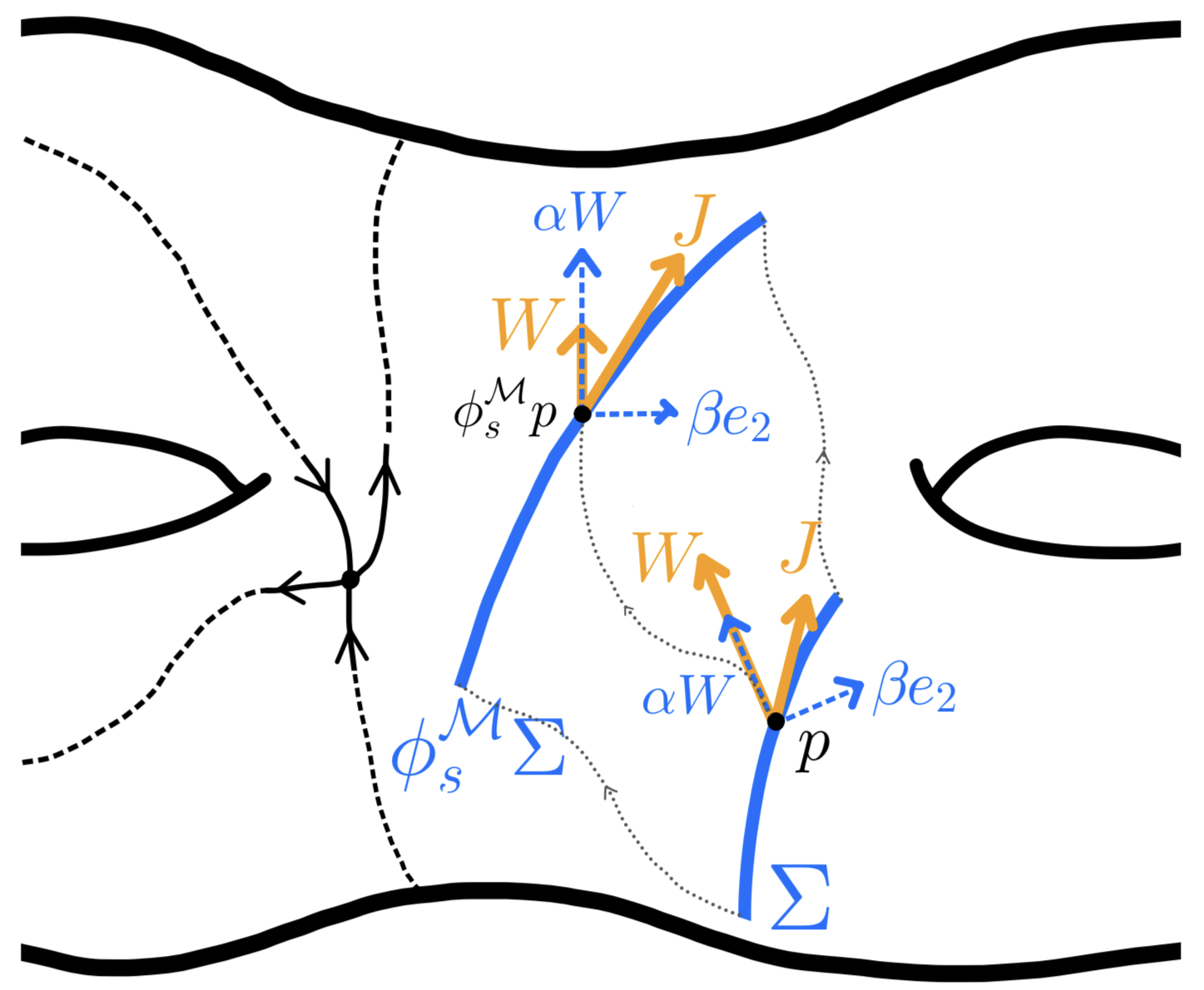} 
\caption{The decomposition of the transverse variation field $J$ along a trajectory of the locally Hamiltonian flow. Starting from the base transversal $\Sigma$ at a point $p$, the variation vector $J(x,s) = d\phi_s^{\mathcal{M}}(J(x,0))$ is tracked along the continuous orbit $\phi_s^{\mathcal{M}}p$, with $s>0$. In the moving orthonormal frame $\{e_1= W/\|W\|, e_2= e_1^\perp\}$, the variation field splits into a longitudinal component $\alpha(x,s)W$ representing a shear, and a transverse component $\beta(x,s)\,e_2 = 1/\|W\|\,e_2$.}
\label{fig:variation_decomposition}
\end{SCfigure}

Applying the the chain rule to $f = \tilde{f} \circ \Phi$ yields
\begin{align*}
\partial_s f &= d\tilde{f}(W) = \|W\|\,d\tilde{f}(e_1), \\
\partial_x f &= d\tilde{f}(J) = \alpha\|W\|\,d\tilde{f}(e_1) + \frac{1}{\|W\|}\,d\tilde{f}(e_2) = \alpha\,\partial_s f + \frac{1}{\|W\|}\,d\tilde{f}(e_2).
\end{align*}
Inverting, we get
\[
d\tilde{f}(e_1) = \frac{\partial_s f}{\|W\|},
\qquad d\tilde{f}(e_2) = \|W\|\bigl(\partial_x f -\alpha(x,s)\,\partial_s f\bigr).
\]
Since $\{e_1,e_2\}$ is orthonormal, we obtain the pointwise bound
\[
\|d\tilde{f}(p)\|  \leq |d\tilde{f}(e_1)| + |d\tilde{f}(e_2)| \leq \frac{|\partial_s f|}{\|W\|} + \|W\|\,|\partial_x f| + |\alpha(x,s)|\,\|W\|\,|\partial_s f|.
\]
\end{proof}

\begin{remark}
The three terms in \eqref{eq:df_bound} reflect three distinct geometric mechanisms. 

The first term is the blowup of $d\tilde{f}(e_1)$ due to the vanishing of $\|W\|$ near the saddles: the flow slows down near $p_i$, so a given increment $\partial_s f$ in suspension time corresponds to a large displacement in the flow-direction derivative on $\mathcal{M}'$. This is controlled by a lower bound on $\|W\|$. 

The second is the blowup of $d\tilde{f}(e_2)$: large transverse oscillations $\partial_x f$ are amplified by $\|W\|$, but since $\|W\|$ is globally bounded this term is controlled directly. 

The third term is the shear term: the variation field $J$ has a component $\alpha\|W\|e_1$ in the flow direction, so that perturbing the initial condition by $\epsilon$ displaces the orbit both transversally and longitudinally. However, the slow $s$-variation $\|\partial_s f\|_\infty = O((\log t)^{-1+\nu})$ in Theorem \ref{thm:IETs} will imply that the second term dominates the norm.
\end{remark}

The coefficient $\alpha(x,s)$ measures the longitudinal displacement between neighboring orbits $\Phi(x,0)$ and $\Phi(x+\epsilon,0)$ at time $s$. We bound $|\alpha(x,s)|$ using \eqref{eq:IET_dist}.

\begin{lemma} \label{lemma:A3} 
For $(x,s) \in A = [a,b]\times[0,t_0]$ satisfying 
\eqref{eq:IET_dist} it holds
\begin{equation}
\label{eq:alpha_bound_final}
|\alpha(x,s)| = O\left((\log t)^{1+\nu}\log\log t\right).
\end{equation}
\end{lemma}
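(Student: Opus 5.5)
We want to bound the longitudinal component $\alpha(x,s)$ of $J(x,s)$ along the orbit segment from $\Phi(x,0)$ to $\Phi(x,s)$. The idea is to relate $\alpha$ to derivatives of Birkhoff sums of the roof function. Write $s=S_{N}(\varphi)(x)+r$ with $N=N(x,s)$ and $0\le r<\varphi(T^Nx)$. Since $\Phi(x,s)=\phi^{\mathcal M}_{r}\Phi(T^Nx,0)$, differentiating in $x$ gives
\[
J(x,s)=d\phi^{\mathcal M}_{r}\bigl(\partial_x\Phi(T^Nx,0)\bigr)\cdot (T^N)'(x) .
\]
Because $T$ is an IET and, by \eqref{eq:IET_dist}, no discontinuity is crossed up to $N(x,s)$, we have $(T^N)'(x)=1$.

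The key observation concerns the timing of the return to $\Sigma$. The point $\Phi(x+\epsilon,s)$ is reached along the orbit of $x+\epsilon$ at suspension height $s-S_N(\varphi)(x+\epsilon)$ above $T^N(x+\epsilon)$. Comparing with $\Phi(x,s)$, the time shift is $S_N(\varphi)(x+\epsilon)-S_N(\varphi)(x)$. Hence
\[
J(x,s)=\widehat J(T^Nx,r)-S_N(\varphi')(x)\,W(\Phi(x,s)),
\]
where $\widehat J(y,r)$ is the variation field of the short segment started on $\Sigma$. This yields
\[
\alpha(x,s)=\widehat\alpha(T^Nx,r)-S_N(\varphi')(x).
\]

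The proof then reduces to two bounds.

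\textbf{Step 1: the Birkhoff sum of $\varphi'$.} By \eqref{eq:IET_dist}, all points $T^kx$ with $k\le N\le (\log t)^{1+\nu}$ are at distance at least $(\log t)^{-1-\nu}$ from $D_0$. The points in the relevant orbit segment are separated, as in the proof of Lemma \ref{lemma:BSD_IET}: in the construction they lie in a tower over $F_{n,i}$ with gaps of order $1/q_n$. The same harmonic-sum argument then gives
\[
|S_N(\varphi')(x)|\lesssim q_n\log h_n\lesssim(\log t)^{1+\nu}\log\log t .
\]
If one wants to use only \eqref{eq:IET_dist}, the crude bound $N\cdot(\log t)^{1+\nu}$ is too large. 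For that reason I would invoke the structure from Section \ref{sec:proofmain}, namely the orbit of $A_m$ lying in the tower over $I_n=F_{n,i}$, together with Lemma \ref{lemma:BSD_IET}. Relabeling $\nu$ absorbs the remaining factors.

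\textbf{Step 2: the within-fiber term.} We must bound $\widehat\alpha(y,r)$ for $y=T^Nx$ and $0\le r<\varphi(y)$. Since $\mathrm{dist}(y,D_0)\ge(\log t)^{-1-\nu}$, the trajectory passes at a distance from the saddle comparable to $\mathrm{dist}(y,D_0)$ in $H$-coordinates. Near a non-degenerate saddle, in Morse coordinates $H=uv$, the variation field has a longitudinal component bounded by the derivative of the passage time, which is $\lesssim 1/\mathrm{dist}(y,D_0)$; this matches $\varphi'(y)$, a term already of the same form as those in Step 1. Away from the saddles, everything is smooth and compact, so it is $O(1)$. Thus
\[
|\widehat\alpha|\lesssim 1+\mathrm{dist}(y,D_0)^{-1}\lesssim(\log t)^{1+\nu}.
\]

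\textbf{Main obstacle.} The hardest step is Step 2: the local analysis of the variation field through the saddle neighbourhood, uniformly in $r$ and also at points where $\|W\|$ is small. I would handle it with the explicit linearized hyperbolic flow in a Morse chart. There one computes that the passage time is $\sim c\,|\log H|$, and that the shear of $J$ is the $H$-derivative of the partial passage time, bounded by $1/|H|$.
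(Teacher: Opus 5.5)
Your proposal is correct and follows the paper's proof. It uses the same decomposition $\alpha(x,s)=\alpha\bigl(T^Nx,\,s-S_N(\varphi)(x)\bigr)-S_N(\varphi')(x)$, once you restore the $-S_N(\varphi')(x)\,W$ time-shift term that your first display omits, then a harmonic-sum bound on $S_N(\varphi')$ from the well-separation of the orbit in the tower over $F_{n,i}$ as in Lemma \ref{lemma:BSD_IET}, and finally a $1/\mathrm{dist}(T^Nx,D_0)$ bound on the incomplete excursion via the derivative of the saddle passage time. You are right that \eqref{eq:IET_dist} alone does not give the needed separation and that the tower structure must be invoked; the paper's proof only signals this with ``cf.\ Lemma \ref{lemma:BSD_IET}''.
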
 

\begin{proof}
Recall that $\alpha(x,s)$ is the longitudinal component of the variation field $J(x,s) = \partial_\epsilon|_{\epsilon=0}\, \phi_s^{\mathcal{M}}(\Phi(x+\epsilon,0))$, measuring the drift in the flow direction between the orbits of $x$ and $x + \epsilon$ at time $s$.

Let $N=N(x,s)$ be the number of complete returns to $\Sigma$ before time $s$ and define \[\alpha_{\mathrm{exc}}(x,s) := \alpha(x,s) - \alpha(x, S_{N}(\varphi)(x)),\] to be the increment during the incomplete excursion starting from $\Phi(T^Nx,0)$.

Differentiating in $\epsilon$ and applying the chain rule, we get
\begin{align*}
J(x,s) &= \partial_\epsilon|_{\epsilon=0}\,\phi_{s}^{\mathcal{M}}(\Phi(x+\epsilon,0))= \partial_\epsilon|_{\epsilon=0}\,\phi_{s - S_N(\varphi)(x+\epsilon)}^{\mathcal{M}}(\Phi(T^N(x+\epsilon),0)) \\
&= d\phi_{s-S_N(\varphi)(x)}^{\mathcal{M}}(\Phi(T^Nx,0))\left[\partial_\epsilon|_{\epsilon=0}\,\Phi(T^N(x+\epsilon),0)\right]- S_N(\varphi')(x)\cdot W(\Phi(x,s)),
\end{align*}
where the first term is the variation field of $\phi_{s-S_N(\varphi)(x)}^{\mathcal{M}}$ started from the point $\Phi(T^N(x),0) \in \Sigma$, and the second term accounts for the shift in elapsed time, which displaces the orbit by $-S_N(\varphi')(x)$ units along the flow direction $W$. Projecting along the flow direction, the first term on the right contributes $\alpha_{\mathrm{exc}}(x,s)$ (the variation coefficient of the incomplete excursion starting from $\Phi(T^Nx,0)$), while the second term contributes $-S_N(\varphi')(x)$, giving
\begin{equation}
\label{eq:alpha_decomp_explicit}
\alpha(x,s) = \alpha\left(T^Nx,s-S_N(\varphi)(x)\right)-S_N(\varphi')(x).
\end{equation}

Since the iterates $T^k x$ for $k = 0, \ldots, N-1$ all satisfy $\mathrm{dist}(T^k x, D_0) \geq \delta$ by \eqref{eq:IET_dist}, and since $T$ acts by translation on each continuity interval, the iterates falling within distance $r \geq \delta$ of any fixed $y \in D_0$ are mutually separated by at least $\delta$. This well-separation (cf.\ Lemma \ref{lemma:BSD_IET}) gives that each singularity of $\varphi'$ contributes at most $O(\delta^{-1} \log N)$ to the Birkhoff sum. Summing over the finitely many points in $D_0$ yields
\[
|S_{N(x,s)}(\varphi')(x)| \lesssim \frac{\log N(x,s)}{\delta}.
\]

For the incomplete excursion, first note that $\Phi[0,1]=\Sigma$ is a closed curve, so $\alpha(\cdot,0)$ is uniformly bounded by a constant depending only on the choice of $\Sigma$ and the geometry of $\mathcal{M}'$. 
More generally, outside fixed neighborhoods of the saddles $p_i$, the vector field $W$ is bounded away from zeros, and trajectories can spend only a uniformly bounded amount of time outside these neighborhoods before returning to $\Sigma $, generating only bounded contributions to the variation field. The dominant contributions to $\alpha(x,s)$ arise from passages near the saddle points.

For any fixed point $p_i\in \mathcal{M}'$, let $\tau_i(h) \sim -C_i \log h$ be the passage time through a fixed ball around $p_i$ at energy level $h$ (cf.\ \cite{ravotti}). Since $\alpha_{\mathrm{exc}}$ measures the cumulative variation of the passage times of the orbit segment $\{\Phi(x,\sigma)\}_{\sigma\in[S_N(\varphi)(x),\,s]}$ with respect to the initial condition on $T^N x \in \Sigma$, we have
\[
|\alpha_{\mathrm{exc}}| \lesssim \max_i\left|\frac{d}{dx}\tau_i(h(x))\right| = \max_i \left|\tau_i'(h)\frac{dh}{dx}\right| \lesssim \max_i |\tau_i'(h)| \sim \max_i\frac{C_i}{h} \lesssim \frac{1}{\delta},
\]
where we used that $dh/dx$ is uniformly bounded, that the piece of orbit intersects each neighborhood at energy level $h \gtrsim \mathrm{dist}(T^N x, D_0) \geq \delta$ by \eqref{eq:IET_dist}, and that each excursion passes near each singularity at most once (cf. \cite{KanigowskiOkunevZelada2026}).

Combining \eqref{eq:alpha_decomp_explicit} with the two bounds above, and using $(\log N)/\delta \geq 1/\delta$ for $N$ large enough, we get
\[
|\alpha(x,s)| \lesssim \frac{\log N(x,s)}{\delta}.
\]
Substituting $N(x,s) \leq (\log t)^{1+\nu}$ and $\delta = (\log t)^{-1-\nu}$ from \eqref{eq:IET_dist}:
\[
|\alpha(x,s)| \lesssim \frac{(1+\nu)\log\log t}{(\log t)^{-(1+\nu)}} = (\log t)^{1+\nu} \log\log t.
\]
\end{proof}

\begin{lemma}\label{lemma:A4}
For every $ p \in \mathrm{supp}(\tilde{f})$, we have
\begin{equation}
\label{eq:W_bounds}
(\log t)^{-(1+\nu)/2} \lesssim \|W(p)\| \leq C_W,
\end{equation}
where $C_W > 0$ depends only on the geometry of $\mathcal{M}'$.
\end{lemma}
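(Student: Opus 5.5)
The upper bound $\|W(p)\|\leq C_W$ is immediate. $W$ is a smooth vector field on the compact surface $\mathcal{M}$, so we may take $C_W:=\max_{\mathcal{M}}\|W\|$, which depends only on the flow and the chosen metric. The content of the lemma is therefore the lower bound.

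For the lower bound, we localise near the saddles. Fix small disjoint neighbourhoods $U_i$ of the saddles $p_i$ in which Morse-type coordinates hold: $H=xy$ up to a smooth change of coordinates, so that $\|W\|\asymp |\nabla H|\asymp \mathrm{dist}(\cdot,p_i)$. Outside $\bigcup_i U_i$, $\|W\|$ is bounded below by a constant, so only points $p\in\mathrm{supp}(\tilde f)\cap U_i$ need attention.

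Write $p=\Phi(x,s)$ with $(x,s)\in A$. The orbit segment from $\Phi(T^kx,0)\in\Sigma$, where $k=N(x,s)$, reaches $p$ within a single excursion. If that excursion enters $U_i$, it does so along the level set $\{H=h\}$, where $|h|$ is the energy offset of $T^kx$ relative to the separatrix through $p_i$. This offset is comparable to $\mathrm{dist}(T^kx,D_0)$, because the separatrices hit $\Sigma$ exactly at the points of $D_0$ and $H$ restricted to $\Sigma$ is a smooth coordinate with nonvanishing derivative. By \eqref{eq:IET_dist} we get $|h|\gtrsim (\log t)^{-1-\nu}$, since $k\leq N(x,s)$ lies in the range covered there. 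On the hyperbola $\{xy=h\}$ the minimal distance to the origin is $\asymp\sqrt{|h|}$. Hence $\|W(p)\|\gtrsim \sqrt{|h|}\gtrsim (\log t)^{-(1+\nu)/2}$, which is the claimed bound.

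The main obstacle is making rigorous the comparison between the energy offset $|h|$ at $p_i$ and $\mathrm{dist}(T^kx,D_0)$. This needs two ingredients: $H$ is locally single-valued near the relevant separatrix, and its restriction to $\Sigma$ is a diffeomorphism whose derivative is bounded uniformly away from zero and infinity. The second holds because $\Sigma$ is parametrized by $H$. For the first, I would argue using only finitely many flow boxes: between $\Sigma$ and each $U_i$ the flow is a diffeomorphism with bounded distortion that preserves $H$. The constants then depend only on the geometry, and we use $\nu<1$ only to ensure that the relevant $h$ lie within the Morse chart.
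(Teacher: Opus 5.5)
Your proposal is correct and is essentially the paper's argument: compactness gives the upper bound. For the lower bound, the Morse chart $H=uv$ near each saddle gives $\|W\|\gtrsim\sqrt{|h|}$ on $\{H=h\}$; since $\Sigma$ is parametrized by $H$, the level offset of the excursion from $T^{N(x,s)}x$ is at least $\mathrm{dist}(T^{N(x,s)}x,D_0)$, and \eqref{eq:IET_dist} finishes. Your closing remark that $\nu<1$ is needed to keep $h$ inside the Morse chart is unnecessary, since a level too large to enter the chart keeps the orbit where $\|W\|$ is bounded below by a constant.
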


\begin{proof}
Since $\mathcal{M}'$ is compact and $W$ is a smooth vector field, $\|W\|$ is bounded above by some constant $C_W > 0$ on all of $\mathcal{M}'$.

For the lower bound, near a non-degenerate saddle $p_i$ there exist local coordinates $(u,v)$ centered at $p_i$ in which $H(u,v) = uv$ and $W \asymp u\,\partial_u - v\,\partial_v$ (see \cite{ravotti}), so on a level set $H = h$ one has by the AM-GM inequality
$
\|W(p)\| \gtrsim \sqrt{h}$ for $p$ on the level set $H = h.$

The transversal $\Sigma$ is parametrized by $H$, so a point $x \in \T$ at distance $\rho = \mathrm{dist}(x, D_0)$ from the nearest singularity corresponds to a level set with $h \gtrsim \rho$. Since level sets are preserved by the flow, for all $ s \in [0, \varphi(x))$
\[
\|W(\Phi(x,s))\| \gtrsim \sqrt{\mathrm{dist}(x, D_0)} .
\]
By \eqref{eq:IET_dist}, $\mathrm{dist}(T^k x, D_0) \geq \delta := (\log t)^{-1-\nu}$ for all $x \in [a,b]$ and $0\leq k \leq N(x, s)$, giving $\|W(\Phi(x,s))\| \gtrsim \delta^{1/2} = (\log t)^{-(1+\nu)/2}$ on the support. 
\end{proof}

We are now ready to prove Proposition \ref{prop:C1norm}.

\begin{proof}[Proof of Proposition \ref{prop:C1norm}]
We combine Lemmas \ref{lemma:A1}, \ref{lemma:A3} and \ref{lemma:A4} by substituting the bounds from \eqref{eq:IET_dbounds}, \eqref{eq:alpha_bound_final}, and \eqref{eq:W_bounds} into \eqref{eq:df_bound}.

From $\|\partial_s f\|_\infty = O((\log t)^{-1+\nu})$ and $\norm{W}\gtrsim(\log t)^{-(1+\nu)/2}$ we get
\[
\frac{|\partial_s f|}{\|W\|} \lesssim \frac{(\log t)^{-1+\nu}}{(\log t)^{-(1+\nu)/2}} = (\log t)^{(1+\nu)/2 - 1} = (\log t)^{(3\nu-1)/2}.
\]

Since $\|W\| \leq C_W$ uniformly on $\mathcal{M}'$ and $\|\partial_x f\|_\infty = O((\log t)^{1+\nu})$, we have
\[
\|W\|\,|\partial_x f| = O((\log t)^{1+\nu}).
\]

Lastly, \eqref{eq:alpha_bound_final}, $\|W\| \leq C_W$, and $\|\partial_s f\|_\infty = O((\log t)^{-1+\nu})$ give
\[
|\alpha|\,\|W\|\,|\partial_s f| \lesssim (\log t)^{1+\nu}\log\log t \cdot C_W \cdot (\log t)^{-1+\nu} = O((\log t)^{2\nu} \log\log t).
\]
For $\nu<1$, the three terms are $O((\log t)^{1+\nu}).$
Together with $\|\tilde{f}\|_\infty = \|f\|_\infty=1$, we get the desired control on the $C^1$ norm
$
\|\tilde{f}\|_{C^1(\mathcal{M}')} = \|\tilde{f}\|_\infty + \|d\tilde{f}\|_\infty = O((\log t)^{1+\nu}).
$
\end{proof}

\section{Proof of Condition \eqref{eq:IET2}} \label{app:H2}

We assume the reader is familiar with the zippered rectangle construction and the natural extension of the Rauzy--Veech and Zorich inductions, and refer to \cite{Viana2008} for a detailed exposition. We recall here only the definitions necessary to proceed with the proof.

The natural extension of the Zorich induction acts on the space
\[
\widehat{\mathcal{M}}_d^*(\mathfrak{R}) := \bigsqcup_{\pi \in \mathfrak{R}}\left(\Delta_{d} \times \{\pi\} \times \Theta_\pi\right),
\]
where $\Theta_\pi \subset \mathbb{R}^d$ is Veech's zippered rectangle cone, defined by
\[
\Theta_\pi := \left\{ \underline{\tau} \in \mathbb{R}^d \,:\, \sum_{\pi(j) < k} \tau_j < 0 < \sum_{j < k} \tau_j, \quad \forall\, 1 < k \leq d \right\}.
\]
A key property of $\Theta_\pi$ is that, denoting by $\Omega_\pi \in M_d(\mathbb{Z})$ the integer-valued antisymmetric matrix associated to $\pi$ (see \cite{Viana2008}), for any $\underline{\tau} \in \Theta_\pi$ the associated geometric height vector $\hat{\underline{h}} = -\Omega_\pi \underline{\tau}$ is strictly positive, i.e., $\hat{h}_i > 0$ for all $i$. Given a triple $(\underline{\lambda}, \pi, \underline{\tau})$, its area is defined by the scalar product $\mathrm{Area}(\underline{\lambda}, \pi, \underline{\tau}) = \langle \underline{\lambda}, \hat{\underline{h}} \rangle = \sum_{i=1}^d \lambda_i \hat{h}_i$.

To each triple $(\underline{\lambda}, \pi, \underline{\tau}) \in \widehat{\mathcal{M}}_d^*(\mathfrak{R})$ one associates a translation surface via the zippered rectangle construction, and the extended Zorich induction $\hat{\mathcal{Z}}$ acts on $\widehat{\mathcal{M}}_d^*(\mathfrak{R})$ by cutting and regluing the corresponding zippered rectangle. Consider the projection $p:\widehat{\mathcal{M}}_d^*(\mathfrak{R})\to {\mathcal{M}}_d(\mathfrak{R})$ given by $p((\underline{\lambda}, \pi, \underline{\tau}))=(\underline{\lambda}, \pi)$. The Zorich induction $\mathcal{Z}$ and its natural extension $\hat{\mathcal{Z}}$ commute via the projection $p$: $p\hat{\mathcal{Z}}={\mathcal{Z}}p$.

Since the extended Zorich induction preserves area, we can restrict it to the area-one locus
\[
\widehat{\mathcal{M}}_d(\mathfrak{R}) := \left\{ (\underline{\lambda}, \pi, \underline{\tau}) \in \widehat{\mathcal{M}}_d^*(\mathfrak{R}) \,:\, \mathrm{Area}(\underline{\lambda}, \pi, \underline{\tau}) = 1 \right\}.
\]
In $\widehat{\mathcal{M}}_d(\mathfrak{R})$, $\hat{\mathcal{Z}}$ preserves an ergodic absolutely continuous measure $\widehat{\nu}_{\mathfrak{R}}$ whose projection onto $\mathcal{M}_d(\mathfrak{R})$ coincides with the Zorich measure $\nu_{\mathfrak{R}}$. That is, it holds that $\nu_{\mathfrak{R}}(A)=\widehat{\nu}_{\mathfrak{R}}(p^{-1}A)$ for any measurable $A\subset{\mathcal{M}}_d(\mathfrak{R})$ (here $p$ is restricted to $\widehat{\mathcal{M}}_d(\mathfrak{R})$).
Moreover, $\widehat{\nu}_{\mathfrak{R}}$ is equivalent to the product of the Lebesgue measure on $\underline{\lambda}$ and $\underline{\tau}$ on each fiber $\{\pi\}$ (see again \cite{Viana2008}). 

We introduce one last definition. Given $\nu>0$, we say that a vector $\underline{v}=(v_1,\cdots,v_d)$ is $\kappa$--\textit{balanced} if $v_i>0$ for all $i$ and $\max_{i,j} v_i/v_j < \kappa$ or, equivalently, $\min_{i,j} v_i/v_j > \kappa^{-1}$. 

We are now ready to finish the proof of Proposition \ref{prop:full_meas_IET}.

\begin{proof}[Proof of Proposition \ref{prop:full_meas_IET} (Condition \eqref{eq:IET2})]
Fix $\pi \in \mathfrak{R}$ and pick $\underline{\tau_*}\in \Theta_\pi$ such that the associated geometric height vector $\hat{\underline{h_*}}= -\Omega_\pi \underline{\tau_*}$ is strictly positive and set $\kappa=2\max_{i,j} \hat{h_*}_i/\hat{h_*}_j$. 
Let $\{\delta_m\}\subset\R^+$ be a decreasing sequence, $\delta_m\to0$, and let $\underline{\lambda_m}=(1-\tfrac{\delta_m}2,\tfrac{\delta_m}{2(d-1)},\cdots,\tfrac{\delta_m}{2(d-1)})$. 

For each $m$, define the normalized $\underline{\tau_m}={\underline{\tau_*}}{\text{Area}(\underline{\lambda_m}, \pi, \underline{\tau_*})}^{-1}$ so that the triple $(\underline{\lambda_m}, \pi, \underline{\tau_m})$ lies in $\widehat{\mathcal{M}}_d(\mathfrak{R})$; and let $K_m \subset \widehat{\mathcal{M}}_d(\mathfrak{R})$ be a compact neighborhood of $(\underline{\lambda_m}, \pi, \underline{\tau_m})$ on which both $\kappa$-balance of $\hat{\underline{h}}$ and $\lambda_1 > (1-\delta_m)$ hold. Since $\widehat{\nu}_{\mathfrak{R}}$ is equivalent to Lebesgue on each fiber, $K_m$ has positive $\widehat{\nu}_{\mathfrak{R}}$-measure.

By ergodicity of $\widehat{\nu}_{\mathfrak{R}}$, for a.e. $(\underline\lambda, \pi, \underline\tau)$ there exists an increasing sequence $(r_m)$ such that $\hat{\mathcal{Z}}^{r_m}(\underline\lambda, \pi, \underline\tau)\in K_m$, for $m\geq 1$. The $\kappa$-balance of $\hat{\underline{h}}$ implies $\eta$-balance of the Rokhlin tower heights $(h_1^{(r_m)},\dots,h_d^{(r_m)})$, for some constant $\eta(\kappa)$ independent of $m$. 
Moreover, knowing \[\lambda^{(r_m)}-\sum_{j\neq 1}\lambda_j^{(r_m)}=\lambda_1^{(r_m)}>(1-\delta_m) \lambda^{(r_m)},\] and the partition identity $\sum_j \lambda_j^{(r_m)} h_j^{(r_m)} = 1$, we get \[\sum_{j\neq 1}\lambda_j^{(r_m)} <\delta_m\lambda^{(r_m)},\quad\text{ and }\quad h_1^{(r_m)} <((1-\delta_m)\lambda^{(r_m)})^{-1}.\] Therefore
\[
1 - \lambda\bigl(\mathcal{T}_1^{(r_m)}\bigr) = \sum_{j \neq 1} \lambda_j^{(r_m)} h_j^{(r_m)} <\delta_m\lambda^{(r_m)} \cdot \frac{\eta}{(1-\delta_m)\lambda^{(r_m)}} = \frac{\delta_m\eta}{1-\delta_m}.
\]

Projecting into $\mathcal{M}_d(\mathfrak{R})$, we get that the former holds for almost every $(\underline{\lambda},\pi)$. Given any $\eps>0$, we restrict the subsequence to $m$ large enough so that $0<\delta_m<1/4$ and  $\delta_m\eta/(1-\delta_m) < \varepsilon$, yielding ${\lambda_{j^*}^{(r_m)}} > \tfrac{3}{4}{\lambda^{(r_m)}}$ and $\lambda(\mathcal{T}_1^{(r_m)}) > 1-\varepsilon$. 

\end{proof}

\section*{Acknowledgments} The author would like to thank Adam Kanigowski for bringing attention to these problems, for many helpful discussions and insights, and for his support during the preparation of the paper.

\printbibliography

\end{document}